\numberwithin{equation}{section}
\theoremstyle{plain}
\newtheorem{thm}{Theorem}[section]
\newtheorem{cor}{Corollary}[section]
\newtheorem{lem}{Lemma}[section]
\newtheorem{rem}{Remark}[section]
\begin{document}

\begin{frontmatter}
\title{Weak convergence theory for Poisson sampling designs}
\runtitle{Weak convergence theory for Poisson sampling}

\begin{aug}
\author{\fnms{Leo} \snm{Pasquazzi}\thanksref{t2}\ead[label=e1]{leo.pasquazzi@unimib.it}
\ead[label=u1,url]{http://www.fooBBBBBBBBBB.com}}

\thankstext{t2}{This work was supported by the grant 2016-ATE-0459 and the grant 2017-ATE-0402 from Universit\`{a} degli Studi di Milano-Bicocca. This work is an updated version of arXiv:1902.09169v1 [math.ST] and of arXiv:1902.09169v2 [math.ST]}
\runauthor{L. Pasquazzi}

\affiliation{Universit\`{a} degli Studi di Milano-Bicocca\thanksmark{m1}}

\address{Dipartimento di Statistica e Metodi Quantitativi\\
Universit\`{a} degli Studi di Milano-Bicocca\\
Edificio U7\\
Via Bicocca degli Arcimboldi, 8\\
20126 – Milano\\
\printead{e1}\\
\phantom{E-mail:\ }}

\end{aug}

\begin{abstract}
This work provides some general theorems about unconditional and conditional weak convergence of empirical processes in the case of Poisson sampling designs. The theorems presented in this work are stronger than previously published results. Their proofs are based on the symmetrization technique and on a contraction principle. 
\end{abstract}

\begin{keyword}[class=MSC]
\kwd[Primary ]{62D05, 60F05, 60F17}
\kwd[; secondary ]{60B12}
\end{keyword}

\begin{keyword}
\kwd{weak convergence}
\kwd{empirical process}
\kwd{Poisson sampling}
\kwd{Donsker class}
\end{keyword}

\end{frontmatter}

\section{Introduction}

While there is now quite a deal of literature about functional central limit theorems for i.i.d. observations (see e.g. \cite{Pollard}, \cite{vdVW}, \cite{vdV}, \cite{Kosorok}), there are only a few papers about extensions which would be useful in the context of survey sampling. In fact, survey statisticians are often concerned with joint model and design-based inference and are therefore also interested in \textit{conditional} functional central limit theorems. Depending on the sampling design, survey statisticians can sometimes resort to results from the bootstrap theory which, however, seems to offer solutions only for the case where the sample inclusion indicator random variables are exchangeable (see e.g. \cite{Praestgaard_Wellner_1993} or \cite{vdVW}). This approach has been applied for example in \cite{Breslow_Wellner_2007} and \cite{Saegusa_Wellner_2013}. Extensions to other sampling designs have been investigated in \cite{Conti_2014}, \cite{Boistard_2017} and \cite{Bertail_2017}. \cite{Conti_2014} deals with \textit{high entropy designs}, i.e. sampling designs which can be approximated by rejective sampling, and provides sufficient conditions for weak convergence in $D[-\infty,\infty]$ (equipped with the Skorohod topology) of the Horvitz-Thompson and H{\'a}jek estimators of a finite population distribution function. \cite{Boistard_2017} considers the class of sampling designs for which the standardized Horvitz-Thompson estimator of the population mean of every uniformly bounded variable is asymptotically normal (almost surely conditional on the sequence of populations) and imposes restrictions on the first four mixed moments of the sample inclusion indicators in order to obtain some uniform central limit theorems for sequences of Horvitz-Thompson and H{\'a}jek estimators of a finite population distribution function. Finally, \cite{Bertail_2017} considers Poisson sampling and high entropy sampling designs as in \cite{Conti_2014}, and provides weak convergence theorems for Horvitz-Thompson empirical processes indexed by classes of functions $\mathcal{F}$ which satisfy the uniform entropy condition. 

The present paper is quite similar to \cite{Bertail_2017} but focuses only on Poisson sampling designs. However, the results presented in this paper are more general than those given in \cite{Bertail_2017}. In fact, the present paper provides functional central limit theorems which can be applied to a much wider family of function classes $\mathcal{F}$ than the one considered in  \cite{Bertail_2017} (the uniform entropy condition will not be required). Moreover, this paper considers also the case where the first order sample inclusion probabilities can be arbitrarily close to zero which has not been treated in \cite{Bertail_2017} and provides extensions for the H{\'a}jek empirical process.

This article is organized as follows. Section \ref{Section_notation_definitions} introduces the notation and the probability space within which the weak convergence theorems of this paper will be derived. The probabilistic framework that will be introduced in this section is completely general and can be used to derive weak convergence theorems for other sampling designs as well. The main results of this paper will then be derived in Section \ref{empirical_process_theory_POISSON}. The first battery of results deals with the case where there is a positive lower bound on the sample inclusion probabilities (which is the case considered in \cite{Bertail_2017}). Then an analogous set of results will be obtained for the case where the sample inclusion probabilities are proportional to some size variable which can take on values arbitrarily close to zero (this case has not been treated in \cite{Bertail_2017}). Extensions for H{\'a}jek empirical processes will then be derived in Section \ref{Hajek_empirical_process_theory_POISSON}. Finally, Section \ref{Simulation_results} concludes this paper with some simulation results.

\section{Notation and Definitions}\label{Section_notation_definitions}

Let $Y_{1}$, $Y_{2}$, \dots, $Y_{N}$ denote the values taken on by a study variable $Y$ on the $N$ units of a finite population, and let $X_{1}$, $X_{2}$, \dots, $X_{N}$ denote corresponding values of an auxiliary variable $X$. In this paper it will be assumed that the $N$ ordered pairs $(Y_{i}, X_{i})$ corresponding to a given finite population of interest are the first $N$ realizations of an infinite sequence of i.i.d. random variables which take on values in the cartesian product of two separable Banach spaces. The latter will be denoted by $\mathcal{Y}\times\mathcal{X}$. Moreover, as usual in finite population sampling theory, it will be assumed that the values taken on by the auxiliary variable $X$ are known in advance for all the $N$ population units, while those corresponding to the study variable $Y$ are only known for the population units that have been selected into a random sample. The corresponding vector of sample inclusion indicator functions will be denoted by $\mathbf{S}_{N}:=(S_{1,N}, S_{2,N}, \dots, S_{N,N})$, and it will be assumed that the vectors $\mathbf{S}_{N}$ and $\mathbf{Y}_{N}:=(Y_{1}, Y_{2}, \dots, Y_{N})$ are \textit{conditionally} independent \textit{given} $\mathbf{X}_{N}:=(X_{1}, X_{2}, \dots, X_{N})$. With reference to the sample design, probability and expectation will be denoted by $P_{d}$ e $E_{d}$, respectively. With this notation, the vector of first order sample inclusion probabilities will be given by
\begin{equation*}
\begin{split}
\underline{\mathbf{\pi}}_{N}&:=(\pi_{1,N}, \pi_{2,N}, \dots, \pi_{N,N})\\
&:=(E_{d}S_{1,N}, E_{d}S_{2,N}, \dots, E_{d}S_{N,N})\\
&=(P_{d}\{S_{1,N}=1\}, P_{d}\{S_{2,N}=1\},\dots, P_{d}\{S_{N,N}=1\}),
\end{split}
\end{equation*}
and from the conditional independence assumption it follows that $\underline{\mathbf{\pi}}_{N}$ must be a deterministic function of $\mathbf{X}_{N}$.

\smallskip

Now, with reference to the Banach space $\mathcal{Y}$ consider the random empirical measures given by
\begin{equation*}\label{HT_empirical_process}
\mathbb{G}_{N}':=\frac{1}{\sqrt{N}}\sum_{i=1}^{N}\left(\frac{S_{i,N}}{\pi_{i,N}}-1\right)\delta_{Y_{i}}.
\end{equation*}
For $f:\mathcal{Y}\mapsto\mathbb{R}$, the integral of $f$ with respect to $\mathbb{G}_{N}'$ can be written as
\[\mathbb{G}_{N}'f:=\frac{1}{\sqrt{N}}\sum_{i=1}^{N}\left(\frac{S_{i,N}}{\pi_{i,N}}-1\right)f(Y_{i})\]
so that, for any given class $\mathcal{F}$ of functions $f:\mathcal{Y}\mapsto\mathbb{R}$, the random empirical measure $\mathbb{G}_{N}'$, as a real-valued function of $f\in\mathcal{F}$, can be interpreted as a stochastic process indexed by the set $\mathcal{F}$. For obvious reasons $\mathbb{G}_{N}'$ will be called Horvitz-Thompson empirical process (henceforth HTEP). Depending on the values taken on by the study variable $Y$ and on the class of functions $\mathcal{F}$, a sample path of $\mathbb{G}_{N}'$ could be either bounded or not. In the former case it will be an element of $l^{\infty}(\mathcal{F})$, the space of all bounded and real-valued functions with domain given by the class of functions $\mathcal{F}$. In what follows $l^{\infty}(\mathcal{F})$ will be considered as a metric space with distance function induced by the norm $\lVert z\rVert_{\mathcal{F}}:=\sup_{f\in\mathcal{F}}|z(f)|$.

\smallskip

As already mentioned in the introduction, the present paper investigates conditions under which
\[\mathbb{G}_{N}'\rightsquigarrow \mathbb{G}'\text{ in }l^{\infty}(\mathcal{F}),\]
where $\mathbb{G}'$ is a Borel measurable and tight (in $l^{\infty}(\mathcal{F})$) Gaussian process. Both \textit{unconditional} and \textit{conditional} (on the realized values of $X$ and $Y$) weak convergence will be considered. Recall that unconditional weak convergence is defined as
\[E^{*}h(\mathbb{G}_{N}')\rightarrow Eh(\mathbb{G}')\quad\text{ for all }h\in C_{b}(l^{\infty}(\mathcal{F})),\]
where $C_{b}(l^{\infty}(\mathcal{F}))$ is the class of all real-valued and bounded functions on $l^{\infty}(\mathcal{F})$. For Borel measurable (in $l^{\infty}(\mathcal{F}$)) processes $\mathbb{G}'$ whose realizations lie in a separable subset of $l^{\infty}(\mathcal{F})$ almost surely this is equivalent to
\[\sup_{h\in BL_{1}(l^{\infty}(\mathcal{F}))}\left|E^{*}h(\mathbb{G}_{N}')-Eh(\mathbb{G}')\right|\rightarrow 0,\]
where $BL_{1}(l^{\infty}(\mathcal{F}))$ is the set of all functions $h:l^{\infty}(\mathcal{F})\mapsto[0,1]$ such that $|h(z_{1}-h(z_{2})|\leq \lVert z_{1}-z_{2}\rVert_{\mathcal{F}}$ for every $z_{1},z_{2}\in l^{\infty}(\mathcal{F})$ (see Chapter 1.12 in \cite{vdVW}). Based on this observation, \cite{vdVW} provides two definitions of conditional weak convergence: \textit{conditional weak convergence in outer probability} (henceforth opCWC), which in the context of this paper translates to the condition
\[\sup_{h\in BL_{1}(l^{\infty}(\mathcal{F}))}\left|E_{d}h(\mathbb{G}_{N}')-Eh(\mathbb{G}')\right|\overset{P^{*}}{\longrightarrow} 0\]
(see page 181 in \cite{vdVW}), and \textit{outer almost sure conditional weak convergence} (henceforth oasCWC), which in the context of this paper translates to the condition
\[\sup_{h\in BL_{1}(l^{\infty}(\mathcal{F}))}\left|E_{d}h(\mathbb{G}_{N}')-Eh(\mathbb{G}')\right|\overset{as*}{\rightarrow} 0.\]
As expected, oasCWC implies opCWC (see Lemma 1.9.2 on page 53 in \citep{vdVW}). However, it seems that in absence of asymptotic measurabilty of the sequence $\{\mathbb{G}_{N}'\}_{N=1}^{\infty}$ oasCWC is not strong enough to imply \textit{unconditional} weak convergence (cfr. Theorem 2.9.6 on page 182 in \cite{vdVW} and the comments thereafter).

\smallskip

Since the very definition of weak convergence relies on the concept of outer expectation, some assumptions about the underlying probability space will be necessary for what follows. Throughout this paper it will always be assumed that the latter is a product space of the form
\begin{equation}\label{probability_space}
\prod_{i=1}^{\infty} (\Omega_{y,x}, \mathcal{A}_{y,x}, P_{y,x})\times (\Omega_{d},\mathcal{A}_{d},P_{d})\times \prod_{i=1}^{\infty} (\Omega_{\varepsilon}, \mathcal{A}_{\varepsilon}, P_{\varepsilon})
\end{equation}
and that the elements of the random sequence $\{(Y_{i},X_{i})\}_{i=1}^{\infty}$ are the coordinate projections on the first infinte coordinates of the sample points $\omega\in\Omega_{y,x}^{\infty}\times\Omega_{d}\times\Omega_{\varepsilon}^{\infty}$. On the other hand, the independent Rademacher random variables $\varepsilon_{1}$, $\varepsilon_{2}$, \dots, which will be needed for symmetrization, are defined as the coordinate projections on the last infinite coordinates of the sample points $\omega\in\Omega_{y,x}^{\infty}\times\Omega_{d}\times\Omega_{\varepsilon}^{\infty}$. The joint expectation with respect to all the Rademacher random variables with the first $\infty+1$ coordinates of the sample points kept fixed will be denoted by $E_{\varepsilon}$. Finally, the sample inclusion indicators $S_{i,N}$ are allowed to depend on the first $\infty+1$ coordinates of the sample points $\omega\in\Omega_{y,x}^{\infty}\times\Omega_{d}\times\Omega_{\varepsilon}^{\infty}$ only. As suggested by the notation, it will be assumed that for each value of $N$ the corresponding sample inclusion indicator functions $S_{1,N}$, $S_{2,N}$, \dots, $S_{N,N}$ are the elements of one row of a triangular array of random variables. This assumption is needed to make sure that for each value of $N$ the sample design can be adapted according to all the $N$ (known) values taken on by the auxiliary variable $X$ as the population size increases. To make sure that the conditional independence assumption holds, it will be assumed that for each value of $N$ the corresponding vector $\mathbf{S}_{N}$ is defined as a function of the random vector $\mathbf{X}_{N}$ and of random variables $D_{1}$, $D_{2}$, \dots which are functions  of the central coordinate of the sample points $\omega\in\Omega_{y,x}^{\infty}\times\Omega_{d}\times\Omega_{\varepsilon}^{\infty}$ only, i.e. of the coordinate that takes on values in the set $\Omega_{d}$ (instead of a random sequence $\{D_{i}\}_{i=1}^{\infty}$ one could also consider a stochastic process $\{D_{t}: t\in T\}$ with an arbitrary index set $T$ but this will not be of interest in the present paper). For example, in the case of a Poisson sampling design with a given vector of first order sample inclusion probabilities $\mathbf{\underline{\pi}}_{N}$ we could define $\{D_{i}\}_{i=1}^{\infty}$ as a sequence of i.i.d. uniform-$[0,1]$ random variables and define for each value of $N$ the corresponding row of sample inclusion indicators by
\[S_{i,N}:=\begin{cases}
1&\text{if }D_{i}\leq \pi_{i,N}\\
0 & \text{ otherwise}
\end{cases}
\quad\quad i=1,2,\dots, N\]
Of course, the above probability space does not only work for Poisson sampling designs, but it can accommodate any non-informative sampling design. To prove this assertion it will be shown that \textit{for any non-informative sampling design the vector of sample inclusion indicators $\mathbf{S}_{N}$ can be defined as a function of $\mathbf{X}_{N}$ and of a single uniform-$[0,1]$ random variable $D$ that depends on the central coordinate of the sample points $\omega\in\Omega_{y,x}^{\infty}\times\Omega_{d}\times\Omega_{\varepsilon}^{\infty}$ only}. To this aim let
\begin{equation}\label{sample_selection_probability}
\mathfrak{p}_{N}(\mathbf{s}_{N}):=\mathfrak{p}_{N}(\mathbf{s}_{N}; \mathbf{X}_{N})
\end{equation}
denote the probability to select a given sample $\mathbf{s}_{N}\in\{0,1\}^{N}$. Note that the definition of the function $\mathfrak{p}_{N}$ specifies a desired sampling design. Since the values taken on by the auxiliary variable $X$ are assumed to be already known before the sample is drawn, the sample selection probabilities $\mathfrak{p}_{N}(\mathbf{s}_{N})$ are allowed to depend on $\mathbf{X}_{N}$. Now, let $\mathbf{s}_{N}^{(1)}$, $\mathbf{s}_{N}^{(2)}$, \dots, $\mathbf{s}_{N}^{(2^{N})}$ denote the $2^{N}$ elements of $\{0,1\}^{N}$ arranged in some fixed order (for example, according to the order determined by the binary expansion corresponding to the finite sequence of zeros and ones in $\mathbf{s}_{N}$), and put $\mathfrak{p}_{N}^{(i)}:=\mathfrak{p}_{N}(\mathbf{s}_{N}^{(i)})$, $i=1,2,\dots, 2^{N}$. Then, define the vector of sample inclusion indicators $\mathbf{S}_{N}$ by
\[\mathbf{S}_{N}:=\begin{cases}
\mathbf{s}_{N}^{(1)}\quad\text{ if }D\leq \mathfrak{p}_{N}^{(1)}\\
\mathbf{s}_{N}^{(i)}\quad\text{ if }\sum_{j=1}^{i-1}\mathfrak{p}_{N}^{(j)}<D\leq\sum_{j=1}^{i}\mathfrak{p}_{N}^{(j)}\text{ for } i=2,3,\dots, 2^{N},
\end{cases}\]
and note that for every $\mathbf{s}_{N}\in\{0,1\}^{N}$ this vector satisfies $P_{d}\{\mathbf{S}_{N}=\mathbf{s}_{N}\}=\mathfrak{p}_{N}(\mathbf{s}_{N})$ as desired. This concludes the proof of the above assertion written in italics. 

Next, observe that in the above construction the sample selection probabilities $P_{d}\{\mathbf{S}_{N}=\mathbf{s}_{N}\}$ are functions of $\mathbf{X}_{N}$. If for a given $\mathbf{s}_{N}\in\{0,1\}^{N}$ the corresponding sample selection probability $P_{d}\{\mathbf{S}_{N}=\mathbf{s}_{N}\}$ is a measurable function of $\mathbf{X}_{N}\in\mathcal{X}^{N}$ (this depends on the sampling design), then, with reference to the probability space of this paper, $P_{d}\{\mathbf{S}_{N}=\mathbf{s}_{N}\}$ can be interpreted as a conditional probability in the proper sense. Otherwise, $P_{d}\{\mathbf{S}_{N}=\mathbf{s}_{N}\}$ will just be a non measurable (random) function of $\mathbf{X}_{N}$. More generally, the expectation with respect to the uniform random variable $D$, with $\mathbf{Y}_{N=\infty}$, $\mathbf{X}_{N=\infty}$ and $\varepsilon_{1}$, $\varepsilon_{2}$, \dots kept fixed, which can be interpreted as design expectation and will therefore be denoted by $E_{d}$, can be applied to \textit{any} function $g$ of $\mathbf{S}_{N}$, $\mathbf{Y}_{N}$, $\mathbf{X}_{N}$ and $\varepsilon_{1}$, $\varepsilon_{2}$, \dots, $\varepsilon_{N}$. In fact, the expectation
\[E_{d}g(\mathbf{S}_{N}, \mathbf{Y}_{N},\mathbf{X}_{N},\varepsilon_{1}, \varepsilon_{2},\dots, \varepsilon_{N})\]
is given by 
\[\sum_{\mathbf{s}_{N}\in\{0,1\}^{N}}g(\mathbf{s}_{N}, \mathbf{Y}_{N},\mathbf{X}_{N},\varepsilon_{1}, \varepsilon_{2},\dots, \varepsilon_{N})P_{d}\{\mathbf{S}_{N}=\mathbf{s}_{N}\},\]
and $E_{d}g(\mathbf{S}_{N}, \mathbf{Y}_{N},\mathbf{X}_{N},\varepsilon_{1}, \varepsilon_{2},\dots, \varepsilon_{N})$ is thus a function of $\mathbf{Y}_{N}$, $\mathbf{X}_{N}$ and $\varepsilon_{1}$, $\varepsilon_{2}$, \dots, $\varepsilon_{N}$. If for every fixed $\mathbf{s}_{N}\in\{0,1\}^{N}$ the corresponding function $g(\mathbf{s}_{N},\cdot)$ is a measurable function of $\mathbf{Y}_{N}$, $\mathbf{X}_{N}$ and $\varepsilon_{1}$, $\varepsilon_{2}$, \dots, $\varepsilon_{N}$ and the function $P_{d}\{\mathbf{S}_{N}=\mathbf{s}_{N}\}$ is a measurable function of $\mathbf{X}_{N}$, then, with respect  to the probability space of this paper, $E_{d}g(\mathbf{S}_{N}, \mathbf{Y}_{N},\mathbf{X}_{N},\varepsilon_{1}, \varepsilon_{2},\dots, \varepsilon_{N})$ can be interpreted as a conditional expectation in the proper sense (and in this case it will obviously be a measurable function of $\mathbf{Y}_{N}$, $\mathbf{X}_{N}$ and $\varepsilon_{1}$, $\varepsilon_{2}$, \dots, $\varepsilon_{N}$), while otherwise it could either be a measurable or a non measurable function of $\mathbf{Y}_{N}$, $\mathbf{X}_{N}$ and $\varepsilon_{1}$, $\varepsilon_{2}$, \dots, $\varepsilon_{N}$.

Throughout this paper it will be assumed that all the vectors of sample inclusion indicators $\mathbf{S}_{N}$ are defined as described in the above construction (the one which involves a single uniform-$[0,1]$ random variable $D$). Of course, in this way the random vectors $\mathbf{S}_{N}$ will be dependent for different values of $N$, but for the purposes of this paper this dependence structure is irrelevant. Moreover, in what follows only \textit{measurable sample designs} will be considered, i.e. sample designs such that for every fixed $\mathbf{s}_{N}\in\{0,1\}^{N}$ the corresponding sample selection probability in (\ref{sample_selection_probability}) is a measurable function of $\mathbf{X}_{N}$. Note that this is a very mild restriction that should be satisfied in virtually every practical setting. However, it entails three important consequences which will be relevant for the proofs presented in this paper. They are: (i) the vectors of sample inclusion indicators $\mathbf{S}_{N}$ are measurable functions of $\mathbf{X}_{N}$ and of the uniform-$[0,1]$ random variable $D$, (ii) for every $\mathbf{s}_{N}\in\{0,1\}^{N}$ the corresponding probability $P_{d}\{\mathbf{S}_{N}=\mathbf{s}_{N}\}$ is a conditional probability in the proper sense, and (iii) for $g$ a measurable function of $\mathbf{S}_{N}$, $\mathbf{Y}_{N}$, $\mathbf{X}_{N}$ and $\varepsilon_{1}$, $\varepsilon_{2}$, \dots, $\varepsilon_{N}$ the corresponding expectation $E_{d}g(\mathbf{S}_{N}, \mathbf{Y}_{N}, \mathbf{X}_{N}, \varepsilon_{1}, \varepsilon_{2}, \dots, \varepsilon_{N})$ is a conditional expectation in the proper sense. 

\smallskip

Finally, for the proofs presented in this paper it will be convenient to introduce a special kind of measurable cover function that could differ from the traditional one for functions that depend on the sample inclusion indicators. Recall that, according to the traditional definition (see for example Section 1.2 in \cite{vdVW}), a measurable cover of a real-valued function $T$ defined on some generic probability space $(\Omega,\mathcal{A},P)$ is a measurable function $T^{*}:\Omega\mapsto\mathbb{R}\cup\{-\infty,\infty\}$ such that (i) $T^{*}\geq T$, and (ii) $T^{*}\leq U$ $P$-almost surely, for every measurable $U:\Omega\mapsto\mathbb{R}\cup\{-\infty,\infty\}$ with $U\geq T$ $P$-almost surely. The existence of measurable cover functions follows e.g. from Lemma 1.2.1 on page 6 in \cite{vdVW}. Note that the very definition of measurable cover implies that it is unique only up to $P$-null sets. Moreover, since the definition of measurable cover function depends on the underlying probability measure $P$, it should actually be called measurable $P$-cover function. Now, consider the probability space of this paper and a function $T:\Omega_{y,x}^{\infty}\times\Omega_{d}\times\Omega_{\varepsilon}^{\infty}\mapsto\mathbb{R}$ that depends on $\omega\in\Omega_{y,x}^{\infty}\times\Omega_{d}\times\Omega_{\varepsilon}^{\infty}$ only through the vector of random elements 
\[\left(\mathbf{Y}_{N},\mathbf{X}_{N}, \mathbf{S}_{N}, \varepsilon_{1},\varepsilon_{2},\dots,\varepsilon_{N}\right).\]
Let 
\[\phi:\Omega_{y,x}^{\infty}\times\Omega_{d}\times\Omega_{\varepsilon}^{\infty}\mapsto\Omega_{y,x}^{N}\times\{0,1\}^{N}\times\Omega_{\varepsilon}^{N}\]
be the map that transforms the sample points $\omega\in\Omega_{y,x}^{\infty}\times\Omega_{d}\times\Omega_{\varepsilon}^{\infty}$ into the above vector that is relevant for the computation of $T$, and note that $T$ can always be written as $T=h\circ\phi$ for some $h:\Omega_{y,x}^{N}\times\{0,1\}^{N}\times\Omega_{\varepsilon}^{N}\mapsto\mathbb{R}$. Then, with $h^{*}$ a measurable $(P_{y,x}^{\infty}\times P_{d}\times P_{\varepsilon}^{\infty})\circ\phi^{-1}$-cover of $h$, define a majorant of $T$ by $T^{**}:=h^{*}\circ\phi$. Then, (i) $T^{**}:\Omega_{y,x}^{\infty}\times\Omega_{d}\times\Omega_{\varepsilon}^{\infty}\mapsto\mathbb{R}\cup\{\infty,-\infty\}$ is measurable because $\phi$ and $h^{*}$ are both measurable, (ii) $T^{**}\geq T^{*}$ almost surely because $h^{*}\circ\phi\geq(h\circ\phi)^{*}$, and (iii) $T^{**}$ depends on $\omega\in\Omega_{y,x}^{\infty}\times\Omega_{d}\times\Omega_{\varepsilon}^{\infty}$ only through the functions that are relevant for the computation of $\phi$. Moreover, (iv) \textit{if $T:=h\circ\phi$ does not depend on $\mathbf{S}_{N}$, then $T^{**}=T^{*}$ almost surely}. To prove this claim it will be enough to show that $T^{**}\leq T^{*}$ almost surely. To this aim note first that if a function $T:=h\circ\phi$ does not depend on $\mathbf{S}_{N}$, then $T$ can be written as $T:=h'\circ\phi'$ for some $h':\Omega_{y,x}^{N}\times\Omega_{\varepsilon}^{N}\mapsto\mathbb{R}$, where $\phi'$ maps the sample point $\omega\in\Omega_{y,x}^{\infty}\times\Omega_{d}\times\Omega_{\varepsilon}^{\infty}$ to the vector 
\[\left(\mathbf{Y}_{N},\mathbf{X}_{N}, \varepsilon_{1},\varepsilon_{2},\dots,\varepsilon_{N}\right).\]
Since $\phi'$ is a coordinate projection on a product probability space with product measure, $T^{*}=(h'\circ\phi')^{*}=h'^{*}\circ\phi'$ for $h'^{*}$ a measurable $(P_{y,x}^{\infty}\times P_{d}\times P_{\varepsilon}^{\infty})\circ\phi'^{-1}$-cover of $h'$ (see Lemma 1.2.5 on page 10 in \cite{vdVW}). On the other hand, $T$ can also be written as $T=h'\circ\phi''\circ\phi$ with $\phi'':\Omega_{y,x}^{N}\times\{0,1\}^{N}\times\Omega_{\varepsilon}^{N}\mapsto\Omega_{y,x}^{N}\times\Omega_{\varepsilon}^{N}$ defined in obvious way and with the same function $h'$ as before. Thus,
\[T^{**}:=(h'\circ\phi'')^{*}\circ\phi\leq h'^{*}\circ\phi''\circ\phi=h'^{*}\circ\phi'=T^{*}\quad\text{a.s.},\]
where the second-last equality follows from $\phi'=\phi''\circ\phi$.

\section{Empirical process theory for Poisson sample designs}\label{empirical_process_theory_POISSON}

Now, assume that $\{\mathbf{S}_{N}\}_{N=1}^{\infty}$ is a sequence of vectors of sample inclusion indicators corresponding to a sequence of measurable Poisson sampling designs, and let $P_{y}$ denote the marginal distribution common to the $\mathcal{Y}$-valued random variables $Y_{i}$, $i=1,2,\dots$. In this section it will be shown that under broad conditions the sequence of HTEPs corresponding to $\{\mathbf{S}_{N}\}_{N=1}^{\infty}$ and a $P_{y}$-Donsker class $\mathcal{F}$ with $\sup\{|P_{y}f|:f\in\mathcal{F}\}<\infty$ converges weakly in $l^{\infty}(\mathcal{F})$ to a Borel measurable and tight Gaussian limit process which will be denoted by $ \mathbb{G}'$. Then, opCWC and oasCWC will be shown as well. Moreover, as a corollary to unconditional weak convergence and opCWC it will also be shown that
\[(\mathbb{G}_{N}, \mathbb{G}_{N}')\rightsquigarrow(\mathbb{G}, \mathbb{G}')\text{ in }l^{\infty}(\mathcal{F})\times l^{\infty}(\mathcal{F}),\]
where
\begin{equation}\label{classical_empirical_process}
\mathbb{G}_{N}:=\frac{1}{\sqrt{N}}\sum_{i=1}^{N}(\delta_{Y_{i}}-P_{y})
\end{equation}
is the classical empirical process based on i.i.d. observations, and where $\mathbb{G}$ is a Borel measurable and tight (in $l^{\infty}(\mathcal{F})$) $P_{y}$-Brownian Bridge which is independent from $\mathbb{G}'$. 

This section will be divided in two subsections. In the first one it will be assumed that there is a positive lower bound on the first order sample inclusion probabilities. It is convenient to treat this case separately in order to make the proofs look more obvious. The second subsection will then consider the case where the first order sample inclusion probabilities are proportional to some size variable which might take on values arbitrarily close to zero.

\subsection{Weak convergence results for the case where the first order sample inclusion probabilities are bounded away from zero}\label{HTEP_lower_bound_pi}

In order to prove any of the above weak convergence results one must first establish sufficient conditions for convergence of the finite-dimensional marginal distributions. The following lemma will take care of this issue. Its conclusion says that for every finite-dimensional vector $\mathbf{f}:=(f_{1}, f_{2},\dots, f_{r})^{\intercal}\in\mathcal{F}^{r}$ and for every $\mathbf{t}\in\mathbb{R}^{r}$
\[E_{d}\exp(i\mathbf{t}^{\intercal}\mathbb{G}'_{N}\mathbf{f})\rightarrow \exp\left(-\frac{1}{2}\mathbf{t}^{\intercal}\Sigma'(\mathbf{f})\mathbf{t}\right)\]
in probability and, under the almost sure versions of its assumptions, almost surely as well. $\Sigma'(\mathbf{f})$ indicates a positive semidefinite covariance matrix that depends on the vector $\mathbf{f}$ and on the sequence of sampling designs, and $\mathbb{G}'_{N}\mathbf{f}:=(\mathbb{G}'_{N}f_{1}, \mathbb{G}'_{N}f_{2}, \dots, \mathbb{G}'_{N}f_{r})^{\intercal}$. Note that in the statement of the lemma it will be assumed that the sampling designs are measurable so that the conditional characteristic functions are measurable as well.

\begin{lem}[Convergence of marginal distributions]\label{lemma_marginal_convergence_expectation}
Let $\{\mathbf{S}_{N}\}_{N=1}^{\infty}$ be the sequence of vectors of sample inclusion indicators corresponding to a sequence of measurable Poisson sampling designs and let $\{\underline{\mathbf{\pi}}_{N}\}_{N=1}^{\infty}$ be the corresponding sequence of first order sample inclusion probabilities. Let $\mathcal{F}$ be a class of measurable functions $f:\mathcal{Y}\mapsto\mathbb{R}$ and let $\{\mathbb{G}'_{N}\}_{N=1}^{\infty}$ be the sequence of HTEPs corresponding to $\mathcal{F}$ and $\{\mathbf{S}_{N}\}_{N=1}^{\infty}$. Assume that
\begin{itemize}
\item[A1) ] there exists a function $\Sigma':\mathcal{F}^{2}\mapsto\mathbb{R}$ such that
\[\Sigma_{N}'(f,g):=E_{d}\mathbb{G}'_{N}f\mathbb{G}'_{N}g\overset{P(as)}{\rightarrow}\Sigma'(f,g)\quad\text{ for every }f,g\in\mathcal{F};\]
\item[A2) ] for every finite-dimensional vector $\mathbf{f}:=(f_{1}, f_{2},\dots, f_{r})^{\intercal}\in\mathcal{F}^{r}$
\[\frac{1}{N}\sum_{i=1}^{N}\frac{1-\pi_{i,N}}{\pi_{i,N}}\lVert\mathbf{f}(Y_{i})\rVert^{2} I(\lVert\mathbf{f}(Y_{i})\rVert>\pi_{i,N}\sqrt{N}\epsilon)\overset{P(as)}{\rightarrow}0\text{ for every }\epsilon>0,\]
where $\lVert\mathbf{f}(Y_{i})\rVert$ is the euclidean norm of $\mathbf{f}(Y_{i}):=(f_{1}(Y_{i}), f_{2}(Y_{i}), \dots, f_{r}(Y_{i}))^{\intercal}$.
\end{itemize}
Then it follows that $\Sigma'$ is a positive semidefinite covariance function, and for every finite-dimensional $\mathbf{f}\in\mathcal{F}^{r}$ and for every $\mathbf{t}\in\mathbb{R}^{r}$
\[E_{d}\exp(i\mathbf{t}^{\intercal}\mathbb{G}'_{N}\mathbf{f})\overset{P(as)}{\rightarrow} \exp\left(-\frac{1}{2}\mathbf{t}^{\intercal}\Sigma'(\mathbf{f})\mathbf{t}\right),\]
where $\Sigma'(\mathbf{f})$ is the covariance matrix whose elements are given by $\Sigma'_{(ij)}(\mathbf{f}):=\Sigma'(f_{i},f_{j})$. 
\end{lem}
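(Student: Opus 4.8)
The plan is to fix a finite-dimensional vector $\mathbf{f}=(f_{1},\dots,f_{r})^{\intercal}\in\mathcal{F}^{r}$ and a vector $\mathbf{t}\in\mathbb{R}^{r}$, and to reduce the claim to a one-dimensional conditional central limit theorem for the scalar $\mathbf{t}^{\intercal}\mathbb{G}'_{N}\mathbf{f}$, since the quantity to be controlled is just the value at argument $1$ of its conditional characteristic function. Writing
\[
\mathbf{t}^{\intercal}\mathbb{G}'_{N}\mathbf{f}=\sum_{i=1}^{N}W_{i,N},\qquad W_{i,N}:=\frac{1}{\sqrt{N}}\left(\frac{S_{i,N}}{\pi_{i,N}}-1\right)\mathbf{t}^{\intercal}\mathbf{f}(Y_{i}),
\]
the key structural fact is that under a Poisson design the indicators $S_{1,N},\dots,S_{N,N}$ are conditionally independent given $\mathbf{X}_{N}$; combined with the conditional independence of $\mathbf{S}_{N}$ and $\mathbf{Y}_{N}$ given $\mathbf{X}_{N}$, this makes the summands $W_{1,N},\dots,W_{N,N}$ independent under $E_{d}$, whence the conditional characteristic function factorizes,
\[
E_{d}\exp\bigl(i\mathbf{t}^{\intercal}\mathbb{G}'_{N}\mathbf{f}\bigr)=\prod_{i=1}^{N}E_{d}\exp\bigl(iW_{i,N}\bigr).
\]
The task thus becomes the verification of a Lindeberg--Feller condition for the triangular array $\{W_{i,N}\}$.

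First I would record the conditional moments. As $S_{i,N}$ is Bernoulli$(\pi_{i,N})$ under $P_{d}$, each $W_{i,N}$ satisfies $E_{d}W_{i,N}=0$ and $\operatorname{Var}_{d}(W_{i,N})=\tfrac{1}{N}\tfrac{1-\pi_{i,N}}{\pi_{i,N}}(\mathbf{t}^{\intercal}\mathbf{f}(Y_{i}))^{2}$, so that $\sum_{i}\operatorname{Var}_{d}(W_{i,N})=\mathbf{t}^{\intercal}\Sigma'_{N}(\mathbf{f})\mathbf{t}$, which by A1 converges (in probability, resp.\ a.s.) to $\mathbf{t}^{\intercal}\Sigma'(\mathbf{f})\mathbf{t}$. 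Positive semidefiniteness of $\Sigma'$ is then immediate: each $\Sigma'_{N}(\mathbf{f})$ is a genuine conditional covariance matrix, hence symmetric with $\mathbf{t}^{\intercal}\Sigma'_{N}(\mathbf{f})\mathbf{t}\ge 0$, and passing to the deterministic limit preserves symmetry and the inequality.

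Next I would verify the Lindeberg condition. Evaluating the relevant expectation on the two atoms of $S_{i,N}$ and using the elementary bounds $(1-\pi_{i,N})^{2}\le 1-\pi_{i,N}$, $\;1-\pi_{i,N}\le\tfrac{1-\pi_{i,N}}{\pi_{i,N}}$, $\;|S_{i,N}/\pi_{i,N}-1|\le 1/\pi_{i,N}$, together with the Cauchy--Schwarz inequality $|\mathbf{t}^{\intercal}\mathbf{f}(Y_{i})|\le\lVert\mathbf{t}\rVert\,\lVert\mathbf{f}(Y_{i})\rVert$, one obtains for every $\eta>0$
\[
\sum_{i=1}^{N}E_{d}\bigl[W_{i,N}^{2}\,I(|W_{i,N}|>\eta)\bigr]\le\frac{2\lVert\mathbf{t}\rVert^{2}}{N}\sum_{i=1}^{N}\frac{1-\pi_{i,N}}{\pi_{i,N}}\lVert\mathbf{f}(Y_{i})\rVert^{2}\,I\Bigl(\lVert\mathbf{f}(Y_{i})\rVert>\pi_{i,N}\sqrt{N}\,\eta/\lVert\mathbf{t}\rVert\Bigr),
\]
which is exactly $2\lVert\mathbf{t}\rVert^{2}$ times the expression in A2 with $\epsilon=\eta/\lVert\mathbf{t}\rVert$. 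Hence the conditional Lindeberg sum tends to $0$ in probability (resp.\ a.s.) for each fixed $\eta$.

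Finally I would combine these facts through the classical Lindeberg--Feller estimate, which bounds $\bigl|\prod_{i}E_{d}e^{iW_{i,N}}-\exp(-\tfrac12\sum_{i}\operatorname{Var}_{d}W_{i,N})\bigr|$ by a deterministic function of the Lindeberg sum (and of $\sum_{i}\operatorname{Var}_{d}W_{i,N}$) that vanishes whenever the former does. The main obstacle is precisely that A1 and A2 hold only in outer probability, so the array conditions are themselves random and the deterministic Lindeberg--Feller theorem cannot be applied verbatim. I would handle this in the standard way: to prove convergence in probability it suffices that every subsequence admit a further subsequence along which the conclusion holds almost surely; along such a subsequence one extracts, by a diagonal argument over a countable dense set of thresholds $\eta$ (monotonicity of the Lindeberg sum in $\eta$ then covering all $\eta>0$) and over the finitely many entries of $\Sigma'_{N}(\mathbf{f})$, a set of full probability on which all array conditions hold, so that the deterministic estimate applies pointwise. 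The almost-sure version follows identically, working directly on the full-probability set furnished by the a.s.\ versions of A1 and A2.
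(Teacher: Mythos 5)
Your proposal is correct and follows essentially the same route as the paper's own proof: both reduce the claim to the conditional Lindeberg--Feller theorem for the triangular array $W_{i,N}=\tfrac{1}{\sqrt{N}}\bigl(\tfrac{S_{i,N}}{\pi_{i,N}}-1\bigr)\mathbf{t}^{\intercal}\mathbf{f}(Y_{i})$ (using conditional independence under the Poisson design), obtain positive semidefiniteness of $\Sigma'$ as a pointwise limit of covariance matrices, and verify the conditional Lindeberg condition via exactly the same two-atom bound that dominates it by the quantity in assumption A2. The only differences are cosmetic: the paper treats the degenerate case $\mathbf{t}^{\intercal}\Sigma'(\mathbf{f})\mathbf{t}=0$ separately and works with the $q_{N}$-normalized Lindeberg sum, whereas you use a fixed threshold (which covers both cases at once) and spell out the subsequence/diagonal argument for passing from the in-probability hypotheses to the conclusion, a step the paper leaves implicit.
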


\begin{proof}
The claim that $\Sigma'$ is a positive semidefinite covariance function follows immediately from the fact that $\Sigma'$ is the pointwise probability limit of the sequence of covariance functions $\{\Sigma'_{N}\}_{N=1}^{\infty}$.

Now, consider the part of the conclusion concerning the sequence of conditional characteristic functions. If $\mathbf{t}\in\mathbb{R}^{r}$ and $\mathbf{f}\in\mathcal{F}^{r}$ are such that $\mathbf{t}^{\intercal}\Sigma'(\mathbf{f})\mathbf{t}=0$, then $E_{d}|\mathbf{t}^{\intercal}\mathbb{G}'_{N}\mathbf{f}|^{2}\overset{P(as)}{\rightarrow} 0$ and the convergence result about the sequence of conditional characteristic functions is obvious in this case. So, assume that $\mathbf{t}^{\intercal}\Sigma'(\mathbf{f})\mathbf{t}>0$, and note that in this case the convergence result about the sequence of conditional characteristic functions will certainly be satisfied if a suitable probability limit version (almost sure version) of the Lindeberg condition holds. To provide an explicit expression for the latter, it will be convenient to define
\[Z_{i,N}:=\left(\frac{S_{i,N}}{\pi_{i,N}}-1\right)\mathbf{t}^{\intercal}\mathbf{f}(Y_{i}),\quad i=1,2,\dots, N,\]
and
\[q_{N}^{2}:=\sum_{i=1}^{N}E_{d}Z_{i,N}^{2}=N \mathbf{t}^{\intercal}\Sigma_{N}'(\mathbf{f})\mathbf{t},\quad N=1,2,\dots,\]
where $\Sigma_{N}'(\mathbf{f})$ is the covariance matrix whose elements are given by $\Sigma'_{N(ij)}(\mathbf{f}):=\Sigma'_{N}(f_{i},f_{j})$. Then, 
\[\mathbf{t}^{\intercal}\mathbb{G}'_{N}\mathbf{f}=\mathbb{G}'_{N}\mathbf{t}^{\intercal}\mathbf{f}=\frac{1}{\sqrt{N}}\sum_{i=1}^{N}Z_{i,N},\]
and the probability limit version (almost sure version) of the Lindeberg condition can be written as
\begin{equation}\label{P_Lindeberg}
\frac{1}{q_{N}^{2}}\sum_{i=1}^{N}E_{d}Z_{i,N}^{2}I(|Z_{i,N}|>\epsilon q_{N})\overset{P(as)}{\rightarrow} 0\quad\text{ for every }\epsilon>0.
\end{equation}
In order to prove this condition, note that
\begin{equation*}
\begin{split}
E_{d}&Z_{i,N}^{2}I(|Z_{i,N}|>\epsilon q_{N})=\\
&=\frac{(1-\pi_{i,N})^{2}}{\pi_{i,N}}\left(\mathbf{t}^{\intercal}\mathbf{f}(Y_{i})\right)^{2}I\left(\frac{1-\pi_{i,N}}{\pi_{i,N}}|\mathbf{t}^{\intercal}\mathbf{f}(Y_{i})|>\epsilon q_{N}\right)+\\
&\quad+(1-\pi_{i,N})\left(\mathbf{t}^{\intercal}\mathbf{f}(Y_{i})\right)^{2} I\left(|\mathbf{t}^{\intercal}\mathbf{f}(Y_{i})|>\epsilon q_{N}\right)\\
&\leq2\frac{1-\pi_{i,N}}{\pi_{i,N}}\lVert\mathbf{t}\rVert^{2} \lVert\mathbf{f}(Y_{i})\rVert^{2}I(\lVert\mathbf{t}\rVert \lVert\mathbf{f}(Y_{i})\rVert>\pi_{i,N}\epsilon q_{N}),
\end{split}
\end{equation*}
and that, for small enough $\eta>0$,
\[q_{N}^{2}=N \mathbf{t}^{\intercal}\Sigma_{N}'(\mathbf{f})\mathbf{t}\geq N (\mathbf{t}^{\intercal}\Sigma'(\mathbf{f})\mathbf{t}-\eta):=N C_{\eta}^{2}\rightarrow \infty\]
with probability tending to $1$ (eventually almost surely). The left side of (\ref{P_Lindeberg}) is therefore bounded by
\[\frac{2}{NC_{\eta}}\sum_{i=1}^{N}\frac{1-\pi_{i,N}}{\pi_{i,N}}\lVert\mathbf{t}\rVert^{2} \lVert\mathbf{f}(Y_{i})\rVert^{2}I(\lVert\mathbf{t}\rVert \lVert\mathbf{f}(Y_{i})\rVert>\pi_{i,N}\epsilon \sqrt{N C_{\eta}})\]
with probability tending to $1$ (eventually almost surely), and the random variable in the last display goes to zero in probability (almost surely) by assumption A2.
\end{proof}

\begin{rem}
If $\mathcal{F}$ is a $P_{y}$-Donsker class, then condition A2 will certainly be satisfied if
\begin{itemize}
\item[A2$^{*}$) ] there exists a constant $L>0$ such that 
\[\min_{1\leq i\leq N}\pi_{i,N}:=\min_{1\leq i\leq N}E_{d}S_{i,N}\geq L\]
with probability tending to $1$ (eventually almost surely).
\end{itemize}
\end{rem}

Of course, the conclusion of Lemma \ref{lemma_marginal_convergence_expectation} could also be stated by adapting the definitions of the conditional weak convergence concepts given in the previous section. This will be done in the following corollary. In its statement $G\restriction\mathcal{G}$ will denote the restriction of a generic $\mathcal{F}$-indexed stochastic process $\{Gf:f\in\mathcal{F}\}$ to a subset $\mathcal{G}\subseteq\mathcal{F}$. Note that for a \textit{finite} index set $\mathcal{G}$ the stochastic process $G\restriction\mathcal{G}$ can always be interpreted as a Borel measurable random element of $l^{\infty}(\mathcal{G})$.

\begin{cor}\label{cor_marginal_convergence}
Under the assumptions of Lemma \ref{lemma_marginal_convergence_expectation} it follows that for every finite subset $\mathcal{G}$ of $\mathcal{F}$ the random variable  
\[\sup_{g\in BL_{1}(l^{\infty}(\mathcal{G}))}\left|E_{d}g(\mathbb{G}_{N}'\restriction\mathcal{G})-Eg(\mathbb{G}'\restriction\mathcal{G})\right|\]
is measurable. Moreover, the conclusion of Lemma \ref{lemma_marginal_convergence_expectation} is equivalent to
\[\sup_{g\in BL_{1}(l^{\infty}(\mathcal{G}))}\left|E_{d}g(\mathbb{G}_{N}'\restriction\mathcal{G})-Eg(\mathbb{G}'\restriction\mathcal{G})\right|\overset{P(as)}{\rightarrow} 0\quad\text{ for every finite }\mathcal{G}\subseteq\mathcal{F},\]
where $BL_{1}(l^{\infty}(\mathcal{G}))$ is the set of all functions $h:l^{\infty}(\mathcal{G})\mapsto[0,1]$ such that $|h(z_{1}-h(z_{2})|\leq \lVert z_{1}-z_{2}\rVert_{\mathcal{G}}$ for every $z_{1},z_{2}\in l^{\infty}(\mathcal{G})$, and where $\mathbb{G}'$ is an $\mathcal{F}$-indexed zero mean Gaussian process with covariance function $\Sigma'$. 
\end{cor}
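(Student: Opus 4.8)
The plan is to identify, for a finite $\mathcal{G}=\{f_1,\dots,f_r\}$ enumerated as $\mathbf{f}$, the restricted process $\mathbb{G}_N'\restriction\mathcal{G}$ with the random vector $\mathbb{G}_N'\mathbf{f}\in\mathbb{R}^r$ and $\mathbb{G}'\restriction\mathcal{G}$ with the centered Gaussian vector whose characteristic function is $\mathbf{t}\mapsto\exp(-\tfrac12\mathbf{t}^\intercal\Sigma'(\mathbf{f})\mathbf{t})$, i.e. the law $\Phi:=N(0,\Sigma'(\mathbf{f}))$ on $\mathbb{R}^r$. Writing $L_N$ for the (random) conditional law of $\mathbb{G}_N'\mathbf{f}$ under $E_d$, the quantity in the statement is the bounded-Lipschitz distance $d_{BL}(L_N,\Phi):=\sup_{g\in BL_1}|E_d g(\mathbb{G}_N'\mathbf{f})-Eg(\mathbb{G}'\mathbf{f})|$, and since $\mathcal{G}$ is finite the norm $\|\cdot\|_{\mathcal{G}}$ is just a norm on $\mathbb{R}^r$, so all the relevant topological facts are those of weak convergence on $\mathbb{R}^r$.

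For the measurability claim (for each fixed $N$) I would first reduce the supremum over the uncountable set $BL_1(l^\infty(\mathcal{G}))$ to a supremum over a fixed countable family $H\subseteq BL_1$. Restricting the $1$-Lipschitz, $[0,1]$-valued functions to each closed ball $B_M\subset\mathbb{R}^r$ gives, by Arzel\`a--Ascoli, a relatively compact (hence separable) subset of $C(B_M)$; diagonalizing over $M$ produces a countable $H$ that is dense in $BL_1$ for uniform convergence on compacta. Because $\Phi$ and every realization of $L_N$ are tight Borel measures on $\mathbb{R}^r$, splitting the integrals over $B_M$ and its complement (on which every $g\in BL_1$ is bounded by $1$) shows $\sup_{g\in BL_1}|L_N g-\Phi g|=\sup_{h\in H}|L_N h-\Phi h|$ for each sample point. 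Each term $E_d h(\mathbb{G}_N'\mathbf{f})$ is the design expectation of a bounded measurable function of $(\mathbf{S}_N,\mathbf{Y}_N,\mathbf{X}_N)$ and is therefore, by consequence (iii) of the measurable-design assumption, a proper conditional expectation, hence measurable; as a countable supremum of measurable functions the displayed random variable is measurable.

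For the equivalence I would argue in two directions. The implication from $d_{BL}(L_N,\Phi)\overset{P(as)}{\rightarrow}0$ to the conclusion of Lemma \ref{lemma_marginal_convergence_expectation} is immediate: for fixed $\mathbf{t}$ the functions $x\mapsto\tfrac12(1+c^{-1}\cos(\mathbf{t}^\intercal x))$ and $x\mapsto\tfrac12(1+c^{-1}\sin(\mathbf{t}^\intercal x))$ with $c:=\max(1,\|\mathbf{t}\|)$ lie in $BL_1$, so $d_{BL}(L_N,\Phi)$ dominates a constant multiple of $|E_d\exp(i\mathbf{t}^\intercal\mathbb{G}_N'\mathbf{f})-\exp(-\tfrac12\mathbf{t}^\intercal\Sigma'(\mathbf{f})\mathbf{t})|$, whence the characteristic functions converge. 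For the converse I would use that assumption A1 forces $E_d\|\mathbb{G}_N'\mathbf{f}\|^2=\sum_{j=1}^r\Sigma_N'(f_j,f_j)$ to converge in probability (a.s.) to a finite limit, so that the random laws $\{L_N\}$ are tight on the relevant probability-one event. Working on the a.s.\ event on which the characteristic functions converge at every rational $\mathbf{t}$ and the second moments converge (a countable intersection of probability-one events in the a.s.\ case), tightness lets me extract weak subsequential limits $\mu$ of $L_N$; each such $\mu$ has characteristic function agreeing with $\exp(-\tfrac12\mathbf{t}^\intercal\Sigma'(\mathbf{f})\mathbf{t})$ on a dense set, hence everywhere by continuity, so $\mu=\Phi$ by uniqueness of characteristic functions. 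Thus $L_N\Rightarrow\Phi$ and, $d_{BL}$ metrizing weak convergence on $\mathbb{R}^r$ (see \cite{vdVW}), $d_{BL}(L_N,\Phi)\to0$. The in-probability version then follows from the subsequence principle: from any subsequence extract a further one along which the characteristic functions at all rationals and the second moments converge a.s., and apply the argument just given.

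The main obstacle I anticipate is the converse implication in the in-probability mode, where one only has convergence of each conditional characteristic function in probability rather than pointwise in $\omega$; the careful point is to reduce to a countable set of frequencies, intersect the resulting probability-one events, and invoke tightness (from A1) together with uniqueness of characteristic functions through the subsequence principle, rather than trying to apply L\'evy's continuity theorem directly to the random measures $L_N$. The measurability reduction to a countable $H$, though more routine, likewise relies on the tightness of both measures to justify the truncation.
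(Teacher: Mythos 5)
Your proposal is correct, and its skeleton matches the paper's: both identify $l^{\infty}(\mathcal{G})$ with $\mathbb{R}^{r}$ (the paper via an explicit isometry $A$ onto $\mathbb{R}^{r}$ with the maximum norm, you by remarking that $\lVert\cdot\rVert_{\mathcal{G}}$ is just that norm), and both obtain measurability by replacing $BL_{1}$ with a countable subfamily that is dense for uniform convergence on compacta, each term $E_{d}h(\mathbb{G}_{N}'\mathbf{f})$ being measurable by the measurable-design assumption. The genuine difference lies in how the equivalence is established. The paper settles it by citation: the extended Portmanteau theorem (Example 1.3.5 in \cite{vdVW}, or L\'evy's continuity theorem) converts convergence of the conditional characteristic functions into convergence of $E_{d}g(\mathbb{G}_{N}'\mathbf{f})$ for every bounded continuous $g$, and the comments on page 73 of \cite{vdVW} convert that into convergence of the $BL_{1}(\mathbb{R}^{r})$ supremum. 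You instead prove the equivalence from scratch: one direction with explicit trigonometric members of $BL_{1}$, and the converse by extracting tightness of the conditional laws from assumption A1 (via $E_{d}\lVert\mathbb{G}_{N}'\mathbf{f}\rVert^{2}=\sum_{j}\Sigma_{N}'(f_{j},f_{j})$), identifying every subsequential limit through agreement of characteristic functions on the rationals together with uniqueness, and invoking the metrization of weak convergence by the bounded-Lipschitz distance, with the in-probability mode reduced to the almost-sure one by the subsequence principle. Your route is longer but makes explicit the null-set bookkeeping that the conditional (random-measure) setting requires: Lemma \ref{lemma_marginal_convergence_expectation} gives convergence for each fixed $\mathbf{t}$ almost surely, and passing to a single exceptional null set is precisely your reduction to rational frequencies plus tightness, a point the paper leaves implicit in citations that are stated for deterministic sequences of laws. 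The paper's route buys brevity and never needs tightness as a separate ingredient, though under A1 that ingredient costs nothing; the only quibble with yours is cosmetic, namely that the Lipschitz constant of $x\mapsto\cos(\mathbf{t}^{\intercal}x)$ should be computed with respect to whichever norm on $\mathbb{R}^{r}$ you fixed, which only changes the constant $c$.
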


\begin{proof}
By the extension of the Portmanteau theorem in Example 1.3.5 on page 20 in \cite{vdVW} (or by L\'evy's continuity theorem) the conclusion of Lemma \ref{lemma_marginal_convergence_expectation} is equivalent to
\[E_{d}g(\mathbb{G}_{N}'\mathbf{f})\overset{P (as)}{\rightarrow} Eg(N_{r}(0,\Sigma'(\mathbf{f})))\]
for every continuous and bounded function $g:\mathbb{R}^{r}\mapsto\mathbb{R}$ and for every $\mathbf{f}\in\mathcal{F}^{r}$, where $N_{r}(0,\Sigma'(\mathbf{f}))$ denotes a random vector with $r$-dimensional centered normal distribution with covariance matrix $\Sigma'(\mathbf{f})$ (note that $E_{d}g(\mathbb{G}_{N}'\mathbf{f})$ is measurable because in Lemma \ref{lemma_marginal_convergence_expectation} it is assumed that the sampling designs are measurable). Obviously, $N_{r}(0,\Sigma'(\mathbf{f}))$ is a Borel measurable and tight random element of the metric space $\mathbb{R}^{r}$ endowed with the maximum metric. Thus, by the comments on page 73 in \cite{vdVW}, the condition in the previous display is in turn equivalent to 
\[\sup_{k\in BL_{1}(\mathbb{R}^{r})}\left|E_{d}k(\mathbb{G}_{N}'\mathbf{f})-Ek(N_{r}(0,\Sigma'(\mathbf{f})))\right|\overset{P (as)}{\rightarrow} 0,\]
where $BL_{1}(\mathbb{R}^{r})$ is the set of all functions $k:\mathbb{R}^{r}\mapsto[0,1]$ such that $|k(\mathbf{x}_{1})-k(\mathbf{x}_{2})|\leq \lVert \mathbf{x}_{1}-\mathbf{x}_{2}\rVert_{\infty}$ for every $\mathbf{x}_{1}, \mathbf{x}_{2}\in \mathbb{R}^{r}$ (here $\lVert\cdot\rVert_{\infty}$ denotes the maximum norm on $\mathbb{R}^{r}$). Since $BL_{1}(\mathbb{R}^{r})$ is separable with respect to the topology of uniform convergence on compact sets, the supremum on the left side of the previous display can be written as a supremum over a countable subset of $BL_{1}(\mathbb{R}^{r})$ which shows that it is measurable. 

To complete the proof of the corollary it is now sufficient to show that the supremum in the statement of the corollary and the supremum in the previous display are the same. To this aim, denote the elements of $\mathcal{G}$ by $f_{1}$, $f_{2}$, \dots, $f_{r}$, and consider the mapping $A:\mathbb{R}^{r}\mapsto l^{\infty}(\mathcal{G})$ that transforms the vectors $\mathbf{x}:=(x_{1}, x_{2},\dots, x_{r})^{\intercal}\in\mathbb{R}^{r}$ into the functions $z\in l^{\infty}(\mathcal{G})$ which are defined as $z(f_{i}):=x_{i}$, $i=1,2,\dots, r$. Then, $A$ is an isometry (recall that we are considering $\mathbb{R}^{r}$ endowed with the maximum norm), so that for every $g\in BL_{1}(l^{\infty}(\mathcal{G}))$ the function composition $g\circ A$ is an element of $BL_{1}(\mathbb{R}^{r})$. Since $A(\mathbb{G}_{N}'\mathbf{f})=\mathbb{G}_{N}'\restriction\mathcal{G}$, it follows that the supremum in the statement of the corollary cannot be larger than the supremum in the previous display. To obtain the opposite inequality, note that for every $k\in BL_{1}(\mathbb{R}^{r})$, the function composition $k\circ A^{-1}$ is an element of $BL_{1}(l^{\infty}(\mathcal{G}))$. 
\end{proof}

Now, for finite function classes $\mathcal{F}$ Lemma \ref{lemma_marginal_convergence_expectation} together with Corollary \ref{cor_marginal_convergence} would already establish all three the desired weak convergence results. However, to prove any of the three weak convergence results for infinite function classes $\mathcal{F}$ requires some additional work. To this aim, Lemma \ref{lemma_marginal_convergence_expectation} is very helpful because it shows that there is only one possible limit process to which the sequence of stochastic processes $\{\mathbb{G}_{N}'\}_{N=1}^{\infty}$ could possibly converge: an $\mathcal{F}$-indexed zero-mean Gaussian process with covariance function given by the function $\Sigma'$. However, for infinite function classes $\mathcal{F}$ convergence of the marginals is not enough to make sure that the sample paths of the limit process $\{\mathbb{G}'f:f\in\mathcal{F}\}$ are bounded and hence elements of $l^{\infty}(\mathcal{F})$. To prove this and the desired weak convergence results it must still be shown that the sequence of stochastic processes $\{\mathbb{G}_{N}'\}_{N=1}^{\infty}$ is (conditionally) asymptotically tight (see Theorem 1.5.4 on page 35 in \cite{vdVW}). By Theorem 1.5.7 on page 37 in \cite{vdVW} this can be done by showing that there exist a semimetric $\rho^{\bullet}$ on $\mathcal{F}$ with respect to which $\mathcal{F}$ is totally bounded and (conditionally) asymptotically equicontinuous. Since in the present paper the limit process $\{\mathbb{G}'f:f\in\mathcal{F}\}$ is supposed to be a zero-mean Gaussian process, (conditional) weak convergence in $l^{\infty}(\mathcal{F})$ can hold \textit{only if} total boundedness and (conditional) asymptotic equicontinuity hold with respect to the "$\mathbb{G}'$-intrinsic" semimetric
\begin{equation}\label{G'_intrinsic_seminorm}
\rho'(f,g):=\sqrt{\Sigma'(f-g,f-g)},\quad f,g\in\mathcal{F},
\end{equation}
(see Example 1.5.10 on pagg. 40-41 in \citep{vdVW}). But for proving the desired weak convergence results it might be more convenient to consider another semimetric in place of $\rho'$. In fact, replacing $\rho'$ with a weaker semimetric makes it easier to show total boundedness but makes it more difficult to establish asymptotic equicontinuity, while taking a stronger semimetric in place of $\rho'$ makes it harder to prove total boundedness and easier to show asymptotic equicontinuity. 

\smallskip

Now, consider first total boundedness. The following general lemma is very useful for showing total boundedness when dealing with Donsker classes. 

\begin{lem}[Total boundedness]\label{lemma_total_boundedness}
Let $Q$ be a probability measure on some measurable space, let $\mathcal{H}$ be a $Q$-Donsker class, and let $d'$ be a seminorm on $\mathcal{H}$. If 
\begin{itemize}
\item[TB) ] the expectation-centered $L_{2}(Q)$-semimetric 
\[d_{c}(f,g):=\sqrt{Q[(f-Qf)-(g-Qg)]^{2}},\quad f,g\in\mathcal{H},\]
is uniformly stronger than $d'$, i.e. $d'(f,g)\leq\zeta(d_{c}(f,g))$ for some function $\zeta:[0,\infty)\mapsto[0,\infty)$ such that $\zeta(x)\rightarrow 0$ for $x\downarrow 0$.

\end{itemize}
then $\mathcal{H}$ is totally bounded with respect to $d'$. 

If $\sup\{|P_{y}f|:f\in\mathcal{H}\}<\infty$, then condition TB can be weakened to condition
\begin{itemize}
\item[TB$_{*}$) ] the ordinary $L_{2}(Q)$-semimetric 
\[d(f,g):=\sqrt{Q(f-g)^{2}},\quad f,g\in\mathcal{H},\]
is uniformly stronger than the semimetric $d'$.
\end{itemize}
\end{lem}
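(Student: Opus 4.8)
The plan is to reduce both assertions to a single structural fact from empirical process theory—that a $Q$-Donsker class is totally bounded with respect to the standard-deviation semimetric of its associated $Q$-Brownian bridge—combined with the elementary observation that total boundedness is inherited under a uniformly continuous change of semimetric. Recall that if $\mathbb{G}$ denotes the $Q$-Brownian bridge indexed by $\mathcal{H}$, its intrinsic semimetric satisfies $\mathrm{sd}(\mathbb{G}f-\mathbb{G}g)^{2}=Q(f-g)^{2}-(Q(f-g))^{2}=Q[(f-Qf)-(g-Qg)]^{2}=d_{c}(f,g)^{2}$, so that $d_{c}$ is precisely this intrinsic semimetric. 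Since $\mathcal{H}$ is $Q$-Donsker, $\mathbb{G}$ is tight in $l^{\infty}(\mathcal{H})$, and tightness of a Gaussian process forces its index set to be totally bounded for the intrinsic semimetric (see Example 1.5.10 in \cite{vdVW}); hence $(\mathcal{H},d_{c})$ is totally bounded.

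To prove the first assertion I would transfer this total boundedness from $d_{c}$ to $d'$. Given $\epsilon>0$, condition TB lets me choose $\delta>0$ so small that $\zeta(x)\le\epsilon$ for every $0\le x\le\delta$, using $\zeta(x)\to0$ as $x\downarrow0$. Taking a finite $d_{c}$-net $\{f_{1},\dots,f_{m}\}$ of radius $\delta$, every $f\in\mathcal{H}$ satisfies $d_{c}(f,f_{j})\le\delta$ for some $j$, whence $d'(f,f_{j})\le\zeta(d_{c}(f,f_{j}))\le\epsilon$; thus the same finite set is a $d'$-net of radius $\epsilon$. As $\epsilon$ was arbitrary, $\mathcal{H}$ is totally bounded for $d'$.

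For the second assertion, under $\sup\{|P_{y}f|:f\in\mathcal{H}\}<\infty$ I would first upgrade total boundedness from $d_{c}$ to the ordinary $L_{2}(Q)$-semimetric $d$, and then transfer from $d$ to $d'$ exactly as above, now invoking TB$_{*}$ in place of TB. The upgrade rests on the decomposition $d(f,g)^{2}=d_{c}(f,g)^{2}+(Qf-Qg)^{2}$, which gives $d(f,g)\le d_{c}(f,g)+|Qf-Qg|$. The boundedness of the mean functional $f\mapsto Qf$ (which is the content of the stated hypothesis when $Q=P_{y}$) makes $\{Qf:f\in\mathcal{H}\}$ a bounded, hence totally bounded, subset of $\mathbb{R}$. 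Given $\epsilon>0$, I would take a finite $d_{c}$-net $A$ of radius $\epsilon/2$ and a finite $\epsilon/2$-net $B\subseteq\mathbb{R}$ of $\{Qf\}$, and for each pair $(a,b)\in A\times B$ whose cell $\{f:d_{c}(f,a)<\epsilon/2,\ |Qf-b|<\epsilon/2\}$ is nonempty select one representative. By the triangle inequality these finitely many representatives form a $d$-net of radius $2\epsilon$, so $(\mathcal{H},d)$ is totally bounded; condition TB$_{*}$ then yields total boundedness for $d'$ by the same $\zeta$-argument.

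The only non-elementary ingredient, and therefore the main obstacle, is the passage from the Donsker hypothesis to total boundedness in $d_{c}$: everything hinges on correctly identifying $d_{c}$ with the standard-deviation semimetric of the limiting Gaussian process and on invoking the tightness-implies-total-boundedness property of Gaussian processes from \cite{vdVW}. The remaining steps—the $\zeta$-transfer of total boundedness and the product-net construction controlling the means—are routine metric-space arguments, the one point requiring care being that $d_{c}$ is weaker than $d$, so that total boundedness must be established first for $d_{c}$ and only then promoted to $d$ using the separately controlled range of the mean functional.
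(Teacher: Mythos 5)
Your proof is correct and follows essentially the same route as the paper's: first establish total boundedness of $(\mathcal{H},d_{c})$ from the Donsker property, transfer it to $d'$ via the $\zeta$-argument, and, for the second assertion, upgrade to the ordinary $L_{2}(Q)$-semimetric $d$ using the uniform bound on the means before transferring via TB$_{*}$. The only difference is that the paper disposes of the two key facts by citation (Corollary 2.3.12 and Problem 2.1.2 in van der Vaart and Wellner), whereas you prove them yourself---the first by identifying $d_{c}$ as the intrinsic standard-deviation semimetric of the tight Gaussian limit and invoking Example 1.5.10 (the same fact underlying the paper's citation), the second by the explicit product-net construction, which is exactly the solution of the cited Problem 2.1.2.
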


\begin{proof}
If $\mathcal{H}$ is a $Q$-Donsker class, then $\mathcal{H}$ is totally bounded with respect to the seminorm $d$ (see Corollary 2.3.12 on page 115 in \cite{vdVW}), and thus, under condition TB, $\mathcal{H}$ will be totally bounded w.r.t. $d'$ as well.

The last assertion in the statement of the lemma follows from Problem 2.1.2 on page 93 in \cite{vdVW}.
\end{proof}


Next, consider asymptotic equicontinuity (hencforth AEC). Since we are going to consider the case where $\mathcal{F}$ is a $P_{y}$-Donsker class and to establish total boundedness with the aid of Lemma \ref{lemma_total_boundedness}, AEC should be established w.r.t. one of the following two semimetrics: either the expectation-centered $L_{2}(P_{y})$ semimetric
\[\rho_{c}(f,g):=\sqrt{P_{y}[(f-P_{y}f)-(g-P_{y}g)]^{2}},\quad f,g\in\mathcal{F},\]
or the ordinary $L_{2}(P_{y})$ semimetric
\[\rho(f,g):=\sqrt{P_{y}(f-g)^{2}},\quad f,g\in\mathcal{F}.\]
Of course, $\rho$ is stronger than $\rho_{c}$ and conditions which make $\{\mathbb{G}_{N}'\}_{N=1}^{\infty}$ (conditionally) AEC w.r.t. $\rho_{c}$ rather than w.r.t. $\rho$ must therefore be more stringent. Moreover, under the conditions of Lemma \ref{lemma_marginal_convergence_expectation} one should expect that there is little agreement between "$\mathbb{G}'$-intrinsic" semimetric $\rho'$ and $\rho_{c}$, while $\rho'$ and $\rho$ should usually behave quite similarly. This suggests that it is much more difficult to state general conditions which make $\{\mathbb{G}_{N}'\}_{N=1}^{\infty}$ (conditionally) AEC w.r.t. $\rho_{c}$ rather than w.r.t. $\rho$. Led by this intuition we shall therefore look for conditions which make $\{\mathbb{G}_{N}'\}_{N=1}^{\infty}$ (conditionally) AEC w.r.t. $\rho$ rather than w.r.t. $\rho_{c}$. Of course, there is a price to pay for this choice: total boundedness must be established w.r.t. to $\rho$ as well, and this can be the case only if $\sup\{|P_{y}f|:f\in\mathcal{F}\}<\infty$.

Now, before moving on to establish (conditional) AEC, it will be convenient to review the definition of the AEC concept and to give a clear definition of what \textit{conditional} AEC means. Recall that according to the definition given on page 37 in \citep{vdVW}, the sequence $\{\{\mathbb{G}_{N}'f:f\in\mathcal{F}\}\}_{N=1}^{\infty}$ is (unconditionally) AEC w.r.t. to a given semimetric $\rho^{\bullet}$ if
\[\lim_{\delta\rightarrow 0}\limsup_{N\rightarrow\infty}P^{*}\left\{\lVert\mathbb{G}_{N}'\rVert_{\mathcal{F}_{\delta}^{\bullet}}>\epsilon\right\}=0\quad\text{ for every }\epsilon>0,\]
where $P$ is the product measure $P_{y,x}^{\infty}\times P_{d}\times P_{\varepsilon}^{\infty}$, and where
\[\mathcal{F}_{\delta}^{\bullet}:=\{f-g:f,g\in\mathcal{F}\wedge \rho^{\bullet}(f,g)<\delta\},\quad \delta>0.\]
It is not difficult to show that this condition is equivalent to
\[P^{*}\left\{\lVert\mathbb{G}_{N}'\rVert_{\mathcal{F}_{\delta_{N}}^{\bullet}}>\epsilon\right\}\rightarrow 0\quad\text{ for every }\epsilon>0\text{ and for every }\delta_{N}\downarrow 0\]
which can more succinctly be written as
\[\lVert\mathbb{G}_{N}'\rVert_{\mathcal{F}_{\delta_{N}}^{\bullet}}\overset{P*}{\rightarrow}0\quad\text{ for every }\delta_{N}\downarrow 0.\]
The concept of \textit{conditional} AEC can now be defined in analogous way by requiring that
\begin{equation}\label{Def_gen_conditional_AEC}
P_{d}\left\{\lVert\mathbb{G}_{N}'\rVert_{\mathcal{F}_{\delta}^{\bullet}}>\epsilon\right\}\overset{P*(as*)}{\rightarrow} 0\quad\text{ for every }\epsilon>0\text{ and for every }\delta_{N}\downarrow 0.
\end{equation}
With respect to this definition it is worth to point out that even 
\[E_{d}\lVert\mathbb{G}_{N}'\rVert_{\mathcal{F}_{\delta_{N}}^{\bullet}}\overset{as*}{\rightarrow}0\quad\text{ for every }\delta_{N}\downarrow 0,\]
which is surely stronger than the almost sure version of conditional AEC, is \textit{apparently} not strong enough to imply the unconditional version of AEC (this is consistent with the conjecture that oasCWC does not imply unconditional weak convergence). However, in the present paper AEC will always be shown by showing that
\begin{equation}\label{AEC_conditional_expectation_versions}
E_{d}\lVert\mathbb{G}_{N}'\rVert_{\mathcal{F}_{\delta_{N}}^{\bullet}}^{**}\overset{P(as)}{\rightarrow}0\quad\text{ for every }\delta_{N}\downarrow 0
\end{equation}
for some suitable semimetric $\rho^{\bullet}$. Note that the probability version of this result is certainly stronger than unconditional AEC.

The next two lemmas are of technical nature. They will be needed to establish the two conditional expectation versions of AEC given in (\ref{AEC_conditional_expectation_versions}). The first one is an adapted version of Lemma 2.3.1 on page 108 in \cite{vdVW} (symmetrization lemma), and the second one is an adapted version of Proposition A.1.10 on page 436 in \cite{vdVW} (contraction principle).


\begin{lem}[Symmetrization inequality]\label{lemma_symmetrization_inequality}
Let $\mathcal{F}$ be an arbitrary class of measurable functions, let $\mathbf{S}_{N}$ denote the vector of sample inclusion indicators corresponding to a measurable Poisson sampling design, and let $\mathbb{G}_{N}'$ denote the HTEP corresponding to $\mathbf{S}_{N}$ and $\mathcal{F}$. Then, 
\begin{equation}\label{symmetrization_inequality}
E_{d}\lVert\mathbb{G}_{N}'\rVert_{\mathcal{F}}^{**}\leq 2 E_{\varepsilon} E_{d}\left\lVert\frac{1}{\sqrt{N}}\sum_{i=1}^{N}\varepsilon_{i}\left(\frac{S_{i,N}}{\pi_{i,N}}-1\right)\delta_{Y_{i}}\right\rVert_{\mathcal{F}}^{**}\quad\text{a.s.},
\end{equation}
where the underlying probability space and all the involved random variables are defined as described in Section \ref{Section_notation_definitions}, and where the stars on both sides of the inequality refer to the arguments of the expectations $E_{d}$ and $E_{\epsilon}E_{d}$, respectively.
\end{lem}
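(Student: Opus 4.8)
The plan is to mimic the classical symmetrization argument (Lemma 2.3.1 in \cite{vdVW}) but carried out entirely with respect to the design expectation $E_{d}$, exploiting the fact that under a Poisson design the summands $(S_{i,N}/\pi_{i,N}-1)\delta_{Y_{i}}$ are \emph{conditionally independent} across $i$ given $\mathbf{X}_{N}$ (and $\mathbf{Y}_{N}$). The key probabilistic input is that for a Poisson design the $S_{i,N}$ are independent Bernoulli$(\pi_{i,N})$ variables conditionally on $\mathbf{X}_{N}$, so I can introduce an independent ``ghost'' copy $\mathbf{S}_{N}'$ of $\mathbf{S}_{N}$ (with the same first-order inclusion probabilities, constructed from a second uniform random variable on the central coordinate, or equivalently on an enlarged design space). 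First I would center: for each fixed $\mathbf{Y}_{N}$, since $E_{d}(S_{i,N}'/\pi_{i,N}-1)=0$, I have
\[
\frac{1}{\sqrt{N}}\sum_{i=1}^{N}\Bigl(\tfrac{S_{i,N}}{\pi_{i,N}}-1\Bigr)\delta_{Y_{i}}
=\frac{1}{\sqrt{N}}\sum_{i=1}^{N}\Bigl(\tfrac{S_{i,N}}{\pi_{i,N}}-\tfrac{S_{i,N}'}{\pi_{i,N}}\Bigr)\delta_{Y_{i}}
+\frac{1}{\sqrt{N}}\sum_{i=1}^{N}\Bigl(\tfrac{S_{i,N}'}{\pi_{i,N}}-1\Bigr)\delta_{Y_{i}},
\]
and a Jensen/Fubini argument with respect to the ghost design expectation $E_{d}'$ lets me replace the original process by the symmetrized difference at the cost of a factor that will ultimately give the constant $2$. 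The crucial step is that the difference term $(S_{i,N}-S_{i,N}')/\pi_{i,N}$ has a distribution symmetric about $0$ conditionally on $\mathbf{X}_{N}$ and $\mathbf{Y}_{N}$, so multiplying each summand by an independent Rademacher sign $\varepsilon_{i}$ leaves the joint conditional law of the vector of summands unchanged.

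Concretely, the steps in order would be: (1) pass to the doubled design space and use the triangle inequality together with Jensen (applied to the convex map $z\mapsto\lVert z\rVert_{\mathcal{F}}$) to bound $E_{d}\lVert\mathbb{G}_{N}'\rVert_{\mathcal{F}}$ by $E_{d}E_{d}'\lVert N^{-1/2}\sum_{i}(S_{i,N}-S_{i,N}')\pi_{i,N}^{-1}\delta_{Y_{i}}\rVert_{\mathcal{F}}$; (2) insert Rademacher signs, justifying that the joint distribution of $\{\varepsilon_{i}(S_{i,N}-S_{i,N}')\}_{i=1}^{N}$ equals that of $\{S_{i,N}-S_{i,N}'\}_{i=1}^{N}$ conditionally on $(\mathbf{X}_{N},\mathbf{Y}_{N})$, by conditional independence and symmetry; (3) undo the symmetrization by splitting $\varepsilon_{i}(S_{i,N}-S_{i,N}')=\varepsilon_{i}(S_{i,N}-\pi_{i,N})-\varepsilon_{i}(S_{i,N}'-\pi_{i,N})$, applying the triangle inequality again, and using that the two resulting terms have the same $E_{\varepsilon}E_{d}$-norm; this produces the factor $2$ and the ghost copy drops out.

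The genuine technical obstacle, and the reason the statement carries the double-star majorant $\lVert\cdot\rVert_{\mathcal{F}}^{**}$ rather than the ordinary outer expectation, is measurability: none of the suprema over $\mathcal{F}$ need be measurable, and the several Fubini-type interchanges of $E_{d}$, $E_{d}'$ and $E_{\varepsilon}$ must be performed on the measurable majorants constructed at the end of Section~\ref{Section_notation_definitions}. Here is precisely where the special cover $T^{**}$ earns its keep: property (iv) guarantees that for a quantity not depending on $\mathbf{S}_{N}$ the majorant coincides with the ordinary cover, so that on the symmetrized side—where after inserting the signs the relevant object can be treated as a function of $(\mathbf{Y}_{N},\mathbf{X}_{N},\varepsilon_{1},\dots,\varepsilon_{N})$ and the design variables in a controlled way—the stars refer to the correct arguments as the statement indicates. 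I would therefore carry the majorants through every inequality above, invoking the measurability of the design (so that $E_{d}$ of a measurable integrand is a genuine conditional expectation, consequence (iii) of measurability), and check that each application of Jensen and of the triangle inequality is compatible with the ordering $T^{**}\ge T^{*}\ge T$ established earlier. The remaining computations are the routine Rademacher-symmetrization bookkeeping and I would not belabor them.
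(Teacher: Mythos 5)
Your proposal is correct and follows essentially the same route as the paper's proof: a ghost copy $\mathbf{S}_{N}'$ of the Poisson design built from a second uniform variable on the central coordinate, centering via Jensen with respect to the ghost expectation, insertion of Rademacher signs justified by the invariance of the conditional law under swapping $S_{i,N}\leftrightarrow S_{i,N}'$ at the indices with $\varepsilon_{i}=-1$, and a final triangle inequality producing the factor $2$, all carried out on the special measurable covers. The only place the paper spends substantially more effort than your sketch is the cover-level justification of the sign insertion --- it constructs the switching map $\theta$ and proves $T_{\varepsilon}^{***}=h^{*}\circ\theta\circ\phi'''$ almost surely, so that the identity $E_{d}T^{***}=E_{d}T_{\varepsilon}^{***}$ holds for the majorants and not merely in distribution --- which is exactly the verification you flag and defer as bookkeeping.
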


\begin{proof}
Let $S_{1,N}'$, $S_{2,N}'$, \dots, $S_{N,N}'$ denote the sample inclusion indicator random variables corresponding to a second Poisson sampling design which is identical to the original one but which is independent from it. To define this additional set of indicator functions recall that, by assumption, the vector $\mathbf{S}_{N}$ whose elements are the original indicator functions $S_{1,N}$, $S_{2,N}$, \dots $S_{N,N}$, is a function of the random vector $\mathbf{X}_{N}$ and of a single uniform-$[0,1]$ random variable $D$ which depends on the central coordinate of the sample points $\omega\in\Omega_{y,x}^{\infty}\times\Omega_{d}\times\Omega_{\varepsilon}^{\infty}$ only. For the definition of the new indicator functions assume WLOG that the central factor in the definition of the probability space, i.e. the factor $(\Omega_{d}, \mathcal{A}_{d}, P_{d})$, is itself a product space of the form $(\Omega_{d}, \mathcal{A}_{d}, P_{d}):=(\widetilde{\Omega}_{d},\widetilde{\mathcal{A}}_{d}, \widetilde{P}_{d})\times (\widetilde{\Omega}_{d'},\widetilde{\mathcal{A}}_{d'}, \widetilde{P}_{d'})$ where both factors on the right are identical, and assume that the random variable $D$ which determines the values taken on by $\mathbf{S}_{N}$ is actually only a function of the first coordinate of this factor space. Then, let $D'$ be an independent copy of $D$ that depends only on the second coordinate of this factor space, and define the vector $\mathbf{S}_{N}':=(S_{1,N}', S_{1,N}', \dots, S_{N,N}')$ exactly in the same way as $\mathbf{S}_{N}$ but with the random variable $D'$ in place of $D$. In what follows, if $T$ is a function that depends on $\omega\in\Omega_{y,x}^{\infty}\times\Omega_{d}\times\Omega_{\varepsilon}^{\infty}$ only through
\[\left(\mathbf{Y}_{N}, \mathbf{X}_{N}, \mathbf{S}_{N}, \mathbf{S}_{N}',\varepsilon_{1},\dots,\varepsilon_{N}\right),\]
and can thus be represented as $T=h\circ\phi'''$ for some $h:\Omega_{y,x}^{N}\times\{0,1\}^{2N}\times\Omega_{\varepsilon}^{N}$ with $\phi''':\Omega_{y,x}^{\infty}\times\Omega_{d}\times\Omega_{\varepsilon}^{\infty}\mapsto\Omega_{y,x}^{N}\times\{0,1\}^{2N}\times\Omega_{\varepsilon}^{N}$ defined in obvious way, then $T^{***}$ will be defined as $T^{***}:=h^{*}\circ\phi$ with $h^{*}$ a measurable $(P_{y,x}^{\infty}\times P_{d}\times P_{\varepsilon}^{\infty})\circ\phi'''^{-1}$-cover of $h$. Of course, $T^{***}$ has properties analogous to those of $T^{**}$ listed at the end of Section \ref{Section_notation_definitions}. In particular, (i) $T^{***}$ is measurable, (ii) $T^{***}\geq T$ almost surely, (iii) $T^{***}$ depends on $\omega\in\Omega_{y,x}^{\infty}\times\Omega_{d}\times\Omega_{\varepsilon}^{\infty}$ only through the functions that are relevant for the computation of $\phi'''$ and (iv) $T^{***}=T^{*}$ almost surely if $T$ does not depend on any of the indicator functions. Moreover, it can also be shown that $T^{***}=T^{**}$ almost surely if $T$ does not depend on $\mathbf{S}_{N}'$. The proof of the latter assertion is essentially the same as the one given at the of Section \ref{Section_notation_definitions} for showing that $T^{**}=T^{*}$ almost surely if $T$ does not depend on any of the sample inclusion indicator functions.

Now, consider the $\mathcal{F}$-indexed stochastic processes defined by
\[Z_{i}:=\frac{1}{\sqrt{N}}\left(\frac{S_{i,N}}{\pi_{i,N}}-1\right)\delta_{Y_{i}},  \quad i=1,2,\dots, N,\]
and the processes $Z_{i}'$ which are defined in the same way but with $S_{i,N}'$ in place of $S_{i,N}$. Let $\widetilde{E}_{d}$ and $\widetilde{E}_{d'}$ denote the expectations only with respect to the random variables $D$ and $D'$, respectively, with $(\mathbf{Y}_{N=\infty},\mathbf{X}_{N=\infty})$ and $\varepsilon_{1}$, $\varepsilon_{2}$, \dots kept fixed. Moreover, let $E_{d}$ denote the joint expectation w.r.t. both random variables $D$ and $D'$, still with $(\mathbf{Y}_{N=\infty},\mathbf{X}_{N=\infty})$ and $\varepsilon_{1}$, $\varepsilon_{2}$, \dots kept fixed. Note that $\widetilde{E}_{d}Z_{i}f=\widetilde{E}_{d'}Z_{i}'f=0$ for every $f\in\mathcal{F}$, so that
\begin{equation*}
\begin{split}
\lVert\mathbb{G}_{N}'\rVert_{\mathcal{F}}&:=\sup_{f\in\mathcal{F}}\left|\sum_{i=1}^{N}Z_{i}f\right|\\
&=\sup_{f\in\mathcal{F}}\left|\sum_{i=1}^{N}(Z_{i}f-\widetilde{E}_{d'}Z_{i}'f)\right|\\
&\leq \sup_{f\in\mathcal{F}}\widetilde{E}_{d'}\left|\sum_{i=1}^{N}(Z_{i}f-Z_{i}'f)\right|\\
&\leq \widetilde{E}_{d'}\left\lVert\sum_{i=1}^{N}(Z_{i}-Z_{i}')\right\rVert_{\mathcal{F}},
\end{split}
\end{equation*}
and therefore
\[\lVert\mathbb{G}_{N}'\rVert_{\mathcal{F}}^{**}=\lVert\mathbb{G}_{N}'\rVert_{\mathcal{F}}^{***}\leq\widetilde{E}_{d'}\left\lVert\sum_{i=1}^{N}(Z_{i}-Z_{i}')\right\rVert_{\mathcal{F}}^{***}\quad\text{a.s.},\]
where the stars on the far right side refer to the argument of the expectation. From this it follows that
\begin{equation}\label{presymmetrization_inequality}
\begin{split}
E_{d}\lVert\mathbb{G}_{N}'\rVert_{\mathcal{F}}^{**}&=\widetilde{E}_{d}\lVert\mathbb{G}_{N}'\rVert_{\mathcal{F}}^{***}\\
&\leq\widetilde{E}_{d}\widetilde{E}_{d'}\left\lVert\sum_{i=1}^{N}(Z_{i}-Z_{i}')\right\rVert_{\mathcal{F}}^{***}\\
&=E_{d}\left\lVert\sum_{i=1}^{N}(Z_{i}-Z_{i}')\right\rVert_{\mathcal{F}}^{***}
\end{split}
\end{equation}
almost surely, where all the stars refer to the arguments of the expectations. Now, let $h:\Omega_{y,x}^{N}\times\{0,1\}^{2N}\times\Omega_{\varepsilon}^{N}\mapsto\mathbb{R}$ be such that
\[T:=\left\lVert\sum_{i=1}^{N}(Z_{i}-Z_{i}')\right\rVert_{\mathcal{F}}=h\circ\phi''',\]
so that
\[T^{***}:=\left\lVert\sum_{i=1}^{N}(Z_{i}-Z_{i}')\right\rVert_{\mathcal{F}}^{***}:=h^{*}\circ\phi'''.\]
Moreover, let $\theta:\Omega_{y,x}^{N}\times\{0,1\}^{2N}\times\Omega_{\varepsilon}^{N}\mapsto\Omega_{y,x}^{N}\times\{0,1\}^{2N}\times\Omega_{\varepsilon}^{N}$ be the mapping that in the range of $\phi'''$ switches the positions of the indicators $S_{i,N}$ and $S_{i,N}'$ corresponding to all the indexes $i$ such that $\varepsilon_{i}=-1$. Note that $\theta$ is a measurable one-to-one mapping with measurable inverse (in fact, the inverse of $\theta$ is $\theta$ itself). Then, consider the mapping
\[T_{\varepsilon}:=\left\lVert\sum_{i=1}^{N}\varepsilon_{i}(Z_{i}-Z_{i}')\right\rVert_{\mathcal{F}}=h\circ\theta\circ\phi'''\]
and note that $T_{\varepsilon}^{***}=h^{*}\circ\theta\circ\phi'''$ almost surely with the same $h^{*}$ as in the definition of $T^{***}$ (of course, $h^{*}$ is an equivalence class). To prove this claim note that
\[T_{\varepsilon}^{***}:=(h\circ\theta)^{*}\circ\phi'''\leq h^{*}\circ\theta\circ\phi''',\]
and that the $h^{*}$ on the right side is the same as the $h^{*}$ from the definition of $T^{***}$ since $(P_{y,x}^{\infty}\times P_{d}\times P_{\varepsilon}^{\infty})\circ\phi'''^{-1}=(P_{y,x}^{\infty}\times P_{d}\times P_{\varepsilon}^{\infty})\circ\phi'''^{-1}\circ\theta^{-1}$. However,
\[h^{*}\circ\theta\circ\phi'''=(h\circ\theta\circ\theta^{-1})^{*}\circ\theta\circ\phi'''\leq (h\circ\theta)^{*}\circ\theta^{-1}\circ\theta\circ\phi'''=T_{\varepsilon}^{***},\]
which proves that $T_{\varepsilon}^{***}=h^{*}\circ\theta\circ\phi'''$ almost surely as claimed above. From the definition of the sample inclusion indicator functions it follows now that
\[E_{d}T^{***}=E_{d}h^{*}\circ \phi'''=E_{d}h^{*}\circ \theta\circ\phi'''=E_{d}T_{\varepsilon}\quad\text{a.s.}\]
Combining this fact with (\ref{presymmetrization_inequality}) yields
\[E_{d}\lVert\mathbb{G}_{N}'\rVert_{\mathcal{F}}^{**}\leq E_{\varepsilon}E_{d}\left\lVert\sum_{i=1}^{N}\varepsilon_{i}(Z_{i}-Z_{i}')\right\rVert_{\mathcal{F}}^{***}\quad\text{a.s.}\]
Now the proof can be completed by applying the triangle inequality to the right side in order to obtain
\[E_{d}\lVert\mathbb{G}_{N}'\rVert_{\mathcal{F}}^{**}\leq 2 E_{\varepsilon} E_{d}\left\lVert\sum_{i=1}^{N}\varepsilon_{i} Z_{i}\right\rVert_{\mathcal{F}}^{**}\]
which is the desired conclusion.
\end{proof}

\begin{lem}[Contraction principle]\label{contraction_principle}
Let $\mathbf{S}_{N}$ denote the vector of sample inclusion indicators corresponding to a measurable sampling design (not necessarily a Poisson sampling design). Let $\gamma_{1}$, $\gamma_{2}$, \dots, $\gamma_{N}$ be arbitrary measurable random variables which are functions of $\mathbf{Y}_{N}$ and $\mathbf{X}_{N}$ only and such that
\[0\leq\min_{1\leq i\leq N}\gamma_{i}\leq \max_{1\leq i\leq N}\gamma_{i}\leq 1\]
with probability tending to $1$ (or eventually almost surely). Let $\mathcal{F}$ be an arbitrary index set, and for each $i=1,2,\dots, N$ let $\{G_{i}f:f\in\mathcal{F}\}$ be an $\mathcal{F}$-indexed stochastic process that is a function of $\mathbf{Y}_{N}$ and $\mathbf{X}_{N}$ only. Then,
\[E_{\varepsilon} E_{d}\left\lVert\frac{1}{\sqrt{N}}\sum_{i=1}^{N}\varepsilon_{i}S_{i,N}\gamma_{i}G_{i}\right\rVert_{\mathcal{F}}^{**}\leq E_{\varepsilon}\left\lVert\frac{1}{\sqrt{N}}\sum_{i=1}^{N}\varepsilon_{i}G_{i}\right\rVert_{\mathcal{F}}^{*}\]
with probability tending to $1$ (or eventually almost surely), where, for $\mathcal{F}$-indexed stochastic processes $G$, $\lVert G \rVert_{\mathcal{F}}:=\sup_{f\in\mathcal{F}}|Gf|$, and where the stars on both sides of the inequality refer to the arguments of the two expectations $E_{\epsilon}E_{d}$ and $E_{\epsilon}$, respectively. The underlying probability space and all the involved random variables are defined as described in Section \ref{Section_notation_definitions}.
\end{lem}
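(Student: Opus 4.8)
The plan is to read the asserted bound as a \emph{conditional} version of the classical Rademacher contraction principle (the outer-expectation formulation of Proposition A.1.10 in \cite{vdVW}): once one conditions on the population $(\mathbf{Y}_{N},\mathbf{X}_{N})$ and on the realized sample $\mathbf{S}_{N}$, the factors $a_{i}:=S_{i,N}\gamma_{i}$ become deterministic numbers lying in $[0,1]\subseteq[-1,1]$, and multiplying the summands of a symmetrized sum by such factors cannot increase the expected supremum norm. The whole argument then consists of (a) isolating the deterministic core, which is pure convexity, and (b) reconciling the three measurable-cover operations that appear.

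First I would discard the null part. On the complement of the event $A_{N}:=\{0\le\min_{i}\gamma_{i}\le\max_{i}\gamma_{i}\le 1\}$ — which has $P$-probability tending to $0$, respectively is eventually $P$-null — nothing is claimed, so throughout I work on $A_{N}$, where $a_{i}=S_{i,N}\gamma_{i}\in[0,1]$ for every $i$ and for \emph{every} realization of $\mathbf{S}_{N}$, because $S_{i,N}\in\{0,1\}$ and $\gamma_{i}\in[0,1]$. Next I reduce the iterated expectation on the left to a pointwise statement in the design. Since the design is measurable, $E_{d}$ is a genuine conditional expectation, i.e.\ the finite $P_{d}$-weighted sum $E_{d}(\cdot)=\sum_{\mathbf{s}_{N}\in\{0,1\}^{N}}(\cdot)\,P_{d}\{\mathbf{S}_{N}=\mathbf{s}_{N}\}$, and $\varepsilon_{1},\dots,\varepsilon_{N}$ are independent of $(\mathbf{Y}_{N},\mathbf{X}_{N},\mathbf{S}_{N})$, so that the law pushed forward by the map defining $\lVert\cdot\rVert^{**}$ is a product measure in the $\varepsilon$-direction. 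Using this product structure together with Lemma 1.2.5 in \cite{vdVW} — exactly as in the manipulation $T^{*}=h'^{*}\circ\phi'$ at the end of Section \ref{Section_notation_definitions} and in the proof of Lemma \ref{lemma_symmetrization_inequality} — the $\mathbf{S}_{N}$-section of the double-star cover on the left coincides almost surely with the cover in $\varepsilon$ of the corresponding section, and, since the right-hand side does not involve $\mathbf{S}_{N}$, the same lemma aligns $\lVert\cdot\rVert^{*}$ with the cover in $\varepsilon$ of its section.

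By Fubini, applied to the measurable cover functions, it therefore suffices to establish, for fixed $(\mathbf{Y}_{N},\mathbf{X}_{N})$ on $A_{N}$ and for each fixed sample $\mathbf{s}_{N}$, the pointwise inequality
\[
E_{\varepsilon}\left\lVert\frac{1}{\sqrt{N}}\sum_{i=1}^{N}\varepsilon_{i}s_{i,N}\gamma_{i}G_{i}\right\rVert_{\mathcal{F}}^{*}
\le
E_{\varepsilon}\left\lVert\frac{1}{\sqrt{N}}\sum_{i=1}^{N}\varepsilon_{i}G_{i}\right\rVert_{\mathcal{F}}^{*},
\]
the covers now being taken with respect to $\varepsilon_{1},\dots,\varepsilon_{N}$ only, and then to reassemble by integrating the surviving $P_{d}$-weighted sum — the right-hand side being free of the design, $E_{d}$ leaves it unchanged. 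The displayed inequality is precisely the classical contraction principle, and I would prove it by the standard convexity argument: with $(\mathbf{Y}_{N},\mathbf{X}_{N},\mathbf{s}_{N})$ fixed, the maps $G_{1},\dots,G_{N}$ are fixed real-valued functions on $\mathcal{F}$ and $a_{i}=s_{i,N}\gamma_{i}$ are fixed constants in $[-1,1]$; the function $(a_{1},\dots,a_{N})\mapsto E_{\varepsilon}\lVert N^{-1/2}\sum_{i}\varepsilon_{i}a_{i}G_{i}\rVert_{\mathcal{F}}^{*}$ is convex in each coordinate, hence attains its maximum over the cube $[-1,1]^{N}$ at a vertex, where the sign-symmetry of the Rademacher variables makes it equal to $E_{\varepsilon}\lVert N^{-1/2}\sum_{i}\varepsilon_{i}G_{i}\rVert_{\mathcal{F}}^{*}$.

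I expect the genuine difficulty to lie not in the probabilistic core — the convexity/vertex-maximization step is routine — but in the bookkeeping of the three distinct cover operations ($\lVert\cdot\rVert^{**}$ on the left, the $\varepsilon$-only covers used inside the contraction step, and $\lVert\cdot\rVert^{*}$ on the right) and in justifying that each may be interchanged with the iterated expectation. The structural fact that makes everything fit is the independence of the Rademacher block from $(\mathbf{Y}_{N},\mathbf{X}_{N},\mathbf{S}_{N})$, which turns the relevant pushforward measure into a product in the $\varepsilon$-direction and thereby identifies the sections of the joint covers with the corresponding section covers; this is the same device already used in Lemma \ref{lemma_symmetrization_inequality}, so the technical overhead here is essentially a repetition of that bookkeeping rather than anything new.
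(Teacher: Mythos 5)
Your probabilistic core (coordinatewise convexity of the multipliers, maximization at a vertex of the cube, sign symmetry of the Rademacher variables) is indeed the classical contraction principle, and essentially the same convexity idea drives the paper's proof. But your reduction to sections fails at the decisive step, and the failure is not a bookkeeping detail — it is exactly the point the lemma has to overcome. You claim that, by the product structure in the $\varepsilon$-direction and Lemma 1.2.5 of \cite{vdVW}, the $\mathbf{S}_{N}$-section of the double-star cover coincides almost surely with the cover in $\varepsilon$ of the corresponding section. This identity is false, and it fails in the direction you need. Since the process depends on $\varepsilon$ only through the finitely many sign patterns $(\varepsilon_{1},\dots,\varepsilon_{N})\in\{-1,1\}^{N}$, every section with $(\mathbf{Y}_{N},\mathbf{X}_{N},\mathbf{S}_{N})$ fixed is automatically a measurable function of $\varepsilon$, so ``the cover in $\varepsilon$ of the section'' is just the section itself; your identity would therefore assert $T^{**}=T$ almost surely, i.e.\ that the supremum over $\mathcal{F}$ was essentially measurable to begin with — precisely what cannot be assumed. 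The correct general relation goes the other way: the section of a joint cover dominates the cover of the section (equivalently, iterated outer expectations are \emph{smaller} than joint ones), so replacing $T^{**}$ under $E_{d}E_{\varepsilon}$ by section covers yields a \emph{lower} bound for the left-hand side, which is useless for the claimed upper bound. Lemma 1.2.5 of \cite{vdVW}, like the paper's property (iv) at the end of Section \ref{Section_notation_definitions}, identifies a joint cover with a marginal cover only for functions depending on a single block of coordinates; here $T$ genuinely depends on both blocks.

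One can also see structurally why no per-section argument can work. In the present setting the joint cover sectionalizes the opposite way: for each fixed sign pattern $e$, $T^{**}(\cdot,e)$ equals the cover of $T(\cdot,e)$ over the $(\mathbf{Y}_{N},\mathbf{X}_{N},\mathbf{S}_{N})$-randomness, so the left-hand side of the lemma is an average over sign patterns of per-pattern covers. But there is no contraction principle for a fixed sign pattern: with $N=2$, $G_{1}=G_{2}=g$, $e=(+1,-1)$ and multipliers $(a_{1},a_{2})=(1,0)$ one gets $\lVert e_{1}a_{1}G_{1}+e_{2}a_{2}G_{2}\rVert_{\mathcal{F}}=\lVert g\rVert_{\mathcal{F}}>0=\lVert e_{1}G_{1}+e_{2}G_{2}\rVert_{\mathcal{F}}$. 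Contraction holds only after averaging over signs, so the $\varepsilon$-averaging and the cover operation must be intertwined rather than separated. That is what the paper's proof does: it replaces $\varepsilon_{i}S_{i,N}\gamma_{i}G_{i}$ by $\varepsilon_{i}G_{i}$ one coordinate at a time, using the pointwise convex-combination bound $\lVert\varepsilon_{i}a_{i}G_{i}+Z_{i}\rVert_{\mathcal{F}}\leq a_{i}\lVert\varepsilon_{i}G_{i}+Z_{i}\rVert_{\mathcal{F}}+(1-a_{i})\lVert Z_{i}\rVert_{\mathcal{F}}$ with $a_{i}=S_{i,N}\gamma_{i}\in[0,1]$, pulling the \emph{measurable} factors $a_{i}$ outside the double-star cover, and recovering the missing term via $E_{\varepsilon_{i}}\varepsilon_{i}=0$ and Jensen's inequality — all performed inside the covers, with the single star on the right appearing only at the final step, where the argument no longer involves $\mathbf{S}_{N}$. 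If you want to salvage your plan, this coordinate-by-coordinate treatment inside the covers, not sectioning, is the required repair.
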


\begin{proof}
Define, for $i=1,2,\dots, N$, the $\mathcal{F}$-indexed stochastic processes
\[Z_{i}:=\begin{cases}
\sum_{j=2}^{N}\varepsilon_{j}S_{j,N}\gamma_{j}G_{j} & \text{ for }i=1,\\
\sum_{j=1}^{i-1}\varepsilon_{j}G_{j}+\sum_{j=i+1}^{N}\varepsilon_{j}S_{j,N}\gamma_{j}G_{j}& \text{ for }i=2,3,\dots, N-1,\\
\sum_{j=1}^{N-1}\varepsilon_{j}G_{j}&\text{ for }i=N.
\end{cases}\]
Note that in order to prove the inequality in the conclusion of the lemma it is sufficient to prove that \textit{all} the $N$ inequalities
\[E_{\varepsilon} E_{d}\left\lVert \varepsilon_{i}S_{i,N}\gamma_{i}G_{i}+Z_{i}\right\rVert_{\mathcal{F}}^{**}\leq E_{\varepsilon} E_{d}\left\lVert \varepsilon_{i}G_{i}+Z_{i}\right\rVert_{\mathcal{F}}^{**},\quad i=1,2,\dots, N,\]
are \textit{simultaneously} satisfied with probability tending to $1$ (or eventually almost surely). In fact, for $i=1$ the left side is the same as the left side of the inequality in the conclusion of the lemma, while for $i=N$ the right side is the same as the right side of the inequality in the conclusion of the lemma. By Fubini's theorem the $N$ inequalities in the last display can also be written as
\[E_{d}E_{\varepsilon} \left\lVert \varepsilon_{i}S_{i,N}\gamma_{i}G_{i}+Z_{i}\right\rVert_{\mathcal{F}}^{**}\leq E_{d}E_{\varepsilon} \left\lVert \varepsilon_{i}G_{i}+Z_{i}\right\rVert_{\mathcal{F}}^{**},\quad i=1,2,\dots, N.\]
To show that \textit{all} these inequalities are true with probability tending to $1$ (or eventually almost surely), note that
\[0\leq \min_{1\leq i\leq N}S_{i,N}\gamma_{i}\leq\max_{1\leq i\leq N}S_{i,N}\gamma_{i}\leq 1\]
with probability tending to one (or eventually almost surely), and note that by assumption the random variables $\gamma_{i}$ are measurable and independent from the Rademacher random variables. It follows that, for every $i=1,2,\dots, N$ simultaneously, the left side in the second-last display can be bounded by
\[E_{d}S_{i,N}\gamma_{i}E_{\varepsilon}\left\lVert \varepsilon_{i}G_{i}+Z_{i}\right\rVert_{\mathcal{F}}^{**}+ E_{d}(1-S_{i,N}\gamma_{i})E_{\varepsilon}\left\lVert Z_{i}\right\rVert_{\mathcal{F}}^{**}\]
with probability tending to $1$ (or eventually almost surely). Now, note that by Fubini's theorem the joint expectation $E_{\varepsilon}$ can be replaced by an iterated expectation of the form $E_{\varepsilon_{i_{1}}}E_{\varepsilon_{i_{2}}}\cdots E_{\varepsilon_{i_{N}}}$, where $i_{1}$, $i_{2}$, \dots, $i_{N}$ is an arbitrary permutation of the natural numbers $i=1,2,\dots, N$, and where $E_{\varepsilon_{i}}$ denotes expectation with respect to the Rademacher random variable $\varepsilon_{i}$ with all other variables kept fixed. Since $E_{\varepsilon_{i}}\varepsilon_{i}=0$ for every $i=1,2,\dots, N$, it follows that the second term on the right in the last display can be written as
\[E_{d}(1-S_{i,N}\gamma_{i})E_{\varepsilon}\left\lVert G_{i}E_{\varepsilon_{i}}\varepsilon_{i}+Z_{i}\right\rVert_{\mathcal{F}}^{**},\]
which, by Jensen's inequality, is bounded by
\[E_{d}(1-S_{i,N}\gamma_{i})E_{\varepsilon}\left\lVert G_{i}\varepsilon_{i}+Z_{i}\right\rVert_{\mathcal{F}}^{**}.\]
The conclusion of the lemma follows from this.
\end{proof}


\begin{lem}[Probability version of conditional AEC]\label{lemma_AEC_expectation_POISSON}
Let $\{\mathbf{S}_{N}\}_{N=1}^{\infty}$ be defined as in Lemma \ref{lemma_marginal_convergence_expectation}, let $\mathcal{F}$ be a $P_{y}$-Donsker class, and let $\{\mathbb{G}'_{N}\}_{N=1}^{\infty}$ be the sequence of HTEPs corresponding to $\mathcal{F}$ and $\{\mathbf{S}_{N}\}_{N=1}^{\infty}$. Assume that the probability versions of conditions A1 and A2$^{*}$ hold. Then it follows that
\[E_{d}\lVert \mathbb{G}_{N}'\rVert_{\mathcal{F}_{\delta_{N}}}^{**}\overset{P}{\rightarrow} 0\quad\text{ for every }\delta_{N}\downarrow 0\]
where the stars refer to the argument of the expectation $E_{d}$, and where 
\[\mathcal{F}_{\delta}:=\{f-g:f,g\in\mathcal{F}\wedge\rho(f,g)<\delta\},\quad\delta>0.\]
\end{lem}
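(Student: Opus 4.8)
The plan is to combine the two structural lemmas just proved with the Donsker property of $\mathcal{F}$ in order to dominate the conditional-expectation modulus by that of the classical symmetrized empirical process, whose modulus over vanishing $\rho$-balls tends to zero. First I would apply Lemma \ref{lemma_symmetrization_inequality}, taking the arbitrary class of measurable functions there to be $\mathcal{F}_{\delta_{N}}$ rather than $\mathcal{F}$, to obtain
\[E_{d}\lVert\mathbb{G}_{N}'\rVert_{\mathcal{F}_{\delta_{N}}}^{**}\leq 2 E_{\varepsilon}E_{d}\left\lVert\frac{1}{\sqrt{N}}\sum_{i=1}^{N}\varepsilon_{i}\left(\frac{S_{i,N}}{\pi_{i,N}}-1\right)\delta_{Y_{i}}\right\rVert_{\mathcal{F}_{\delta_{N}}}^{**}\quad\text{a.s.}\]
Writing $(S_{i,N}/\pi_{i,N}-1)\delta_{Y_{i}}=(S_{i,N}/\pi_{i,N})\delta_{Y_{i}}-\delta_{Y_{i}}$, the triangle inequality for $\lVert\cdot\rVert_{\mathcal{F}_{\delta_{N}}}$ together with the subadditivity of the measurable cover splits the right-hand side into a ``design part'' built from $(S_{i,N}/\pi_{i,N})\delta_{Y_{i}}$ and a ``symmetrized classical part'' built from $\delta_{Y_{i}}$. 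The latter does not depend on $\mathbf{S}_{N}$, so by property (iv) at the end of Section \ref{Section_notation_definitions} its majorant reduces to the ordinary cover and the inner $E_{d}$ disappears, leaving the term $2E_{\varepsilon}\lVert N^{-1/2}\sum_{i}\varepsilon_{i}\delta_{Y_{i}}\rVert_{\mathcal{F}_{\delta_{N}}}^{*}$.

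Next I would strip the design out of the remaining part using Lemma \ref{contraction_principle}. By condition A2$^{*}$ there is a constant $L>0$ with $\min_{i}\pi_{i,N}\geq L$ on an event of probability tending to one; on that event set $\gamma_{i}:=L/\pi_{i,N}$, so that $0\leq\gamma_{i}\leq 1$, each $\gamma_{i}$ is a measurable function of $\mathbf{X}_{N}$ alone, and $(S_{i,N}/\pi_{i,N})\delta_{Y_{i}}=L^{-1}S_{i,N}\gamma_{i}\delta_{Y_{i}}$. Applying the contraction principle with $G_{i}:=\delta_{Y_{i}}$ (a function of $\mathbf{Y}_{N},\mathbf{X}_{N}$ only) and index set $\mathcal{F}_{\delta_{N}}$ then bounds the design part by $2L^{-1}E_{\varepsilon}\lVert N^{-1/2}\sum_{i}\varepsilon_{i}\delta_{Y_{i}}\rVert_{\mathcal{F}_{\delta_{N}}}^{*}$ with probability tending to one. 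Collecting the two pieces, on an event of probability tending to one,
\[E_{d}\lVert\mathbb{G}_{N}'\rVert_{\mathcal{F}_{\delta_{N}}}^{**}\leq 2\bigl(1+L^{-1}\bigr)E_{\varepsilon}\left\lVert\frac{1}{\sqrt{N}}\sum_{i=1}^{N}\varepsilon_{i}\delta_{Y_{i}}\right\rVert_{\mathcal{F}_{\delta_{N}}}^{*},\]
so the whole claim is reduced to showing that the right-hand side tends to zero in probability.

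It then remains to prove that the symmetrized classical modulus $E_{\varepsilon}\lVert N^{-1/2}\sum_{i}\varepsilon_{i}\delta_{Y_{i}}\rVert_{\mathcal{F}_{\delta_{N}}}^{*}$ converges to zero in probability for every $\delta_{N}\downarrow 0$. First I would pass to the centered process: since every $f\in\mathcal{F}_{\delta_{N}}$ is a difference $g-h$ with $\rho(g,h)<\delta_{N}$, the Cauchy--Schwarz inequality gives $\sup_{f\in\mathcal{F}_{\delta_{N}}}|P_{y}f|\leq\delta_{N}$, whence
\[E_{\varepsilon}\left\lVert\frac{1}{\sqrt{N}}\sum_{i}\varepsilon_{i}\delta_{Y_{i}}\right\rVert_{\mathcal{F}_{\delta_{N}}}^{*}\leq E_{\varepsilon}\left\lVert\frac{1}{\sqrt{N}}\sum_{i}\varepsilon_{i}(\delta_{Y_{i}}-P_{y})\right\rVert_{\mathcal{F}_{\delta_{N}}}^{*}+\delta_{N}\,E_{\varepsilon}\left|\frac{1}{\sqrt{N}}\sum_{i}\varepsilon_{i}\right|,\]
and the last term is at most $\delta_{N}\to 0$ deterministically, since $E_{\varepsilon}|N^{-1/2}\sum_{i}\varepsilon_{i}|\leq 1$. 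The remaining centered term is the conditional (given the data) asymptotic equicontinuity modulus of the Rademacher-symmetrized empirical process, and it vanishes in probability precisely because $\mathcal{F}$ is $P_{y}$-Donsker; this is the content of the conditional multiplier central limit theorem for bounded multipliers (cf. Section 2.9 in \cite{vdVW}).

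The hard part is this last step. The Donsker property directly delivers only asymptotic equicontinuity of the \emph{uncentered, unconditional} empirical process $\mathbb{G}_{N}$ in outer probability, whereas what is needed is the convergence to zero, in probability over the data, of the \emph{conditional} expectation $E_{\varepsilon}[\,\cdot\,]^{*}$ of the symmetrized modulus. Bridging these two requires the symmetrization inequalities for mean-zero independent processes in order to relate the Rademacher-symmetrized modulus back to that of $\mathbb{G}_{N}$, together with care about the order of conditioning (conditional-in-$\varepsilon$ uniform integrability rather than mere convergence in probability) and about the measurable-cover operations, keeping the stars attached to the correct arguments throughout. This is exactly the conditional asymptotic equicontinuity supplied by the conditional multiplier CLT, which I would invoke to close the argument.
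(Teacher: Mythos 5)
Your proposal reproduces the paper's own argument step for step through its main body: the application of Lemma \ref{lemma_symmetrization_inequality} with $\mathcal{F}_{\delta_{N}}$ in place of $\mathcal{F}$, the triangle-inequality split of $(S_{i,N}/\pi_{i,N}-1)\delta_{Y_{i}}$ into a design part and a pure Rademacher part (with the observation that the latter does not depend on $\mathbf{S}_{N}$, so the double star collapses and $E_{d}$ disappears), the contraction principle of Lemma \ref{contraction_principle} applied with $\gamma_{i}:=L/\pi_{i,N}$ on the event $\min_{i}\pi_{i,N}\geq L$ furnished by A2$^{*}$, and the reduction to the Rademacher modulus with the bound $2(1+L^{-1})E_{\varepsilon}\lVert N^{-1/2}\sum_{i}\varepsilon_{i}\delta_{Y_{i}}\rVert_{\mathcal{F}_{\delta_{N}}}^{*}$, including the centering step via $\sup_{f\in\mathcal{F}_{\delta_{N}}}|P_{y}f|\leq\delta_{N}$. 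All of this is exactly what the paper does, with the same constants.

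The gap is in how you close the argument. The conditional multiplier CLT (Theorem 2.9.6 in \cite{vdVW}) is a statement of conditional weak convergence in the bounded-Lipschitz metric; as stated it does not deliver what you actually need, namely $E_{\varepsilon}\lVert N^{-1/2}\sum_{i}\varepsilon_{i}(\delta_{Y_{i}}-P_{y})\rVert_{\mathcal{F}_{\delta_{N}}}^{*}\overset{P}{\rightarrow}0$ for every $\delta_{N}\downarrow 0$: conditional weak convergence does not imply convergence of conditional expectations of an unbounded modulus functional over shrinking classes, and the modulus statement you want is a step buried inside the proof of that theorem, not a quotable consequence of its conclusion. Your appeal to ``conditional-in-$\varepsilon$ uniform integrability'' circles around the decisive trick without landing on it, and the trick is elementary: $E_{\varepsilon}\lVert\cdot\rVert^{*}$ is a nonnegative random variable whose unconditional expectation equals, by Fubini, $E\lVert\cdot\rVert^{*}$; by the first inequality of Lemma 2.3.6 in \cite{vdVW} (Rademacher symmetrization, applicable since the processes $\delta_{Y_{i}}-P_{y}$ are i.i.d.\ and mean zero) this expectation is bounded by a constant multiple of $E^{*}\lVert N^{-1/2}\sum_{i}(\delta_{Y_{i}}-P_{y})\rVert_{\mathcal{F}_{\delta_{N}}}$, which tends to zero by Corollary 2.3.12 in \cite{vdVW} because $\mathcal{F}$ is $P_{y}$-Donsker and $\rho_{c}\leq\rho$; Markov's inequality applied to the nonnegative variable $E_{\varepsilon}\lVert\cdot\rVert^{*}$ then yields exactly the required convergence in probability. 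So the fact you need is true and is provable with the tools you name, but as written your final step rests on a theorem whose statement does not contain it; replace the black-box invocation by the Fubini--symmetrization--Markov chain above and the proof is complete.
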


\begin{proof}
It is enough to prove that the expectation on the right side of (\ref{symmetrization_inequality}) with $\mathcal{F}$ replaced by $\mathcal{F}_{\delta_{N}}$ goes to zero in probability. To this aim note that by the triangle inequality the latter is bounded by 
\[\frac{1}{L}E_{\varepsilon} E_{d}\left\lVert\frac{1}{\sqrt{N}}\sum_{i=1}^{N}\varepsilon_{i}\frac{L S_{i,N}}{\pi_{i,N}}\delta_{Y_{i}}\right\rVert_{\mathcal{F}_{\delta_{N}}}^{**}+E_{\varepsilon}\left\lVert\frac{1}{\sqrt{N}}\sum_{i=1}^{N}\varepsilon_{i}\delta_{Y_{i}}\right\rVert_{\mathcal{F}_{\delta_{N}}}^{*},\]
where the stars refer to the arguments of the expectations. Then, apply the contraction principle in Lemma \ref{contraction_principle} to see that
\begin{equation}\label{contraction_inequality}
\begin{split}
E_{\varepsilon} E_{d}&\left\lVert\frac{1}{\sqrt{N}}\sum_{i=1}^{N}\varepsilon_{i}\frac{L S_{i,N}}{\pi_{i,N}}\delta_{Y_{i}}\right\rVert_{\mathcal{F}_{\delta_{N}}}^{**}\leq E_{\varepsilon}\left\lVert\frac{1}{\sqrt{N}}\sum_{i=1}^{N}\varepsilon_{i}\delta_{Y_{i}}\right\rVert_{\mathcal{F}_{\delta_{N}}}^{*}
\end{split}
\end{equation}
with probability tending to $1$. Now, observe that 
\begin{equation*}
\begin{split}
EE_{\varepsilon}&\left\lVert\frac{1}{\sqrt{N}}\sum_{i=1}^{N}\varepsilon_{i}\delta_{Y_{i}}\right\rVert_{\mathcal{F}_{\delta_{N}}}^{*}=E\left\lVert\frac{1}{\sqrt{N}}\sum_{i=1}^{N}\varepsilon_{i}\delta_{Y_{i}}\right\rVert_{\mathcal{F}_{\delta_{N}}}^{*}\\
&\leq E\left\lVert\frac{1}{\sqrt{N}}\sum_{i=1}^{N}\varepsilon_{i}(\delta_{Y_{i}}-P_{y})\right\rVert_{\mathcal{F}_{\delta_{N}}}^{*}+E\left\lVert\frac{1}{\sqrt{N}}\sum_{i=1}^{N}\varepsilon_{i}P_{y}\right\rVert_{\mathcal{F}_{\delta_{N}}}^{*}\\
&=E^{*}\left\lVert\frac{1}{\sqrt{N}}\sum_{i=1}^{N}\varepsilon_{i}(\delta_{Y_{i}}-P_{y})\right\rVert_{\mathcal{F}_{\delta_{N}}}+\lVert P_{y}\rVert_{\mathcal{F}_{\delta_{N}}}E\left|\frac{1}{\sqrt{N}}\sum_{i=1}^{N}\varepsilon_{i}\right|\\
&\leq E^{*}\left\lVert\frac{1}{\sqrt{N}}\sum_{i=1}^{N}\varepsilon_{i}(\delta_{Y_{i}}-P_{y})\right\rVert_{\mathcal{F}_{\delta_{N}}}+\delta_{N},
\end{split}
\end{equation*}
and apply the first inequality in the statement of Lemma 2.3.6 on page 111 in \cite{vdVW} to see that the outer expectation in the last line is bounded by a constant multiple of
\[E^{*}\left\lVert\frac{1}{\sqrt{N}}\sum_{i=1}^{N}(\delta_{Y_{i}}-P_{y})\right\rVert_{\mathcal{F}_{\delta_{N}}}.\]
This expectation goes to zero by Corollary 2.3.12 on page 115 in \cite{vdVW} (use the fact that $\rho_{c}\leq \rho$, where $\rho_{c}(f,g):=\sqrt{P_{y}[(f-P_{y}f)-(g-P_{y}g)]^{2}}$ is the expectation-centered $L_{2}(P_{y})$ semimetric on $\mathcal{F}$). The conclusion of the lemma follows now upon an application of Markov's inequality.
\end{proof}

\begin{lem}[Almost sure version of conditional AEC]\label{lemma_almost_sure_AEC_POISSON}
Let $\{\mathbf{S}_{N}\}_{N=1}^{\infty}$, $\mathcal{F}$ and $\{\mathbb{G}_{N}'\}_{N=1}^{\infty}$ be defined as in Lemma \ref{lemma_AEC_expectation_POISSON}, and assume that the almost sure versions of conditions A1 and A2$^{*}$ hold and moreover that condition 
\begin{itemize}
\item[S) ] $E^{*}\lVert \delta_{Y_{1}}-P_{y}\rVert_{\mathcal{F}}^{2}<\infty$
\end{itemize}
is satisfied. Then it follows that
\[E_{d}\lVert \mathbb{G}_{N}'\rVert_{\mathcal{F}_{\delta_{N}}}^{**}\overset{as}{\rightarrow} 0\quad\text{ for every }\delta_{N}\downarrow 0\]
where the stars refer to the argument of the expectation $E_{d}$.
\end{lem}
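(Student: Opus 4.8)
The plan is to reproduce the proof of Lemma \ref{lemma_AEC_expectation_POISSON} up to the point where the sampling design has been integrated out, and then to replace the final Markov-inequality step (which only gave convergence in probability) by a reverse submartingale argument that upgrades the conclusion to almost sure convergence; condition S is exactly what powers this upgrade.

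First I would apply the symmetrization inequality of Lemma \ref{lemma_symmetrization_inequality} with $\mathcal{F}$ replaced by $\mathcal{F}_{\delta_N}$, which gives almost surely
\[E_d\lVert\mathbb{G}_N'\rVert_{\mathcal{F}_{\delta_N}}^{**}\le 2E_\varepsilon E_d\left\lVert\frac{1}{\sqrt N}\sum_{i=1}^N\varepsilon_i\left(\frac{S_{i,N}}{\pi_{i,N}}-1\right)\delta_{Y_i}\right\rVert_{\mathcal{F}_{\delta_N}}^{**}.\]
As in Lemma \ref{lemma_AEC_expectation_POISSON}, the triangle inequality splits the right side into a term carrying the factor $S_{i,N}/\pi_{i,N}$ and a term carrying the constant $1$. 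Under the almost sure version of A2$^*$ one has $\min_i\pi_{i,N}\ge L$ eventually almost surely, so the coefficients $\gamma_i:=L/\pi_{i,N}$ satisfy $0\le\gamma_i\le1$ eventually almost surely and the contraction principle of Lemma \ref{contraction_principle} applies in its eventually-almost-sure form. Exactly as before, and after peeling off the centering using $\lVert P_y\rVert_{\mathcal{F}_{\delta_N}}\le\delta_N$ together with $E_\varepsilon|N^{-1/2}\sum_i\varepsilon_i|\le1$, this bounds $E_d\lVert\mathbb{G}_N'\rVert_{\mathcal{F}_{\delta_N}}^{**}$ eventually almost surely by a fixed constant multiple of $\delta_N$ plus a constant multiple of
\[b_N:=E_\varepsilon\left\lVert\frac{1}{\sqrt N}\sum_{i=1}^N\varepsilon_i(\delta_{Y_i}-P_y)\right\rVert_{\mathcal{F}_{\delta_N}}^{*},\]
so it suffices to prove $b_N\overset{as}{\rightarrow}0$ for every $\delta_N\downarrow0$.

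For fixed $\delta>0$ write $b_N^{(\delta)}$ for the same quantity with $\mathcal{F}_\delta$ in place of $\mathcal{F}_{\delta_N}$. The map $(Y_1,\dots,Y_N)\mapsto b_N^{(\delta)}$ is a symmetric function of its arguments (relabelling the $Y_i$ and the corresponding $\varepsilon_i$ simultaneously leaves $E_\varepsilon$ unchanged), and by convexity of the supremum norm the sequence $\{b_N^{(\delta)}\}_N$ can be shown to be a reverse submartingale with respect to the decreasing filtration generated by the exchangeable $\sigma$-fields of $(Y_1,\dots,Y_N)$ together with $Y_{N+1},Y_{N+2},\dots$; this is the device underlying the almost sure multiplier central limit theorem in \cite{vdVW}. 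Condition S, i.e.\ $E^*\lVert\delta_{Y_1}-P_y\rVert_{\mathcal{F}}^2<\infty$, guarantees that this reverse submartingale is bounded in $L_1$ and uniformly integrable, so by the reverse submartingale convergence theorem $b_N^{(\delta)}$ converges almost surely and in mean; its almost sure limit is measurable with respect to the tail of the exchangeable $\sigma$-fields, hence constant by the Hewitt--Savage zero--one law, and this constant equals $c(\delta):=\lim_N Eb_N^{(\delta)}$ (the limit of the nonincreasing sequence $\{Eb_N^{(\delta)}\}_N$).

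It remains to pass from fixed $\delta$ to $\delta_N\downarrow0$ and then to let $\delta\to0$. By the unconditional asymptotic equicontinuity of the symmetrized empirical process, a consequence of the $P_y$-Donsker property of $\mathcal{F}$ via the symmetrization inequalities of Lemma 2.3.6 and Corollary 2.3.12 in \cite{vdVW}, one has $\lim_{\delta\to0}\limsup_N Eb_N^{(\delta)}=0$, whence $c(\delta)\to0$ as $\delta\to0$. Fixing the sequence $\delta^{(k)}=1/k\downarrow0$, on a single event of probability one $b_N^{(1/k)}\to c(1/k)$ for every $k$; and for any $\delta_N\downarrow0$ and any fixed $k$ we have $\mathcal{F}_{\delta_N}\subseteq\mathcal{F}_{1/k}$ eventually, so $b_N\le b_N^{(1/k)}$ eventually and $\limsup_N b_N\le c(1/k)$, which forces $\limsup_N b_N\le0$ almost surely since $k$ is arbitrary. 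The main obstacle is the reverse submartingale step: establishing the reverse submartingale inequality at the $1/\sqrt N$ scale and carrying out the measurable-cover bookkeeping needed to make the exchangeability argument and the monotonicity $b_N\le b_N^{(1/k)}$ rigorous for the outer expectations $\lVert\cdot\rVert^{*}$.
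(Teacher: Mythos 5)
Your reduction — symmetrization via Lemma \ref{lemma_symmetrization_inequality}, the contraction principle of Lemma \ref{contraction_principle} under the eventual lower bound $L$ from A2$^{*}$, peeling off the centering with $\lVert P_{y}\rVert_{\mathcal{F}_{\delta_{N}}}\leq\delta_{N}$, and the concluding diagonal argument passing from fixed $\delta$ to arbitrary $\delta_{N}\downarrow 0$ — coincides with the paper's proof. The gap is precisely the step you flag as the main obstacle: the sequence $b_{N}^{(\delta)}:=E_{\varepsilon}\lVert N^{-1/2}\sum_{i=1}^{N}\varepsilon_{i}(\delta_{Y_{i}}-P_{y})\rVert_{\mathcal{F}_{\delta}}^{*}$ is \emph{not} a reverse submartingale with respect to the exchangeable $\sigma$-fields, and no convexity argument can make it one. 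Jensen's inequality yields the reverse submartingale property only at the scale $1/N$, where one has the identity $E[\mathbb{P}_{y,N}f\mid\mathcal{E}_{N+1}]=\mathbb{P}_{y,N+1}f$ for the exchangeable $\sigma$-field $\mathcal{E}_{N+1}$ (this is the device behind Lemma 2.4.5 of \cite{vdVW}, i.e.\ the Glivenko--Cantelli case); at the scale $1/\sqrt{N}$ there is no analogous identity. Concretely, already for a single mean-zero function, writing $a_{i}$ for its value at $Y_{i}$, one has $b_{1}=|a_{1}|$, $b_{2}=E_{\varepsilon}|2^{-1/2}(\varepsilon_{1}a_{1}+\varepsilon_{2}a_{2})|=2^{-1/2}\max(|a_{1}|,|a_{2}|)$ and $E[b_{1}\mid\mathcal{E}_{2}]=\tfrac{1}{2}(|a_{1}|+|a_{2}|)$; on the event $\{a_{1}=1,a_{2}=0\}$ the reverse submartingale inequality fails ($\tfrac{1}{2}<2^{-1/2}$), while on $\{a_{1}=a_{2}=1\}$ the reverse supermartingale inequality fails ($1>2^{-1/2}$), and both events can have positive probability. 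Consequently neither the almost sure convergence of $b_{N}^{(\delta)}$, nor the monotonicity of $Eb_{N}^{(\delta)}$, nor the identification of the limit as $\lim_{N}Eb_{N}^{(\delta)}$ is established, and the Hewitt--Savage step has nothing to apply to. (It is also a misattribution: the almost sure multiplier theory in \cite{vdVW} does not rest on a reverse submartingale at the $\sqrt{N}$ scale.)

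The step that correctly repairs this — and it is what the paper does — is to invoke the first inequality of Lemma 2.9.9 on page 185 in \cite{vdVW} (the almost sure multiplier inequality), applied for fixed $\delta>0$ with $\xi_{i}=\varepsilon_{i}$ and $Z_{i}=\delta_{Y_{i}}-P_{y}$ viewed as $\mathcal{F}_{\delta}$-indexed processes. Condition S is exactly the second-moment hypothesis that this lemma requires, and it gives
\[\limsup_{N\rightarrow\infty}E_{\varepsilon}\left\lVert\frac{1}{\sqrt{N}}\sum_{i=1}^{N}\varepsilon_{i}(\delta_{Y_{i}}-P_{y})\right\rVert_{\mathcal{F}_{\delta}}^{*}\leq 6\sqrt{2}\,\limsup_{N\rightarrow\infty}E^{*}\left\lVert\frac{1}{\sqrt{N}}\sum_{i=1}^{N}\varepsilon_{i}(\delta_{Y_{i}}-P_{y})\right\rVert_{\mathcal{F}_{\delta}}\quad\text{a.s.}\]
Since the right side tends to zero as $\delta\downarrow 0$ (this was shown in the proof of Lemma \ref{lemma_AEC_expectation_POISSON} via Lemma 2.3.6 and Corollary 2.3.12 of \cite{vdVW}), your final diagonal argument then applies essentially verbatim. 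Note that the proof of Lemma 2.9.9 in \cite{vdVW} is genuinely harder than a reverse submartingale argument: it combines the multiplier (blocking) inequalities and Hoffmann--J{\o}rgensen-type bounds with the Hewitt--Savage zero-one law. So this ingredient should be cited, or reproduced in full; it cannot be replaced by the submartingale claim as written.
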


\begin{proof}
Again, it will be shown that the right side of (\ref{symmetrization_inequality}) with $\mathcal{F}_{\delta_{N}}$ in place of $\mathcal{F}$ goes to zero almost surely. To this aim, go through the steps in the proof of Lemma \ref{lemma_AEC_expectation_POISSON} up to inequality (\ref{contraction_inequality}) to see that it suffices to show that the right side of (\ref{contraction_inequality}) goes to zero almost surely. Since the right side of (\ref{contraction_inequality}) is bounded by
\[E_{\varepsilon}\left\lVert\frac{1}{\sqrt{N}}\sum_{i=1}^{N}\varepsilon_{i}(\delta_{Y_{i}}-P_{y})\right\rVert_{\mathcal{F}_{\delta_{N}}}^{*}+\delta_{N},\]
it is sufficient to show that the conditional expectation in the last display goes to zero almost surely. In order to prove the latter assertion, apply the first inequality in the statement of Lemma 2.9.9 on page 185 in \cite{vdVW} with $\mathcal{F}$ replaced by $\mathcal{F}_{\delta}$ with $\delta>0$ arbitrary but fixed, and with $\xi_{i}$ and $Z_{i}$ replaced by the Rademacher random variables $\varepsilon_{i}$ and the $\mathcal{F}_{\delta}$-indexed stochastic processes $\delta_{Y_{i}}-P_{y}$, respectively. In this way it is seen that for every $\delta>0$
\begin{equation}\label{disugaglianza_as_symmetrization}
\begin{split}
\limsup_{N\rightarrow\infty}&E_{\varepsilon}\left\lVert\frac{1}{\sqrt{N}}\sum_{i=1}^{N}\varepsilon_{i}(\delta_{Y_{i}}-P_{y})\right\rVert_{\mathcal{F}_{\delta}}^{*}\leq\\
&\leq 6\sqrt{2}\limsup_{N\rightarrow\infty}E^{*}\left\lVert\frac{1}{\sqrt{N}}\sum_{i=1}^{N}\varepsilon_{i}(\delta_{Y_{i}}-P_{y})\right\rVert_{\mathcal{F}_{\delta}}\quad\text{a.s.}
\end{split}
\end{equation}
Now, in the proof of the previous lemma it has already been shown that
\[E^{*}\left\lVert\frac{1}{\sqrt{N}}\sum_{i=1}^{N}\varepsilon_{i}(\delta_{Y_{i}}-P_{y})\right\rVert_{\mathcal{F}_{\delta_{N}}}\rightarrow 0\quad\text{ for every }\delta_{N}\downarrow 0\]
which is equivalent to 
\[\lim_{\delta\rightarrow 0}\limsup_{N\rightarrow\infty}E^{*}\left\lVert\frac{1}{\sqrt{N}}\sum_{i=1}^{N}\varepsilon_{i}(\delta_{Y_{i}}-P_{y})\right\rVert_{\mathcal{F}_{\delta}}^{*}=0.\]
In combination with (\ref{disugaglianza_as_symmetrization}) this implies that
\[\lim_{\delta\rightarrow 0}\limsup_{N\rightarrow\infty}E_{\varepsilon}\left\lVert\frac{1}{\sqrt{N}}\sum_{i=1}^{N}\varepsilon_{i}(\delta_{Y_{i}}-P_{y})\right\rVert_{\mathcal{F}_{\delta}}^{*}=0\quad\text{ a.s.}\]
and this last display is equivalent to the right side of (\ref{contraction_inequality}) going to zero almost surely for arbitrary $\delta_{N}\downarrow0$.
\end{proof}

Having gone through the most difficult technical details it remains to prove the desired weak convergence results. 

\begin{thm}[Unconditional weak convergence]\label{unconditional_convergence}
Let $\{\mathbf{S}_{N}\}_{N=1}^{\infty}$ be the sequence of vectors of sample inclusion indicators corresponding to a sequence of measurable Poisson sampling designs, let $\mathcal{F}$ be a $P_{y}$-Donsker class, and let $\{\mathbb{G}'_{N}\}_{N=1}^{\infty}$ be the sequence of HTEPs corresponding to $\mathcal{F}$ and $\{\mathbf{S}_{N}\}_{N=1}^{\infty}$. 

Assume that the probability versions of conditions A1 and A2$^{*}$ hold, and assume moreover that 
\begin{itemize}
\item[A3) ] $\mathcal{F}$ has uniformly bounded mean, i.e. $\sup\{|P_{y}f|:f\in\mathcal{F}\}<\infty$.
\end{itemize}
Then it follows that 
\begin{itemize}
\item[(i) ] there exists zero-mean Gaussian process $\{\mathbb{G}'f:f\in\mathcal{F}\}$ with covariance function given by $\Sigma'$ which is a Borel measurable and tight random element of $l^{\infty}(\mathcal{F})$ such that
\[\mathbb{G}_{N}'\rightsquigarrow\mathbb{G}'\quad\text{ in }l^{\infty}(\mathcal{F});\]
\item[(ii) ] the sample paths $f\mapsto\mathbb{G}'f$ are uniformly $\rho$-continuous with probability $1$.
\end{itemize}
\end{thm}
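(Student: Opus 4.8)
The plan is to verify the two hypotheses of Theorem 1.5.4 in \cite{vdVW}: convergence of the finite-dimensional marginal distributions of $\{\mathbb{G}_N'\}_{N=1}^\infty$ and their unconditional asymptotic tightness. Throughout I would work with the ordinary $L_2(P_y)$ semimetric $\rho$, which is the natural choice here because condition A3 is precisely what makes total boundedness available in this metric, and because the AEC lemmas of this section are tailored to $\rho$.

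For the marginals, first note that by the Remark following Lemma \ref{lemma_marginal_convergence_expectation} the probability version of condition A2$^*$ implies condition A2 for the $P_y$-Donsker class $\mathcal{F}$, so Lemma \ref{lemma_marginal_convergence_expectation} applies and yields $E_d \exp(i \mathbf{t}^\intercal \mathbb{G}_N' \mathbf{f}) \overset{P}{\to} \exp(-\tfrac12 \mathbf{t}^\intercal \Sigma'(\mathbf{f}) \mathbf{t})$ for every finite $\mathbf{f} \in \mathcal{F}^r$ and $\mathbf{t} \in \mathbb{R}^r$. Since these conditional characteristic functions are measurable and uniformly bounded by $1$, bounded convergence upgrades convergence in probability to convergence of the unconditional expectations, i.e. $E \exp(i \mathbf{t}^\intercal \mathbb{G}_N' \mathbf{f}) \to \exp(-\tfrac12 \mathbf{t}^\intercal \Sigma'(\mathbf{f}) \mathbf{t})$, so by L\'evy's continuity theorem the marginals converge to those of a zero-mean Gaussian law with covariance $\Sigma'$. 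This already pins down the candidate limit $\mathbb{G}'$.

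Asymptotic tightness I would obtain from Theorem 1.5.7 in \cite{vdVW} by checking total boundedness and unconditional AEC with respect to $\rho$. Total boundedness follows from condition TB$_*$ of Lemma \ref{lemma_total_boundedness} applied with $Q = P_y$ and $d' = \rho$: since $\rho$ is trivially uniformly stronger than itself and A3 guarantees $\sup\{|P_y f| : f \in \mathcal{F}\} < \infty$, the Donsker class $\mathcal{F}$ is totally bounded for $\rho$. For AEC the key input is Lemma \ref{lemma_AEC_expectation_POISSON}, which gives $E_d \lVert \mathbb{G}_N' \rVert_{\mathcal{F}_{\delta_N}}^{**} \overset{P}{\to} 0$ for every $\delta_N \downarrow 0$. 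A conditional Markov inequality turns this into $P_d\{\lVert \mathbb{G}_N'\rVert_{\mathcal{F}_{\delta_N}}^{**} > \epsilon\} \overset{P}{\to} 0$, and since these conditional probabilities lie in $[0,1]$, bounded convergence gives $P\{\lVert \mathbb{G}_N'\rVert_{\mathcal{F}_{\delta_N}}^{**} > \epsilon\} \to 0$. Finally, because the special majorant is measurable and satisfies $\lVert\mathbb{G}_N'\rVert^{**} \geq \lVert\mathbb{G}_N'\rVert$ almost surely, one has $P^*\{\lVert\mathbb{G}_N'\rVert_{\mathcal{F}_{\delta_N}} > \epsilon\} \leq P\{\lVert\mathbb{G}_N'\rVert_{\mathcal{F}_{\delta_N}}^{**} > \epsilon\} \to 0$, which is exactly unconditional AEC in $\rho$.

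Combining the marginal convergence with asymptotic tightness, Theorem 1.5.4 in \cite{vdVW} yields $\mathbb{G}_N' \rightsquigarrow \mathbb{G}'$ in $l^\infty(\mathcal{F})$ for a tight Borel measurable limit whose marginals are the Gaussian ones identified above, which is part (i). For part (ii), tightness of $\mathbb{G}'$ together with total boundedness of $\mathcal{F}$ for $\rho$ forces almost all sample paths $f \mapsto \mathbb{G}'f$ to be uniformly $\rho$-continuous, again by the characterization of tight limits underlying Theorem 1.5.7 in \cite{vdVW}. I expect the one genuinely delicate step to be the passage from the conditional statement of Lemma \ref{lemma_AEC_expectation_POISSON} to unconditional AEC: this is where the nonstandard majorant $\lVert\cdot\rVert^{**}$ and the measurability bookkeeping of Section \ref{Section_notation_definitions} must be used carefully, and it is precisely the reason the probability version of conditional AEC was phrased in terms of $\lVert\cdot\rVert^{**}$ rather than the ordinary measurable cover.
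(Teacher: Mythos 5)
Your proposal is correct and takes essentially the same route as the paper's own proof: marginal convergence from Lemma \ref{lemma_marginal_convergence_expectation} (using the Remark to reduce A2$^{*}$ to A2), total boundedness w.r.t. $\rho$ from Lemma \ref{lemma_total_boundedness} under A3, unconditional AEC deduced from Lemma \ref{lemma_AEC_expectation_POISSON} via a conditional Markov inequality plus bounded convergence, and the conclusion via Theorems 1.5.4 and 1.5.7 and Addendum 1.5.8 of \cite{vdVW}. The only cosmetic difference is that you carry the special majorant $\lVert\cdot\rVert_{\mathcal{F}_{\delta_{N}}}^{**}$ through the tightness step, whereas the paper switches to the ordinary cover $\lVert\cdot\rVert_{\mathcal{F}_{\delta_{N}}}^{*}$; the two are interchangeable since $\lVert\cdot\rVert_{\mathcal{F}_{\delta_{N}}}^{*}\leq\lVert\cdot\rVert_{\mathcal{F}_{\delta_{N}}}^{**}$ almost surely.
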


\begin{proof}
By Theorem 1.5.4 on page 35 and Theorem 1.5.7 on page 37 in \citep{vdVW}, for proving part (i) of the conclusion it is sufficient to show that
\begin{itemize}
\item[a) ] the marginal distributions of $\{\mathbb{G}_{N}'\}_{N=1}^{\infty}$ converge weakly to the corresponding marginal distributions of $\mathbb{G}'$, which in the notation of this paper can be written as
\[\mathbb{G}_{N}'\mathbf{f}\rightsquigarrow\mathbb{G}'\mathbf{f}\text{ in }\mathbb{R}^{r}\]
for every finite dimensional vector $\mathbf{f}:=(f_{1},f_{2},\dots, f_{r})\in\mathcal{F}^{r}$;
\item[b) ] $\{\mathbb{G}_{N}'\}_{N=1}^{\infty}$ is unconditionally AEC w.r.t. the semimetric $\rho$, which in the notation of this paper can be written as
\[\lim_{\delta\rightarrow 0}\limsup_{N\rightarrow\infty}P^{*}\left\{\lVert \mathbb{G}'_{N}\rVert_{\mathcal{F}_{\delta}}>\epsilon\right\}=0\quad\text{ for every }\epsilon>0;\]
\item[c) ] $\mathcal{F}$ is totally bounded w.r.t. the semimetric $\rho$.
\end{itemize}

Now, condition a) is an immediate consequence of Lemma \ref{lemma_marginal_convergence_expectation}, and condition c) follows from assumption A3 and Lemma \ref{lemma_total_boundedness}. It remains to show that condition b) holds. To this aim note that condition b) is equivalent to 
\[P^{*}\left\{\lVert \mathbb{G}'_{N}\rVert_{\mathcal{F}_{\delta_{N}}}>\epsilon\right\}\rightarrow 0\quad\text{ for every }\delta_{N}\downarrow0,\]
and that
\begin{equation*}
\begin{split}
P^{*}\left\{\lVert \mathbb{G}'_{N}\rVert_{\mathcal{F}_{\delta_{N}}}>\epsilon\right\}&= P\left\{\lVert \mathbb{G}'_{N}\rVert_{\mathcal{F}_{\delta_{N}}}^{*}>\epsilon\right\}\\
&\leq E P_{d}\left\{\lVert \mathbb{G}'_{N}\rVert_{\mathcal{F}_{\delta_{N}}}^{*}>\epsilon\right\}.
\end{split}
\end{equation*}
Then note that the expectation in the last line goes to zero because its argument is bounded and because by Markov's inequality and by Lemma \ref{lemma_AEC_expectation_POISSON}
\[P_{d}\left\{\lVert \mathbb{G}'_{N}\rVert_{\mathcal{F}_{\delta_{N}}}^{*}>\epsilon\right\}\leq\frac{E_{d}\lVert \mathbb{G}'_{N}\rVert_{\mathcal{F}_{\delta_{N}}}^{*}}{\epsilon}\overset{P}{\rightarrow} 0.\]
This shows that condition b) is also satisfied and completes the proof of part (i) of the conclusion. Part (ii) follows now from Addendum 1.5.8 on page 37 in \citep{vdVW}.
\end{proof}

\begin{thm}[Outer probability conditional weak convergence]\label{outer_expectation_conditional_weak_convergence}
Under the assumptions of Theorem \ref{unconditional_convergence} it follows that
\[\sup_{h\in BL_{1}(l^{\infty}(\mathcal{F}))}\left|E_{d}h(\mathbb{G}_{N}')-Eh(\mathbb{G}')\right|\overset{P*}{\rightarrow}0,\]
where $BL_{1}(l^{\infty}(\mathcal{F}))$ is the set of all functions $h:l^{\infty}(\mathcal{F})\mapsto[0,1]$ such that $|h(z_{1})-h(z_{2})|\leq \lVert z_{1}-z_{2}\rVert_{\mathcal{F}}$ for every $z_{1},z_{2}\in l^{\infty}(\mathcal{F})$, and 
where $\mathbb{G}'$ is defined as in Theorem \ref{unconditional_convergence}.
\end{thm}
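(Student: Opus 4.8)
The plan is to run the classical two-step reduction for conditional weak convergence: discretize $\mathcal{F}$ to a finite $\rho$-net, absorb the discretization error into the conditional asymptotic equicontinuity already proved in Lemma~\ref{lemma_AEC_expectation_POISSON}, and reduce the remaining finite-dimensional comparison to Corollary~\ref{cor_marginal_convergence}. Throughout I note that, for fixed values of $Y$ and $X$, the vector $\mathbf{S}_N$ takes only finitely many values, so $E_d h(\mathbb{G}_N')$ is a genuine finite sum and hence well defined for every (possibly non-measurable) $h\in BL_1(l^\infty(\mathcal{F}))$.

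First I would fix $\delta>0$ and invoke total boundedness of $\mathcal{F}$ with respect to $\rho$ (which holds by assumption A3 and Lemma~\ref{lemma_total_boundedness}) to split $\mathcal{F}$ into finitely many cells of $\rho$-diameter below $\delta$. Choosing one representative per cell yields a finite set $\mathcal{G}_\delta\subseteq\mathcal{F}$ and a map $\pi_\delta:\mathcal{F}\to\mathcal{G}_\delta$ with $\rho(f,\pi_\delta f)<\delta$ for all $f$. Writing $z_\delta:=z\circ\pi_\delta$ and $\mathbb{G}'_\delta:=\mathbb{G}'\circ\pi_\delta$, the linearity of $\mathbb{G}_N'$ in $f$ gives the pathwise bound $\lVert\mathbb{G}_N'-(\mathbb{G}_N')_\delta\rVert_{\mathcal{F}}\leq\lVert\mathbb{G}_N'\rVert_{\mathcal{F}_\delta}$, and since $\pi_\delta$ does not expand the $\mathcal{G}_\delta$-supremum distance, every $h\in BL_1(l^\infty(\mathcal{F}))$ induces a map $\bar h(w):=h(w\circ\pi_\delta)$ lying in $BL_1(l^\infty(\mathcal{G}_\delta))$, with $h((\mathbb{G}_N')_\delta)=\bar h(\mathbb{G}_N'\restriction\mathcal{G}_\delta)$ and $h(\mathbb{G}'_\delta)=\bar h(\mathbb{G}'\restriction\mathcal{G}_\delta)$. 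A triangle inequality then yields, after passing in the first term to the measurable majorant of Section~\ref{Section_notation_definitions} so that $E_d$ is a proper conditional expectation,
\begin{equation*}
\begin{split}
\sup_{h}\bigl\lvert E_d h(\mathbb{G}_N')-Eh(\mathbb{G}')\bigr\rvert
&\leq E_d\lVert\mathbb{G}_N'\rVert_{\mathcal{F}_\delta}^{**}\\
&\quad+\sup_{\bar h\in BL_1(l^\infty(\mathcal{G}_\delta))}\bigl\lvert E_d\bar h(\mathbb{G}_N'\restriction\mathcal{G}_\delta)-E\bar h(\mathbb{G}'\restriction\mathcal{G}_\delta)\bigr\rvert\\
&\quad+E\lVert\mathbb{G}'-\mathbb{G}'_\delta\rVert_{\mathcal{F}}.
\end{split}
\end{equation*}

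Next I would dispose of the three terms. The middle term is finite-dimensional and, for the fixed finite set $\mathcal{G}_\delta$, converges to $0$ in probability by Corollary~\ref{cor_marginal_convergence}. The third term is deterministic: by part (ii) of Theorem~\ref{unconditional_convergence} the sample paths of $\mathbb{G}'$ are uniformly $\rho$-continuous, so $\lVert\mathbb{G}'-\mathbb{G}'_\delta\rVert_{\mathcal{F}}\to 0$ almost surely as $\delta\downarrow 0$, and since it is dominated by $2\lVert\mathbb{G}'\rVert_{\mathcal{F}}$, which is integrable for a tight Gaussian process, dominated convergence gives $E\lVert\mathbb{G}'-\mathbb{G}'_\delta\rVert_{\mathcal{F}}\to 0$. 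The first term is exactly the conditional modulus controlled by Lemma~\ref{lemma_AEC_expectation_POISSON}.

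The step needing the most care is the interchange of limits. Lemma~\ref{lemma_AEC_expectation_POISSON} is phrased along sequences $\delta_N\downarrow 0$, whereas the display above wants me to fix $\delta$, let $N\to\infty$, and only then send $\delta\downarrow 0$. I would convert it into the double-limit form $\lim_{\delta\to 0}\limsup_{N\to\infty}P\{E_d\lVert\mathbb{G}_N'\rVert_{\mathcal{F}_\delta}^{**}>\epsilon\}=0$; the two are equivalent because $\delta\mapsto\mathcal{F}_\delta$, and hence $\delta\mapsto E_d\lVert\mathbb{G}_N'\rVert_{\mathcal{F}_\delta}^{**}$, is nondecreasing, exactly as in the equivalence recorded for the unconditional modulus in Section~\ref{empirical_process_theory_POISSON}. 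With this in hand the diagonalization is routine: given $\eta,\gamma>0$, I choose $\delta$ so small that $E\lVert\mathbb{G}'-\mathbb{G}'_\delta\rVert_{\mathcal{F}}<\eta/3$ and $\limsup_N P\{E_d\lVert\mathbb{G}_N'\rVert_{\mathcal{F}_\delta}^{**}>\eta/3\}<\gamma$; then, with this $\delta$ held fixed, the middle term tends to $0$ in probability, so $\limsup_N P^*\{\sup_h|E_d h(\mathbb{G}_N')-Eh(\mathbb{G}')|>\eta\}\leq\gamma$. Letting $\gamma\downarrow 0$ gives the asserted $\overset{P*}{\rightarrow}0$.
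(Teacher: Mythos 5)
Your proof is correct and follows essentially the same route as the paper's: a finite $\rho$-net discretization, with the three error terms controlled respectively by the conditional AEC of Lemma~\ref{lemma_AEC_expectation_POISSON}, the finite-dimensional convergence of Corollary~\ref{cor_marginal_convergence}, and the uniform $\rho$-continuity of the sample paths of $\mathbb{G}'$, followed by a diagonalization in $(\delta,N)$. The only cosmetic differences are that you bound the limit-process discretization term by $E\lVert\mathbb{G}'-\mathbb{G}'\circ\pi_\delta\rVert_{\mathcal{F}}$ with dominated convergence (the paper instead invokes $\mathbb{G}'\circ\Pi_\delta\rightsquigarrow\mathbb{G}'$ and the bounded-Lipschitz metrization), and that you phrase the final limit interchange as a double-limit argument rather than extracting a sequence $\delta_N\downarrow 0$; both variants are sound.
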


\begin{proof}
The proof is almost the same as the one of Theorem 2.9.6 on page 182 in \cite{vdVW}. First note 
that in the proof of Theorem \ref{unconditional_convergence} it has already been shown that $\mathcal{F}$ is totally bounded w.r.t. $\rho$, and that the two parts of the conclusion of the same theorem imply the existence of the limit process $\{\mathbb{G}'f:f\in\mathcal{F}\}$ as a Borel measurable and tight random element of $l^{\infty}(\mathcal{F})$ whose sample paths are uniformly $\rho$-continuous with probability $1$. 

Now, since the class of functions $\mathcal{F}$ is totally bounded with respect to $\rho$, there exists for every $\delta>0$ a finite $\delta$-net for $\mathcal{F}$ (here and in the rest of this proof the underlying semimetric will always be $\rho$). Let $\mathcal{G}_{\delta}$ denote the $\delta$-net corresponding to a given $\delta>0$, and for each $\delta>0$ define a corresponding mapping $\Pi_{\delta}:\mathcal{F}\mapsto\mathcal{G}_{\delta}$ by letting $\Pi_{\delta}f$ be the element of $\mathcal{G}_{\delta}$ that is closest to $f\in\mathcal{F}$ w.r.t. $\rho$. Since $\{\mathbb{G}'f:f\in\mathcal{F}\}$ is Borel measurable (in $l^{\infty}(\mathcal{F})$) and its sample paths are uniformly $\rho$-continuous almost surely, it follows that
\[\mathbb{G}'\circ\Pi_{\delta} \overset{as}{\rightarrow}\mathbb{G}'\text{ in }l^{\infty}(\mathcal{F})\quad\text{ if }\delta\downarrow 0.\]
From this it follows further that $\mathbb{G}'\circ\Pi_{\delta} \rightsquigarrow\mathbb{G}'$ in $l^{\infty}(\mathcal{F})$ for $\delta\downarrow 0$ and the latter condition is equivalent to
\begin{equation}\label{uniform_continuous_sample_paths}
\sup_{h\in BL_{1}(l^{\infty}(\mathcal{F}))}\left|Eh(\mathbb{G}'\circ\Pi_{\delta})-Eh(\mathbb{G}')\right|\rightarrow0\quad\text{ for }\delta\downarrow 0
\end{equation}
(see the comments at the top of page 73 in \cite{vdVW}).

Next, it will be shown that for every fixed $\delta>0$
\begin{equation}\label{marginal_convergence}
\sup_{h\in BL_{1}(l^{\infty}(\mathcal{F}))}\left|E_{d}h(\mathbb{G}'_{N}\circ\Pi_{\delta})-Eh(\mathbb{G}'\circ\Pi_{\delta})\right|\overset{P*}{\rightarrow}0.
\end{equation}
To this aim, define for each $\delta>0$ the mapping $A_{\delta}:l^{\infty}(\mathcal{G}_{\delta})\mapsto l^{\infty}(\mathcal{F})$ by $A_{\delta}(z):=z\circ\Pi_{\delta}$, and note that $A_{\delta}$ transforms a function $z\in l^{\infty}(\mathcal{G}_{\delta})$ into a function $z'\in l^{\infty}(\mathcal{F})$ by extending the domain from $\mathcal{G}_{\delta}$ to $\mathcal{F}$: for $f\in\mathcal{G}_{\delta}$ the new function $z'$ remains the same (in fact, $z'(f):=z(\Pi_{\delta}(f))=z(f)$), and the new function $z'$ is constant on each level set of $\Pi_{\delta}$ (since $\mathcal{G}_{\delta}$ is finite there is only a finite number of such level sets and the range of the new function $z'$ must therefore be finite as well). Then, for $h:l^{\infty}(\mathcal{F})\mapsto\mathbb{R}$ and $G$ an $\mathcal{F}$-indexed stochastic process it follows that $h(G\circ\Pi_{\delta})=h\circ A_{\delta}(G\restriction\mathcal{G}_{\delta})$. Moreover, if $h\in BL_{1}(l^{\infty}(\mathcal{F}))$, then
\[|h\circ A_{\delta}(z_{1})-h\circ A_{\delta}(z_{2})|\leq\lVert A_{\delta}(z_{1})-A_{\delta}(z_{2})\rVert_{\mathcal{F}}=\lVert z_{1}\circ\Pi_{\delta}-z_{2}\circ\Pi_{\delta}\rVert_{\mathcal{F}}=\lVert z_{1}-z_{2}\rVert_{\mathcal{G}_{\delta}},\]
and the composition $h\circ A_{\delta}$ is therefore a member of $BL_{1}(l^{\infty}(\mathcal{G}_{\delta}))$, i.e. of the set of all functions $g:l^{\infty}(\mathcal{G}_{\delta})\mapsto[0,1]$ such that $|g(z_{1})-g(z_{2})|\leq \lVert z_{1}-z_{2}\rVert_{\mathcal{G}_{\delta}}$ for every $z_{1},z_{2}\in l^{\infty}(\mathcal{G}_{\delta})$. It follows that the supremum on the left side in (\ref{marginal_convergence}) is bounded by
\[\sup_{g\in BL_{1}(l^{\infty}(\mathcal{G}_{\delta}))}\left|E_{d}g(\mathbb{G}'_{N}\restriction\mathcal{G}_{\delta})-Eg(\mathbb{G}'\restriction\mathcal{G}_{\delta})\right|,\]
which is measurable and goes to zero in probability (see Corollary \ref{cor_marginal_convergence}). This proves (\ref{marginal_convergence}). 

Finally, in order to complete the proof note that for every fixed $\delta>0$
\begin{equation}\label{application_conditional_AEC}
\begin{split}
\sup_{h\in BL_{1}(l^{\infty}(\mathcal{F}))}&\left|E_{d}h(\mathbb{G}'_{N}\circ\Pi_{\delta})-E_{d}h(\mathbb{G}'_{N})\right|\leq\\
&\leq \sup_{h\in BL_{1}(l^{\infty}(\mathcal{F}))}E_{d}\left|h(\mathbb{G}'_{N}\circ\Pi_{\delta})-h(\mathbb{G}'_{N})\right|\\
&\leq E_{d}\lVert\mathbb{G}'_{N}\circ\Pi_{\delta}-\mathbb{G}'_{N}\rVert_{\mathcal{F}}\\
&\leq E_{d}\lVert\mathbb{G}'_{N}\rVert_{\mathcal{F}_{\delta}},
\end{split}
\end{equation}
Combine (\ref{uniform_continuous_sample_paths}) and (\ref{marginal_convergence}) with (\ref{application_conditional_AEC}) to conclude that for every fixed $\delta>0$
\begin{equation}\label{last_step_conditional_weak_convergence_proof}
\sup_{h\in BL_{1}}\left|E_{d}h(\mathbb{G}'_{N})-Eh(\mathbb{G}')\right|\leq r(\delta)+R_{N}(\delta)+E_{d}\lVert\mathbb{G}'_{N}\rVert_{\mathcal{F}_{\delta}},
\end{equation}
where $r(\delta)$ is a deterministic function of $\delta$ which goes to zero as $\delta\downarrow 0$, and where $R_{N}(\delta)=o_{P}(1)$ for every fixed $\delta>0$. From this and from Lemma \ref{lemma_AEC_expectation_POISSON} it follows that there exists a sequence $\delta_{N}\downarrow 0$ such that
\[r(\delta_{N})+R_{N}(\delta_{N})+E_{d}\lVert\mathbb{G}'_{N}\rVert_{\mathcal{F}_{\delta_{N}}}\overset{P*}{\rightarrow} 0\]
which completes the proof.
\end{proof}

\begin{cor}[Joint weak convergence]\label{corollary_joint_weak_convergence}
Under the assumptions of Theorem \ref{unconditional_convergence} it follows that
\[(\mathbb{G}_{N}, \mathbb{G}_{N}')\rightsquigarrow(\mathbb{G}, \mathbb{G}')\text{ in }l^{\infty}(\mathcal{F})\times l^{\infty}(\mathcal{F}),\]
where $\mathbb{G}_{N}'$ and $\mathbb{G}'$ are defined as in Theorem \ref{unconditional_convergence}, $\mathbb{G}_{N}$ is the classical empirical process defined in (\ref{classical_empirical_process}), and where $\mathbb{G}$ is a Borel measurable and tight  $P_{y}$-Brownian Bridge which is independent from $\mathbb{G}'$.
\end{cor}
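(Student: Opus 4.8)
The plan is to deduce the joint statement from the two marginal results already in hand, using a conditioning argument that exploits the asymmetry of the randomness: $\mathbb{G}_{N}$ is a function of the data $(\mathbf{Y}_{N},\mathbf{X}_{N})$ alone, whereas the extra randomness carried by $\mathbb{G}_{N}'$ lives entirely in the design coordinate. First I would record the two ingredients. Since $\mathcal{F}$ is $P_{y}$-Donsker, the classical Donsker theorem gives $\mathbb{G}_{N}\rightsquigarrow\mathbb{G}$ in $l^{\infty}(\mathcal{F})$ for a tight Borel $P_{y}$-Brownian bridge $\mathbb{G}$ with $\rho$-uniformly continuous paths; by Theorem \ref{unconditional_convergence} the limit $\mathbb{G}'$ exists as a tight, Borel measurable process with separable range, and I take $(\mathbb{G},\mathbb{G}')$ to be independent. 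Their product law is then tight, Borel measurable and concentrated on a separable subset of $l^{\infty}(\mathcal{F})\times l^{\infty}(\mathcal{F})$, so by the $BL_{1}$ characterization of weak convergence (comments on page 73 in \cite{vdVW}) it suffices to verify $E^{*}h(\mathbb{G}_{N},\mathbb{G}_{N}')\to Eh(\mathbb{G},\mathbb{G}')$ for every bounded Lipschitz $h:l^{\infty}(\mathcal{F})\times l^{\infty}(\mathcal{F})\to[0,1]$, the product being equipped with the max-metric.

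The heart of the argument is to integrate out the design first. For a fixed such $h$, define $\Phi(z):=Eh(z,\mathbb{G}')$, where the expectation is over $\mathbb{G}'$ only. Then $\Phi$ is well defined (the map $w\mapsto h(z,w)$ is continuous and $\mathbb{G}'$ is Borel measurable), it takes values in $[0,1]$, and $|\Phi(z_{1})-\Phi(z_{2})|\le E|h(z_{1},\mathbb{G}')-h(z_{2},\mathbb{G}')|\le\lVert z_{1}-z_{2}\rVert_{\mathcal{F}}$, so $\Phi\in BL_{1}(l^{\infty}(\mathcal{F}))$; moreover $E\Phi(\mathbb{G})=Eh(\mathbb{G},\mathbb{G}')$ by independence and Fubini. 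Keeping the data fixed, the function $w\mapsto h(\mathbb{G}_{N},w)$ also lies in $BL_{1}(l^{\infty}(\mathcal{F}))$, so Theorem \ref{outer_expectation_conditional_weak_convergence} (opCWC) yields
\[\left|E_{d}h(\mathbb{G}_{N},\mathbb{G}_{N}')-\Phi(\mathbb{G}_{N})\right|\le\sup_{g\in BL_{1}(l^{\infty}(\mathcal{F}))}\left|E_{d}g(\mathbb{G}_{N}')-Eg(\mathbb{G}')\right|=:\Delta_{N}\overset{P*}{\to}0,\]
and, since $0\le\Delta_{N}\le1$, this gives $E^{*}\Delta_{N}\to0$. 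On the other hand, applying $\mathbb{G}_{N}\rightsquigarrow\mathbb{G}$ to the functional $\Phi$ gives $E^{*}\Phi(\mathbb{G}_{N})\to E\Phi(\mathbb{G})=Eh(\mathbb{G},\mathbb{G}')$.

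Chaining these two facts through the triangle inequality (using $|E^{*}A-E^{*}B|\le E^{*}|A-B|$ for data functions $A,B$) reduces the entire claim to the single identity $E^{*}h(\mathbb{G}_{N},\mathbb{G}_{N}')=E^{*}\!\left[E_{d}h(\mathbb{G}_{N},\mathbb{G}_{N}')\right]$, that is, to the factorization of the outer expectation of the joint functional through the design-conditional expectation. Once this is granted one has $|E^{*}h(\mathbb{G}_{N},\mathbb{G}_{N}')-Eh(\mathbb{G},\mathbb{G}')|\le E^{*}\Delta_{N}+|E^{*}\Phi(\mathbb{G}_{N})-E\Phi(\mathbb{G})|\to0$. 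Since $\Phi$ ranges over all of $BL_{1}(l^{\infty}(\mathcal{F}))$ as $h$ ranges over the bounded Lipschitz functions on the product, both bounds are in fact uniform in $h$ (the first does not involve $h$, and the second is dominated by the $BL_{1}$-metric for $\mathbb{G}_{N}\rightsquigarrow\mathbb{G}$), so the supremum over $h$ converges to zero, which is the asserted joint weak convergence.

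The step I expect to be the real obstacle is exactly this factorization, and it is precisely what the measurability framework of Section \ref{Section_notation_definitions} is designed to supply. Because the design is measurable, for each fixed value of $(\mathbf{Y}_{N},\mathbf{X}_{N})$ the quantity $h(\mathbb{G}_{N},\mathbb{G}_{N}')$ is a simple function of $\mathbf{S}_{N}$ (it takes at most $2^{N}$ values), so $E_{d}h(\mathbb{G}_{N},\mathbb{G}_{N}')$ is a conditional expectation in the proper sense, and all of the non-measurability of $h(\mathbb{G}_{N},\mathbb{G}_{N}')$ is confined to the data coordinates. I would then invoke the Fubini theorem for outer expectations (Lemma 1.2.6 in \cite{vdVW}) together with the perfectness of the coordinate projection onto $\Omega_{y,x}^{\infty}$ that is used repeatedly in Section \ref{Section_notation_definitions}; concretely, the Fubini inequality gives $E^{*}h\le E^{*}[E_{d}h]$, and applying the same bound to $1-h$ (which is again bounded Lipschitz) pins down the reverse direction and hence the equality. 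With that identity in place, the conditioning argument above completes the proof.
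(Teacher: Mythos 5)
Your reduction via the conditioning functional $\Phi(z):=Eh(z,\mathbb{G}')$, the bound by $\Delta_{N}$ from Theorem \ref{outer_expectation_conditional_weak_convergence}, and the classical Donsker theorem is sound, but the step you yourself identify as the crux --- the factorization $E^{*}h(\mathbb{G}_{N},\mathbb{G}_{N}')=E^{*}[E_{d}h(\mathbb{G}_{N},\mathbb{G}_{N}')]$ --- is a genuine gap, and the justification you sketch for it does not work. The Fubini theorem for outer expectations (Lemma 1.2.6 in \cite{vdVW}) runs in the direction opposite to the one you cite: repeated outer expectation is dominated by joint outer expectation, $E_{1}^{*}E_{2}^{*}T\leq E^{*}T$. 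In your setting, where for fixed data $h(\mathbb{G}_{N},\mathbb{G}_{N}')$ is a measurable simple function of the design variable so that the inner outer expectation is exactly $E_{d}$, this gives $E^{*}[E_{d}h(\mathbb{G}_{N},\mathbb{G}_{N}')]\leq E^{*}h(\mathbb{G}_{N},\mathbb{G}_{N}')$, not the inequality you assert. Applying the correct inequality to $1-h$ yields $E_{*}h(\mathbb{G}_{N},\mathbb{G}_{N}')\leq E_{*}[E_{d}h(\mathbb{G}_{N},\mathbb{G}_{N}')]$, so your two devices together produce only the sandwich
\[E_{*}h(\mathbb{G}_{N},\mathbb{G}_{N}')\leq E_{*}[E_{d}h(\mathbb{G}_{N},\mathbb{G}_{N}')]\leq E^{*}[E_{d}h(\mathbb{G}_{N},\mathbb{G}_{N}')]\leq E^{*}h(\mathbb{G}_{N},\mathbb{G}_{N}'),\]
which collapses to your identity only if $E^{*}h(\mathbb{G}_{N},\mathbb{G}_{N}')=E_{*}h(\mathbb{G}_{N},\mathbb{G}_{N}')$, i.e.\ only if the joint functional is already essentially measurable --- which is precisely what cannot be assumed and is the heart of the matter. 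Perfectness does not rescue the exact identity either: the relevant map $\omega\mapsto(\mathbf{Y}_{N},\mathbf{X}_{N},\mathbf{S}_{N})$ is not a coordinate projection (this is exactly why Section \ref{Section_notation_definitions} introduces $T^{**}$ as an object possibly larger than $T^{*}$), and for fixed $N$ there is no reason for the two sides to coincide.

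What is true, and what suffices, is an asymptotic version of your factorization, and the missing ingredient is asymptotic measurability of $\mathbb{G}_{N}'$, which your argument never invokes. This is how the paper proceeds: it reduces the claim to asymptotic independence (Example 1.4.6 in \cite{vdVW}), splits $f(\mathbb{G}_{N})g(\mathbb{G}_{N}')-Ef(\mathbb{G})Eg(\mathbb{G}')$ into two terms, disposes of one by $\mathbb{G}_{N}\rightsquigarrow\mathbb{G}$, and for the other brackets $g(\mathbb{G}_{N}')$ between $g_{*}(\mathbb{G}_{N}')$ and $g^{*}(\mathbb{G}_{N}')$; the resulting discrepancy is controlled by $E[g^{*}(\mathbb{G}_{N}')-g_{*}(\mathbb{G}_{N}')]\rightarrow0$, which holds because $\mathbb{G}_{N}'\rightsquigarrow\mathbb{G}'$ (Theorem \ref{unconditional_convergence}) implies asymptotic measurability (Lemma 1.3.8 in \cite{vdVW}); Theorem \ref{outer_expectation_conditional_weak_convergence} then enters exactly as in your argument for the remaining term. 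Your conditioning scheme could in principle be repaired along the same lines --- keep the two one-sided Fubini bounds above and show that the gap between them vanishes as $N\rightarrow\infty$ using asymptotic measurability of $\mathbb{G}_{N}'$ (and of $\mathbb{G}_{N}$) --- but as written the proof is incomplete at its decisive step.
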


\begin{proof}
By Example 1.4.6 on page 31 in \cite{vdVW} it suffices to show that $\mathbb{G}_{N}$ and $\mathbb{G}_{N}'$ are asymptotically independent, i.e. that
\[E^{*}f(\mathbb{G}_{N})g(\mathbb{G}_{N}')-E^{*}f(\mathbb{G}_{N}) E^{*}g(\mathbb{G}_{N}')\rightarrow 0\]
for every $f,g\in BL_{1}(l^{\infty}(\mathcal{F}))$. To this aim note that
\[E^{*}f(\mathbb{G}_{N}) E^{*}g(\mathbb{G}_{N}')\rightarrow Ef(\mathbb{G}) Eg(\mathbb{G}')\]
by Theorem \ref{unconditional_convergence} and because, by assumption, $\mathcal{F}$ is a $P_{y}$-Donsker class. To prove the corollary it must hence be shown that
\[E^{*}f(\mathbb{G}_{N})g(\mathbb{G}_{N}')\rightarrow Ef(\mathbb{G}) Eg(\mathbb{G}'),\]
which is equivalent to 
\[E^{*}[f(\mathbb{G}_{N})g(\mathbb{G}_{N}')-Ef(\mathbb{G}) Eg(\mathbb{G}')]\rightarrow0.\]
To this aim note that the argument of the outer expectation can be written as
\[f(\mathbb{G}_{N})[g(\mathbb{G}_{N}')-Eg(\mathbb{G}')]+Eg(\mathbb{G}')[f(\mathbb{G}_{N})-Ef(\mathbb{G})]:=S_{N}+T_{N},\]
and that the outer expectation in the second-last display is therefore bounded from above by $E^{*}S_{N}+E^{*}T_{N}$, and bounded from below by $E_{*}S_{N}+E_{*}T_{N}$. Now, from the definition of $\mathbb{G}_{N}\rightsquigarrow\mathbb{G}$ in $l^{\infty}(\mathcal{F})$ it follows immediately that $E^{*}T_{N}$ and $E_{*}T_{N}=-E^{*}(-T_{N})$ go both to zero. Thus, it remains to show that also $E^{*}S_{N}$ and $E_{*}S_{N}$ go both to zero. Since $E^{*}S_{N}\geq E_{*}S_{N}=-E^{*}(-S_{N})$, it suffices to show that $E^{*}S_{N}$ and $E^{*}(-S_{N})$ are bounded from above by two sequences which go both to zero. So consider first $E^{*}S_{N}$. Note that
\begin{equation*}
\begin{split}
E^{*}S_{N}&:=E^{*}f(\mathbb{G}_{N})[g(\mathbb{G}_{N}')-Eg(\mathbb{G}')]\\
&\leq E^{*}f(\mathbb{G}_{N})[g(\mathbb{G}_{N}')-E_{d}g_{*}(\mathbb{G}_{N}')+E_{d}g(\mathbb{G}_{N}')-Eg(\mathbb{G}')]\\
&\leq E^{*}f(\mathbb{G}_{N})[g(\mathbb{G}_{N}')-E_{d}g_{*}(\mathbb{G}_{N}')]+E^{*}f(\mathbb{G}_{N})[E_{d}g(\mathbb{G}_{N}')-Eg(\mathbb{G}')]\\
\end{split}
\end{equation*}
and observe that the second term in the last line goes to zero by Theorem \ref{outer_expectation_conditional_weak_convergence}. As for the first term, note that 
\begin{equation*}
\begin{split}
E^{*}f(\mathbb{G}_{N})[g(\mathbb{G}_{N}')-E_{d}g_{*}(\mathbb{G}_{N}')]&\leq Ef^{*}(\mathbb{G}_{N})[g(\mathbb{G}_{N}')-E_{d}g_{*}(\mathbb{G}_{N}')]^{*}\\
&= Ef^{*}(\mathbb{G}_{N})[g^{*}(\mathbb{G}_{N}')-E_{d}g_{*}(\mathbb{G}_{N}')]\\
&= Ef^{*}(\mathbb{G}_{N})E_{d}[g^{*}(\mathbb{G}_{N}')-E_{d}g_{*}(\mathbb{G}_{N}')]\\
&= Ef^{*}(\mathbb{G}_{N})[E_{d} g^{*}(\mathbb{G}_{N}')-E_{d}g_{*}(\mathbb{G}_{N}')]\\
&\leq E|E_{d}g^{*}(\mathbb{G}_{N}')-E_{d}g_{*}(\mathbb{G}_{N}')|\\
&= E E_{d}[g^{*}(\mathbb{G}_{N}')-g_{*}(\mathbb{G}_{N}')]\\
&= E [g^{*}(\mathbb{G}_{N}')-g_{*}(\mathbb{G}_{N}')]
\end{split}
\end{equation*}
and that the expectation in the last line goes to zero because $\mathbb{G}_{N}'\rightsquigarrow \mathbb{G}'$ in $l^{\infty}(\mathcal{F})$ by Theorem \ref{unconditional_convergence}, and because $\mathbb{G}_{N}'$ is therefore asymptotically measurable (see Lemma 1.3.8 on page 21 in \cite{vdVW}). Similar methods can be applied to show that the sequence $E^{*}(-S_{N})$ goes to zero as well.
\end{proof}

\begin{thm}[Outer almost sure conditional weak convergence]\label{outer_almost_sure_conditional_weak_convergence}
Let $\{\mathbf{S}_{N}\}_{N=1}^{\infty}$, $\mathcal{F}$ and $\{\mathbb{G}'_{N}\}_{N=1}^{\infty}$ be defined as in Theorem \ref{unconditional_convergence}. Assume that the almost sure versions of conditions A1 and A2$^{*}$ are satisfied, and assume moreover that conditions A3 and S are satisfied as well. Then,
\[\sup_{h\in BL_{1}(l^{\infty}(\mathcal{F}))}\left|E_{d}h(\mathbb{G}_{N}')-Eh(\mathbb{G}')\right|\overset{as*}{\rightarrow}0\]
where $\mathbb{G}'$ is defined as in Theorem \ref{unconditional_convergence}.
\end{thm}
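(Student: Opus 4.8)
The plan is to re-run the proof of Theorem~\ref{outer_expectation_conditional_weak_convergence} line by line, upgrading its two stochastic error terms from convergence in outer probability to outer almost sure convergence. Since the almost sure versions of A1 and A2$^{*}$ imply their probability versions and A3 is assumed, Theorem~\ref{unconditional_convergence} continues to apply: $\mathcal{F}$ is totally bounded with respect to $\rho$, and the limit process $\mathbb{G}'$ exists as a Borel measurable and tight element of $l^{\infty}(\mathcal{F})$ with almost surely uniformly $\rho$-continuous sample paths. Consequently the finite $\delta$-nets $\mathcal{G}_{\delta}$, the nearest-point maps $\Pi_{\delta}$, the deterministic remainder $r(\delta)$ of (\ref{uniform_continuous_sample_paths}) with $r(\delta)\rightarrow0$ as $\delta\downarrow0$, and the basic bound (\ref{last_step_conditional_weak_convergence_proof}),
\[\Delta_{N}:=\sup_{h\in BL_{1}}\left|E_{d}h(\mathbb{G}'_{N})-Eh(\mathbb{G}')\right|\leq r(\delta)+R_{N}(\delta)+E_{d}\lVert\mathbb{G}'_{N}\rVert_{\mathcal{F}_{\delta}},\]
are all available for each fixed $\delta>0$, where $R_{N}(\delta):=\sup_{h\in BL_{1}}|E_{d}h(\mathbb{G}'_{N}\circ\Pi_{\delta})-Eh(\mathbb{G}'\circ\Pi_{\delta})|$. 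To prove oasCWC it suffices to show that the measurable cover $\Delta_{N}^{*}$ tends to $0$ almost surely.

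First I would upgrade the marginal term. By the Remark following Lemma~\ref{lemma_marginal_convergence_expectation}, the almost sure version of A2$^{*}$ together with the Donsker property of $\mathcal{F}$ yields the almost sure version of A2, so the almost sure conclusion of Lemma~\ref{lemma_marginal_convergence_expectation} holds. As in the derivation of (\ref{marginal_convergence}), $R_{N}(\delta)$ is then dominated, for each fixed $\delta>0$, by a measurable quantity $Q_{N}(\delta)$ (the one from Corollary~\ref{cor_marginal_convergence} associated with the finite net $\mathcal{G}_{\delta}$), which now converges to $0$ almost surely. Next I would upgrade the equicontinuity term: I bound $E_{d}\lVert\mathbb{G}'_{N}\rVert_{\mathcal{F}_{\delta}}$ by the measurable majorant $E_{d}\lVert\mathbb{G}'_{N}\rVert_{\mathcal{F}_{\delta}}^{**}$ and invoke Lemma~\ref{lemma_almost_sure_AEC_POISSON}, which is where condition S enters. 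Because $\mathcal{F}_{\delta'}\subseteq\mathcal{F}_{\delta}$ whenever $\delta'<\delta$, this majorant is nondecreasing in $\delta$, so the conclusion of that lemma is equivalent to
\[\lim_{\delta\downarrow0}\limsup_{N\rightarrow\infty}E_{d}\lVert\mathbb{G}'_{N}\rVert_{\mathcal{F}_{\delta}}^{**}=0\quad\text{a.s.}\]

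The combination step is then cleaner than in Theorem~\ref{outer_expectation_conditional_weak_convergence}, since no diagonal choice $\delta_{N}\downarrow0$ is required. For each fixed $\delta>0$ the right-hand side of the displayed bound is dominated by the measurable quantity $r(\delta)+Q_{N}(\delta)+E_{d}\lVert\mathbb{G}'_{N}\rVert_{\mathcal{F}_{\delta}}^{**}$; hence $\Delta_{N}^{*}\leq r(\delta)+Q_{N}(\delta)+E_{d}\lVert\mathbb{G}'_{N}\rVert_{\mathcal{F}_{\delta}}^{**}$ almost surely, because $\Delta_{N}^{*}$ is the smallest measurable majorant of $\Delta_{N}$. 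Taking $\limsup_{N}$ and using $Q_{N}(\delta)\overset{as}{\rightarrow}0$ gives
\[\limsup_{N\rightarrow\infty}\Delta_{N}^{*}\leq r(\delta)+\limsup_{N\rightarrow\infty}E_{d}\lVert\mathbb{G}'_{N}\rVert_{\mathcal{F}_{\delta}}^{**}\quad\text{a.s.}\]
Since the left-hand side does not depend on $\delta$, I would let $\delta\downarrow0$ along the countable sequence $\delta=1/k$, $k=1,2,\dots$ (so that the exceptional set remains a countable union of null sets and hence null), and conclude from $r(\delta)\rightarrow0$ and the displayed consequence of Lemma~\ref{lemma_almost_sure_AEC_POISSON} that $\limsup_{N}\Delta_{N}^{*}=0$ almost surely, i.e. $\Delta_{N}^{*}\overset{as}{\rightarrow}0$. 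This is precisely the asserted $\overset{as*}{\rightarrow}$ convergence.

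The main obstacle is the bookkeeping around measurable covers and null sets rather than any new estimate. One must check that $R_{N}(\delta)$ is genuinely dominated by the measurable quantity of Corollary~\ref{cor_marginal_convergence} (so that its almost sure convergence is legitimate), that the non-measurable equicontinuity term is correctly controlled by its measurable majorant $E_{d}\lVert\cdot\rVert_{\mathcal{F}_{\delta}}^{**}$ so that the comparison holds for the \emph{smallest} measurable majorant $\Delta_{N}^{*}$, and that letting $\delta\downarrow0$ along a fixed countable sequence keeps a single exceptional null set. All of the analytic content---symmetrization, the contraction principle, and the Donsker bound controlling the symmetrized empirical process---has already been absorbed into Lemma~\ref{lemma_almost_sure_AEC_POISSON}, so no further hard estimates are needed here.
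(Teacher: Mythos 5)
Your overall plan coincides with the paper's: the paper's proof of this theorem is literally the instruction to repeat the proof of Theorem \ref{outer_expectation_conditional_weak_convergence} with every convergence in (outer) probability upgraded to (outer) almost sure convergence, invoking Lemma \ref{lemma_almost_sure_AEC_POISSON} instead of Lemma \ref{lemma_AEC_expectation_POISSON} at the last step. Your treatment of the marginal term (almost sure A2$^{*}$ plus the Donsker property gives almost sure A2, hence the almost sure conclusions of Lemma \ref{lemma_marginal_convergence_expectation} and Corollary \ref{cor_marginal_convergence}) and your measurable-cover bookkeeping (the bound $\Delta_{N}^{*}\leq r(\delta)+Q_{N}(\delta)+E_{d}\lVert\mathbb{G}'_{N}\rVert_{\mathcal{F}_{\delta}}^{**}$ a.s.\ for each fixed $\delta$) are both correct.

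The gap lies in the step where you replace the paper's diagonal sequence $\delta_{N}\downarrow0$ by ``fix $\delta$, take $\limsup_{N}$, then let $\delta=1/k\downarrow0$''. This forces you to convert the conclusion of Lemma \ref{lemma_almost_sure_AEC_POISSON}, which has the form ``for every deterministic $\delta_{N}\downarrow0$ the quantity tends to $0$ a.s.'', into the iterated-limit form $\lim_{\delta\downarrow0}\limsup_{N}E_{d}\lVert\mathbb{G}'_{N}\rVert_{\mathcal{F}_{\delta}}^{**}=0$ a.s., and you justify this equivalence by monotonicity in $\delta$. Monotonicity, however, only yields the implication you do \emph{not} need (iterated-limit form $\Rightarrow$ sequential form); the direction you use is not a deterministic diagonal argument, because on the putative exceptional event where $\inf_{k}\limsup_{N}E_{d}\lVert\mathbb{G}'_{N}\rVert_{\mathcal{F}_{1/k}}^{**}>\epsilon$ the indices $N$ at which the quantity exceeds $\epsilon$ are \emph{random}, so exhibiting a single deterministic sequence $\delta_{N}\downarrow0$ that contradicts the lemma requires an extra construction (bounding the random first exceedance times by deterministic quantiles $m_{k}$ except on events of summable probability, and then applying Borel--Cantelli). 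The implication is in fact true, but your proof asserts it without the idea that makes it work. Two clean repairs: (i) follow the paper and extract one sequence $\delta_{N}\downarrow0$ such that $Q_{N}(\delta_{N})\rightarrow0$ a.s.\ (choose $M_{k}$ with $P\{\sup_{N\geq M_{k}}Q_{N}(1/k)>2^{-k}\}\leq2^{-k}$, apply Borel--Cantelli, and set $\delta_{N}:=1/k$ for $M_{k}\leq N<M_{k+1}$), and then apply Lemma \ref{lemma_almost_sure_AEC_POISSON} verbatim to that sequence; or (ii) cite not the conclusion but the proof of Lemma \ref{lemma_almost_sure_AEC_POISSON}: for each fixed $\delta$ the symmetrization and contraction inequalities hold eventually almost surely, and combined with (\ref{disugaglianza_as_symmetrization}) and the Donsker property they deliver exactly the iterated-limit form $\lim_{\delta\downarrow0}\limsup_{N}E_{d}\lVert\mathbb{G}'_{N}\rVert_{\mathcal{F}_{\delta}}^{**}=0$ a.s.\ that your combination step needs.
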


\begin{proof}
The proof is the same as the proof of Theorem \ref{outer_expectation_conditional_weak_convergence}. In fact, under the conditions of the present theorem all occurrences of convergence in (outer) probability can be strengthened to (outer) almost sure convergence (at the last step of the proof Lemma \ref{lemma_almost_sure_AEC_POISSON} must be applied instead of Lemma \ref{lemma_AEC_expectation_POISSON}).
\end{proof}

\begin{rem}\label{rem_dimostrazione_generale}
The proof of Theorem \ref{outer_expectation_conditional_weak_convergence} (Theorem \ref{outer_almost_sure_conditional_weak_convergence}) can be strengthened in order to apply to any sequence of sampling designs (not necessarily Poisson sampling designs) for which it can be shown that 
\begin{itemize}
\item[(i) ] there exists a function $\Sigma':\mathcal{F}^{2}\mapsto\mathbb{R}$ such that
\[\Sigma_{N}':=E_{d}\mathbb{G}_{N}'f\mathbb{G}_{N}'g\overset{P(as)}{\rightarrow}\Sigma'\]
and such that
\begin{equation*}
\begin{split}
&E_{d}\exp(i\mathbf{t}^{\intercal}\mathbb{G}'_{N}\mathbf{f})\overset{P(as)}{\rightarrow} \exp\left(-\frac{1}{2}\mathbf{t}^{\intercal}\Sigma'(\mathbf{f})\mathbf{t}\right)\\
&\text{ for every vector $\mathbf{f}\in\mathcal{F}^{r}$ and for every $\mathbf{t}\in\mathbb{R}^{r}$, $r=1,2,\dots$,}
\end{split}
\end{equation*}
where $\Sigma'(\mathbf{f})$ is the covariance matrix whose elements are given by $\Sigma'_{(ij)}(\mathbf{f}):=\Sigma'(f_{i},f_{j})$; 
\item[(ii) ] there exists a semimetric $\rho^{\bullet}$ on $\mathcal{F}$ such that conditional AEC as defined in (\ref{Def_gen_conditional_AEC}) holds;
\item[(iii) ] $\mathcal{F}$ is totally bounded with respect to the semimetric $\rho^{\bullet}$.
\end{itemize}
\end{rem}

\subsection{Weak convergence results for the case where the first order sample inclusion probabilities are proportional to some size variable}\label{subsection_case_BBB}

For many applications the lower bound on the first order sample inclusion probabilities in assumption A2$^{*}$ is too restrictive. In fact, in many applications the first order sample inclusion probabilities are proportional to some size variable which can take on arbitrarily small values. To accomodate such cases in this subsection it will be assumed that the first order sample inclusion probabilities are defined as
\begin{equation}\label{first_order_Poisson_special_case}
\pi_{i,N}:=\min\left\{c_{N}(X_{1}, X_{2},\dots, X_{N}) \frac{w(X_{i})}{\sum_{j=1}^{N}w(X_{j})}; 1\right\},\quad i=1,2,\dots, N,
\end{equation}
where $w:\mathcal{X}\mapsto(0,\infty)$ and where $c_{N}:\mathcal{X}^{N}\mapsto(0,\infty)$ is a function which makes sure that the expected sample size equals the value taken on by some other function $n_{N}:\mathcal{X}^{N}\mapsto[0,N]$ (in many applications $\{n_{N}\}_{N=1}^{\infty}$ is simply a deterministic sequence of positive integers), i.e. $c_{N}$ makes sure that
\begin{equation}\label{expected_sample_size}
\sum_{i=1}^{N}\pi_{i,N}:=\sum_{i=1}^{N}\min\left\{c_{N}\frac{w(X_{i})}{\sum_{j=1}^{N}w(X_{j})}; 1\right\} =n_{N}.
\end{equation} 
It is not difficult to show that the function $c_{N}$ is well defined, i.e. that for every $n_{N}\in[0,N]$ there exists a unique positive constant $c_{N}$ such that equation (\ref{expected_sample_size}) holds. Moreover, under the assumptions 
\begin{itemize}
\item[B0) ] $n_{N}:\mathcal{X}^{N}\mapsto[0,N]$ is a measurable function and the sequence of expected sample sizes $\{n_{N}\}_{N=1}^{\infty}$ is such that
\[\frac{n_{N}}{N}\overset{P(as)}{\rightarrow}\alpha\in(0,1),\]
\item[B1) ] $w:\mathcal{X}\mapsto(0,\infty)$ is a measurable function such that $Ew(X_{1})<\infty$,
\end{itemize}
it can also be shown that $c_{N}$ is measurable and that $c_{N}/N\rightarrow \theta$ in probability (almost surely), where $\theta$ is the unique (positive) constant such that
\[E\min\left\{\frac{\theta w(X_{1})}{Ew(X_{2})}; 1\right\}=\alpha.\]
The details of the proof of the latter claim are left to the reader.

As already anticipated above, in what follows the theory presented in the previous subsection will be adapted in order to accommodate the case where the first order sample inclusion probabilities are defined as in (\ref{first_order_Poisson_special_case}). Under assumptions B0 and B1 this can be done by placing restrictions on the class of functions
\[\mathcal{F}/w_{\theta}:=\{f/w_{\theta}: f\in\mathcal{F}\},\]
where $\mathcal{F}$ is the original class of interest, and where
\[w_{\theta}(X_{1}):=\min\{w(X_{1}), Ew(X_{1})/\theta\}.\]
Note that the domain of the members of the class $\mathcal{F}/w_{\theta}$ is the range of the random vectors $(Y_{i},X_{i})$ (which in this paper is assumed to be $\mathcal{Y}\times\mathcal{X}$), and that the value taken on by $f/w_{\theta}\in \mathcal{F}/w_{\theta}$ at a given realization of the random vector $(Y_{i},X_{i})$ is given by $f/w_{\theta}(Y_{i},X_{i}):=f(Y_{i})/w_{\theta}(X_{i})$.

The following lemma establishes convergence of the marginal distributions of the sequence of HTEPs in the present setup. It is analogous to Lemma \ref{lemma_marginal_convergence_expectation}.

\begin{lem}[Convergence of marginal distributions]\label{lemma_marginal_convergence_B}
Let $\{\mathbf{S}_{N}\}_{N=1}^{\infty}$ be the sequence of vectors of sample inclusion indicators corresponding to a sequence of Poisson sampling designs and assume that the first order sample inclusion probabilities are defined as in (\ref{first_order_Poisson_special_case}). Let $\mathcal{F}$ be a class of measurable functions $f:\mathcal{Y}\mapsto\mathbb{R}$, and let $\{\mathbb{G}'_{N}\}_{N=1}^{\infty}$ be the sequence of HTEPs corresponding to $\mathcal{F}$ and $\{\mathbf{S}_{N}\}_{N=1}^{\infty}$. Assume that conditions B0 and B1 are satisfied and that
\begin{itemize}
\item[B2) ] the members of $\mathcal{F}/w_{\theta}$ are square integrable, i.e. $E[f(Y_{1})/w_{\theta}(X_{1})]^{2}<\infty$ for every $f\in\mathcal{F}$.
\end{itemize}
Then, 
\[\Sigma_{N}'(f,g):=E_{d}\mathbb{G}'_{N}f\mathbb{G}'_{N}g\overset{P(as)}{\rightarrow}\Sigma'(f,g)\quad\text{ for every }f,g\in\mathcal{F},\]
where
\[\Sigma'(f,g):=Ew_{\theta}(X_{1})\left(\frac{Ew(X_{2})}{\theta}-w_{\theta}(X_{1})\right)\frac{f(Y_{1})g(Y_{1})}{w_{\theta}(X_{1})^{2}},\quad f,g\in\mathcal{F}\]
is a positive semidefinite covariance function. Moreover, for every finite-dimensional $\mathbf{f}\in\mathcal{F}^{r}$ and for every $\mathbf{t}\in\mathbb{R}^{r}$ ($r$ can be any positive integer),
\[E_{d}\exp(i\mathbf{t}^{\intercal}\mathbb{G}'_{N}\mathbf{f})\overset{P(as)}{\rightarrow} \exp\left(-\frac{1}{2}\mathbf{t}^{\intercal}\Sigma'(\mathbf{f})\mathbf{t}\right),\]
where $\Sigma'(\mathbf{f})$ is the covariance matrix whose elements are given by $\Sigma'_{(ij)}(\mathbf{f}):=\Sigma'(f_{i},f_{j})$. 
\end{lem}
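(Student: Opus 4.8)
The plan is to reduce the entire statement to Lemma~\ref{lemma_marginal_convergence_expectation} by verifying its hypotheses A1 and A2 in the present proportional-to-size setting, while along the way identifying the explicit limit $\Sigma'$. The natural starting point is an exact computation of the conditional covariance. Since the design is Poisson, the indicators $S_{i,N}$ are conditionally independent given $\mathbf{X}_N$ and all cross terms vanish, so
\[
\Sigma_N'(f,g)=E_d\mathbb{G}_N'f\,\mathbb{G}_N'g=\frac{1}{N}\sum_{i=1}^N\frac{1-\pi_{i,N}}{\pi_{i,N}}f(Y_i)g(Y_i).
\]
Thus the asserted convergence $\Sigma_N'\to\Sigma'$ is precisely condition A1, and the first task is to evaluate its limit.

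For the covariance limit I would introduce the scalars $a_N:=(\sum_{j=1}^N w(X_j))/c_N$ and $a:=Ew(X_1)/\theta$, and record that $1/\pi_{i,N}=\max\{a_N/w(X_i),1\}$ while the pointwise limit is $1/\pi_i^*=\max\{a/w(X_i),1\}=a/w_\theta(X_i)$. The strong law gives $\bar w_N\to Ew(X_1)$ and the setup supplies $c_N/N\to\theta$, hence $a_N\to a$ in probability (almost surely). Writing $g_i^*:=(1/\pi_i^*-1)f(Y_i)g(Y_i)$, which are i.i.d.\ with $E|g_1^*|<\infty$ by B2 (using $w_\theta\le a$ and Cauchy--Schwarz), the law of large numbers yields $\frac1N\sum_i g_i^*\to Eg_1^*=\Sigma'(f,g)$. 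The remaining error is controlled by the Lipschitz bound $|1/\pi_{i,N}-1/\pi_i^*|\le|a_N-a|/w(X_i)$, so that $|\frac1N\sum_i(g_{i,N}-g_i^*)|\le|a_N-a|\cdot\frac1N\sum_i|f(Y_i)g(Y_i)|/w(X_i)$, where the average converges by the LLN (finite by B2, since $1/w\le 1/w_\theta$) and $|a_N-a|\to0$. Positive semidefiniteness of $\Sigma'$ then follows either as a pointwise limit of covariance functions or directly from $\frac{1-\pi_1^*}{\pi_1^*}\ge0$.

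To obtain the characteristic-function convergence I would verify condition A2 and then quote Lemma~\ref{lemma_marginal_convergence_expectation}. Here A2$^*$ is deliberately unavailable, so A2 must be checked by hand. Using $1/\pi_{i,N}=a_N/w_{\theta,N}(X_i)$ with $w_{\theta,N}(X_i):=\min\{w(X_i),a_N\}\le a_N$, the summand in A2 is bounded by $a_N^2\,V_{i,N}^2\,I(V_{i,N}>\sqrt N\epsilon/a_N)$, where $V_{i,N}:=\|\mathbf{f}(Y_i)\|/w_{\theta,N}(X_i)$. On the event $\{|a_N-a|<\eta\}$ one has $V_{i,N}\le 2V_i^*$ uniformly in $i$, with $V_i^*:=\|\mathbf{f}(Y_i)\|/w_\theta(X_i)$ i.i.d.\ and $E(V_1^*)^2<\infty$ by B2; this reduces A2 to showing $\frac1N\sum_{i=1}^N(V_i^*)^2\,I(V_i^*>c\sqrt N)\to0$ for a fixed $c>0$.

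I expect this last truncated sum to be the main obstacle, but it dissolves once one notices that the truncation level grows like $\sqrt N$. Since $E(V_1^*)^2<\infty$, a Borel--Cantelli argument applied to $P((V_1^*)^2>\delta N)$ shows $\frac1N\max_{i\le N}(V_i^*)^2\to0$ almost surely; hence for large $N$ every indicator $I((V_i^*)^2>c^2N)$ vanishes and the sum is eventually identically zero, giving the almost-sure version. The in-probability version is even easier, since $E[(V_1^*)^2 I(V_1^*>c\sqrt N)]\to0$ by dominated convergence forces the nonnegative average to zero in probability. Combining the good-event reduction with this observation establishes A2 in both senses, and an application of Lemma~\ref{lemma_marginal_convergence_expectation} delivers the stated convergence of the conditional characteristic functions, completing the proof.
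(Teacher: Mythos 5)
Your proposal is correct and follows essentially the same route as the paper: it establishes the limit $\Sigma'$ via the SLLN, then verifies that B0, B1, B2 imply the in-probability (almost sure) versions of conditions A1 and A2, and finally invokes Lemma \ref{lemma_marginal_convergence_expectation} for the characteristic-function convergence. The paper's own proof states this reduction in one sentence and leaves the computations to the reader; your argument simply supplies those details (the $a_N\to a$ reduction, the Lipschitz bound on $1/\pi_{i,N}$, and the truncation argument for A2), all of which check out.
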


\begin{proof}
The definition of $\Sigma'$ can be obtained through a straightforward limit calculation by using assumptions B0, B1, B2 and the SLLN, and the claim that $\Sigma'$ is positive semidefinite follows from the fact that it is the pointwise limit of a sequence of covariance functions. As for the other part of the conclusion, it follows from Lemma \ref{lemma_marginal_convergence_expectation} upon noting that assumptions B0, B1 and B2 imply the corresponding (in probability or almost sure) versions of assumptions A1 and A2.
\end{proof}

Next consider conditional AEC. Recall that in the proof of Lemma \ref{lemma_AEC_expectation_POISSON} the lower bound on the first order sample inclusion probabilities (i.e. assumption A2$^{*}$) and the assumption that $\mathcal{F}$ is a $P_{y}$-Donsker class played a fundamental role. In the present setting, rather than assuming that $\mathcal{F}$ is a $P_{y}$-Donsker class it will be more convenient to assume that 
\begin{itemize}
\item[B2$^{*}$) ] the class $\mathcal{F}/w_{\theta}$ is a $P_{y,x}$-Donsker class
\end{itemize}
which strengthens assumption B2. 

\begin{lem}[Probability version of conditional AEC]\label{lemma_AEC_expectation_POISSON_B}
Let $\{\mathbf{S}_{N}\}_{N=1}^{\infty}$, $\mathcal{F}$ and $\{\mathbb{G}'_{N}\}_{N=1}^{\infty}$ be defined as in Lemma \ref{lemma_marginal_convergence_B}. Assume that conditions B0 (the probability version suffices), B1 and B2$^{*}$ hold. Then it follows that
\[E_{d}\lVert \mathbb{G}_{N}'\rVert_{\mathcal{F}_{\delta_{N}}^{w}}^{**}\overset{P}{\rightarrow} 0\quad\text{ for every }\delta_{N}\downarrow 0,\]
where
\[\mathcal{F}_{\delta}^{w}:=\{f-g:f,g\in\mathcal{F}\text{ and }\rho_{w}(f,g)<\delta\},\quad\delta>0,\]
with 
\[\rho_{w}(f,g):=\sqrt{P_{y,x}[(f/w_{\theta})-(g/w_{\theta})]^{2}},\quad f,g\in\mathcal{F}.\]
\end{lem}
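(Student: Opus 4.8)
The plan is to follow the same four–step architecture as in the proof of Lemma~\ref{lemma_AEC_expectation_POISSON} (symmetrize, split off the design, apply the contraction principle, reduce to a symmetrized empirical process of a Donsker class), the only genuinely new difficulty being that here there is no lower bound on the $\pi_{i,N}$, so $1/\pi_{i,N}$ is unbounded and the step $1/\pi_{i,N}\le 1/L$ used there is unavailable. The device that replaces it is an exact identity read off from (\ref{first_order_Poisson_special_case}): writing $\tau_N:=\big(\sum_{j=1}^N w(X_j)\big)/c_N$ and $w_{\tau_N}(X_i):=\min\{w(X_i),\tau_N\}$, one has $\pi_{i,N}=w_{\tau_N}(X_i)/\tau_N$ for every $i$, hence $S_{i,N}/\pi_{i,N}=\tau_N S_{i,N}/w_{\tau_N}(X_i)$. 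Under B0 and B1 the SLLN gives $\tau_N\overset{P(as)}{\to}b:=Ew(X_1)/\theta$ and $w_{\tau_N}(X_i)\to w_\theta(X_i)$, so $w_{\tau_N}$ is a bounded–ratio surrogate for the fixed weight $w_\theta$ appearing in condition B2$^{*}$.

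First I would apply the symmetrization inequality (Lemma~\ref{lemma_symmetrization_inequality}) with $\mathcal{F}$ replaced by $\mathcal{F}_{\delta_N}^{w}$, reducing the problem to showing that $E_\varepsilon E_d\big\|N^{-1/2}\sum_i\varepsilon_i(S_{i,N}/\pi_{i,N}-1)\delta_{Y_i}\big\|_{\mathcal{F}_{\delta_N}^{w}}^{**}\overset{P}{\to}0$. A triangle inequality splits this into a \emph{design term} carrying $S_{i,N}/\pi_{i,N}$ and a \emph{Rademacher term} carrying the constant $1$; the latter does not involve $\mathbf{S}_N$, so its majorant reduces to the ordinary one. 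For the design term I would insert the identity above, pull the $X$–measurable scalar $\tau_N$ out of $E_d$ (it is constant for the design expectation), and then strip off $S_{i,N}$ with the contraction principle (Lemma~\ref{contraction_principle}). The correct multipliers are
\[
\gamma_i:=\frac{1}{M_N}\,\frac{w_\theta(X_i)}{w_{\tau_N}(X_i)},\qquad G_i h:=M_N\,\frac{h(Y_i)}{w_\theta(X_i)},\qquad M_N:=\max\{1,\,b/\tau_N\},
\]
so that $S_{i,N}\gamma_i G_i h=(S_{i,N}/w_{\tau_N}(X_i))\,h(Y_i)$ and, crucially, $0\le\gamma_i\le1$: a short case analysis of $\min\{w(X_i),b\}/\min\{w(X_i),\tau_N\}$ shows this ratio never exceeds $M_N$. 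The Rademacher term is treated the same way with $\gamma_i=w_\theta(X_i)/b\in[0,1]$ (using the trivial all–ones design, or the classical contraction inequality). Both terms are thereby bounded by a bounded prefactor ($\tau_N M_N$ and $b$, both $O_P(1)$ with limit $b$) times
\[
E_\varepsilon\Big\|\tfrac{1}{\sqrt N}\sum_{i=1}^N\varepsilon_i\,\frac{\delta_{Y_i}}{w_\theta(X_i)}\Big\|_{\mathcal{F}_{\delta_N}^{w}}^{*}.
\]

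It remains to show this last expression tends to $0$ in probability, and this is exactly where condition B2$^{*}$ enters. Since for $h=f-g\in\mathcal{F}_{\delta_N}^{w}$ one has $h/w_\theta=f/w_\theta-g/w_\theta$ with $\|h/w_\theta\|_{L_2(P_{y,x})}=\rho_w(f,g)<\delta_N$, the displayed quantity equals the symmetrized empirical process of $\mathcal{G}:=\mathcal{F}/w_\theta$ over its $L_2(P_{y,x})$–ball $\mathcal{G}_{\delta_N}:=\{\phi-\psi:\phi,\psi\in\mathcal{G},\ \|\phi-\psi\|_{L_2(P_{y,x})}<\delta_N\}$. Taking the outer expectation $E$, using Fubini, splitting off the mean (bounded by $\delta_N$), desymmetrizing via the first inequality of Lemma~2.3.6 in~\cite{vdVW}, and invoking Corollary~2.3.12 in~\cite{vdVW} for the $P_{y,x}$–Donsker class $\mathcal{G}$ (together with $\rho_c\le\|\cdot\|_{L_2}$), exactly as in Lemma~\ref{lemma_AEC_expectation_POISSON}, shows that the outer expectation of the display tends to $0$; Markov's inequality then yields convergence in probability, and multiplying by the $O_P(1)$ prefactors finishes the proof. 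I expect the main obstacle to be precisely the control of the mismatch between $w_{\tau_N}$ and $w_\theta$: one must verify the exact identity $\pi_{i,N}=w_{\tau_N}(X_i)/\tau_N$, establish the bound $w_\theta/w_{\tau_N}\le M_N$ with $M_N\overset{P(as)}{\to}1$, and ensure all these reductions hold on an event of probability tending to $1$ (the contraction principle only needs $\gamma_i\in[0,1]$ with probability tending to $1$), while keeping track of the measurable–cover bookkeeping ($^{**}$ versus $^{*}$) inherited from Section~\ref{Section_notation_definitions}.
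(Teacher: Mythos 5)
Your proposal is correct and follows essentially the same route as the paper's own proof: the same symmetrization via Lemma \ref{lemma_symmetrization_inequality}, the same split into a design term and a Rademacher term, the same identity $\pi_{i,N}=w_{N}(X_i)/\tau_N$ (your $w_{\tau_N}$ is the paper's $w_N$ from (\ref{definizione_funzione_w_N})), the same contraction-principle applications with multipliers in $[0,1]$, and the same final reduction to the symmetrized empirical process of the $P_{y,x}$-Donsker class $\mathcal{F}/w_{\theta}$ handled by Lemma 2.3.6 and Corollary 2.3.12 of \cite{vdVW} plus Markov's inequality. The only difference is notational packaging: your prefactor $\tau_N M_N=\max\{\tau_N,\,Ew(X_1)/\theta\}$ is exactly the single constant the paper calls $M_N$.
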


\begin{proof}
The proof is very similar to the proof of Lemma \ref{lemma_AEC_expectation_POISSON}. First, use Lemma \ref{lemma_symmetrization_inequality} and the triangle inequality to bound the conditional expectation $E_{d}\lVert \mathbb{G}_{N}'\rVert_{\mathcal{F}_{\delta_{N}}^{w}}^{**}$ with
\begin{equation}\label{symm_bound_B}
\begin{split}
&E_{\varepsilon} E_{d}\left\lVert\frac{1}{\sqrt{N}}\sum_{i=1}^{N}\varepsilon_{i}S_{i,N}\frac{w_{\theta}(X_{i})\sum_{j=1}^{N}w(X_{j})}{w_{N}(X_{i})c_{N}}\frac{\delta_{Y_{i}}}{w_{\theta}(X_{i})}\right\rVert_{\mathcal{F}_{\delta_{N}}^{w}}^{**}+\\
&\quad +E_{\varepsilon}\left\lVert\frac{1}{\sqrt{N}}\sum_{i=1}^{N}\varepsilon_{i}\delta_{Y_{i}}\right\rVert_{\mathcal{F}_{\delta_{N}}^{w}}^{*},
\end{split}
\end{equation}
where all the stars refer to the arguments of the expectations, and where
\begin{equation}\label{definizione_funzione_w_N}
w_{N}(X_{i}):=\min\left\{w(X_{i}),\frac{1}{c_{N}}\sum_{j=1}^{N}w(X_{j})\right\}.
\end{equation}
Note that 
\[0\leq\frac{w_{\theta}(X_{i})\sum_{j=1}^{N}w(X_{j})}{w_{N}(X_{i})c_{N}}\leq\max\left\{\frac{\sum_{j=1}^{N}w(X_{j})}{c_{N}}, \frac{1}{\theta}Ew(X_{1})\right\}:=M_{N},\]
and rewrite the first term in (\ref{symm_bound_B}) as
\[M_{N}E_{\varepsilon} E_{d}\left\lVert\frac{1}{\sqrt{N}}\sum_{i=1}^{N}\varepsilon_{i}\gamma_{i}\frac{\delta_{Y_{i}}}{w_{\theta}(X_{i})}\right\rVert_{\mathcal{F}_{\delta_{N}}^{w}}^{**}\]
with 
\[\gamma_{i}:=S_{i,N}\frac{w_{\theta}(X_{i})\sum_{j=1}^{N}w(X_{j})}{M_{N} w_{N}(X_{i})n_{N}}.\]
Since $\gamma_{i}$ takes on values in $[0,1]$, the contraction principle in Lemma \ref{contraction_principle} can be applied and the first term in (\ref{symm_bound_B}) is therefore bounded by
\[M_{N}E_{\varepsilon} \left\lVert\frac{1}{\sqrt{N}}\sum_{i=1}^{N}\varepsilon_{i}\frac{\delta_{Y_{i}}}{w_{\theta}(X_{i})}\right\rVert_{\mathcal{F}_{\delta_{N}}^{w}}^{*}.\]
The contraction principle in Lemma \ref{contraction_principle} can also be applied to the second term in (\ref{symm_bound_B}). In fact, the latter can be written as
\[\frac{Ew(X_{1})}{\theta}E_{\varepsilon}\left\lVert\frac{1}{\sqrt{N}}\sum_{i=1}^{N}\varepsilon_{i}\frac{\theta w_{\theta}(X_{i})}{Ew(X_{1})}\frac{\delta_{Y_{i}}}{w_{\theta}(X_{i})}\right\rVert_{\mathcal{F}_{\delta_{N}}^{w}}^{*},\]
and by the contraction principle this is bounded by 
\[\frac{Ew(X_{1})}{\theta} E_{\varepsilon} \left\lVert\frac{1}{\sqrt{N}}\sum_{i=1}^{N}\varepsilon_{i}\frac{\delta_{Y_{i}}}{w_{\theta}(X_{i})}\right\rVert_{\mathcal{F}_{\delta_{N}}^{w}}^{*}.\]
Since $M_{N}\rightarrow Ew(X_{1})/\theta<\infty$ in probability, the proof can now be completed by showing that
\begin{equation}\label{risultato_intermedio}
E_{\varepsilon} \left\lVert\frac{1}{\sqrt{N}}\sum_{i=1}^{N}\varepsilon_{i}\frac{\delta_{Y_{i}}}{w_{\theta}(X_{i})}\right\rVert_{\mathcal{F}_{\delta_{N}}^{w}}^{*}\overset{P}{\rightarrow} 0\quad\text{ for every }\delta_{N}\downarrow 0.
\end{equation}
To this aim use the triangle inequality to bound the left side by
\begin{equation*}\label{BBBound}
\begin{split}
&E_{\varepsilon} \left\lVert\frac{1}{\sqrt{N}}\sum_{i=1}^{N}\varepsilon_{i}Z_{i}\right\rVert_{\mathcal{F}_{\delta_{N}}^{w}}^{*}+\left\lVert P_{y,x}(f/w_{\theta})\right\rVert_{\mathcal{F}_{\delta_{N}}^{w}} E_{\varepsilon}\left|\frac{1}{\sqrt{N}}\sum_{i=1}^{N}\varepsilon_{i}\right|\leq\\
&\quad\leq E_{\varepsilon} \left\lVert\frac{1}{\sqrt{N}}\sum_{i=1}^{N}\varepsilon_{i}Z_{i}\right\rVert_{\mathcal{F}_{\delta_{N}}^{w}}^{*}+\delta_{N},
\end{split}
\end{equation*}
where
\[Z_{i}:=\frac{\delta_{Y_{i}}}{w_{\theta}(X_{i})}-P_{y,x}(f/w_{\theta}),\quad i=1,2,\dots, N.\]
Then, use the first inequality in Lemma 2.3.6 on page 111 in \cite{vdVW} to see that the expectation of the right side of the inequality in the second-last display is bounded by $\delta_{N}$ plus
\[E\left\lVert\frac{1}{\sqrt{N}}\sum_{i=1}^{N}Z_{i}\right\rVert_{\mathcal{F}_{\delta_{N}}^{w}}^{*},\]
and note that this sequence of expectations goes to zero because $\mathcal{F}/w_{\theta}$ is a $P_{y,x}$-Donsker class by assumption B2$^{*}$ (use Corollary 2.3.12 on page 115 in \cite{vdVW} and the fact that $\rho_{w}$ dominates its expectation centered counterpart). 
\end{proof}

\begin{lem}[Almost sure version of conditional AEC]\label{lemma_AEC_outer_almost_sure_POISSON_B}
Let $\{\mathbf{S}_{N}\}_{N=1}^{\infty}$, $\mathcal{F}$ and $\{\mathbb{G}'_{N}\}_{N=1}^{\infty}$ be defined as in Lemma \ref{lemma_marginal_convergence_B}. Assume that conditions B0 (the almost sure version), B1 and B2$^{*}$ hold, and assume moreover that
\begin{itemize}
\item[S') ] $E^{*}\lVert \delta_{(Y_{1},X_{1})}-P_{y,x}\rVert_{\mathcal{F}/w_{\theta}}^{2}<\infty$.
\end{itemize}
Then it follows that
\[E_{d}\lVert \mathbb{G}_{N}'\rVert_{\mathcal{F}_{\delta_{N}}^{w}}^{**}\overset{as}{\rightarrow} 0\quad\text{ for every }\delta_{N}\downarrow 0.\]
\end{lem}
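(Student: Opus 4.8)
The plan is to mimic the proof of Lemma~\ref{lemma_almost_sure_AEC_POISSON}, using the probability-version argument of Lemma~\ref{lemma_AEC_expectation_POISSON_B} as the skeleton and upgrading every occurrence of convergence in probability to almost sure convergence. First I would run through the proof of Lemma~\ref{lemma_AEC_expectation_POISSON_B} verbatim, via the symmetrization inequality of Lemma~\ref{lemma_symmetrization_inequality}, the triangle inequality, and the two applications of the contraction principle (Lemma~\ref{contraction_principle}). Under the almost sure version of B0 one has $c_{N}/N\to\theta$ and $N^{-1}\sum_{j=1}^{N}w(X_{j})\to Ew(X_{1})$ almost surely, so $M_{N}$ and the factor $Ew(X_{1})/\theta$ converge almost surely to finite constants, and the bounds $0\le\gamma_{i}\le 1$ and $0\le\theta w_{\theta}(X_{i})/Ew(X_{1})\le 1$ hold eventually almost surely; hence both contraction steps are valid eventually almost surely. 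After these steps $E_{d}\lVert\mathbb{G}_{N}'\rVert_{\mathcal{F}_{\delta_{N}}^{w}}^{**}$ is dominated, up to an almost surely convergent random factor, by the symmetrized term on the left of~(\ref{risultato_intermedio}), so it suffices to show that this term tends to zero almost surely.

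Next I would center exactly as in Lemma~\ref{lemma_AEC_expectation_POISSON_B}: writing $Z_{i}:=\delta_{Y_{i}}/w_{\theta}(X_{i})-P_{y,x}(f/w_{\theta})$ and using $\lVert P_{y,x}(f/w_{\theta})\rVert_{\mathcal{F}_{\delta_{N}}^{w}}\le\delta_{N}$ together with $E_{\varepsilon}\bigl|N^{-1/2}\sum_{i=1}^{N}\varepsilon_{i}\bigr|\le 1$, the left side of~(\ref{risultato_intermedio}) is bounded above by
\[
E_{\varepsilon}\Bigl\lVert\tfrac{1}{\sqrt N}\sum_{i=1}^{N}\varepsilon_{i}Z_{i}\Bigr\rVert_{\mathcal{F}_{\delta_{N}}^{w}}^{*}+\delta_{N}.
\]
Thus the whole matter reduces to proving that the conditional Rademacher average $E_{\varepsilon}\lVert N^{-1/2}\sum_{i=1}^{N}\varepsilon_{i}Z_{i}\rVert_{\mathcal{F}_{\delta_{N}}^{w}}^{*}$ tends to zero almost surely for every $\delta_{N}\downarrow 0$.

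To handle this conditional Rademacher average almost surely I would invoke the first inequality of Lemma~2.9.9 on page~185 in~\cite{vdVW}, applied with the multipliers taken to be the Rademacher variables $\varepsilon_{i}$ and with the $\mathcal{F}_{\delta}^{w}$-indexed i.i.d. processes $Z_{i}$; condition~S' is precisely the square-integrability hypothesis $E^{*}\lVert\delta_{(Y_{1},X_{1})}-P_{y,x}\rVert_{\mathcal{F}/w_{\theta}}^{2}<\infty$ that this lemma requires (note $\lVert Z_{1}\rVert$ over the index set equals $\lVert\delta_{(Y_{1},X_{1})}-P_{y,x}\rVert_{\mathcal{F}/w_{\theta}}$). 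For each fixed $\delta>0$ this yields, almost surely,
\[
\limsup_{N\to\infty}E_{\varepsilon}\Bigl\lVert\tfrac{1}{\sqrt N}\sum_{i=1}^{N}\varepsilon_{i}Z_{i}\Bigr\rVert_{\mathcal{F}_{\delta}^{w}}^{*}\le 6\sqrt 2\,\limsup_{N\to\infty}E^{*}\Bigl\lVert\tfrac{1}{\sqrt N}\sum_{i=1}^{N}\varepsilon_{i}Z_{i}\Bigr\rVert_{\mathcal{F}_{\delta}^{w}}.
\]
In the proof of Lemma~\ref{lemma_AEC_expectation_POISSON_B} it was already shown, via Lemma~2.3.6 on page~111 and Corollary~2.3.12 on page~115 in~\cite{vdVW} (valid because $\mathcal{F}/w_{\theta}$ is $P_{y,x}$-Donsker by B2$^{*}$), that $E^{*}\lVert N^{-1/2}\sum_{i=1}^{N}\varepsilon_{i}Z_{i}\rVert_{\mathcal{F}_{\delta_{N}}^{w}}\to 0$ for every $\delta_{N}\downarrow 0$, equivalently $\lim_{\delta\downarrow 0}\limsup_{N}E^{*}\lVert N^{-1/2}\sum_{i=1}^{N}\varepsilon_{i}Z_{i}\rVert_{\mathcal{F}_{\delta}^{w}}^{*}=0$. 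Combining this with the displayed inequality gives $\lim_{\delta\downarrow 0}\limsup_{N}E_{\varepsilon}\lVert N^{-1/2}\sum_{i=1}^{N}\varepsilon_{i}Z_{i}\rVert_{\mathcal{F}_{\delta}^{w}}^{*}=0$ almost surely, which is equivalent to the required convergence to zero for every $\delta_{N}\downarrow 0$ and finishes the proof.

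I expect the main obstacle to be the same one that forces the introduction of condition~S': the conditional (given the data) Rademacher supremum $E_{\varepsilon}\lVert\cdot\rVert^{*}$ cannot be controlled almost surely merely from its expectation, and one must pass through Lemma~2.9.9, whose hypothesis is the square-integrability of $\lVert\delta_{(Y_{1},X_{1})}-P_{y,x}\rVert_{\mathcal{F}/w_{\theta}}$. Care is also needed to verify that every estimate of the probability-version argument genuinely upgrades to hold eventually almost surely under the almost sure version of B0 — in particular the two contraction steps and the almost sure convergence of $M_{N}$ — and to keep the measurable-cover bookkeeping (the $^{**}$ and $^{*}$ operations) consistent throughout.
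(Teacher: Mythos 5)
Your proof is correct and follows essentially the same route as the paper's: run the argument of Lemma~\ref{lemma_AEC_expectation_POISSON_B} with all convergences upgraded to almost sure under the almost sure version of B0, reduce to the symmetrized term in (\ref{risultato_intermedio}), and handle it by the method of Lemma~\ref{lemma_almost_sure_AEC_POISSON}, i.e. centering plus the first inequality of Lemma~2.9.9 in \cite{vdVW} with condition~S' supplying the required square integrability. In fact you have correctly supplied precisely the details that the paper's own proof leaves to the reader.
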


\begin{proof}
Follow the steps of the proof of Lemma \ref{lemma_AEC_expectation_POISSON_B} up to display (\ref{risultato_intermedio}) (note that under the assumptions of the present lemma all occurrences of convergence in probability can be replaced by almost sure convergence) and note that in order to obtain the almost sure version of conditional AEC it is sufficient to show that
\begin{equation*}\label{risultato_intermedio_1}
E_{\varepsilon} \left\lVert\frac{1}{\sqrt{N}}\sum_{i=1}^{N}\varepsilon_{i}\frac{\delta_{Y_{i}}}{w_{\theta}(X_{i})}\right\rVert_{\mathcal{F}_{\delta_{N}}^{w}}^{*}\overset{as}{\rightarrow} 0\quad\text{ for every }\delta_{N}\downarrow 0.
\end{equation*}
This can be done by the method already seen in the proof of Lemma \ref{lemma_almost_sure_AEC_POISSON}. The details are left to the reader.
\end{proof}


\begin{rem}
Example 2.10.23 on page 200 in \cite{vdVW} shows that assumption B2$^{*}$ will be satisfied whenever (i) $E(1/w_{\theta}^{2}(X_{1}))<\infty$, (ii) $\mathcal{F}$ has an envelope function $F$ such that $P_{y,x}(F^{*}/w_{\theta})^{2}<\infty$, (iii) $\mathcal{F}$ is \textit{suitably measurable} in the sense defined below, and (iv) $\mathcal{F}$ satisfies the uniform entropy condition
\begin{equation}\label{uniform_entropy}
\int_{0}^{\infty}\sup_{Q}\sqrt{\log N(\epsilon\lVert F\rVert_{Q,2}, \mathcal{F}, L_{2}(Q))}d\epsilon<\infty.
\end{equation}
In the last display the supremum is taken over all finitely discrete probability measures $Q$ on $\mathcal{Y}$ such that $\lVert F\rVert_{Q,2}:=\int F^{2}dQ>0$. 

In the context of the present remark, "$\mathcal{F}$ is \textit{suitably measurable}" means that for every $N=1,2,\dots$ and for every $(e_{1},e_{2},\dots, e_{N})\in\{-1,1\}^{N}$ the maps
\[(\mathbf{Y}_{N},\mathbf{X}_{N})\mapsto\left\lVert\sum_{i=1}^{N}e_{i}f(Y_{i})\right\rVert_{\mathcal{F}}\]
are measurable on the completion of the product space $\prod_{i=1}^{N}(\Omega_{y,x}, \mathcal{A}_{y,x}, P_{y,x})$ (cfr. Definition 2.3.3 on page 110 in \cite{vdVW}). It is easily seen that $\mathcal{F}$ is "suitably measurable" whenever it is "pointwise measurable", i.e. whenever $\mathcal{F}$ contains a countable subset $\mathcal{G}$ such that for every $f\in\mathcal{F}$ there is a sequence $\{g_{m}\}_{m=1}^{\infty}$ of functions $g_{m}\in\mathcal{G}$ such that $f$ is the pointwise limit of $\{g_{m}\}_{m=1}^{\infty}$ (see Example 2.3.4 on page 110 in \cite{vdVW}).

\end{rem}

Finally, it remains to deal with total boundedness. As in Subsection \ref{HTEP_lower_bound_pi} this can be done with the aid of Lemma \ref{lemma_total_boundedness}. However, since in the present setting the assumption that the class of functions $\mathcal{F}$ of interest is a $P_{y}$-Donsker class has been replaced by the assumption that the class of functions $\mathcal{F}/w_{\theta}$ is a $P_{y,x}$-Donsker class (see assumption B2$^{*}$), Lemma \ref{lemma_total_boundedness} must be applied with $\mathcal{H}=\mathcal{F}/w_{\theta}$ and $Q=P_{y,x}$. In this way it is easily seen that under assumption B2$^{*}$ the class of functions $\mathcal{F}$ is totally bounded w.r.t. $\rho_{w}$ whenever $\mathcal{F}/w_{\theta}$ is a $P_{y,x}$-Donsker class with $\sup\{|P_{y,x}(f/w_{\theta})|:f\in\mathcal{F}\}<\infty$.

\smallskip

Having established sufficient conditions for marginal convergence and for AEC and total boundedness w.r.t. $\rho_{w}$, one can now proceed as in the proofs of Theorem \ref{unconditional_convergence}, Theorem \ref{outer_expectation_conditional_weak_convergence}, Corollary \ref{corollary_joint_weak_convergence} and Theorem \ref{outer_almost_sure_conditional_weak_convergence} in order to obtain the following weak convergence results:

\begin{thm}[Unconditional weak convergence]\label{unconditional_convergence_B}
Let $\{\mathbf{S}_{N}\}_{N=1}^{\infty}$, $\mathcal{F}$ and $\{\mathbb{G}'_{N}\}_{N=1}^{\infty}$ be defined as in Lemma \ref{lemma_marginal_convergence_B}. Assume that conditions B0 (the probability version suffices), B1, B2$^{*}$ and assumption
\begin{itemize}
\item[B3) ] $\mathcal{F}/w_{\theta}$ has uniformly bounded mean, i.e. $\sup\{|P_{y,x}(f/w_{\theta})|:f\in\mathcal{F}\}<\infty$
\end{itemize}
are satisfied. Then it follows that
\begin{itemize}
\item[(i) ] there exists zero-mean Gaussian process $\{\mathbb{G}'f:f\in\mathcal{F}\}$ with covariance function given by $\Sigma'$ which is a Borel measurable and tight random element of $l^{\infty}(\mathcal{F})$ such that
\[\mathbb{G}_{N}'\rightsquigarrow\mathbb{G}'\quad\text{ in }l^{\infty}(\mathcal{F});\]
\item[(ii) ] the sample paths $f\mapsto\mathbb{G}'f$ are uniformly $\rho_{w}$-continuous with probability $1$.
\end{itemize}
\end{thm}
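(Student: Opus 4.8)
The plan is to reproduce, essentially verbatim, the argument used to prove Theorem \ref{unconditional_convergence}, with the $P_{y}$-Donsker hypothesis and the semimetric $\rho$ there replaced by assumption B2$^{*}$ and the semimetric $\rho_{w}$ here. By Theorem 1.5.4 on page 35 and Theorem 1.5.7 on page 37 in \cite{vdVW}, proving part (i) reduces to verifying three conditions: (a) the finite-dimensional marginals of $\{\mathbb{G}_{N}'\}_{N=1}^{\infty}$ converge weakly to those of a zero-mean Gaussian process with covariance function $\Sigma'$; (b) $\{\mathbb{G}_{N}'\}_{N=1}^{\infty}$ is unconditionally asymptotically equicontinuous with respect to $\rho_{w}$; and (c) $\mathcal{F}$ is totally bounded with respect to $\rho_{w}$.

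First I would dispatch conditions (a) and (c). Condition (a) is immediate from Lemma \ref{lemma_marginal_convergence_B}, whose hypotheses (B0, B1, and the square integrability B2 entailed by B2$^{*}$) all hold. Condition (c) was already settled in the discussion preceding the theorem: applying Lemma \ref{lemma_total_boundedness} with $\mathcal{H}=\mathcal{F}/w_{\theta}$, $Q=P_{y,x}$ and $d'=\rho_{w}$, assumption B2$^{*}$ gives total boundedness of $\mathcal{F}/w_{\theta}$ in $L_{2}(P_{y,x})$, while assumption B3 licenses the weakened condition TB$_{*}$, so that $\mathcal{F}$ is totally bounded with respect to $\rho_{w}$.

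For condition (b) I would copy the final paragraph of the proof of Theorem \ref{unconditional_convergence}. Unconditional AEC with respect to $\rho_{w}$ is equivalent to $P^{*}\{\lVert\mathbb{G}_{N}'\rVert_{\mathcal{F}_{\delta_{N}}^{w}}>\epsilon\}\to0$ for every $\epsilon>0$ and every $\delta_{N}\downarrow0$, and I would estimate
\begin{equation*}
\begin{split}
P^{*}\left\{\lVert\mathbb{G}_{N}'\rVert_{\mathcal{F}_{\delta_{N}}^{w}}>\epsilon\right\}
&=P\left\{\lVert\mathbb{G}_{N}'\rVert_{\mathcal{F}_{\delta_{N}}^{w}}^{*}>\epsilon\right\}\\
&\leq E\,P_{d}\left\{\lVert\mathbb{G}_{N}'\rVert_{\mathcal{F}_{\delta_{N}}^{w}}^{*}>\epsilon\right\}.
\end{split}
\end{equation*}
By Markov's inequality the inner probability is bounded by $E_{d}\lVert\mathbb{G}_{N}'\rVert_{\mathcal{F}_{\delta_{N}}^{w}}^{**}/\epsilon$, which tends to zero in probability by Lemma \ref{lemma_AEC_expectation_POISSON_B}; being also bounded by $1$, its expectation tends to zero by bounded convergence. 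This gives (b), hence part (i), and part (ii) then follows from Addendum 1.5.8 on page 37 in \cite{vdVW}, which furnishes uniform $\rho_{w}$-continuity of the sample paths of $\mathbb{G}'$.

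I do not expect a serious obstacle, since the analytic work has been absorbed into Lemmas \ref{lemma_marginal_convergence_B}, \ref{lemma_total_boundedness} and \ref{lemma_AEC_expectation_POISSON_B}; the one point deserving an explicit check is internal consistency of the semimetric. One must confirm that the intrinsic semimetric $\rho'(f,g):=\sqrt{\Sigma'(f-g,f-g)}$ of the limit process is dominated by $\rho_{w}$, so that the total boundedness and equicontinuity proved with respect to $\rho_{w}$ legitimately transfer to the Gaussian limit produced by the abstract machinery. Using $0\leq\tfrac{Ew(X_{1})}{\theta}-w_{\theta}(X_{1})\leq\tfrac{Ew(X_{1})}{\theta}$ and $w_{\theta}\leq\tfrac{Ew(X_{1})}{\theta}$ in the formula for $\Sigma'$ from Lemma \ref{lemma_marginal_convergence_B} yields
\[
\rho'(f,g)^{2}=\Sigma'(f-g,f-g)\leq\left(\frac{Ew(X_{1})}{\theta}\right)^{2}\rho_{w}(f,g)^{2},
\]
so $\rho'\leq\tfrac{Ew(X_{1})}{\theta}\,\rho_{w}$, which closes this gap and guarantees that the tight, Borel measurable Gaussian limit with covariance $\Sigma'$ indeed has $\rho_{w}$-continuous paths, as claimed in part (ii).
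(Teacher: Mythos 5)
Your proposal is correct and takes exactly the route the paper intends: the paper's own proof of this theorem consists of the instruction to proceed as in Theorem \ref{unconditional_convergence}, with marginal convergence supplied by Lemma \ref{lemma_marginal_convergence_B}, conditional AEC by Lemma \ref{lemma_AEC_expectation_POISSON_B}, and total boundedness w.r.t. $\rho_{w}$ by Lemma \ref{lemma_total_boundedness} applied with $\mathcal{H}=\mathcal{F}/w_{\theta}$ and $Q=P_{y,x}$ under B2$^{*}$ and B3 --- precisely your steps (a), (b), (c). Your closing check that $\rho'\leq (Ew(X_{1})/\theta)\,\rho_{w}$ is correct but not actually required, since Theorems 1.5.4 and 1.5.7 together with Addendum 1.5.8 in \cite{vdVW} already deliver tightness and uniform $\rho_{w}$-continuity of the limit sample paths directly from AEC and total boundedness w.r.t. $\rho_{w}$.
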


\begin{rem}\label{F_Donsker_implication}
Note that assumptions B2$^{*}$ and B3 and the fact that the function $w_{\theta}$ is uniformly bounded imply that $\mathcal{F}$ is a $P_{y}$-Donsker class for which assumption A3 holds (see Example 2.10.10 on page 192 in \citep{vdVW}).
\end{rem}

\begin{thm}[Outer probability conditional weak convergence]\label{outer_expectation_conditional_weak_convergence_B}
Under the assumptions of Theorem \ref{unconditional_convergence_B} it follows that
\[\sup_{h\in BL_{1}(l^{\infty}(\mathcal{F}))}\left|E_{d}h(\mathbb{G}_{N}')-Eh(\mathbb{G}')\right|\overset{P*}{\rightarrow}0,\]
where $\mathbb{G}'$ is defined as in Theorem \ref{unconditional_convergence_B}.
\end{thm}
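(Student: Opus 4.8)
The plan is to run the proof of Theorem \ref{outer_expectation_conditional_weak_convergence} essentially verbatim, replacing the semimetric $\rho$ by $\rho_{w}$ and the sets $\mathcal{F}_{\delta}$ by $\mathcal{F}_{\delta}^{w}$ throughout. This is precisely the route sanctioned by Remark \ref{rem_dimostrazione_generale}: that remark observes that the argument of Theorem \ref{outer_expectation_conditional_weak_convergence} depends on the sampling design only through three abstract inputs, namely (i) convergence of the conditional characteristic functions to those of a zero-mean Gaussian limit $\mathbb{G}'$ with covariance $\Sigma'$, (ii) conditional AEC in the sense of (\ref{AEC_conditional_expectation_versions}) with respect to \emph{some} semimetric $\rho^{\bullet}$, and (iii) total boundedness of $\mathcal{F}$ with respect to that same $\rho^{\bullet}$. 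Consequently the real work reduces to checking these three hypotheses in the present size-proportional setting, with $\rho^{\bullet}:=\rho_{w}$.

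First I would assemble the three inputs. For (i), Lemma \ref{lemma_marginal_convergence_B} furnishes convergence of the conditional characteristic functions to those of the zero-mean Gaussian process $\mathbb{G}'$ with covariance $\Sigma'$; since conditions B0, B1 and B2 imply the probability versions of A1 and A2, Corollary \ref{cor_marginal_convergence} applies and upgrades this to the $BL_{1}(l^{\infty}(\mathcal{G}))$ formulation of marginal convergence for every finite $\mathcal{G}\subseteq\mathcal{F}$. For (iii), total boundedness of $\mathcal{F}$ with respect to $\rho_{w}$ follows from Lemma \ref{lemma_total_boundedness} applied with $\mathcal{H}=\mathcal{F}/w_{\theta}$ and $Q=P_{y,x}$, using assumptions B2$^{*}$ and B3, exactly as recorded in the paragraph preceding Theorem \ref{unconditional_convergence_B}. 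For (ii), Lemma \ref{lemma_AEC_expectation_POISSON_B} delivers the probability version of conditional AEC in the form (\ref{AEC_conditional_expectation_versions}), namely $E_{d}\lVert\mathbb{G}_{N}'\rVert_{\mathcal{F}_{\delta_{N}}^{w}}^{**}\overset{P}{\rightarrow}0$ for every $\delta_{N}\downarrow 0$, which is precisely conditional AEC with respect to $\rho_{w}$.

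With these in hand I would carry out the same four-stage argument as in Theorem \ref{outer_expectation_conditional_weak_convergence}. By Theorem \ref{unconditional_convergence_B}(i)--(ii) the limit $\mathbb{G}'$ is a Borel measurable, tight element of $l^{\infty}(\mathcal{F})$ whose sample paths are uniformly $\rho_{w}$-continuous almost surely. Choosing finite $\rho_{w}$-nets $\mathcal{G}_{\delta}$ and projections $\Pi_{\delta}:\mathcal{F}\mapsto\mathcal{G}_{\delta}$, the $\rho_{w}$-continuity of the paths yields $\mathbb{G}'\circ\Pi_{\delta}\overset{as}{\rightarrow}\mathbb{G}'$ and hence the analogue of (\ref{uniform_continuous_sample_paths}), with the $BL_{1}$ distance between $\mathbb{G}'\circ\Pi_{\delta}$ and $\mathbb{G}'$ tending to zero as $\delta\downarrow 0$. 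For fixed $\delta$, the same isometry $A_{\delta}(z):=z\circ\Pi_{\delta}$ reduces $\sup_{h}\lvert E_{d}h(\mathbb{G}'_{N}\circ\Pi_{\delta})-Eh(\mathbb{G}'\circ\Pi_{\delta})\rvert$ to the finite-dimensional supremum over $BL_{1}(l^{\infty}(\mathcal{G}_{\delta}))$, which tends to zero in outer probability by Corollary \ref{cor_marginal_convergence}. The triangle-inequality chain of (\ref{application_conditional_AEC}) gives $\sup_{h}\lvert E_{d}h(\mathbb{G}'_{N}\circ\Pi_{\delta})-E_{d}h(\mathbb{G}'_{N})\rvert\le E_{d}\lVert\mathbb{G}'_{N}\rVert_{\mathcal{F}_{\delta}^{w}}$, now with $\mathcal{F}_{\delta}^{w}$ in place of $\mathcal{F}_{\delta}$. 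Combining the three pieces produces, for each fixed $\delta$, the analogue of (\ref{last_step_conditional_weak_convergence_proof}), namely $\sup_{h\in BL_{1}}\lvert E_{d}h(\mathbb{G}'_{N})-Eh(\mathbb{G}')\rvert\le r(\delta)+R_{N}(\delta)+E_{d}\lVert\mathbb{G}'_{N}\rVert_{\mathcal{F}_{\delta}^{w}}$, with $r(\delta)\downarrow 0$ and $R_{N}(\delta)\overset{P*}{\rightarrow}0$ for each fixed $\delta$. Extracting a diagonal sequence $\delta_{N}\downarrow 0$ by means of Lemma \ref{lemma_AEC_expectation_POISSON_B} then finishes the proof.

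I expect the only genuine subtlety, as opposed to mere transcription, to be verifying that the finite-net and isometry machinery of the earlier proof truly used nothing special about $\rho$ beyond its role as an indexing semimetric, so that every occurrence of $\mathcal{F}_{\delta}$ may be swapped for $\mathcal{F}_{\delta}^{w}$ without altering a single estimate. The point to watch is the \emph{compatibility of semimetrics}: the semimetric governing conditional AEC (here $\rho_{w}$, arising from $\mathcal{F}/w_{\theta}$ and $P_{y,x}$), the one governing total boundedness, and the one governing path-continuity of $\mathbb{G}'$ must all coincide. This alignment is exactly what Lemma \ref{lemma_AEC_expectation_POISSON_B}, Lemma \ref{lemma_total_boundedness} (through B2$^{*}$ and B3), and Theorem \ref{unconditional_convergence_B}(ii) jointly guarantee, and once it is confirmed no new analytic difficulty arises.
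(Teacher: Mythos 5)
Your proposal is correct and coincides with the paper's own proof: the paper establishes Theorem \ref{outer_expectation_conditional_weak_convergence_B} precisely by rerunning the argument of Theorem \ref{outer_expectation_conditional_weak_convergence} with $\rho_{w}$ and $\mathcal{F}_{\delta}^{w}$ in place of $\rho$ and $\mathcal{F}_{\delta}$, using Lemma \ref{lemma_marginal_convergence_B} (via Corollary \ref{cor_marginal_convergence}) for the finite-dimensional marginals, Lemma \ref{lemma_AEC_expectation_POISSON_B} for conditional AEC, and the B2$^{*}$/B3 total-boundedness observation, exactly as you assemble them. No gaps to report.
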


\begin{cor}[Joint weak convergence]\label{corollary_joint_weak_convergence_B}
Under the assumptions of Theorem \ref{unconditional_convergence_B} it follows that
\[(\mathbb{G}_{N}, \mathbb{G}_{N}')\rightsquigarrow(\mathbb{G}, \mathbb{G}')\text{ in }l^{\infty}(\mathcal{F})\times l^{\infty}(\mathcal{F}),\]
where $\mathbb{G}_{N}'$ and $\mathbb{G}'$ are defined as in Theorem \ref{unconditional_convergence_B}, $\mathbb{G}_{N}$ is the classical empirical process defined in (\ref{classical_empirical_process}), and where $\mathbb{G}$ is a Borel measurable and tight  $P_{y}$-Brownian Bridge which is independent from $\mathbb{G}'$.
\end{cor}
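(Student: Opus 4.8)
The plan is to mimic the proof of Corollary \ref{corollary_joint_weak_convergence} essentially verbatim, replacing each ingredient by its proportional-to-size counterpart. First I would invoke Remark \ref{F_Donsker_implication}: since assumptions B2$^{*}$ and B3 hold and the function $w_{\theta}$ is uniformly bounded, $\mathcal{F}$ is a $P_{y}$-Donsker class for which A3 is satisfied. Consequently the classical empirical process satisfies $\mathbb{G}_{N}\rightsquigarrow\mathbb{G}$ in $l^{\infty}(\mathcal{F})$ with $\mathbb{G}$ a Borel measurable and tight $P_{y}$-Brownian bridge, while Theorem \ref{unconditional_convergence_B} supplies $\mathbb{G}_{N}'\rightsquigarrow\mathbb{G}'$ in $l^{\infty}(\mathcal{F})$.

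By Example 1.4.6 on page 31 in \cite{vdVW}, joint weak convergence of $(\mathbb{G}_{N},\mathbb{G}_{N}')$ to the product law of the independent pair $(\mathbb{G},\mathbb{G}')$ is equivalent to asymptotic independence, i.e.\ to
\[E^{*}f(\mathbb{G}_{N})g(\mathbb{G}_{N}')-E^{*}f(\mathbb{G}_{N})\,E^{*}g(\mathbb{G}_{N}')\rightarrow 0\]
for all $f,g\in BL_{1}(l^{\infty}(\mathcal{F}))$. The factorized term converges to $Ef(\mathbb{G})\,Eg(\mathbb{G}')$ by the two unconditional convergences just recorded, so it remains to show $E^{*}[f(\mathbb{G}_{N})g(\mathbb{G}_{N}')]\to Ef(\mathbb{G})\,Eg(\mathbb{G}')$.

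Next I would reproduce the $S_{N}+T_{N}$ splitting of Corollary \ref{corollary_joint_weak_convergence}: writing the integrand as $f(\mathbb{G}_{N})[g(\mathbb{G}_{N}')-Eg(\mathbb{G}')]+Eg(\mathbb{G}')[f(\mathbb{G}_{N})-Ef(\mathbb{G})]$, the second summand $T_{N}$ is handled through $\mathbb{G}_{N}\rightsquigarrow\mathbb{G}$, which forces both $E^{*}T_{N}$ and $E_{*}T_{N}$ to zero. For the first summand $S_{N}$ I would insert $E_{d}g_{*}(\mathbb{G}_{N}')$ and split into a piece controlled by the outer-probability conditional weak convergence of Theorem \ref{outer_expectation_conditional_weak_convergence_B} (this is where the present B-version replaces the analogue used in Subsection \ref{HTEP_lower_bound_pi}) and a piece bounded by $E[g^{*}(\mathbb{G}_{N}')-g_{*}(\mathbb{G}_{N}')]$. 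The latter vanishes because $\mathbb{G}_{N}'\rightsquigarrow\mathbb{G}'$ makes $\mathbb{G}_{N}'$ asymptotically measurable (Lemma 1.3.8 on page 21 in \cite{vdVW}). The symmetric bound on $E^{*}(-S_{N})$ is obtained identically, and the corollary follows.

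The main obstacle is conceptual rather than computational: the whole argument rests on the fact that, after conditioning on the realized data $(\mathbf{Y}_{N},\mathbf{X}_{N})$, the random conditional law of $\mathbb{G}_{N}'$ settles down to the \emph{deterministic} limit law of $\mathbb{G}'$ uniformly over $BL_{1}$, which is precisely what Theorem \ref{outer_expectation_conditional_weak_convergence_B} delivers. Because $\mathbb{G}_{N}$ depends only on $(\mathbf{Y}_{N},\mathbf{X}_{N})$ while the design randomness enters $\mathbb{G}_{N}'$ through the independent variable $D$, this stabilization decouples the two processes in the limit. Care is needed only in handling non-measurability through outer and inner expectations and measurable covers, but these manipulations are verbatim those of Corollary \ref{corollary_joint_weak_convergence} and require no new idea in the proportional-to-size setting.
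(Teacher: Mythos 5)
Your proposal is correct and follows precisely the route the paper intends: the paper proves this corollary only implicitly, by stating that one proceeds as in the proof of Corollary \ref{corollary_joint_weak_convergence} once marginal convergence, conditional AEC and total boundedness w.r.t.\ $\rho_{w}$ are in place, and your argument is exactly that adaptation --- including the one genuinely new ingredient needed in the proportional-to-size setting, namely invoking Remark \ref{F_Donsker_implication} to deduce from B2$^{*}$ and B3 that $\mathcal{F}$ is $P_{y}$-Donsker (so that $\mathbb{G}_{N}\rightsquigarrow\mathbb{G}$, which in Corollary \ref{corollary_joint_weak_convergence} was an explicit hypothesis), and substituting Theorem \ref{outer_expectation_conditional_weak_convergence_B} for Theorem \ref{outer_expectation_conditional_weak_convergence} in the treatment of $S_{N}$.
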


\begin{thm}[Outer almost sure conditional weak convergence]\label{outer_almost_sure_conditional_weak_convergence_B}
Let $\{\mathbf{S}_{N}\}_{N=1}^{\infty}$, $\mathcal{F}$ and $\{\mathbb{G}'_{N}\}_{N=1}^{\infty}$ be defined as in Lemma \ref{lemma_marginal_convergence_B}. Assume that conditions B0 (the almost sure version), B1, B2$^{*}$, B3 and S' are satisfied. Then,
\[\sup_{h\in BL_{1}(l^{\infty}(\mathcal{F}))}\left|E_{d}h(\mathbb{G}_{N}')-Eh(\mathbb{G}')\right|\overset{as*}{\rightarrow}0,\]
where $\mathbb{G}'$ is defined as in Theorem \ref{unconditional_convergence_B}.
\end{thm}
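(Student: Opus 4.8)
The plan is to follow the proof of Theorem~\ref{outer_expectation_conditional_weak_convergence_B} step by step, strengthening every occurrence of convergence in outer probability to outer almost sure convergence and replacing the probability version of conditional AEC (Lemma~\ref{lemma_AEC_expectation_POISSON_B}) by its almost sure counterpart (Lemma~\ref{lemma_AEC_outer_almost_sure_POISSON_B}); this is exactly the relationship between Theorem~\ref{outer_almost_sure_conditional_weak_convergence} and Theorem~\ref{outer_expectation_conditional_weak_convergence} in the bounded-away-from-zero setting. First I would record that the present hypotheses (the almost sure version of B0 together with B1, B2$^{*}$, B3 and S') are stronger than those of Theorem~\ref{unconditional_convergence_B}, so that the limit process $\{\mathbb{G}'f:f\in\mathcal{F}\}$ exists as a Borel measurable and tight random element of $l^{\infty}(\mathcal{F})$ whose sample paths are uniformly $\rho_{w}$-continuous almost surely, and $\mathcal{F}$ is totally bounded with respect to $\rho_{w}$.

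Next I would reproduce the skeleton of the proof of Theorem~\ref{outer_expectation_conditional_weak_convergence}, but with $\rho$ replaced throughout by $\rho_{w}$ and $\mathcal{F}_{\delta}$ by $\mathcal{F}_{\delta}^{w}$. Using total boundedness with respect to $\rho_{w}$, fix for each $\delta>0$ a finite $\delta$-net $\mathcal{G}_{\delta}$ and the nearest-point projection $\Pi_{\delta}:\mathcal{F}\mapsto\mathcal{G}_{\delta}$. Uniform $\rho_{w}$-continuity of the sample paths of $\mathbb{G}'$ gives $\mathbb{G}'\circ\Pi_{\delta}\rightsquigarrow\mathbb{G}'$ as $\delta\downarrow0$, hence the analogue of (\ref{uniform_continuous_sample_paths}) with a deterministic function $r(\delta)$ satisfying $r(\delta)\to0$ as $\delta\downarrow0$. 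The finite-dimensional convergence supplied by Lemma~\ref{lemma_marginal_convergence_B} holds in the almost sure sense under the present assumptions, so the same reasoning as in Corollary~\ref{cor_marginal_convergence} yields, for each fixed $\delta>0$, the almost sure analogue of (\ref{marginal_convergence}); calling the resulting measurable remainder $R_{N}(\delta)$, this gives $R_{N}(\delta)\to0$ almost surely for each fixed $\delta$. Bounding the approximation error exactly as in (\ref{application_conditional_AEC}) then produces, for every fixed $\delta>0$, the counterpart of (\ref{last_step_conditional_weak_convergence_proof}):
\[\sup_{h\in BL_{1}(l^{\infty}(\mathcal{F}))}\left|E_{d}h(\mathbb{G}_{N}')-Eh(\mathbb{G}')\right|\leq r(\delta)+R_{N}(\delta)+E_{d}\lVert\mathbb{G}_{N}'\rVert_{\mathcal{F}_{\delta}^{w}}^{**}.\]

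To finish I would take the limit superior in $N$ of this inequality for $\delta$ ranging over the countable set $\{1/k\}_{k=1}^{\infty}$, which lets all the almost sure statements be collected on a single set of full measure. On that set the term $R_{N}(1/k)$ vanishes in the limit for every $k$, so that $\limsup_{N}$ of the left-hand side is bounded by $r(1/k)+\limsup_{N}E_{d}\lVert\mathbb{G}_{N}'\rVert_{\mathcal{F}_{1/k}^{w}}^{**}$ for every $k$. Letting $k\to\infty$ and invoking the almost sure AEC in the double-limit form $\lim_{\delta\downarrow0}\limsup_{N}E_{d}\lVert\mathbb{G}_{N}'\rVert_{\mathcal{F}_{\delta}^{w}}^{**}=0$ almost surely—which is what the proof of Lemma~\ref{lemma_AEC_outer_almost_sure_POISSON_B} actually delivers, cf. the proof of Lemma~\ref{lemma_almost_sure_AEC_POISSON}, and which is legitimate here because $\delta\mapsto E_{d}\lVert\mathbb{G}_{N}'\rVert_{\mathcal{F}_{\delta}^{w}}^{**}$ is nondecreasing—drives the right-hand side to zero, giving the claimed outer almost sure conditional weak convergence.

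The main obstacle I anticipate is precisely this last passage to the limit. Unlike the probability version, where one may simply let a single deterministic sequence $\delta_{N}\downarrow0$ decrease slowly, the almost sure version cannot rely on such a diagonal sequence, because the rate at which $R_{N}(\delta)$ becomes small depends on the sample point; one must instead keep $\delta$ fixed, collect the countably many almost sure statements (over $\delta\in\{1/k\}$) on one null set, and only afterwards send $\delta\downarrow0$. The decisive input is therefore that Lemma~\ref{lemma_AEC_outer_almost_sure_POISSON_B} be available in its monotone double-limit form rather than merely along prescribed sequences $\delta_{N}\downarrow0$. A secondary, bookkeeping-type difficulty is maintaining consistency of the measurable-cover majorants $(\cdot)^{**}$ throughout the chain of inequalities—exactly as in the proof of Theorem~\ref{outer_expectation_conditional_weak_convergence}—so that each $\overset{as*}{\rightarrow}0$ assertion is genuinely almost sure and not merely a statement about outer expectations.
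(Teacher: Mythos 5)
Your proposal is correct and takes essentially the same route as the paper, which proves this theorem by pointing back to the arguments of Theorem \ref{outer_expectation_conditional_weak_convergence} and Theorem \ref{outer_almost_sure_conditional_weak_convergence} with $\rho_{w}$, $\mathcal{F}_{\delta}^{w}$, the almost sure version of Lemma \ref{lemma_marginal_convergence_B} and the almost sure AEC of Lemma \ref{lemma_AEC_outer_almost_sure_POISSON_B} substituted in. Your closing step---fixing $\delta=1/k$, taking $\limsup_{N}$, collecting the countably many almost sure statements on one full-measure set, and only then letting $k\to\infty$ via the monotone double-limit form of AEC (which the proof of Lemma \ref{lemma_AEC_outer_almost_sure_POISSON_B} indeed delivers)---is a legitimate and welcome sharpening of the paper's terse claim that all occurrences of convergence in probability can be strengthened to almost sure convergence, since the diagonal-sequence extraction used at the end of the probability-version proof does not transfer directly to the almost sure setting.
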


\section{Extensions for H{\'a}jek empirical processes}\label{Hajek_empirical_process_theory_POISSON}

The theory of the previous section can quite easily be extended to sequences of H{\'a}jek empirical processes (henceforth HEP). Given a class $\mathcal{F}$ of functions $f:\mathcal{Y}\mapsto\mathbb{R}$, the HEP is defined as
\begin{equation}\label{Def_HEP}
\mathbb{G}_{N}''f:=\sqrt{N}\left(\frac{1}{\widehat{N}}\sum_{i=1}^{N}\frac{S_{i,N}}{\pi_{i,N}}f(Y_{i})-\frac{1}{N}\sum_{i=1}^{N}f(Y_{i})\right),\quad f\in\mathcal{F},
\end{equation}
with $\widehat{N}:=\sum_{i=1}^{N}(S_{i,N}/\pi_{i,N})$ the Horvitz-Thompson estimator of the population size $N$. Note that the value taken on by $\mathbb{G}_{N}''f$ is undefined in the case where $\widehat{N}=0$. However, this will not be problem here since the assumptions in the forthcoming theory will always imply that
\begin{equation}\label{condizione_consistenza_N_hat}
P_{d}\left\{\left|\frac{\widehat{N}}{N}-1\right|>\epsilon\right\}\overset{P*(as*)}{\rightarrow} 0\quad\text{ for every }\epsilon>0
\end{equation}
which allows us to consider in place of the HEP as defined in (\ref{Def_HEP}) the closely related empirical process given by
\begin{equation}\label{HEP_approximation}
\widetilde{\mathbb{G}}_{N}''f:=\frac{1}{\sqrt{N}}\sum_{i=1}^{N}\left(\frac{S_{i,N}}{\pi_{i,N}}-1\right)[f(Y_{i})-\mathbb{P}_{y,N}f],\quad f\in\mathcal{F},
\end{equation}
where $\mathbb{P}_{y,N}:=\sum_{i=1}^{N}\delta_{Y_{i}}/N$ is the empirical measure on $\mathcal{Y}$. In order to see why the HEP can be replaced by $\widetilde{\mathbb{G}}_{N}''$ it is sufficient to observe that
\[\mathbb{G}_{N}''f-\widetilde{\mathbb{G}}_{N}''f=\left(\frac{N}{\widehat{N}}-1\right) \widetilde{\mathbb{G}}_{N}''f,\quad f\in\mathcal{F},\]
and from this and condition (\ref{condizione_consistenza_N_hat}) it follows that any one of the three weak convergence results in $l^{\infty}(\mathcal{F})$ for the sequence $\{\widetilde{\mathbb{G}}_{N}''\}_{N=1}^{\infty}$ carries over immediately to the corresponding sequence of HEPs, and viceversa. 

The following lemma establishes conditional weak convergence of the marginal distributions for the sequence $\{\widetilde{\mathbb{G}}_{N}''\}_{N=1}^{\infty}$ and hence for the corresponding sequence of HEPs. 

\begin{lem}\label{marginal_convergence_HEP}
Let $\{\mathbf{S}_{N}\}_{N=1}^{\infty}$ be the sequence of vectors of sample inclusioni indicators corresponding to a sequence of measurable Poisson sampling designs and let $\{\underline{\mathbf{\pi}}_{N}\}_{N=1}^{\infty}$ the corresponding sequence of first order sample inclusion probabilities. Let $\mathcal{F}$ be a class of measurable functions $f:\mathcal{Y}\mapsto\mathbb{R}$, and let $\{\widetilde{\mathbb{G}}''\}_{N=1}^{\infty}$ be the corresponding sequence of empirical processes defined by (\ref{HEP_approximation}). Assume that conditions
\begin{itemize}
\item[C1) ] $\mathcal{F}$ contains a constant function which is not identically equal to zero, i.e. a function $f:\mathcal{Y}\mapsto\mathbb{R}$ such that $f\equiv C$ $P_{y}$-almost surely for some constant $C\neq 0$;
\item[C2) ] $P_{y}|f|<\infty$ for every $f\in\mathcal{F}$
\end{itemize}
and conditions A1 and A2 are satisfied. Then there exists a positive definite covariance function $\Sigma'':\mathcal{F}^{2}\mapsto\mathbb{R}$ such that
\[\Sigma_{N}''(f,g):=E_{d}\widetilde{\mathbb{G}}_{N}''f\widetilde{\mathbb{G}}_{N}''g\overset{P(as)}{\rightarrow}\Sigma''(f,g),\quad f,g\in\mathcal{F},\]
and for every finite-dimensional $\mathbf{f}\in\mathcal{F}^{r}$ and for every $\mathbf{t}\in\mathbb{R}^{r}$
\begin{equation}\label{limit_chf_Hajek}
E_{d}\exp(i\mathbf{t}^{\intercal}\widetilde{\mathbb{G}}''_{N}\mathbf{f})\overset{P(as)}{\rightarrow} \exp\left(-\frac{1}{2}\mathbf{t}^{\intercal}\Sigma''(\mathbf{f})\mathbf{t}\right),
\end{equation}
where $\Sigma''(\mathbf{f})$ is the covariance matrix whose elements are given by $\Sigma''_{(ij)}(\mathbf{f}):=\Sigma''(f_{i},f_{j})$. 
\end{lem}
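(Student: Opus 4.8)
The plan is to exploit the linearity of the Horvitz--Thompson functional so that the whole lemma reduces to Lemma \ref{lemma_marginal_convergence_expectation}. First I would note that, because $\mathbb{P}_{y,N}f:=\frac{1}{N}\sum_{i=1}^{N}f(Y_{i})$ does not involve the sample inclusion indicators, the approximating process splits as
\[\widetilde{\mathbb{G}}_{N}''f=\mathbb{G}_{N}'f-(\mathbb{P}_{y,N}f)\,\mathbb{G}_{N}'\mathbf{1},\]
where $\mathbf{1}$ is the constant function equal to $1$. By condition C1 there is a function $c\equiv C\in\mathcal{F}$ with $C\neq 0$, so by linearity $\mathbb{G}_{N}'\mathbf{1}=\mathbb{G}_{N}'c/C$ and the correction term is entirely governed by the genuine HTEP on $\mathcal{F}$. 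Condition C2 together with the SLLN gives $\mathbb{P}_{y,N}f\rightarrow P_{y}f$ almost surely for every $f\in\mathcal{F}$, and this is the only ``model'' ingredient I would use.

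For the covariance I would expand $\Sigma_{N}''(f,g)=E_{d}\widetilde{\mathbb{G}}_{N}''f\,\widetilde{\mathbb{G}}_{N}''g$ using the splitting above. Under the design expectation $E_{d}$ the factors $\mathbb{P}_{y,N}f$ and $\mathbb{P}_{y,N}g$ are constants, so they pull out and leave a linear combination of $\Sigma_{N}'(f,g)$, $\Sigma_{N}'(f,c)$, $\Sigma_{N}'(c,g)$ and $\Sigma_{N}'(c,c)$ with coefficients built from $\mathbb{P}_{y,N}f$, $\mathbb{P}_{y,N}g$ and $1/C$. Condition A1 applied to the augmented set $\{f,g,c\}\subseteq\mathcal{F}$, together with the SLLN, then yields $\Sigma_{N}''(f,g)\overset{P(as)}{\rightarrow}\Sigma''(f,g)$ with
\[\Sigma''(f,g):=\Sigma'(f,g)-\frac{P_{y}g}{C}\Sigma'(f,c)-\frac{P_{y}f}{C}\Sigma'(c,g)+\frac{(P_{y}f)(P_{y}g)}{C^{2}}\Sigma'(c,c).\]
Since $E_{d}\widetilde{\mathbb{G}}_{N}''f=0$, each $\Sigma_{N}''$ is a genuine covariance function, and the positive-definiteness asserted in the statement follows from $\Sigma''$ being the pointwise limit of the $\Sigma_{N}''$.

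For the characteristic functions I would write, for $\mathbf{f}=(f_{1},\dots,f_{r})^{\intercal}\in\mathcal{F}^{r}$ and $\mathbf{t}\in\mathbb{R}^{r}$,
\[\mathbf{t}^{\intercal}\widetilde{\mathbb{G}}_{N}''\mathbf{f}=(\mathbf{t},-a_{N})^{\intercal}\,\mathbb{G}_{N}'(\mathbf{f},c),\qquad a_{N}:=\frac{1}{C}\sum_{j=1}^{r}t_{j}\,\mathbb{P}_{y,N}f_{j},\]
so that $\mathbf{t}^{\intercal}\widetilde{\mathbb{G}}_{N}''\mathbf{f}$ is a linear functional of the augmented vector $\mathbb{G}_{N}'(\mathbf{f},c)\in\mathbb{R}^{r+1}$ whose coefficient $a_{N}$ is random but, crucially, nonrandom under $E_{d}$, and converges almost surely to the deterministic limit $a:=\frac{1}{C}\sum_{j}t_{j}P_{y}f_{j}$. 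I would then apply Lemma \ref{lemma_marginal_convergence_expectation} to the augmented vector $(\mathbf{f},c)\in\mathcal{F}^{r+1}$ (conditions A1 and A2 hold for it directly, $c$ being a bounded member of $\mathcal{F}$) with the fixed coefficient $(\mathbf{t},-a)$, obtaining $E_{d}\exp(i(\mathbf{t},-a)^{\intercal}\mathbb{G}_{N}'(\mathbf{f},c))\overset{P(as)}{\rightarrow}\exp(-\tfrac{1}{2}(\mathbf{t},-a)^{\intercal}\Sigma'((\mathbf{f},c))(\mathbf{t},-a))$; a short symmetric computation identifies the exponent with $-\tfrac{1}{2}\mathbf{t}^{\intercal}\Sigma''(\mathbf{f})\mathbf{t}$, which is exactly (\ref{limit_chf_Hajek}).

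The hard part will be the passage from the random coefficient $a_{N}$ to the fixed coefficient $a$. Here I would use that $a_{N}$ is an $E_{d}$-constant together with the elementary bound $|e^{ix}-e^{iy}|\le|x-y|$ to get
\[\bigl|E_{d}\exp(i(\mathbf{t},-a_{N})^{\intercal}\mathbb{G}_{N}'(\mathbf{f},c))-E_{d}\exp(i(\mathbf{t},-a)^{\intercal}\mathbb{G}_{N}'(\mathbf{f},c))\bigr|\le|a_{N}-a|\,E_{d}|\mathbb{G}_{N}'c|.\]
Since $E_{d}|\mathbb{G}_{N}'c|\le\sqrt{\Sigma_{N}'(c,c)}$ stays bounded (by A1) while $|a_{N}-a|\rightarrow0$ almost surely, this difference vanishes in probability (almost surely), and combining it with the application of Lemma \ref{lemma_marginal_convergence_expectation} gives the claim. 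I would run the whole argument in both the in-probability and the almost-sure versions at once, invoking whichever version of A1 and A2 is assumed.
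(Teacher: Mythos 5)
Your proposal is correct, and while your treatment of the covariance convergence coincides with the paper's (expand $\Sigma_{N}''$ by bilinearity of $\Sigma_{N}'$, pull the $E_{d}$-constants $\mathbb{P}_{y,N}f$ and $\mathbb{P}_{y,N}g$ out of the design expectation, and invoke A1, C1, C2 and the SLLN to reach the same limit formula), your handling of the characteristic functions takes a genuinely different route. The paper does not reduce to Lemma \ref{lemma_marginal_convergence_expectation}: it repeats the Lindeberg verification from scratch for the triangular array $Z_{i,N}:=\bigl(S_{i,N}/\pi_{i,N}-1\bigr)\mathbf{t}^{\intercal}[\mathbf{f}(Y_{i})-\mathbb{P}_{y,N}\mathbf{f}]$, treating the degenerate case $\mathbf{t}^{\intercal}\Sigma''(\mathbf{f})\mathbf{t}=0$ separately and bounding the Lindeberg sum by two sums, one controlled by A2 applied to $\mathbf{f}$ and one controlled by A2 applied to constant functions --- which is precisely where C1 enters the paper's argument. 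You instead view $\widetilde{\mathbb{G}}_{N}''$ as a linear image of the HTEP on the augmented vector $(\mathbf{f},c)\in\mathcal{F}^{r+1}$ with a random but $E_{d}$-constant coefficient $a_{N}\rightarrow a$, apply Lemma \ref{lemma_marginal_convergence_expectation} as a black box with the fixed coefficient vector $(\mathbf{t},-a)$ (legitimate, since C1 puts $c$ in $\mathcal{F}$ and hence the augmented vector within the scope of A1 and A2), and dispose of the perturbation via $|e^{ix}-e^{iy}|\leq|x-y|$ together with the bound $E_{d}|\mathbb{G}_{N}'c|\leq\sqrt{\Sigma_{N}'(c,c)}$, which is bounded in probability (almost surely) by A1. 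Your reduction buys three things: no second Lindeberg computation, no case distinction for degenerate $\mathbf{t}^{\intercal}\Sigma''(\mathbf{f})\mathbf{t}$ (the HTEP lemma already covers it internally), and the algebraic identity $(\mathbf{t},-a)^{\intercal}\Sigma'((\mathbf{f},c))(\mathbf{t},-a)=\mathbf{t}^{\intercal}\Sigma''(\mathbf{f})\mathbf{t}$ makes the form of the limit transparent. What the paper's direct computation buys in exchange is self-containedness --- it needs neither the augmentation device nor the perturbation step --- but in both arguments C1 plays the same conceptual role of bringing the constant direction under the hypotheses A1 and A2.
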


\begin{proof}
The proof is substantially the same as the proof of Lemma \ref{lemma_marginal_convergence_expectation}. First note that
\begin{equation*}
\begin{split}
\Sigma_{N}''(f,g)&:=E_{d}\widetilde{\mathbb{G}}_{N}''f\widetilde{\mathbb{G}}_{N}''g\\
&=\frac{1}{N}\sum_{i=1}^{N}\frac{1-\pi_{i,N}}{\pi_{i,N}}(f(Y_{i})-\mathbb{P}_{y,N}f)(g(Y_{i})-\mathbb{P}_{y,N}g)\\
&=\Sigma_{N}'(f,g)-\Sigma_{N}'(f,\mathbb{P}_{y,N}g)-\Sigma_{N}'(\mathbb{P}_{y,N}f,g)+\Sigma_{N}'(\mathbb{P}_{y,N}f,\mathbb{P}_{y,N}g),
\end{split}
\end{equation*}
and that 
\[\Sigma_{N}''(f,g)\overset{P(as)}{\rightarrow}\Sigma'(f,g)-\frac{P_{y}g}{C}\Sigma'(f,C)-\frac{P_{y}f}{C}\Sigma'(C,g)+\frac{P_{y}f P_{y}g}{C^{2}}\Sigma'(C,C)\]
by assumptions C1, C2 and A1. This shows the existence of the limit function $\Sigma''$. The fact that $\Sigma''$ is positive semidefinite follows from the fact that it is the limit of a sequence of covariance functions.

Next, consider the part of the conclusion about the sequence of conditional characteristic functions. Let $\Sigma_{N}''(\mathbf{f})$ be the covariance matrix whose elements are given by $\Sigma''_{N(ij)}(\mathbf{f}):=\Sigma''_{N}(f_{i},f_{j})$, $i,j=1,2,\dots, r$, and consider first the case where $\mathbf{t}^{\intercal}\Sigma''(\mathbf{f})\mathbf{t}=0$. In this case it follows that
\[\mathbf{t}^{\intercal}\Sigma_{N}''(\mathbf{f})\mathbf{t}\overset{P(as)}{\rightarrow}\mathbf{t}^{\intercal}\Sigma''(\mathbf{f})\mathbf{t}=0\]
and this implies (\ref{limit_chf_Hajek}). In order to prove condition (\ref{limit_chf_Hajek}) also for the case where $\mathbf{t}^{\intercal}\Sigma''(\mathbf{f})\mathbf{t}>0$, it will be enough to show that a suitable probability limit (almost sure) version of the Lindeberg condition holds. In order to give an explicit expression of that condition it will be convenient to define 
\[Z_{i,N}:=\left(\frac{S_{i,N}}{\pi_{i,N}}-1\right)\mathbf{t}^{\intercal}[\mathbf{f}(Y_{i})-\mathbb{P}_{y,N}\mathbf{f}],\quad i=1,2,\dots, N,\]
and
\[q_{N}^{2}:=\sum_{i=1}^{N}E_{d}Z_{i,N}^{2}=N \mathbf{t}^{\intercal}\Sigma_{N}''(\mathbf{f})\mathbf{t},\quad N=1,2,\dots.\]
Then, 
\[\mathbf{t}^{\intercal}\widetilde{\mathbb{G}}''_{N}\mathbf{f}=\widetilde{\mathbb{G}}''_{N}\mathbf{t}^{\intercal}\mathbf{f}=\frac{1}{\sqrt{N}}\sum_{i=1}^{N}Z_{i,N},\]
and the probability limit version (almost sure version) of the Lindeberg condition can be written as
\begin{equation}\label{P_Lindeberg_Hayek}
\frac{1}{q_{N}^{2}}\sum_{i=1}^{N}E_{d}Z_{i,N}^{2}I(|Z_{i,N}|>\epsilon q_{N})\overset{P(as)}{\rightarrow} 0\quad\text{ for every }\epsilon>0.
\end{equation}
In order to prove this condition, note that
\begin{equation*}
\begin{split}
E_{d}&Z_{i,N}^{2}I(|Z_{i,N}|>\epsilon q_{N})=\\
&=\frac{(1-\pi_{i,N})^{2}}{\pi_{i,N}}\left(\mathbf{t}^{\intercal}[\mathbf{f}(Y_{i})-\mathbb{P}_{y,N}\mathbf{f}]\right)^{2}I\left(\frac{1-\pi_{i,N}}{\pi_{i,N}}|\mathbf{t}^{\intercal}[\mathbf{f}(Y_{i})-\mathbb{P}_{y,N}\mathbf{f}]|>\epsilon q_{N}\right)+\\
&\quad+(1-\pi_{i,N})\left(\mathbf{t}^{\intercal}[\mathbf{f}(Y_{i})-\mathbb{P}_{y,N}\mathbf{f}]\right)^{2} I\left(|\mathbf{t}^{\intercal}[\mathbf{f}(Y_{i})-\mathbb{P}_{y,N}\mathbf{f}]|>\epsilon q_{N}\right)\\
&\leq2\frac{1-\pi_{i,N}}{\pi_{i,N}}\lVert\mathbf{t}\rVert^{2} \lVert\mathbf{f}(Y_{i})-\mathbb{P}_{y,N}\mathbf{f}\rVert^{2}I(\lVert\mathbf{t}\rVert \lVert\mathbf{f}(Y_{i})-\mathbb{P}_{y,N}\mathbf{f}\rVert>\pi_{i,N}\epsilon q_{N})\\
&\leq2\frac{1-\pi_{i,N}}{\pi_{i,N}}\left[\lVert\mathbf{t}\rVert^{2} \lVert\mathbf{f}(Y_{i})\rVert^{2}I(\lVert\mathbf{t}\rVert \lVert\mathbf{f}(Y_{i})\rVert>\pi_{i,N}\epsilon q_{N})+\right.\\
&\quad\quad\quad\quad\quad\quad\left.+\lVert\mathbf{t}\rVert^{2} \lVert\mathbb{P}_{y,N}\mathbf{f}\rVert^{2}I(\lVert\mathbf{t}\rVert \lVert\mathbb{P}_{y,N}\mathbf{f}\rVert>\pi_{i,N}\epsilon q_{N})\right]
\end{split}
\end{equation*}
and that, for small enough $\eta>0$,
\[q_{N}^{2}=N \mathbf{t}^{\intercal}\Sigma_{N}''(\mathbf{f})\mathbf{t}\geq N (\mathbf{t}^{\intercal}\Sigma''(\mathbf{f})\mathbf{t}-\eta):=N C_{\eta}^{2}\rightarrow \infty\]
with probability tending to $1$ (eventually almost surely). The left side of (\ref{P_Lindeberg_Hayek}) is therefore bounded by
\begin{equation*}
\begin{split}
&\frac{2}{NC_{\eta}}\left[\sum_{i=1}^{N}\frac{1-\pi_{i,N}}{\pi_{i,N}}\lVert\mathbf{t}\rVert^{2} \lVert\mathbf{f}(Y_{i})\rVert^{2}I(\lVert\mathbf{t}\rVert \lVert\mathbf{f}(Y_{i})\rVert>\pi_{i,N}\epsilon \sqrt{N C_{\eta}})+\right.\\
&\quad\quad\quad\left.+\sum_{i=1}^{N}\frac{1-\pi_{i,N}}{\pi_{i,N}}\lVert\mathbf{t}\rVert^{2} \lVert\mathbb{P}_{y,N}\mathbf{f}\rVert^{2}I(\lVert\mathbf{t}\rVert \lVert\mathbb{P}_{y,N}\mathbf{f}\rVert>\pi_{i,N}\epsilon \sqrt{N C_{\eta}})\right]
\end{split}
\end{equation*}
with probability tending to $1$ (eventually almost surely), and the random variable in the last display goes to zero in probability (almost surely) by assumptions C1, C2 and A2 (assumption C1 makes sure that assumption A2 holds for constant functions $f$ as well).
\end{proof}

\begin{rem}
Assumption C2 is certainly satisfied if $\mathcal{F}$ is a $P_{y}$-Donsker class.
\end{rem}

Now, as already seen in Subsection \ref{HTEP_lower_bound_pi}, Lemma \ref{marginal_convergence_HEP} determines uniquely the finite-dimensional distributions of a stochastic process $\{\mathbb{G}''f:f\in\mathcal{F}\}$ to which the sequence $\{\{\widetilde{\mathbb{G}}_{N}''f:f\in\mathcal{F}\}\}_{N=1}^{\infty}$ (or, equivalently, the corresponding sequence of HEPs), viewed as a sequence of random elements in $l^{\infty}(\mathcal{F})$, could possibly converge in the weak sense. However, in order to prove any one of the three desired weak convergence results for infinite function classes $\mathcal{F}$ it must still be shown that there exists a semimetric for which $\mathcal{F}$ is totally bounded and for which (conditional) AEC holds. Given its similarity with the ``$\mathbb{G}''$-intrinsic'' semimetric 
\[\rho''(f,g):=\sqrt{\Sigma''(f-g,f-g)},\quad f,g\in\mathcal{F},\]
the expectation-centered $L_{2}(P_{y})$ semimetric $\rho_{c}$ seems the most obvious choice. In fact, according to Lemma \ref{lemma_total_boundedness}, if $\mathcal{F}$ is a $P_{y}$-Donsker class, then it must be totally bounded w.r.t. $\rho_{c}$. Thus, it remains to establish (conditional) AEC w.r.t. $\rho_{c}$. To this aim, the following modified version of the symmetrization inequality in Lemma \ref{lemma_symmetrization_inequality} will be needed. Its proof is word for word same as the proof of Lemma \ref{lemma_symmetrization_inequality} after replacing $\delta_{Y_{i}}$ by $\delta_{Y_{i}}-\mathbb{P}_{y,N}$. 

\begin{lem}[Symmetrization inequality]\label{HAJEK_lemma_symmetrization_inequality}
Let $\mathcal{F}$ be an arbitrary class of measurable functions, let $\mathbf{S}_{N}$ denote the vector of sample inclusion indicators corresponding to a measurable Poisson sampling design and let $\widetilde{\mathbb{G}}_{N}''$ be the corresponding empirical process defined by (\ref{HEP_approximation}). Then, 
\begin{equation}\label{HAJEK_symmetrization_inequality}
E_{d}\lVert\widetilde{\mathbb{G}}_{N}''\rVert_{\mathcal{F}}^{**}\leq 2 E_{\varepsilon} E_{d}\left\lVert\frac{1}{\sqrt{N}}\sum_{i=1}^{N}\varepsilon_{i}\left(\frac{S_{i,N}}{\pi_{i,N}}-1\right)(\delta_{Y_{i}}-\mathbb{P}_{y,N})\right\rVert_{\mathcal{F}}^{**}\quad\text{a.s.},
\end{equation}
where the underlying probability space and all the involved random variables are defined as described in Section \ref{Section_notation_definitions}, and where the stars on both sides of the inequality refer to the arguments of the expectations $E_{d}$ and $E_{\epsilon}E_{d}$, respectively.
\end{lem}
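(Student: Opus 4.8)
The plan is to reproduce the proof of Lemma \ref{lemma_symmetrization_inequality} line by line, the only change being that throughout the argument the point mass $\delta_{Y_i}$ is replaced by the centered measure $\delta_{Y_i}-\mathbb{P}_{y,N}$. Concretely, I would introduce a second Poisson design with indicators $S_{i,N}'$ built from an independent copy $D'$ of the uniform random variable $D$ (exactly as in the proof of Lemma \ref{lemma_symmetrization_inequality}, factoring the central space $(\Omega_{d},\mathcal{A}_{d},P_{d})$ into two identical copies), and I would set $Z_{i}:=\frac{1}{\sqrt{N}}(S_{i,N}/\pi_{i,N}-1)(\delta_{Y_{i}}-\mathbb{P}_{y,N})$, defining $Z_{i}'$ in the same way with $S_{i,N}'$ in place of $S_{i,N}$. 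The majorants $T^{**}$ and $T^{***}$ are then defined relative to the same maps $\phi'''$ and $\theta$ as before.

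The first thing to verify is that the centering does not disturb the zero-mean property that drives the opening step of the symmetrization. Since $\mathbb{P}_{y,N}=\frac{1}{N}\sum_{j=1}^{N}\delta_{Y_{j}}$ depends only on $\mathbf{Y}_{N}$, the quantity $f(Y_{i})-\mathbb{P}_{y,N}f$ is held fixed under the design expectations $\widetilde{E}_{d}$ and $\widetilde{E}_{d'}$; consequently $\widetilde{E}_{d}Z_{i}f=\frac{1}{\sqrt{N}}\widetilde{E}_{d}(S_{i,N}/\pi_{i,N}-1)(f(Y_{i})-\mathbb{P}_{y,N}f)=0$, and likewise $\widetilde{E}_{d'}Z_{i}'f=0$, exactly as required. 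The chain of inequalities that turns $\lVert\widetilde{\mathbb{G}}_{N}''\rVert_{\mathcal{F}}$ into $\widetilde{E}_{d'}\lVert\sum_{i=1}^{N}(Z_{i}-Z_{i}')\rVert_{\mathcal{F}}$ (Jensen's inequality followed by the triangle inequality) then goes through unchanged, and so does the passage to the cover functions in the analogue of (\ref{presymmetrization_inequality}).

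The heart of the argument is the randomization step, where $\theta$ swaps $S_{i,N}$ and $S_{i,N}'$ at the indices with $\varepsilon_{i}=-1$ and one shows that this swap is measure-preserving and leaves the cover function invariant, so that $E_{d}T^{***}=E_{d}T_{\varepsilon}$ almost surely. The crucial observation is that $\theta$ acts only on the indicator coordinates, whereas the centering term $\mathbb{P}_{y,N}$ is a function of $\mathbf{Y}_{N}$ alone and is therefore untouched by $\theta$. Hence the function $h$ representing $\lVert\sum_{i=1}^{N}(Z_{i}-Z_{i}')\rVert_{\mathcal{F}}$ still factors through $\phi'''$ in exactly the same way, the identity $T_{\varepsilon}^{***}=h^{*}\circ\theta\circ\phi'''$ holds verbatim, and the invariance of the measure $(P_{y,x}^{\infty}\times P_{d}\times P_{\varepsilon}^{\infty})\circ\phi'''^{-1}$ under $\theta$ is unaffected. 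Inserting the Rademacher signs and applying the triangle inequality to $\sum_{i=1}^{N}\varepsilon_{i}(Z_{i}-Z_{i}')$ then produces the factor $2$ and the claimed bound (\ref{HAJEK_symmetrization_inequality}).

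I expect no genuine obstacle here: the only place where the substitution could conceivably break is the cover-function bookkeeping, and that is precisely where the independence of $\mathbb{P}_{y,N}$ from the sample inclusion indicators is used. Because $\mathbb{P}_{y,N}$ depends on none of $\mathbf{S}_{N}$, $\mathbf{S}_{N}'$, $\varepsilon_{1},\dots,\varepsilon_{N}$, every measurability and measure-invariance statement in the proof of Lemma \ref{lemma_symmetrization_inequality} transfers without modification, which is exactly the sense in which the present proof is \emph{word for word the same}.
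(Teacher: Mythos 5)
Your proposal is correct and coincides exactly with the paper's own proof: the paper disposes of this lemma by stating that its proof is word for word the same as that of Lemma \ref{lemma_symmetrization_inequality} with $\delta_{Y_{i}}$ replaced by $\delta_{Y_{i}}-\mathbb{P}_{y,N}$, which is precisely your plan. Your two verifications --- that $\widetilde{E}_{d}Z_{i}f=0$ still holds because $\mathbb{P}_{y,N}$ is a function of $\mathbf{Y}_{N}$ alone, and that the swap map $\theta$ and the cover-function bookkeeping are unaffected since $\mathbb{P}_{y,N}$ does not involve the inclusion indicators or the Rademacher signs --- are exactly the points that make the substitution legitimate.
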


\begin{lem}[Probability version of conditional AEC]\label{HAJEK_lemma_AEC_expectation_POISSON}
Let $\{\mathbf{S}_{N}\}_{N=1}^{\infty}$ be defined as in Lemma \ref{marginal_convergence_HEP}, let $\mathcal{F}$ be a $P_{y}$-Donsker class and let $\{\widetilde{\mathbb{G}}''_{N}\}_{N=1}^{\infty}$ be the corresponding sequence of empirical processes defined by (\ref{HEP_approximation}). Assume that condition C1 is satisfied and that the probability versions of conditions A1 and A2$^{*}$ hold as well. Then it follows that
\[E_{d}\lVert \widetilde{\mathbb{G}}_{N}''\rVert_{\mathcal{F}_{\delta_{N}}^{c}}^{**}\overset{P}{\rightarrow} 0\quad\text{ for every }\delta_{N}\downarrow 0,\]
where $\mathcal{F}_{\delta}^{c}:=\{f-g:f,g\in\mathcal{F}\wedge \rho_{c}(f,g)<\delta\}$ for $\delta>0$, and where the stars refer to the argument of the expectation $E_{d}$.
\end{lem}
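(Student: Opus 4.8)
The plan is to mirror the proof of Lemma \ref{lemma_AEC_expectation_POISSON} verbatim, the only structural change being that the Horvitz--Thompson symmetrization inequality is replaced by its H\'ajek counterpart (Lemma \ref{HAJEK_lemma_symmetrization_inequality}), and that the point masses $\delta_{Y_i}$ are everywhere replaced by the empirically centered increments $\delta_{Y_i}-\mathbb{P}_{y,N}$. First I would invoke (\ref{HAJEK_symmetrization_inequality}) with $\mathcal{F}$ replaced by $\mathcal{F}_{\delta_N}^{c}$, reducing the claim to
\[
E_{\varepsilon}E_{d}\left\lVert\frac{1}{\sqrt{N}}\sum_{i=1}^{N}\varepsilon_{i}\left(\frac{S_{i,N}}{\pi_{i,N}}-1\right)(\delta_{Y_{i}}-\mathbb{P}_{y,N})\right\rVert_{\mathcal{F}_{\delta_N}^{c}}^{**}\overset{P}{\rightarrow}0 .
\]
Splitting $S_{i,N}/\pi_{i,N}-1$ into its two summands by the triangle inequality and factoring $1/L$ out of the first one, I would apply the contraction principle (Lemma \ref{contraction_principle}) with $\gamma_i:=L/\pi_{i,N}$ and $G_i:=\delta_{Y_i}-\mathbb{P}_{y,N}$: condition A2$^{*}$ guarantees $\gamma_i\in[0,1]$ with probability tending to $1$, $\gamma_i$ is a function of $\mathbf{X}_N$, and $G_i$ is a function of $\mathbf{Y}_N$ only, so the hypotheses of Lemma \ref{contraction_principle} are met. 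This reduces everything, up to the fixed factor $2(1/L+1)$ and on an event of probability tending to $1$, to showing
\[
E_{\varepsilon}\left\lVert\frac{1}{\sqrt{N}}\sum_{i=1}^{N}\varepsilon_{i}(\delta_{Y_{i}}-\mathbb{P}_{y,N})\right\rVert_{\mathcal{F}_{\delta_N}^{c}}^{*}\overset{P}{\rightarrow}0 .
\]

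The one genuinely new ingredient relative to Lemma \ref{lemma_AEC_expectation_POISSON} is the \emph{random} centering $\mathbb{P}_{y,N}$. Since both the desymmetrization inequality (Lemma 2.3.6 in \cite{vdVW}) and the Donsker modulus result (Corollary 2.3.12 in \cite{vdVW}) are phrased for the deterministic centering $P_y$, I would replace $\mathbb{P}_{y,N}$ by $P_y$ before invoking them. Writing $\delta_{Y_i}-\mathbb{P}_{y,N}=(\delta_{Y_i}-P_y)-(\mathbb{P}_{y,N}-P_y)$ and using that $\mathbb{P}_{y,N}-P_y$ does not depend on $i$, the triangle inequality gives the pointwise bound
\[
\left\lVert\frac{1}{\sqrt{N}}\sum_{i=1}^{N}\varepsilon_{i}(\delta_{Y_{i}}-\mathbb{P}_{y,N})\right\rVert_{\mathcal{F}_{\delta_N}^{c}}\le\left\lVert\frac{1}{\sqrt{N}}\sum_{i=1}^{N}\varepsilon_{i}(\delta_{Y_{i}}-P_y)\right\rVert_{\mathcal{F}_{\delta_N}^{c}}+\left|\frac{1}{\sqrt{N}}\sum_{i=1}^{N}\varepsilon_{i}\right|\cdot\frac{1}{\sqrt{N}}\lVert\mathbb{G}_N\rVert_{\mathcal{F}_{\delta_N}^{c}},
\]
where I have used $\sup_{f\in\mathcal{F}_{\delta_N}^{c}}|\mathbb{P}_{y,N}f-P_yf|=N^{-1/2}\lVert\mathbb{G}_N\rVert_{\mathcal{F}_{\delta_N}^{c}}$ for the classical empirical process $\mathbb{G}_N$ of (\ref{classical_empirical_process}).

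I would then take the (outer) expectation $E$ of this bound and treat the two terms separately. For the first term, $EE_\varepsilon\lVert\,\cdots(\delta_{Y_i}-P_y)\cdots\rVert^{*}=E^{*}\lVert N^{-1/2}\sum\varepsilon_i(\delta_{Y_i}-P_y)\rVert_{\mathcal{F}_{\delta_N}^{c}}$ is bounded, by the first inequality of Lemma 2.3.6 in \cite{vdVW}, by a constant multiple of $E^{*}\lVert\mathbb{G}_N\rVert_{\mathcal{F}_{\delta_N}^{c}}$, which tends to $0$ by Corollary 2.3.12 in \cite{vdVW}; the crucial point is that $\mathcal{F}_{\delta_N}^{c}$ is defined through $\rho_c$, which is exactly the intrinsic (standard-deviation) semimetric governing asymptotic equicontinuity of the empirical process of the $P_y$-Donsker class $\mathcal{F}$ (so no analogue of A3 is required here). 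For the second term, $E_\varepsilon|N^{-1/2}\sum\varepsilon_i|\le1$, so its expectation is at most $N^{-1/2}E^{*}\lVert\mathbb{G}_N\rVert_{\mathcal{F}_{\delta_N}^{c}}$, which vanishes a fortiori. Having shown that the expectation of the bound tends to $0$, Markov's inequality yields convergence in probability, and combining this with the contraction step completes the proof. \textbf{The main obstacle} I expect is precisely the handling of the random centering $\mathbb{P}_{y,N}$: one must peel it off and verify that the resulting discrepancy $\mathbb{P}_{y,N}-P_y$, amplified only by the $O_P(1)$ factor $N^{-1/2}\sum\varepsilon_i$ and damped by an extra $N^{-1/2}$, is asymptotically negligible uniformly over the shrinking class $\mathcal{F}_{\delta_N}^{c}$, all the while keeping the index class matched to $\rho_c$ so that the Donsker modulus result genuinely applies.
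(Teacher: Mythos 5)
Your proposal is correct and follows essentially the same route as the paper's own proof: H\'ajek symmetrization (Lemma \ref{HAJEK_lemma_symmetrization_inequality}), the same triangle-inequality split and application of the contraction principle (Lemma \ref{contraction_principle}) with $\gamma_i=L/\pi_{i,N}$, then peeling off the random centering via $\delta_{Y_i}-\mathbb{P}_{y,N}=(\delta_{Y_i}-P_y)-(\mathbb{P}_{y,N}-P_y)$ and invoking Lemma 2.3.6 and Corollary 2.3.12 of \cite{vdVW} before finishing with Markov's inequality. The paper treats the $\mathbb{P}_{y,N}-P_y$ term in exactly the way you describe, bounding $E_{\varepsilon}\bigl|N^{-1/2}\sum_{i=1}^{N}\varepsilon_i\bigr|$ by one and noting that $E^{*}\lVert\mathbb{P}_{y,N}-P_y\rVert_{\mathcal{F}_{\delta_N}^{c}}$ vanishes a fortiori, so there is nothing to add.
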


\begin{proof}
The proof is essentially the same as the proof of Lemma \ref{lemma_AEC_expectation_POISSON}. It suffices to prove that the expectation on the right side of (\ref{HAJEK_symmetrization_inequality}) with $\mathcal{F}$ replaced by $\mathcal{F}_{\delta_{N}}^{c}$ goes to zero in probability. To this aim note that, by the triangle inequality, the latter is bounded by 
\[\frac{1}{L}E_{\varepsilon} E_{d}\left\lVert\frac{1}{\sqrt{N}}\sum_{i=1}^{N}\varepsilon_{i}\frac{L S_{i,N}}{\pi_{i,N}}(\delta_{Y_{i}}-\mathbb{P}_{y,N})\right\rVert_{\mathcal{F}_{\delta_{N}}^{c}}^{**}+E_{\varepsilon}\left\lVert\frac{1}{\sqrt{N}}\sum_{i=1}^{N}\varepsilon_{i}(\delta_{Y_{i}}-\mathbb{P}_{y,N})\right\rVert_{\mathcal{F}_{\delta_{N}}^{c}}^{*},\]
where the stars refer to the arguments of the expectations. Then, apply the contraction principle in Lemma \ref{contraction_principle} to see that
\begin{equation}\label{HAJEK_contraction_inequality}
\begin{split}
E_{\varepsilon} E_{d}&\left\lVert\frac{1}{\sqrt{N}}\sum_{i=1}^{N}\varepsilon_{i}\frac{L S_{i,N}}{\pi_{i,N}}(\delta_{Y_{i}}-\mathbb{P}_{y,N})\right\rVert_{\mathcal{F}_{\delta_{N}}^{c}}^{**}\leq E_{\varepsilon}\left\lVert\frac{1}{\sqrt{N}}\sum_{i=1}^{N}\varepsilon_{i}(\delta_{Y_{i}}-\mathbb{P}_{y,N})\right\rVert_{\mathcal{F}_{\delta_{N}}^{c}}^{*}
\end{split}
\end{equation}
with probability tending to $1$. Now, observe that 
\begin{equation*}
\begin{split}
EE_{\varepsilon}&\left\lVert\frac{1}{\sqrt{N}}\sum_{i=1}^{N}\varepsilon_{i}(\delta_{Y_{i}}-\mathbb{P}_{y,N})\right\rVert_{\mathcal{F}_{\delta_{N}}^{c}}^{*}=E\left\lVert\frac{1}{\sqrt{N}}\sum_{i=1}^{N}\varepsilon_{i}(\delta_{Y_{i}}-\mathbb{P}_{y,N})\right\rVert_{\mathcal{F}_{\delta_{N}}^{c}}^{*}\\
&\leq E\left\lVert\frac{1}{\sqrt{N}}\sum_{i=1}^{N}\varepsilon_{i}(\delta_{Y_{i}}-P_{y})\right\rVert_{\mathcal{F}_{\delta_{N}}^{c}}^{*}+E\left\lVert\frac{1}{\sqrt{N}}\sum_{i=1}^{N}\varepsilon_{i}(\mathbb{P}_{y,N}-P_{y})\right\rVert_{\mathcal{F}_{\delta_{N}}^{c}}^{*}\\
&=E^{*}\left\lVert\frac{1}{\sqrt{N}}\sum_{i=1}^{N}\varepsilon_{i}(\delta_{Y_{i}}-P_{y})\right\rVert_{\mathcal{F}_{\delta_{N}}^{c}}+E\lVert \mathbb{P}_{y,N}-P_{y}\rVert_{\mathcal{F}_{\delta_{N}}^{c}}^{*}E_{\varepsilon}\left|\frac{1}{\sqrt{N}}\sum_{i=1}^{N}\varepsilon_{i}\right|\\
&\leq E^{*}\left\lVert\frac{1}{\sqrt{N}}\sum_{i=1}^{N}\varepsilon_{i}(\delta_{Y_{i}}-P_{y})\right\rVert_{\mathcal{F}_{\delta_{N}}^{c}}+E^{*}\lVert \mathbb{P}_{y,N}-P_{y}\rVert_{\mathcal{F}_{\delta_{N}}^{c}},
\end{split}
\end{equation*}
and apply the first inequality in the statement of Lemma 2.3.6 on page 111 in \cite{vdVW} to see that the first outer expectation in the last line is bounded by a constant multiple of
\[E^{*}\left\lVert\frac{1}{\sqrt{N}}\sum_{i=1}^{N}(\delta_{Y_{i}}-P_{y})\right\rVert_{\mathcal{F}_{\delta_{N}}^{c}}.\]
This outer expectation goes to zero by Corollary 2.3.12 on page 115 in \cite{vdVW}, and therefore the outer expectation
\[E^{*}\lVert \mathbb{P}_{y,N}-P_{y}\rVert_{\mathcal{F}_{\delta_{N}}^{c}}=E^{*}\left\lVert\frac{1}{N}\sum_{i=1}^{N}(\delta_{Y_{i}}-P_{y})\right\rVert_{\mathcal{F}_{\delta_{N}}^{c}}\]
must go to zero as well. The conclusion of the lemma follows now upon an application of Markov's inequality.
\end{proof}

\begin{lem}[Almost sure version of conditional AEC]\label{HAJEK_lemma_almost_sure_AEC_POISSON}
Let $\{\mathbf{S}_{N}\}_{N=1}^{\infty}$, $\mathcal{F}$ and $\{\widetilde{\mathbb{G}}_{N}''\}_{N=1}^{\infty}$ be defined as in Lemma \ref{HAJEK_lemma_AEC_expectation_POISSON}, and assume that condition C1, the almost sure versions of conditions A1 and A2$^{*}$ and condition S hold. Then it follows that
\[E_{d}\lVert \widetilde{\mathbb{G}}_{N}''\rVert_{\mathcal{F}_{\delta_{N}}^{c}}^{**}\overset{as}{\rightarrow} 0\quad\text{ for every }\delta_{N}\downarrow 0\]
where the stars refer to the argument of the expectation $E_{d}$.
\end{lem}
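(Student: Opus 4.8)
The plan is to mirror the proof of the probability version (Lemma \ref{HAJEK_lemma_AEC_expectation_POISSON}) and to upgrade its convergence statements to almost sure statements by invoking the almost sure symmetrization argument already used in Lemma \ref{lemma_almost_sure_AEC_POISSON}. First I would repeat the opening steps of the proof of Lemma \ref{HAJEK_lemma_AEC_expectation_POISSON}: apply the symmetrization inequality of Lemma \ref{HAJEK_lemma_symmetrization_inequality}, the triangle inequality, and the contraction principle, observing that under the almost sure versions of A1 and A2$^{*}$ every ``with probability tending to $1$'' qualifier becomes ``eventually almost surely''. This reduces the claim to showing that the right-hand side of (\ref{HAJEK_contraction_inequality}) tends to zero almost surely, i.e. that
\[E_{\varepsilon}\left\lVert\frac{1}{\sqrt{N}}\sum_{i=1}^{N}\varepsilon_{i}(\delta_{Y_{i}}-\mathbb{P}_{y,N})\right\rVert_{\mathcal{F}_{\delta_{N}}^{c}}^{*}\overset{as}{\rightarrow}0\quad\text{ for every }\delta_{N}\downarrow 0.\]

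Next I would remove the data-dependent centering $\mathbb{P}_{y,N}$ by writing $\delta_{Y_{i}}-\mathbb{P}_{y,N}=(\delta_{Y_{i}}-P_{y})-(\mathbb{P}_{y,N}-P_{y})$ and applying the triangle inequality, exactly as in the chain of displays of Lemma \ref{HAJEK_lemma_AEC_expectation_POISSON}. The cross term is controlled by $\lVert\mathbb{P}_{y,N}-P_{y}\rVert_{\mathcal{F}_{\delta_{N}}^{c}}^{*}\,E_{\varepsilon}|N^{-1/2}\sum_{i}\varepsilon_{i}|$; since $E_{\varepsilon}|N^{-1/2}\sum_{i}\varepsilon_{i}|\le 1$ and $\lVert\mathbb{P}_{y,N}-P_{y}\rVert_{\mathcal{F}_{\delta_{N}}^{c}}^{*}\le 2\lVert\mathbb{P}_{y,N}-P_{y}\rVert_{\mathcal{F}}^{*}\to 0$ almost surely by the Glivenko--Cantelli theorem (a $P_{y}$-Donsker class is $P_{y}$-Glivenko--Cantelli), this term is negligible almost surely. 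It then remains to handle the symmetric empirical process
\[E_{\varepsilon}\left\lVert\frac{1}{\sqrt{N}}\sum_{i=1}^{N}\varepsilon_{i}(\delta_{Y_{i}}-P_{y})\right\rVert_{\mathcal{F}_{\delta_{N}}^{c}}^{*}.\]

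For this last term I would copy the argument of Lemma \ref{lemma_almost_sure_AEC_POISSON} essentially word for word with $\mathcal{F}_{\delta}$ replaced by $\mathcal{F}_{\delta}^{c}$: apply the first inequality of Lemma 2.9.9 on page 185 in \cite{vdVW} (whose moment hypothesis is supplied by condition S) to bound, for each fixed $\delta>0$, the $\limsup_{N}$ of the conditional expectation by $6\sqrt{2}$ times the $\limsup_{N}$ of the corresponding unconditional expectation $E^{*}\lVert N^{-1/2}\sum_{i}\varepsilon_{i}(\delta_{Y_{i}}-P_{y})\rVert_{\mathcal{F}_{\delta}^{c}}$, almost surely. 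In the proof of Lemma \ref{HAJEK_lemma_AEC_expectation_POISSON} it has already been shown that this unconditional expectation tends to zero as first $N\to\infty$ and then $\delta\downarrow 0$, so I would conclude $\lim_{\delta\to 0}\limsup_{N} E_{\varepsilon}\lVert N^{-1/2}\sum_{i}\varepsilon_{i}(\delta_{Y_{i}}-P_{y})\rVert_{\mathcal{F}_{\delta}^{c}}^{*}=0$ almost surely, which is equivalent to the desired almost sure convergence to zero along any sequence $\delta_{N}\downarrow 0$. The main obstacle is essentially bookkeeping: one must verify that the random centering $\mathbb{P}_{y,N}$ does not spoil the almost sure statements, and the only genuinely new ingredient beyond Lemma \ref{lemma_almost_sure_AEC_POISSON} is the Glivenko--Cantelli control of the centering term; everything else is the transposition of two arguments already carried out in the excerpt.
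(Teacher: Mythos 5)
Your proposal is correct and follows essentially the same route as the paper's own proof: reduce to the right side of (\ref{HAJEK_contraction_inequality}) via the symmetrization and contraction steps of Lemma \ref{HAJEK_lemma_AEC_expectation_POISSON}, split $\delta_{Y_{i}}-\mathbb{P}_{y,N}$ into $(\delta_{Y_{i}}-P_{y})-(\mathbb{P}_{y,N}-P_{y})$, handle the centered term with Lemma 2.9.9 of \cite{vdVW} (using condition S) exactly as in Lemma \ref{lemma_almost_sure_AEC_POISSON}, and kill the centering term with $\lVert\mathbb{P}_{y,N}-P_{y}\rVert_{\mathcal{F}_{\delta_{N}}^{c}}^{*}\leq 2\lVert\mathbb{P}_{y,N}-P_{y}\rVert_{\mathcal{F}}^{*}\overset{as}{\rightarrow}0$. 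The only difference is cosmetic: where you simply cite that a $P_{y}$-Donsker class is (outer almost surely) $P_{y}$-Glivenko--Cantelli, the paper spells this out via Corollary 2.3.13 and Lemma 2.4.5 of \cite{vdVW}, since the almost sure (rather than in-probability) version of that fact is the one needed here.
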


\begin{proof}
It will be shown that the right side of (\ref{HAJEK_symmetrization_inequality}) with $\mathcal{F}_{\delta_{N}}^{c}$ in place of $\mathcal{F}$ goes to zero almost surely. To this aim, go through the steps in the proof of Lemma \ref{HAJEK_lemma_AEC_expectation_POISSON} up to inequality (\ref{HAJEK_contraction_inequality}) to see that it suffices to show that the right side of (\ref{HAJEK_contraction_inequality}) goes to zero almost surely. Since the right side of (\ref{HAJEK_contraction_inequality}) is bounded by
\[E_{\varepsilon}\left\lVert\frac{1}{\sqrt{N}}\sum_{i=1}^{N}\varepsilon_{i}(\delta_{Y_{i}}-P_{y})\right\rVert_{\mathcal{F}_{\delta_{N}}^{c}}^{*}+\left\lVert\mathbb{P}_{y,N}-P_{y}\right\rVert_{\mathcal{F}_{\delta_{N}}^{c}}^{*},\]
it suffices to show that these two terms go to zero almost surely. For the first one this can be done by using the first inequality in the statement of Lemma 2.9.9 on page 185 in \cite{vdVW} (see the proof of Lemma \ref{lemma_almost_sure_AEC_POISSON}). For the second one this follows immediately from the fact that
\[\left\lVert\mathbb{P}_{y,N}-P_{y}\right\rVert_{\mathcal{F}_{\delta_{N}}^{c}}^{*}\leq 2\left\lVert\mathbb{P}_{y,N}-P_{y}\right\rVert_{\mathcal{F}}^{*}\]
and the fact that every $P_{y}$-Donsker class is an outer almost sure $P_{y}$-Glivenko-Cantelli class (in fact, Corollary 2.3.13 on page 115 in \citep{vdVW} implies that for every $P_{y}$-Donsker class $\mathcal{F}$ the random variable $\left\lVert\delta_{Y_{i}}-P_{y}\right\rVert_{\mathcal{F}}^{*}$ has a weak second moment, and thus it follows from Lemma 2.4.5 on page 124 in \citep{vdVW} that $\left\lVert\mathbb{P}_{y,N}-P_{y}\right\rVert_{\mathcal{F}}^{*}$ converges almost surely to a constant $c$; since for every $P_{y}$-Donsker class $\mathcal{F}$ it is certainly true that $\left\lVert\mathbb{P}_{y,N}-P_{y}\right\rVert_{\mathcal{F}}^{*}\rightarrow 0$ in probability, the constant $c$ must be zero).
\end{proof}

Now, having established sufficient conditions for convergence of the marginal distributions and for total boundedness and (conditional) AEC w.r.t. $\rho_{c}$, one can proceed as in the proofs of Theorem \ref{unconditional_convergence}, Theorem \ref{outer_expectation_conditional_weak_convergence}, Corollary \ref{corollary_joint_weak_convergence} and Theorem \ref{outer_almost_sure_conditional_weak_convergence} in order to show the three desired weak convergence results for the sequence $\{\widetilde{\mathbb{G}}_{N}''\}_{N=1}^{\infty}$. However, in this section it has always been assumed that the sequence of Poisson sampling designs is measurable, and that assumptions C1 and A1 hold. These conditions imply that condition (\ref{condizione_consistenza_N_hat}) must be satisfied, and from this it follows that any one of the three desired weak convergence results about $\{\widetilde{\mathbb{G}}_{N}''\}_{N=1}^{\infty}$ carries over to the corresponding sequence of HEPs, and viceversa. Since in applications only the latter is of interest, the weak convergence results will be stated only for HEPs. 

\begin{thm}[Unconditional weak convergence]\label{HAJEK_unconditional_convergence}
Let $\{\mathbf{S}_{N}\}_{N=1}^{\infty}$ be the sequence of vectors of sample inclusion indicators corresponding to a sequence of measurable Poisson sampling designs, let $\mathcal{F}$ be a $P_{y}$-Donsker class, and let $\{\mathbb{G}''_{N}\}_{N=1}^{\infty}$ be the sequence of HEPs corresponding to $\mathcal{F}$ and $\{\mathbf{S}_{N}\}_{N=1}^{\infty}$. 

Assume that condition C1 and the probability versions of conditions A1 and A2$^{*}$ are satisfied. Then it follows that
\begin{itemize}
\item[(i) ] there exists zero-mean Gaussian process $\{\mathbb{G}''f:f\in\mathcal{F}\}$ with covariance function given by $\Sigma''$ which is a Borel measurable and tight random element of $l^{\infty}(\mathcal{F})$ such that
\[\mathbb{G}_{N}''\rightsquigarrow\mathbb{G}''\quad\text{ in }l^{\infty}(\mathcal{F});\]
\item[(ii) ] the sample paths $f\mapsto\mathbb{G}''f$ are uniformly $\rho_{c}$-continuous with probability $1$.
\end{itemize}
\end{thm}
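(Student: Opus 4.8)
The plan is to follow the blueprint of the proof of Theorem~\ref{unconditional_convergence} essentially verbatim, working first with the surrogate process $\widetilde{\mathbb{G}}_N''$ defined in \eqref{HEP_approximation} and then transferring the conclusion to the HEP. By Theorem~1.5.4 on page~35 and Theorem~1.5.7 on page~37 in \cite{vdVW}, to obtain weak convergence of $\{\widetilde{\mathbb{G}}_N''\}_{N=1}^{\infty}$ in $l^\infty(\mathcal{F})$ to a tight, Borel measurable limit it suffices to verify (a) convergence of the finite-dimensional marginals, (b) unconditional asymptotic equicontinuity with respect to a semimetric, and (c) total boundedness of $\mathcal{F}$ with respect to that same semimetric. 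Guided by the remarks preceding Lemma~\ref{HAJEK_lemma_symmetrization_inequality}, the natural choice of semimetric is the expectation-centered $L_2(P_y)$ semimetric $\rho_c$, since it closely mimics the ``$\mathbb{G}''$-intrinsic'' semimetric $\rho''$.

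First I would dispose of (a) and (c). Because $\mathcal{F}$ is $P_y$-Donsker, condition C2 holds automatically, and condition A2 follows from A2$^{*}$ together with the Donsker property (cf. the Remark after Lemma~\ref{lemma_marginal_convergence_expectation}); hence Lemma~\ref{marginal_convergence_HEP} applies and delivers (a), identifying the unique centered Gaussian limit with covariance $\Sigma''$. Condition (c) is immediate from Lemma~\ref{lemma_total_boundedness} applied with $\mathcal{H}=\mathcal{F}$, $Q=P_y$ and $d'=\rho_c$: since $\rho_c$ \emph{is} the expectation-centered $L_2(P_y)$ semimetric, condition TB holds with $\zeta$ the identity map, so the Donsker property forces total boundedness of $\mathcal{F}$ with respect to $\rho_c$.

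The substantive step is (b). I would upgrade the conditional, in-expectation equicontinuity provided by Lemma~\ref{HAJEK_lemma_AEC_expectation_POISSON} to the required unconditional form exactly as in Theorem~\ref{unconditional_convergence}. Writing $P^{*}\{\|\widetilde{\mathbb{G}}_N''\|_{\mathcal{F}_{\delta_N}^{c}}>\epsilon\}=P\{\|\widetilde{\mathbb{G}}_N''\|_{\mathcal{F}_{\delta_N}^{c}}^{*}>\epsilon\}\le E\,P_d\{\|\widetilde{\mathbb{G}}_N''\|_{\mathcal{F}_{\delta_N}^{c}}^{*}>\epsilon\}$ and bounding the inner design probability by Markov's inequality together with the majorant $\|\cdot\|^{**}\ge\|\cdot\|^{*}$, the integrand is at most $E_d\|\widetilde{\mathbb{G}}_N''\|_{\mathcal{F}_{\delta_N}^{c}}^{**}/\epsilon$, which tends to zero in probability by Lemma~\ref{HAJEK_lemma_AEC_expectation_POISSON}. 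Since this integrand is bounded by $1$ and converges in probability to $0$, bounded convergence gives $E\,P_d\{\cdots\}\to 0$, which is precisely (b) with respect to $\rho_c$. Combining (a), (b) and (c) yields $\widetilde{\mathbb{G}}_N''\rightsquigarrow\mathbb{G}''$ in $l^\infty(\mathcal{F})$ with $\mathbb{G}''$ tight and Borel measurable; part~(ii), the uniform $\rho_c$-continuity of the sample paths, then follows from Addendum~1.5.8 on page~37 in \cite{vdVW}.

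Finally I would transfer the result from $\widetilde{\mathbb{G}}_N''$ to the HEP using the identity $\mathbb{G}_N''f-\widetilde{\mathbb{G}}_N''f=(N/\widehat{N}-1)\widetilde{\mathbb{G}}_N''f$. Under C1 and A1 condition \eqref{condizione_consistenza_N_hat} is in force, so $|N/\widehat{N}-1|\overset{P^{*}}{\to}0$; since the weak convergence just established makes $\|\widetilde{\mathbb{G}}_N''\|_{\mathcal{F}}$ asymptotically tight, the sup-norm of the difference is $o_{P^{*}}(1)$, and a Slutsky-type argument upgrades the weak convergence of $\widetilde{\mathbb{G}}_N''$ to that of $\mathbb{G}_N''$ toward the same limit $\mathbb{G}''$. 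I expect the main obstacle to be bookkeeping rather than conceptual: one must carry out the passage from the conditional $\|\cdot\|^{**}$-in-expectation equicontinuity to the unconditional outer-probability statement with the correct measurable covers (the delicate point already navigated in Theorem~\ref{unconditional_convergence}), and must phrase the final Slutsky step through outer probabilities so that it survives the nonmeasurability inherent in $l^\infty(\mathcal{F})$.
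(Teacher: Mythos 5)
Your proposal is correct and follows essentially the same route the paper prescribes: marginals via Lemma \ref{marginal_convergence_HEP} (with C2 and A2 supplied by the Donsker property and A2$^{*}$), total boundedness w.r.t.\ $\rho_{c}$ via Lemma \ref{lemma_total_boundedness}, the upgrade of the conditional in-expectation AEC of Lemma \ref{HAJEK_lemma_AEC_expectation_POISSON} to unconditional AEC exactly as in the proof of Theorem \ref{unconditional_convergence}, and the final transfer from $\widetilde{\mathbb{G}}_{N}''$ to the HEP through condition (\ref{condizione_consistenza_N_hat}) and a Slutsky-type argument in outer probability. The paper only sketches this proof by reference to Section \ref{HTEP_lower_bound_pi}; your write-up fills in the same steps it has in mind, so there is nothing to correct.
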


\begin{thm}[Outer probability conditional weak convergence]\label{HAJEK_outer_expectation_conditional_weak_convergence}
Under the assumptions of Theorem \ref{HAJEK_unconditional_convergence} it follows that
\[\sup_{h\in BL_{1}(l^{\infty}(\mathcal{F}))}\left|E_{d}h(\mathbb{G}_{N}'')-Eh(\mathbb{G}'')\right|\overset{P*}{\rightarrow}0,\]
where $\mathbb{G}''$ is defined as in Theorem \ref{HAJEK_unconditional_convergence}.
\end{thm}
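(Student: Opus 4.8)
The plan is to first prove the stated opCWC for the surrogate process $\widetilde{\mathbb{G}}_N''$ of (\ref{HEP_approximation}) by running the proof of Theorem \ref{outer_expectation_conditional_weak_convergence} essentially verbatim, with $\mathbb{G}_N'$ replaced by $\widetilde{\mathbb{G}}_N''$, the limit $\mathbb{G}'$ by $\mathbb{G}''$, and the semimetric $\rho$ by the expectation-centered semimetric $\rho_c$; and then to transfer the conclusion to the genuine HEP $\mathbb{G}_N''$ by means of the identity $\mathbb{G}_N''f-\widetilde{\mathbb{G}}_N''f=(N/\widehat{N}-1)\widetilde{\mathbb{G}}_N''f$ and the consistency relation (\ref{condizione_consistenza_N_hat}). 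The two analytic inputs needed for the surrogate are already available: convergence of the conditional marginal characteristic functions from Lemma \ref{marginal_convergence_HEP}, and the probability version of conditional AEC w.r.t. $\rho_c$ from Lemma \ref{HAJEK_lemma_AEC_expectation_POISSON}.

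For the surrogate, I would reproduce the three-step structure of Theorem \ref{outer_expectation_conditional_weak_convergence}. By Theorem \ref{HAJEK_unconditional_convergence}, $\mathcal{F}$ is totally bounded w.r.t. $\rho_c$ and $\mathbb{G}''$ exists as a Borel measurable, tight element of $l^\infty(\mathcal{F})$ with uniformly $\rho_c$-continuous sample paths; so for each $\delta>0$ I fix a finite $\rho_c$-net $\mathcal{G}_\delta$ and the nearest-point map $\Pi_\delta\colon\mathcal{F}\to\mathcal{G}_\delta$ and obtain $\mathbb{G}''\circ\Pi_\delta\overset{as}{\to}\mathbb{G}''$, equivalently $\sup_h|Eh(\mathbb{G}''\circ\Pi_\delta)-Eh(\mathbb{G}'')|\to0$ as $\delta\downarrow0$. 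For fixed $\delta$, the same isometry $A_\delta(z):=z\circ\Pi_\delta$ used in Theorem \ref{outer_expectation_conditional_weak_convergence} reduces $\sup_h|E_dh(\widetilde{\mathbb{G}}_N''\circ\Pi_\delta)-Eh(\mathbb{G}''\circ\Pi_\delta)|$ to the finite-dimensional $BL_1(l^\infty(\mathcal{G}_\delta))$ supremum, which is measurable and tends to zero in outer probability by the argument of Corollary \ref{cor_marginal_convergence} with Lemma \ref{marginal_convergence_HEP} in place of Lemma \ref{lemma_marginal_convergence_expectation}. The Lipschitz estimate $\sup_h|E_dh(\widetilde{\mathbb{G}}_N''\circ\Pi_\delta)-E_dh(\widetilde{\mathbb{G}}_N'')|\le E_d\lVert\widetilde{\mathbb{G}}_N''\rVert_{\mathcal{F}_\delta^c}^{**}$ carries over unchanged. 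Combining the three pieces by the triangle inequality gives, for each fixed $\delta>0$,
\[\sup_{h\in BL_1}\bigl|E_dh(\widetilde{\mathbb{G}}_N'')-Eh(\mathbb{G}'')\bigr|\le r(\delta)+R_N(\delta)+E_d\lVert\widetilde{\mathbb{G}}_N''\rVert_{\mathcal{F}_\delta^c}^{**},\]
with $r(\delta)\downarrow0$ deterministic and $R_N(\delta)=o_P(1)$; Lemma \ref{HAJEK_lemma_AEC_expectation_POISSON} then supplies a sequence $\delta_N\downarrow0$ along which the right side is $o_{P^*}(1)$, which is opCWC for $\widetilde{\mathbb{G}}_N''$.

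For the transfer to $\mathbb{G}_N''$, I would use that every $h\in BL_1(l^\infty(\mathcal{F}))$ is $[0,1]$-valued and $1$-Lipschitz, so that
\[\bigl|E_dh(\mathbb{G}_N'')-E_dh(\widetilde{\mathbb{G}}_N'')\bigr|\le E_d\min\Bigl\{\bigl|N/\widehat{N}-1\bigr|\,\lVert\widetilde{\mathbb{G}}_N''\rVert_{\mathcal{F}}^{**},\,1\Bigr\},\]
and the right side is free of $h$. The opCWC just proved for $\widetilde{\mathbb{G}}_N''$ makes $\lVert\widetilde{\mathbb{G}}_N''\rVert_{\mathcal{F}}$ conditionally asymptotically tight, while (\ref{condizione_consistenza_N_hat}) gives $|N/\widehat{N}-1|\to0$ in $P_d$-probability; hence the product inside the minimum tends to $0$ in $P_d$-probability and, being bounded by $1$, its $E_d$-expectation tends to $0$ in outer probability by bounded convergence. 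One further triangle inequality combining this with the surrogate result yields the theorem.

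I expect the only genuinely delicate point to be this last transfer: one must bound, uniformly in $h$, the product of the scalar factor $|N/\widehat{N}-1|$ with the possibly non-measurable sup-norm of $\widetilde{\mathbb{G}}_N''$, which is precisely why truncating at $1$ (legitimate since $h\in[0,1]$) and invoking the conditional asymptotic tightness coming out of the surrogate opCWC is the clean route, with the $E_d$/starred-majorant bookkeeping handled as in Section \ref{Section_notation_definitions}. The core three-step argument itself is word-for-word that of Theorem \ref{outer_expectation_conditional_weak_convergence} and raises no new difficulty.
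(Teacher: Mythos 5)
Your proposal is correct and follows essentially the same route as the paper: the paper likewise proves the three weak convergence results for the surrogate process $\widetilde{\mathbb{G}}_{N}''$ by repeating the proofs of Theorems \ref{unconditional_convergence}--\ref{outer_almost_sure_conditional_weak_convergence} with Lemma \ref{marginal_convergence_HEP}, Lemma \ref{HAJEK_lemma_AEC_expectation_POISSON} and total boundedness w.r.t.\ $\rho_{c}$ as inputs, and then transfers the conclusion to the genuine HEP via the identity $\mathbb{G}_{N}''f-\widetilde{\mathbb{G}}_{N}''f=(N/\widehat{N}-1)\widetilde{\mathbb{G}}_{N}''f$ together with condition (\ref{condizione_consistenza_N_hat}). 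Your explicit treatment of the transfer step (truncating the Lipschitz bound at $1$ and invoking conditional asymptotic tightness of $\lVert\widetilde{\mathbb{G}}_{N}''\rVert_{\mathcal{F}}$) simply fills in details the paper leaves to the reader.
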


\begin{cor}[Joint weak convergence]\label{HAJEK_corollary_joint_weak_convergence}
Under the assumptions of Theorem \ref{HAJEK_unconditional_convergence} it follows that
\[(\mathbb{G}_{N}, \mathbb{G}_{N}'')\rightsquigarrow(\mathbb{G}, \mathbb{G}'')\text{ in }l^{\infty}(\mathcal{F})\times l^{\infty}(\mathcal{F}),\]
where $\mathbb{G}_{N}''$ and $\mathbb{G}''$ are defined as in Theorem \ref{HAJEK_unconditional_convergence}, $\mathbb{G}_{N}$ is the classical empirical process defined in (\ref{classical_empirical_process}), and where $\mathbb{G}$ is a Borel measurable and tight  $P_{y}$-Brownian Bridge which is independent from $\mathbb{G}''$.
\end{cor}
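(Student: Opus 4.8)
The plan is to mirror the proof of Corollary \ref{corollary_joint_weak_convergence} almost verbatim, substituting the H{\'a}jek objects for the Horvitz--Thompson ones. By Example 1.4.6 on page 31 in \cite{vdVW}, joint weak convergence of $(\mathbb{G}_N,\mathbb{G}_N'')$ to $(\mathbb{G},\mathbb{G}'')$ with independent marginals reduces to two things: marginal convergence of each coordinate, and asymptotic independence of $\mathbb{G}_N$ and $\mathbb{G}_N''$. Marginal convergence is immediate: $\mathbb{G}_N\rightsquigarrow\mathbb{G}$ because $\mathcal{F}$ is assumed to be a $P_y$-Donsker class, and $\mathbb{G}_N''\rightsquigarrow\mathbb{G}''$ by Theorem \ref{HAJEK_unconditional_convergence}. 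Asymptotic independence amounts to showing
\[
E^{*}f(\mathbb{G}_N)g(\mathbb{G}_N'')-E^{*}f(\mathbb{G}_N)\,E^{*}g(\mathbb{G}_N'')\rightarrow 0
\]
for all $f,g\in BL_1(l^{\infty}(\mathcal{F}))$. Since $E^{*}f(\mathbb{G}_N)\,E^{*}g(\mathbb{G}_N'')\to Ef(\mathbb{G})Eg(\mathbb{G}'')$ already follows from the two marginal convergences, it suffices to prove $E^{*}f(\mathbb{G}_N)g(\mathbb{G}_N'')\to Ef(\mathbb{G})Eg(\mathbb{G}'')$.

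First I would write $f(\mathbb{G}_N)g(\mathbb{G}_N'')-Ef(\mathbb{G})Eg(\mathbb{G}'')=S_N+T_N$ with
\[
S_N:=f(\mathbb{G}_N)[g(\mathbb{G}_N'')-Eg(\mathbb{G}'')],\qquad T_N:=Eg(\mathbb{G}'')[f(\mathbb{G}_N)-Ef(\mathbb{G})],
\]
exactly as in Corollary \ref{corollary_joint_weak_convergence}. The term $T_N$ is handled directly by $\mathbb{G}_N\rightsquigarrow\mathbb{G}$, which forces both $E^{*}T_N\to 0$ and $E_{*}T_N\to 0$. For $S_N$ the crucial step is to insert the design expectation: using $g_{*}\le g$ and the nonnegativity of $f$, bound $E^{*}S_N$ above by
\[
E^{*}f(\mathbb{G}_N)[g(\mathbb{G}_N'')-E_d g_{*}(\mathbb{G}_N'')]+E^{*}f(\mathbb{G}_N)[E_d g(\mathbb{G}_N'')-Eg(\mathbb{G}'')].
\]
The second summand tends to zero by the outer-probability conditional weak convergence in Theorem \ref{HAJEK_outer_expectation_conditional_weak_convergence} together with the boundedness of $f$. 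For the first summand I would reproduce the chain of inequalities from Corollary \ref{corollary_joint_weak_convergence}: because $\mathbb{G}_N$ is a function of $\mathbf{Y}_N$ alone and is therefore constant under $E_d$, the factor $f^{*}(\mathbb{G}_N)$ can be pulled through $E_d$, which collapses the expression to $E[g^{*}(\mathbb{G}_N'')-g_{*}(\mathbb{G}_N'')]$; this vanishes because $\mathbb{G}_N''\rightsquigarrow\mathbb{G}''$ (Theorem \ref{HAJEK_unconditional_convergence}) makes $\mathbb{G}_N''$ asymptotically measurable in the sense of Lemma 1.3.8 on page 21 in \cite{vdVW}. Applying the same argument to $-S_N$ controls $E_{*}S_N=-E^{*}(-S_N)$.

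The only point requiring care --- and the one I expect to be the main obstacle --- is the manipulation of the measurable-cover operations in the first summand, specifically the justification that $f^{*}(\mathbb{G}_N)$ commutes with the design expectation $E_d$. This is legitimate only because, in the probability space of Section \ref{Section_notation_definitions}, the classical empirical process $\mathbb{G}_N$ defined in (\ref{classical_empirical_process}) depends on the sample point solely through $\mathbf{Y}_N$ and hence not at all on the central (design) coordinate, so that $f^{*}(\mathbb{G}_N)$ is $E_d$-measurable and may be treated as a nonnegative multiplier bounded by $1$; this is precisely the kind of situation covered by property (iv) of the cover construction at the end of Section \ref{Section_notation_definitions}. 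Once this is granted, the inequalities are identical to those in Corollary \ref{corollary_joint_weak_convergence} and no genuinely new estimate is needed. In particular, the replacement of the ordinary semimetric $\rho$ by the expectation-centered semimetric $\rho_c$ that distinguishes the H{\'a}jek theory plays no role here, since the required convergence statements enter only as the black boxes supplied by Theorem \ref{HAJEK_unconditional_convergence} and Theorem \ref{HAJEK_outer_expectation_conditional_weak_convergence}.
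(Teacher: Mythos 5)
Your proposal is correct and matches the paper's own (implicit) proof: the paper disposes of this corollary by saying one proceeds exactly as in Corollary \ref{corollary_joint_weak_convergence}, with marginal convergence, outer-probability conditional weak convergence, and asymptotic measurability supplied by the H\'ajek analogues, which is precisely the argument you reproduce, including the key observation that $f^{*}(\mathbb{G}_{N})$ may be pulled through $E_{d}$ because $\mathbb{G}_{N}$ does not depend on the design coordinate. The only cosmetic difference is that the paper formally runs the argument on the surrogate process $\widetilde{\mathbb{G}}_{N}''$ of (\ref{HEP_approximation}) and then transfers to the HEP via (\ref{condizione_consistenza_N_hat}), whereas you work directly with $\mathbb{G}_{N}''$ through Theorems \ref{HAJEK_unconditional_convergence} and \ref{HAJEK_outer_expectation_conditional_weak_convergence}, in which that transfer is already built in.
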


\begin{thm}[Outer almost sure conditional weak convergence]\label{HAJEK_outer_almost_sure_conditional_weak_convergence}
Let $\{\mathbf{S}_{N}\}_{N=1}^{\infty}$, $\mathcal{F}$ and $\{\mathbb{G}''_{N}\}_{N=1}^{\infty}$ be defined as in Theorem \ref{HAJEK_unconditional_convergence}. Assume that condition C1, the almost sure versions of conditions A1 and A2$^{*}$ and condition S are satisfied. Then it follows that
\[\sup_{h\in BL_{1}(l^{\infty}(\mathcal{F}))}\left|E_{d}h(\mathbb{G}_{N}'')-Eh(\mathbb{G}'')\right|\overset{as*}{\rightarrow}0.\]
where $\mathbb{G}''$ is defined as in Theorem \ref{HAJEK_unconditional_convergence}.
\end{thm}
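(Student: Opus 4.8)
The plan is to reproduce, with almost-sure convergence throughout, the argument of Theorem~\ref{outer_expectation_conditional_weak_convergence} in its H\'ajek incarnation (the one referenced by Theorem~\ref{HAJEK_outer_expectation_conditional_weak_convergence}), the only structural change being that the driving semimetric is now $\rho_{c}$ rather than $\rho$ and that the almost-sure conditional AEC lemma \ref{HAJEK_lemma_almost_sure_AEC_POISSON} replaces its in-probability counterpart \ref{HAJEK_lemma_AEC_expectation_POISSON}. Since conditions C1 and A1 force condition~(\ref{condizione_consistenza_N_hat}) (in its almost-sure version), and since $\mathbb{G}_{N}''f-\widetilde{\mathbb{G}}_{N}''f=(N/\widehat{N}-1)\widetilde{\mathbb{G}}_{N}''f$, it suffices to prove the claim for the surrogate process $\widetilde{\mathbb{G}}_{N}''$ defined in~(\ref{HEP_approximation}) and then transfer the conclusion to the genuine HEP.

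First I would assemble the ingredients. By Theorem~\ref{HAJEK_unconditional_convergence} the limit $\{\mathbb{G}''f:f\in\mathcal{F}\}$ exists as a Borel measurable and tight element of $l^{\infty}(\mathcal{F})$ whose sample paths are $\rho_{c}$-uniformly continuous with probability one, and $\mathcal{F}$ is totally bounded with respect to $\rho_{c}$ because $\mathcal{F}$ is $P_{y}$-Donsker (Lemma~\ref{lemma_total_boundedness}). For each fixed $\delta>0$ I would take a finite $\rho_{c}$-$\delta$-net $\mathcal{G}_{\delta}$ and the nearest-point map $\Pi_{\delta}:\mathcal{F}\mapsto\mathcal{G}_{\delta}$; $\rho_{c}$-uniform continuity of the paths of $\mathbb{G}''$ yields $\mathbb{G}''\circ\Pi_{\delta}\rightsquigarrow\mathbb{G}''$ as $\delta\downarrow0$, so that the deterministic remainder $r(\delta):=\sup_{h\in BL_{1}(l^{\infty}(\mathcal{F}))}|Eh(\mathbb{G}''\circ\Pi_{\delta})-Eh(\mathbb{G}'')|$ tends to $0$, exactly as in~(\ref{uniform_continuous_sample_paths}).

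Next I would control the two random remainders. For the finite-dimensional part, Lemma~\ref{marginal_convergence_HEP} applies (the almost-sure versions of A1 and A2$^{*}$ together with C1 give the almost-sure version of A2, including on constant functions), and the H\'ajek counterpart of Corollary~\ref{cor_marginal_convergence}---whose proof is word for word the same, via the isometry $A_{\delta}$ between $l^{\infty}(\mathcal{G}_{\delta})$ and the range of $\cdot\circ\Pi_{\delta}$---shows that for each fixed $\delta>0$ the random quantity $R_{N}(\delta):=\sup_{h\in BL_{1}(l^{\infty}(\mathcal{F}))}|E_{d}h(\widetilde{\mathbb{G}}_{N}''\circ\Pi_{\delta})-Eh(\mathbb{G}''\circ\Pi_{\delta})|$ is measurable and $\overset{as*}{\rightarrow}0$. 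For the equicontinuity part, the triangle-inequality bound of~(\ref{application_conditional_AEC}) gives $\sup_{h\in BL_{1}(l^{\infty}(\mathcal{F}))}|E_{d}h(\widetilde{\mathbb{G}}_{N}''\circ\Pi_{\delta})-E_{d}h(\widetilde{\mathbb{G}}_{N}'')|\leq E_{d}\lVert\widetilde{\mathbb{G}}_{N}''\rVert_{\mathcal{F}_{\delta}^{c}}^{**}$, and here is exactly where condition~S enters through Lemma~\ref{HAJEK_lemma_almost_sure_AEC_POISSON}, which guarantees $E_{d}\lVert\widetilde{\mathbb{G}}_{N}''\rVert_{\mathcal{F}_{\delta_{N}}^{c}}^{**}\overset{as}{\rightarrow}0$ for every $\delta_{N}\downarrow0$. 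Combining the three pieces as in~(\ref{last_step_conditional_weak_convergence_proof}) yields, for each fixed $\delta>0$, the almost-sure bound $\sup_{h\in BL_{1}}|E_{d}h(\widetilde{\mathbb{G}}_{N}'')-Eh(\mathbb{G}'')|\leq r(\delta)+R_{N}(\delta)+E_{d}\lVert\widetilde{\mathbb{G}}_{N}''\rVert_{\mathcal{F}_{\delta}^{c}}^{**}$, and a diagonal choice of $\delta_{N}\downarrow0$ collapses the right-hand side to zero outer-almost-surely.

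The genuinely delicate points, rather than the conceptual outline, are the bookkeeping of the almost-sure strengthening and the final transfer. For the diagonalization one must ensure that the single sequence $\delta_{N}\downarrow0$ works simultaneously for $r$, for the countably many instances $R_{N}(1/k)$, and for the AEC term on one common almost-sure event---this is routine but needs the measurability of $R_{N}(\delta)$ noted above. The main obstacle I anticipate is the passage from $\widetilde{\mathbb{G}}_{N}''$ to $\mathbb{G}_{N}''$: from the identity $\mathbb{G}_{N}''-\widetilde{\mathbb{G}}_{N}''=(N/\widehat{N}-1)\widetilde{\mathbb{G}}_{N}''$ one must bound $E_{d}\bigl[\lVert\mathbb{G}_{N}''-\widetilde{\mathbb{G}}_{N}''\rVert_{\mathcal{F}}\wedge1\bigr]$, which requires combining the almost-sure version of~(\ref{condizione_consistenza_N_hat}), namely $|N/\widehat{N}-1|\overset{P_{d}}{\rightarrow}0$ almost surely, with the conditional asymptotic tightness of $\widetilde{\mathbb{G}}_{N}''$ (so that $\lVert\widetilde{\mathbb{G}}_{N}''\rVert_{\mathcal{F}}=O_{P_{d}}(1)$ almost surely) and then invoking dominated convergence for the bounded integrand; only this guarantees that the oasCWC statement indeed carries over from $\widetilde{\mathbb{G}}_{N}''$ to the HEP.
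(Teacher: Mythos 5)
Your proposal is correct and follows essentially the same route as the paper: the paper's (implicit) proof is precisely to repeat the argument of Theorems \ref{unconditional_convergence}--\ref{outer_almost_sure_conditional_weak_convergence} for $\{\widetilde{\mathbb{G}}_{N}''\}_{N=1}^{\infty}$ with $\rho_{c}$ as the driving semimetric, using Lemma \ref{marginal_convergence_HEP} for the marginals, Lemma \ref{lemma_total_boundedness} for total boundedness, and Lemma \ref{HAJEK_lemma_almost_sure_AEC_POISSON} for the almost-sure conditional AEC, and then to transfer the conclusion to the HEP via condition (\ref{condizione_consistenza_N_hat}), which C1 and A1 guarantee. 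Your write-up in fact supplies more detail than the paper does (notably the measurability bookkeeping for the diagonalization and the dominated-convergence argument for the $\widetilde{\mathbb{G}}_{N}''\to\mathbb{G}_{N}''$ transfer), but the decomposition and the key lemmas are identical.
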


\begin{rem}
Note that the assumptions of Theorem \ref{HAJEK_outer_almost_sure_conditional_weak_convergence} (and hence also the assumptions of Theorem \ref{HAJEK_unconditional_convergence}) are often satisfied also if $\mathcal{F}$ is a $P_{y}$-Donsker class with $\sup\{|P_{y}f|:f\in\mathcal{F}\}=\infty$, i.e. if assumption A3 fails (cfr. this with Theorem \ref{unconditional_convergence} and Theorem \ref{outer_almost_sure_conditional_weak_convergence}).
\end{rem}

Finally, consider the case where the first order sample inclusion probabilities are defined as in (\ref{first_order_Poisson_special_case}) which could give rise to arbitrarily small values. This case has already been treated for HTEP sequences in Subsection \ref{subsection_case_BBB}. Under slightly more restrictive assumptions it can be shown that the weak convergence results for HTEP sequences extend also to HEP sequences. Unfortunately, assumption B3 (which is analogous to assumption A3) cannot be dropped in general because it is not possible to show (conditional) AEC w.r.t. the expectation centered counterpart of the semimetric $\rho_{w}$. Actually, assumption B3 can be dropped if it is assumed that the two components of the $(Y_{i},X_{i})$ vectors are independent but this fact will not enter the statements of the next battery of weak convergence results.

\begin{lem}[Convergence of marginal distributions]\label{lemma_marginal_convergence_B_HEP}
Let $\{\mathbf{S}_{N}\}_{N=1}^{\infty}$ be the sequence of vectors of sample inclusion indicators corresponding to a sequence of Poisson sampling designs and assume that the first order sample inclusion probabilities are defined as in (\ref{first_order_Poisson_special_case}). Let $\mathcal{F}$ be a class of measurable functions $f:\mathcal{Y}\mapsto\mathbb{R}$ and let $\{\widetilde{\mathbb{G}}''\}_{N=1}^{\infty}$ be the corresponding sequence of empirical processes defined by (\ref{HEP_approximation}). Assume that conditions C1, B0, B1 and B2 are satisfied. Then it follows that 
\[\Sigma_{N}''(f,g):=E_{d}\widetilde{\mathbb{G}}''_{N}f\widetilde{\mathbb{G}}''_{N}g\overset{P(as)}{\rightarrow}\Sigma''(f,g)\quad\text{ for every }f,g\in\mathcal{F},\]
where
\[\Sigma''(f,g):=Ew_{\theta}(X_{1})\left(\frac{Ew(X_{2})}{\theta}-w_{\theta}(X_{1})\right)\frac{[f(Y_{1})-P_{y}f][g(Y_{1})-P_{y}g]}{w_{\theta}(X_{1})^{2}},\quad f,g\in\mathcal{F},\]
is a positive semidefinite covariance function. Moreover, for every finite-dimensional $\mathbf{f}\in\mathcal{F}^{r}$ and for every $\mathbf{t}\in\mathbb{R}^{r}$ ($r$ can be any positive integer)
\[E_{d}\exp(i\mathbf{t}^{\intercal}\widetilde{\mathbb{G}}''_{N}\mathbf{f})\overset{P(as)}{\rightarrow} \exp\left(-\frac{1}{2}\mathbf{t}^{\intercal}\Sigma''(\mathbf{f})\mathbf{t}\right),\]
where $\Sigma''(\mathbf{f})$ is the covariance matrix whose elements are given by $\Sigma''_{(ij)}(\mathbf{f}):=\Sigma''(f_{i},f_{j})$. 
\end{lem}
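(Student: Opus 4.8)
The plan is to obtain this lemma as a direct combination of Lemma~\ref{lemma_marginal_convergence_B}, which supplies the explicit Horvitz--Thompson covariance in the present ``size-proportional'' setting, and Lemma~\ref{marginal_convergence_HEP}, which reduces the H\'ajek case to the Horvitz--Thompson case once conditions A1, A2, C1 and C2 are in force. The whole argument is therefore one of checking hypotheses followed by a short bilinear computation; no genuinely new estimate is required.

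First I would verify that the hypotheses C1, B0, B1 and B2 of the present lemma imply the hypotheses C1, C2, A1 and A2 required by Lemma~\ref{marginal_convergence_HEP}. Condition C1 is assumed outright. Condition C2 follows from B2: since $w_{\theta}(X_{1})=\min\{w(X_{1}), Ew(X_{1})/\theta\}\leq Ew(X_{1})/\theta$, one has $1/w_{\theta}(X_{1})\geq\theta/Ew(X_{1})>0$, so $E[f(Y_{1})/w_{\theta}(X_{1})]^{2}<\infty$ forces $Ef(Y_{1})^{2}<\infty$ and hence $P_{y}|f|<\infty$. The (probability or almost sure) versions of A1 and A2 for $\mathcal{F}$ are precisely what was established inside the proof of Lemma~\ref{lemma_marginal_convergence_B} from B0, B1 and B2. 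The one point deserving attention is that Lemma~\ref{marginal_convergence_HEP} needs A1 and A2 to hold for the constant function adjoined by C1 as well; this requires $E[1/w_{\theta}(X_{1})^{2}]<\infty$, which is exactly B2 applied to that constant function, so it is automatically available.

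Next, invoking Lemma~\ref{marginal_convergence_HEP} yields at once the convergence $\Sigma_{N}''(f,g)\overset{P(as)}{\rightarrow}\Sigma''(f,g)$ and the stated characteristic-function convergence, with the limit covariance expressed, as in the proof of that lemma, by
\[
\Sigma''(f,g)=\Sigma'(f,g)-\frac{P_{y}g}{C}\Sigma'(f,C)-\frac{P_{y}f}{C}\Sigma'(C,g)+\frac{P_{y}f\,P_{y}g}{C^{2}}\Sigma'(C,C),
\]
where $C$ is the nonzero constant from C1 and $\Sigma'$ is now the explicit covariance furnished by Lemma~\ref{lemma_marginal_convergence_B}. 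Writing $A:=\big(Ew(X_{2})/\theta-w_{\theta}(X_{1})\big)/w_{\theta}(X_{1})$, that covariance reads $\Sigma'(f,g)=E[A\,f(Y_{1})g(Y_{1})]$, so $\Sigma'(f,C)=C\,E[A\,f(Y_{1})]$ and $\Sigma'(C,C)=C^{2}E[A]$. Substituting these into the display above and collecting terms, everything telescopes to
\[
\Sigma''(f,g)=E\big[A\,(f(Y_{1})-P_{y}f)(g(Y_{1})-P_{y}g)\big],
\]
which is exactly the claimed formula. Positive semidefiniteness of $\Sigma''$ is inherited, since it is the pointwise limit of the covariance functions $\Sigma_{N}''$.

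I expect the only real obstacle to be bookkeeping rather than analysis: one must make sure that every application of Lemma~\ref{lemma_marginal_convergence_B} and of the strong law of large numbers (used for $\mathbb{P}_{y,N}f\rightarrow P_{y}f$ and $\mathbb{P}_{y,N}g\rightarrow P_{y}g$, which produce the $\Sigma'(\cdot,C)$ terms through the centering) is legitimate for the constant function as well as for a generic $f,g\in\mathcal{F}$. As noted above, the integrability of $1/w_{\theta}$ needed for this is guaranteed by B2 together with C1, so no difficulty arises; apart from that the proof is word for word the proof of Lemma~\ref{marginal_convergence_HEP} with the explicit $\Sigma'$ of Lemma~\ref{lemma_marginal_convergence_B} inserted.
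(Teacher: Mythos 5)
Your proof is correct and takes essentially the same approach as the paper's: the paper likewise reduces the characteristic-function part to the argument of Lemma~\ref{marginal_convergence_HEP} after noting that B0, B1 and B2 imply C2 and the A-conditions (via Lemma~\ref{lemma_marginal_convergence_B}), and obtains $\Sigma''$ as a limit of the covariances $\Sigma_N''$, with positive semidefiniteness inherited in the limit. Your only deviation is cosmetic — you derive the closed form of $\Sigma''$ by substituting the explicit $\Sigma'$ of Lemma~\ref{lemma_marginal_convergence_B} into the bilinear decomposition appearing in the proof of Lemma~\ref{marginal_convergence_HEP}, rather than redoing the SLLN limit calculation directly — and your extra care about B2 applied to the constant function from C1 is a valid (and slightly more explicit) justification of the same point.
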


\begin{proof}
The definition of $\Sigma''$ can be obtained through a straightforward limit calculation by using assumptions B0, B1, B2 and the SLLN (note that  assumption C2 follows from B0, B1 and B2), and the claim that $\Sigma''$ is positive semidefinite follows from the fact that it is the pointwise limit of a sequence of covariance functions. This proves the first part of the conclusion. The second part of the conclusion can now be proved as it was done in the proof of Lemma \ref{marginal_convergence_HEP} upon noting that assumptions B0, B1 and B2 imply assumption A2.
\end{proof}

\begin{lem}[Probability version of conditional AEC]\label{lemma_AEC_expectation_POISSON_B_HEP}
Let $\{\mathbf{S}_{N}\}_{N=1}^{\infty}$, $\mathcal{F}$ and $\{\widetilde{\mathbb{G}}''_{N}\}_{N=1}^{\infty}$ be defined as in Lemma \ref{lemma_marginal_convergence_B_HEP}. Assume that conditions C1, B0 (the probability version suffices), B1, B2$^{*}$ and B3 hold. Then it follows that
\[E_{d}\lVert \widetilde{\mathbb{G}}_{N}''\rVert_{\mathcal{F}_{\delta_{N}}^{w}}^{**}\overset{P}{\rightarrow} 0\quad\text{ for every }\delta_{N}\downarrow 0.\]
\end{lem}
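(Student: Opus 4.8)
The proof would run parallel to that of Lemma~\ref{lemma_AEC_expectation_POISSON_B}, with the empirical-measure centering of the H\'ajek process handled by the device already used in Lemma~\ref{HAJEK_lemma_AEC_expectation_POISSON}. First I would apply the H\'ajek symmetrization inequality of Lemma~\ref{HAJEK_lemma_symmetrization_inequality} with $\mathcal{F}$ replaced by $\mathcal{F}_{\delta_N}^{w}$, so that $E_{d}\lVert\widetilde{\mathbb{G}}_{N}''\rVert_{\mathcal{F}_{\delta_N}^{w}}^{**}$ is dominated by twice the symmetrized expression that one obtains from (\ref{symm_bound_B}) upon replacing every occurrence of $\delta_{Y_i}$ by $\delta_{Y_i}-\mathbb{P}_{y,N}$; a triangle inequality then separates the $S_{i,N}/\pi_{i,N}$ part from the $-1$ part.

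Next I would apply the contraction principle (Lemma~\ref{contraction_principle}) to each part, reusing verbatim the factorizations of Lemma~\ref{lemma_AEC_expectation_POISSON_B}: the same $[0,1]$-valued weights and the multipliers $M_{N}$ (first part) and $Ew(X_1)/\theta$ (second part) carry over unchanged, the only difference being that the driving processes are now $(\delta_{Y_i}-\mathbb{P}_{y,N})/w_{\theta}(X_i)$ in place of $\delta_{Y_i}/w_{\theta}(X_i)$. Since $M_{N}\rightarrow Ew(X_1)/\theta<\infty$ in probability, the whole bound reduces to showing that
\[
E_{\varepsilon}\left\lVert\frac{1}{\sqrt{N}}\sum_{i=1}^{N}\varepsilon_{i}\frac{\delta_{Y_i}-\mathbb{P}_{y,N}}{w_{\theta}(X_i)}\right\rVert_{\mathcal{F}_{\delta_N}^{w}}^{*}\overset{P}{\rightarrow}0\qquad\text{for every }\delta_N\downarrow0.
\]

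For this reduced statement I would center at $P_{y,x}$ and write, for $h=f-g\in\mathcal{F}_{\delta_N}^{w}$,
\[
\frac{h(Y_i)-\mathbb{P}_{y,N}h}{w_{\theta}(X_i)}=\left[\frac{h(Y_i)}{w_{\theta}(X_i)}-P_{y,x}(h/w_{\theta})\right]+P_{y,x}(h/w_{\theta})-\frac{\mathbb{P}_{y,N}h}{w_{\theta}(X_i)},
\]
which splits the symmetrized sum into three pieces. The first is the symmetrized empirical process of $\mathcal{F}/w_{\theta}$ over the $\rho_{w}$-ball of radius $\delta_N$ (recall that $\rho_{w}$ is exactly the $L_{2}(P_{y,x})$ metric on $\mathcal{F}/w_{\theta}$); its expectation tends to zero by the argument already used for (\ref{risultato_intermedio}) in Lemma~\ref{lemma_AEC_expectation_POISSON_B}, i.e.\ by desymmetrizing with the first inequality in Lemma~2.3.6 on page~111 in \cite{vdVW} and then invoking the $P_{y,x}$-Donsker property of $\mathcal{F}/w_{\theta}$ through Corollary~2.3.12 on page~115 in \cite{vdVW}. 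The second piece is constant in $i$, so after $E_{\varepsilon}$ it equals $\sup_{h}|P_{y,x}(h/w_{\theta})|\cdot E_{\varepsilon}|N^{-1/2}\sum_{i}\varepsilon_{i}|\leq\delta_N$, using $|P_{y,x}(h/w_{\theta})|\leq\rho_{w}(f,g)<\delta_N$ (Cauchy--Schwarz) and $E_{\varepsilon}|N^{-1/2}\sum_{i}\varepsilon_{i}|\leq1$.

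The hard part is the third piece, $-\mathbb{P}_{y,N}h\cdot N^{-1/2}\sum_{i}\varepsilon_{i}/w_{\theta}(X_i)$, because the weight $1/w_{\theta}(X_i)$ can be arbitrarily large; after $E_{\varepsilon}$ it factors as $\sup_{h\in\mathcal{F}_{\delta_N}^{w}}|\mathbb{P}_{y,N}h|\cdot E_{\varepsilon}|N^{-1/2}\sum_{i}\varepsilon_{i}/w_{\theta}(X_i)|$. The observation that makes this term tractable is that assumption~C1 forces $E(1/w_{\theta}(X_1)^{2})<\infty$: the nonzero constant function lies in $\mathcal{F}$, hence a nonzero constant multiple of $1/w_{\theta}$ lies in $\mathcal{F}/w_{\theta}$ and must be square $P_{y,x}$-integrable by B2 (a fortiori by B2$^{*}$). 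Therefore $E_{\varepsilon}|N^{-1/2}\sum_{i}\varepsilon_{i}/w_{\theta}(X_i)|\leq(N^{-1}\sum_{i}1/w_{\theta}(X_i)^{2})^{1/2}$ is bounded in probability by the SLLN, and it remains only to prove $\sup_{h\in\mathcal{F}_{\delta_N}^{w}}|\mathbb{P}_{y,N}h|\overset{P}{\rightarrow}0$. I would bound this supremum by $2\lVert\mathbb{P}_{y,N}-P_{y}\rVert_{\mathcal{F}}^{*}+\sup_{h\in\mathcal{F}_{\delta_N}^{w}}|P_{y}h|$: the first summand vanishes because $\mathcal{F}$ is $P_{y}$-Glivenko--Cantelli (it is $P_{y}$-Donsker by Remark~\ref{F_Donsker_implication}), and the second is at most $\delta_N(Ew_{\theta}(X_1)^{2})^{1/2}\rightarrow0$ by Cauchy--Schwarz and the boundedness of $w_{\theta}$. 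Collecting the three pieces and applying Markov's inequality would then complete the proof.
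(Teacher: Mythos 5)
Your proposal follows essentially the same path as the paper's own proof: Hájek symmetrization over $\mathcal{F}_{\delta_N}^{w}$, the same contraction-principle factorizations with $M_N$ and $Ew(X_1)/\theta$, the same three-way centering at $P_{y,x}$, the same key observation that C1 together with B2$^{*}$ yields $E(1/w_{\theta}^{2}(X_1))<\infty$ so the Rademacher term is controlled by Jensen and the SLLN, and the same appeal to Remark \ref{F_Donsker_implication} to dispose of $\lVert\mathbb{P}_{y,N}\rVert_{\mathcal{F}_{\delta_N}^{w}}^{*}$ via the Glivenko--Cantelli property. The only cosmetic deviation is that you bound $\sup_{h\in\mathcal{F}_{\delta_N}^{w}}|P_{y}h|$ directly by $\delta_N(Ew_{\theta}(X_1)^{2})^{1/2}$ via Cauchy--Schwarz, where the paper instead passes from $\mathcal{F}_{\delta_N}^{w}$ to $\mathcal{F}_{\delta_N}$ using that $\rho_{w}$ dominates $\rho$; these are interchangeable.
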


\begin{proof}
The proof is very similar to the proof of Lemma \ref{HAJEK_lemma_AEC_expectation_POISSON}. First, use Lemma \ref{HAJEK_lemma_symmetrization_inequality} and the triangle inequality to bound the conditional expectation $E_{d}\lVert \widetilde{\mathbb{G}}_{N}''\rVert_{\mathcal{F}_{\delta_{N}}^{w}}^{**}$ with
\begin{equation}\label{symm_bound_B_HEP}
\begin{split}
&E_{\varepsilon} E_{d}\left\lVert\frac{1}{\sqrt{N}}\sum_{i=1}^{N}\varepsilon_{i}S_{i,N}\frac{w_{\theta}(X_{i})\sum_{j=1}^{N}w(X_{j})}{w_{N}(X_{i})c_{N}}\frac{\delta_{Y_{i}}-\mathbb{P}_{y,N}}{w_{\theta}(X_{i})}\right\rVert_{\mathcal{F}_{\delta_{N}}^{w}}^{**}+\\
&\quad +E_{\varepsilon}\left\lVert\frac{1}{\sqrt{N}}\sum_{i=1}^{N}\varepsilon_{i}(\delta_{Y_{i}}-\mathbb{P}_{y,N})\right\rVert_{\mathcal{F}_{\delta_{N}}^{w}}^{*},
\end{split}
\end{equation}
where all the stars refer to the arguments of the expectations, and where $w_{N}(X_{i})$ is defined as in (\ref{definizione_funzione_w_N}). Note that 
\[0\leq\frac{w_{\theta}(X_{i})\sum_{j=1}^{N}w(X_{j})}{w_{N}(X_{i})c_{N}}\leq\max\left\{\frac{\sum_{j=1}^{N}w(X_{j})}{c_{N}}, \frac{1}{\theta}Ew(X_{1})\right\}:=M_{N},\]
and rewrite the first term in (\ref{symm_bound_B_HEP}) as
\[M_{N}E_{\varepsilon} E_{d}\left\lVert\frac{1}{\sqrt{N}}\sum_{i=1}^{N}\varepsilon_{i}\gamma_{i}\frac{\delta_{Y_{i}}-\mathbb{P}_{y,N}}{w_{\theta}(X_{i})}\right\rVert_{\mathcal{F}_{\delta_{N}}^{w}}^{**}\]
with 
\[\gamma_{i}:=S_{i,N}\frac{w_{\theta}(X_{i})\sum_{j=1}^{N}w(X_{j})}{M_{N} w_{N}(X_{i})n_{N}}.\]
Since $\gamma_{i}$ takes on values in $[0,1]$, the contraction principle in Lemma \ref{contraction_principle} can be applied and the first term in (\ref{symm_bound_B_HEP}) is therefore bounded by
\[M_{N}E_{\varepsilon} \left\lVert\frac{1}{\sqrt{N}}\sum_{i=1}^{N}\varepsilon_{i}\frac{\delta_{Y_{i}}-\mathbb{P}_{y,N}}{w_{\theta}(X_{i})}\right\rVert_{\mathcal{F}_{\delta_{N}}^{w}}^{*}.\]
The contraction principle in Lemma \ref{contraction_principle} can also be applied to the second term in (\ref{symm_bound_B_HEP}). In fact, the latter can be written as
\[\frac{Ew(X_{1})}{\theta}E_{\varepsilon}\left\lVert\frac{1}{\sqrt{N}}\sum_{i=1}^{N}\varepsilon_{i}\frac{\theta w_{\theta}(X_{i})}{Ew(X_{1})}\frac{\delta_{Y_{i}}-\mathbb{P}_{y,N}}{w_{\theta}(X_{i})}\right\rVert_{\mathcal{F}_{\delta_{N}}^{w}}^{*},\]
and by the contraction principle this is bounded by 
\[\frac{Ew(X_{1})}{\theta} E_{\varepsilon} \left\lVert\frac{1}{\sqrt{N}}\sum_{i=1}^{N}\varepsilon_{i}\frac{\delta_{Y_{i}}-\mathbb{P}_{y,N}}{w_{\theta}(X_{i})}\right\rVert_{\mathcal{F}_{\delta_{N}}^{w}}^{*}.\]
Since $M_{N}\rightarrow Ew(X_{1})/\theta<\infty$ in probability, the proof can now be completed by showing that
\begin{equation}\label{risultato_intermedio_HEP}
E_{\varepsilon} \left\lVert\frac{1}{\sqrt{N}}\sum_{i=1}^{N}\varepsilon_{i}\frac{\delta_{Y_{i}}-\mathbb{P}_{y,N}}{w_{\theta}(X_{i})}\right\rVert_{\mathcal{F}_{\delta_{N}}^{w}}^{*}\overset{P}{\rightarrow} 0\quad\text{ for every }\delta_{N}\downarrow 0.
\end{equation}
To this aim use the triangle inequality to bound the left side by
\begin{equation}\label{BBBound_HEP}
\begin{split}
&E_{\varepsilon} \left\lVert\frac{1}{\sqrt{N}}\sum_{i=1}^{N}\varepsilon_{i}Z_{i}\right\rVert_{\mathcal{F}_{\delta_{N}}^{w}}^{*}+\left\lVert P_{y,x}(f/w_{\theta})\right\rVert_{\mathcal{F}_{\delta_{N}}^{w}}^{*} E_{\varepsilon}\left|\frac{1}{\sqrt{N}}\sum_{i=1}^{N}\varepsilon_{i}\right|+\\
&+\left\lVert \mathbb{P}_{y,N}\right\rVert_{\mathcal{F}_{\delta_{N}}^{w}}^{*} E_{\varepsilon}\left|\frac{1}{\sqrt{N}}\sum_{i=1}^{N}\frac{\varepsilon_{i}}{w_{\theta}(X_{i})}\right|
\end{split}
\end{equation}
where
\[Z_{i}:=\frac{\delta_{Y_{i}}}{w_{\theta}(X_{i})}-P_{y,x}(f/w_{\theta}),\quad i=1,2,\dots, N.\]
Now, note that 
\[E_{\varepsilon} \left\lVert\frac{1}{\sqrt{N}}\sum_{i=1}^{N}\varepsilon_{i}Z_{i}\right\rVert_{\mathcal{F}_{\delta_{N}}^{w}}^{*}\overset{P}{\rightarrow}0\]
has already been shown in the final part of the proof of Lemma \ref{lemma_AEC_expectation_POISSON_B}, and that
\[\left\lVert P_{y,x}(f/w_{\theta})\right\rVert_{\mathcal{F}_{\delta_{N}}^{w}}^{*} E_{\varepsilon}\left|\frac{1}{\sqrt{N}}\sum_{i=1}^{N}\varepsilon_{i}\right|\leq\delta_{N}.\]
Thus, it remains to show that
\[\left\lVert \mathbb{P}_{y,N}\right\rVert_{\mathcal{F}_{\delta_{N}}^{w}}^{*} E_{\varepsilon}\left|\frac{1}{\sqrt{N}}\sum_{i=1}^{N}\frac{\varepsilon_{i}}{w_{\theta}(X_{i})}\right|\overset{P}{\rightarrow}0\]
as well. This will be done by showing that the left side goes to zero almost surely. To this aim note first that by Jensen's inequality and the independence of the Rademacher random variables
\[E_{\varepsilon}\left|\frac{1}{\sqrt{N}}\sum_{i=1}^{N}\frac{\varepsilon_{i}}{w_{\theta}(X_{i})}\right|\leq\sqrt{\frac{1}{N}\sum_{i=1}^{N}\frac{1}{w_{\theta}^{2}(X_{i})}},\]
and that the right side goes to a constant almost surely by assumptions C1 and B2$^{*}$. Then observe that
\[\left\lVert \mathbb{P}_{y,N}\right\rVert_{\mathcal{F}_{\delta_{N}}^{w}}^{*}\leq \left\lVert \mathbb{P}_{y,N}\right\rVert_{\mathcal{F}_{\delta_{N}}}^{*}\]
because the semimetric $\rho_{w}$ is stronger than $\rho$, that
\[\left\lVert \mathbb{P}_{y,N}\right\rVert_{\mathcal{F}_{\delta_{N}}}^{*}\leq \left\lVert \mathbb{P}_{y,N}-P_{y}\right\rVert_{\mathcal{F}_{\delta_{N}}}^{*}+\left\lVert P_{y}\right\rVert_{\mathcal{F}_{\delta_{N}}}^{*}\leq \left\lVert\mathbb{P}_{y,N}-P_{y}\right\rVert_{\mathcal{F}_{\delta_{N}}}^{*}+\delta_{N},\]
and finally that
\[\left\lVert\mathbb{P}_{y,N}-P_{y}\right\rVert_{\mathcal{F}_{\delta_{N}}}^{*}\overset{as}{\rightarrow}0\]
because $\mathcal{F}$ is a $P_{y}$-Donsker class (see Remark \ref{F_Donsker_implication}) and hence an outer almost sure $P_{y}$-Glivenko-Cantelli class.
\end{proof}

\begin{lem}[Almost sure version of conditional AEC]\label{lemma_AEC_outer_almost_sure_POISSON_B_HEP}
Let $\{\mathbf{S}_{N}\}_{N=1}^{\infty}$, $\mathcal{F}$ and $\{\mathbb{G}'_{N}\}_{N=1}^{\infty}$ be defined as in Lemma \ref{lemma_marginal_convergence_B_HEP}. Assume that conditions C1, B0 (the almost sure version), B1, B2$^{*}$, B3 and S' hold. Then it follows that
\[E_{d}\lVert \widetilde{\mathbb{G}}_{N}''\rVert_{\mathcal{F}_{\delta_{N}}^{w}}^{**}\overset{as}{\rightarrow} 0\quad\text{ for every }\delta_{N}\downarrow 0.\]
\end{lem}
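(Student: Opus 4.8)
The plan is to transcribe the proof of Lemma \ref{lemma_AEC_expectation_POISSON_B_HEP} (the probability version in the identical B/HEP setting) line by line, upgrading every convergence in probability to almost sure convergence. First I would invoke the symmetrization inequality in Lemma \ref{HAJEK_lemma_symmetrization_inequality} and the contraction principle in Lemma \ref{contraction_principle} exactly as there. Under the almost sure version of condition B0 together with B1, the SLLN gives $M_N\rightarrow Ew(X_1)/\theta$ almost surely, so the entire bound collapses to showing that the left side of (\ref{risultato_intermedio_HEP}) tends to zero almost surely for every $\delta_N\downarrow 0$.

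Next I would split (\ref{risultato_intermedio_HEP}) by the triangle inequality into the three summands of (\ref{BBBound_HEP}). The middle summand is bounded by $\delta_N$ (since $\rho_w$ dominates $|P_{y,x}((f-g)/w_\theta)|$ and the Rademacher factor is at most $1$), and the last summand was already shown to converge to zero \emph{almost surely} in the proof of Lemma \ref{lemma_AEC_expectation_POISSON_B_HEP}: its treatment (Jensen's inequality, the almost sure convergence of $(1/N)\sum_{i=1}^{N}1/w_\theta^2(X_i)$ to a constant under B2$^*$, and the outer almost sure Glivenko-Cantelli property of the $P_y$-Donsker class $\mathcal{F}$ supplied by Remark \ref{F_Donsker_implication}) is already almost sure and needs no modification. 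Everything therefore reduces to
\[ E_{\varepsilon}\left\lVert \frac{1}{\sqrt{N}}\sum_{i=1}^{N}\varepsilon_i Z_i\right\rVert_{\mathcal{F}_{\delta_N}^{w}}^{*}\overset{as}{\rightarrow} 0\quad\text{for every }\delta_N\downarrow 0, \]
where $Z_i:=\delta_{Y_i}/w_\theta(X_i)-P_{y,x}(f/w_\theta)$.

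For this remaining term I would proceed exactly as in Lemma \ref{lemma_almost_sure_AEC_POISSON}. Fixing $\delta>0$, I would apply the first inequality of Lemma 2.9.9 on page 185 in \cite{vdVW}, with the Rademacher variables $\varepsilon_i$ and the i.i.d.\ $\mathcal{F}_\delta^{w}$-indexed processes $Z_i$, to bound $\limsup_N E_\varepsilon\lVert(1/\sqrt{N})\sum_i\varepsilon_i Z_i\rVert_{\mathcal{F}_\delta^{w}}^{*}$ by a universal constant times $\limsup_N E^*\lVert(1/\sqrt{N})\sum_i\varepsilon_i Z_i\rVert_{\mathcal{F}_\delta^{w}}$, almost surely. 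Condition S$'$ is precisely the weak second moment hypothesis that Lemma 2.9.9 requires on the envelope $\lVert\delta_{(Y_1,X_1)}-P_{y,x}\rVert_{\mathcal{F}/w_\theta}$ of the $Z_i$. Because $\mathcal{F}/w_\theta$ is a $P_{y,x}$-Donsker class (B2$^*$), the unconditional expectation on the right tends to zero as $\delta\downarrow 0$, as already established in the final part of the proof of Lemma \ref{lemma_AEC_expectation_POISSON_B} via Lemma 2.3.6 and Corollary 2.3.12 in \cite{vdVW}. Letting $\delta\downarrow 0$ then yields $\lim_{\delta\rightarrow 0}\limsup_N E_\varepsilon\lVert(1/\sqrt{N})\sum_i\varepsilon_i Z_i\rVert_{\mathcal{F}_\delta^{w}}^{*}=0$ almost surely, which is equivalent to the displayed claim and completes the argument.

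I expect the main obstacle to be the careful application of Lemma 2.9.9: one must verify that S$'$ supplies exactly the integrability needed for the almost sure desymmetrization, and must track the measurable cover operations (the stars) so that the conditional Rademacher expectation is dominated by the unconditional one on a set of full probability, as in the passage leading to (\ref{disugaglianza_as_symmetrization}). Everything else is a routine transcription of the probability-version proof, with the SLLN and the Glivenko-Cantelli property furnishing almost sure rather than in-probability limits.
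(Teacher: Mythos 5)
Your proposal is correct and follows essentially the same route as the paper's own proof: transcribe the proof of Lemma \ref{lemma_AEC_expectation_POISSON_B_HEP} up to display (\ref{risultato_intermedio_HEP}) with almost sure convergence throughout, observe that the last two terms in (\ref{BBBound_HEP}) were already handled almost surely there, and dispose of the remaining Rademacher term $E_{\varepsilon}\lVert N^{-1/2}\sum_{i}\varepsilon_{i}Z_{i}\rVert_{\mathcal{F}_{\delta_{N}}^{w}}^{*}$ by the method of Lemma \ref{lemma_almost_sure_AEC_POISSON}, i.e.\ the first inequality of Lemma 2.9.9 in \cite{vdVW} with condition S$'$ supplying the required weak second moment. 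Your identification of S$'$ as the integrability hypothesis on the envelope of the $Z_{i}$ fills in exactly the detail the paper leaves to the reader.
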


\begin{proof}
Follow the steps of the proof of Lemma \ref{lemma_AEC_expectation_POISSON_B_HEP} up to display (\ref{risultato_intermedio_HEP}) (under the assumptions of the present lemma all occurrences of convergence in probability can be replaced by almost sure convergence) and note that in order to obtain the almost sure version of conditional AEC it is sufficient to show that
\begin{equation*}\label{risultato_intermedio_1_HEP}
E_{\varepsilon} \left\lVert\frac{1}{\sqrt{N}}\sum_{i=1}^{N}\varepsilon_{i}Z_{i}\right\rVert_{\mathcal{F}_{\delta_{N}}^{w}}^{*}:=E_{\varepsilon} \left\lVert\frac{1}{\sqrt{N}}\sum_{i=1}^{N}\varepsilon_{i}\left(\frac{\delta_{Y_{i}}}{w_{\theta}(X_{i})}-P_{y,x}(f/w_{\theta})\right)\right\rVert_{\mathcal{F}_{\delta_{N}}^{w}}^{*}
\end{equation*}
goes to zero almost surely (in fact, in the proof of Lemma \ref{lemma_AEC_expectation_POISSON_B_HEP} it has already been shown that the two remaining terms in (\ref{BBBound_HEP}) go to zero almost surely). This can be done by the method already seen in the proof of Lemma \ref{lemma_almost_sure_AEC_POISSON}. The details are left to the reader.
\end{proof}

Finally, it remains to deal with total boundedness. But this problem has already been handled in Subsection \ref{subsection_case_BBB} where it has been pointed out that assumptions B2$^{*}$ and B3 imply that $\mathcal{F}/w_{\theta}$ (and hence also $\mathcal{F}$) is totally bounded w.r.t. the semimetric $\rho_{w}$.

Now, having established sufficient conditions for convergence of the marginal distributions and for (conditional) AEC and total boundedness w.r.t. $\rho_{w}$, one can apply the methods of proof that have already been applied for Theorem \ref{unconditional_convergence}, Theorem \ref{outer_expectation_conditional_weak_convergence}, Corollary \ref{corollary_joint_weak_convergence} and Theorem \ref{outer_almost_sure_conditional_weak_convergence} in order to obtain the desired weak convergence results. As before, the weak convergence results will be stated for the HEP sequence rather than for $\{\widetilde{\mathbb{G}}_{N}''\}_{N=1}^{\infty}$.

\begin{thm}[Unconditional weak convergence]\label{HAJEK_unconditional_convergence_BBB}
Let $\{\mathbf{S}_{N}\}_{N=1}^{\infty}$ be the sequence of vectors of sample inclusion indicators corresponding to a sequence of Poisson sampling designs with first order sample inclusion probabilities defined as in (\ref{first_order_Poisson_special_case}). Let $\mathcal{F}$ be a class of measurable functions $f:\mathcal{Y}\mapsto\mathbb{R}$ and let $\{\mathbb{G}''_{N}\}_{N=1}^{\infty}$ be the sequence of HEPs corresponding to $\mathcal{F}$ and $\{\mathbf{S}_{N}\}_{N=1}^{\infty}$. Assume that conditions C1, B0 (the probability version suffices), B1 and B2$^{*}$ are satisfied. Then it follows that
\begin{itemize}
\item[(i) ] there exists zero-mean Gaussian process $\{\mathbb{G}''f:f\in\mathcal{F}\}$ with covariance function given by $\Sigma''$ which is a Borel measurable and tight random element of $l^{\infty}(\mathcal{F})$ such that
\[\mathbb{G}_{N}''\rightsquigarrow\mathbb{G}''\quad\text{ in }l^{\infty}(\mathcal{F});\]
\item[(ii) ] the sample paths $f\mapsto\mathbb{G}''f$ are uniformly $\rho_{w}$-continuous with probability $1$.
\end{itemize}
\end{thm}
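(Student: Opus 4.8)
The plan is to mirror the proof of Theorem \ref{unconditional_convergence}, replacing the Horvitz--Thompson process by the H\'{a}jek process and the semimetric $\rho$ by $\rho_{w}$. First I would reduce the problem from the H\'{a}jek empirical process $\mathbb{G}_{N}''$ to its surrogate $\widetilde{\mathbb{G}}_{N}''$ defined in (\ref{HEP_approximation}). Under the present hypotheses the stability condition (\ref{condizione_consistenza_N_hat}) holds: condition C1 guarantees that a non-zero constant function belongs to $\mathcal{F}$, and applying Lemma \ref{lemma_marginal_convergence_B_HEP} (equivalently, the marginal convergence underlying it) to that function shows $\sqrt{N}(\widehat{N}/N-1)$ is conditionally $O_{P}(1)$, so $\widehat{N}/N\to 1$. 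Since $\mathbb{G}_{N}''f-\widetilde{\mathbb{G}}_{N}''f=(N/\widehat{N}-1)\widetilde{\mathbb{G}}_{N}''f$, any weak-convergence statement in $l^{\infty}(\mathcal{F})$ for $\{\widetilde{\mathbb{G}}_{N}''\}_{N=1}^{\infty}$ transfers to $\{\mathbb{G}_{N}''\}_{N=1}^{\infty}$. It therefore suffices to prove $\widetilde{\mathbb{G}}_{N}''\rightsquigarrow\mathbb{G}''$ in $l^{\infty}(\mathcal{F})$.

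By Theorem 1.5.4 and Theorem 1.5.7 in \cite{vdVW} this reduces to three checks: (a) convergence of the marginal distributions, (b) unconditional asymptotic equicontinuity with respect to $\rho_{w}$, and (c) total boundedness of $\mathcal{F}$ with respect to $\rho_{w}$. Claim (a) is immediate from Lemma \ref{lemma_marginal_convergence_B_HEP}: the conditional characteristic functions converge in probability to the Gaussian limit, and since they are uniformly bounded, bounded convergence yields convergence of the unconditional characteristic functions, whence L\'{e}vy's continuity theorem delivers convergence of the finite-dimensional marginals to those of a zero-mean Gaussian process with covariance $\Sigma''$. Claim (c) was already settled in the discussion preceding this theorem: applying Lemma \ref{lemma_total_boundedness} with $\mathcal{H}=\mathcal{F}/w_{\theta}$ and $Q=P_{y,x}$ shows that $\mathcal{F}/w_{\theta}$, hence $\mathcal{F}$, is totally bounded with respect to $\rho_{w}$.

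For claim (b) I would run the same conditioning-and-integrating argument used in Theorem \ref{unconditional_convergence}. Passing to the measurable cover, for every $\delta_{N}\downarrow 0$,
\[
P^{*}\{\lVert\widetilde{\mathbb{G}}_{N}''\rVert_{\mathcal{F}_{\delta_{N}}^{w}}>\epsilon\}\leq E\,P_{d}\{\lVert\widetilde{\mathbb{G}}_{N}''\rVert_{\mathcal{F}_{\delta_{N}}^{w}}^{*}>\epsilon\}\leq E\,\frac{E_{d}\lVert\widetilde{\mathbb{G}}_{N}''\rVert_{\mathcal{F}_{\delta_{N}}^{w}}^{**}}{\epsilon},
\]
the last step being Markov's inequality. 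By Lemma \ref{lemma_AEC_expectation_POISSON_B_HEP} the inner conditional expectation tends to $0$ in probability; since the probability $P_{d}\{\cdots\}$ is bounded by $1$, bounded convergence forces the left-hand side to $0$, which is exactly unconditional AEC with respect to $\rho_{w}$. Combining (a)--(c) gives $\widetilde{\mathbb{G}}_{N}''\rightsquigarrow\mathbb{G}''$ with $\mathbb{G}''$ Borel measurable and tight, and part (ii) then follows from Addendum 1.5.8 in \cite{vdVW}.

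The hard part is concentrated not in assembling the theorem but in the fact that total boundedness and equicontinuity must be controlled relative to the \emph{uncentered} semimetric $\rho_{w}$ rather than its expectation-centered counterpart. Because the H\'{a}jek process only sees centered functions while $\rho_{w}$ is an uncentered $L_{2}(P_{y,x})$ distance on $\mathcal{F}/w_{\theta}$, the Donsker property of $\mathcal{F}/w_{\theta}$ alone controls only the centered metric; bridging to $\rho_{w}$ is precisely what forces the uniformly-bounded-mean hypothesis (as the remark preceding Lemma \ref{lemma_marginal_convergence_B_HEP} stresses, this cannot be dispensed with in general), and it is already built into Lemma \ref{lemma_AEC_expectation_POISSON_B_HEP} and the total-boundedness argument invoked for (c). The only other delicate point is the initial reduction to $\widetilde{\mathbb{G}}_{N}''$, where one must verify that (\ref{condizione_consistenza_N_hat}) is genuinely implied by the hypotheses before transferring the limit back to the H\'{a}jek process.
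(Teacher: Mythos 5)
Your architecture coincides with the paper's: reduce to the surrogate process $\widetilde{\mathbb{G}}_{N}''$ via (\ref{condizione_consistenza_N_hat}), then verify marginal convergence (Lemma \ref{lemma_marginal_convergence_B_HEP}), unconditional AEC w.r.t.\ $\rho_{w}$ by integrating the conditional bound of Lemma \ref{lemma_AEC_expectation_POISSON_B_HEP} (Markov plus bounded convergence), and total boundedness of $\mathcal{F}$ w.r.t.\ $\rho_{w}$, assembling everything through Theorems 1.5.4 and 1.5.7 and Addendum 1.5.8 of \cite{vdVW}. However, your justification of the reduction step fails as written. Applying Lemma \ref{lemma_marginal_convergence_B_HEP} to the constant function supplied by C1 is vacuous: for $f\equiv C$ one has $f(Y_{i})-\mathbb{P}_{y,N}f=0$ for every $i$, so $\widetilde{\mathbb{G}}_{N}''C\equiv 0$ (and likewise $\mathbb{G}_{N}''C\equiv 0$); the H\'{a}jek-type processes annihilate constants and carry no information whatsoever about $\widehat{N}$. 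The quantity you must control, $\sqrt{N}(\widehat{N}/N-1)=\mathbb{G}_{N}'C/C$, is a functional of the \emph{Horvitz--Thompson} process. The paper's route is: C1 together with A1 --- which in the present setting follows from B0, B1 and B2 (the latter implied by B2$^{*}$), as noted in the proof of Lemma \ref{lemma_marginal_convergence_B} --- gives $E_{d}(\mathbb{G}_{N}'C)^{2}=\Sigma_{N}'(C,C)=O_{P}(1)$, whence conditional Chebyshev yields $P_{d}\{|\widehat{N}/N-1|>\epsilon\}\leq \Sigma_{N}'(C,C)/(C^{2}N\epsilon^{2})\overset{P}{\rightarrow}0$, which is exactly (\ref{condizione_consistenza_N_hat}). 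The fix is one line, but the lemma you cite cannot deliver the step.

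A second point, which is as much the paper's inconsistency as yours: the AEC lemma you invoke (Lemma \ref{lemma_AEC_expectation_POISSON_B_HEP}) and the total-boundedness argument via Lemma \ref{lemma_total_boundedness} with $d'=d=\rho_{w}$ both require B3 (the Donsker property of $\mathcal{F}/w_{\theta}$ alone controls only the centered $L_{2}(P_{y,x})$ metric, and passing to the uncentered $\rho_{w}$ needs uniformly bounded means, through Remark \ref{F_Donsker_implication} in the AEC proof and through the weakened condition TB$_{*}$ for total boundedness). Yet B3 is absent from the hypothesis list of Theorem \ref{HAJEK_unconditional_convergence_BBB}, even though the text preceding Lemma \ref{lemma_marginal_convergence_B_HEP} states that B3 cannot be dropped in general. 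You correctly identify the role of the bounded-mean hypothesis, but describing it as ``built into'' the cited lemma obscures the real situation: your proof, exactly like the paper's, establishes the theorem only under the additional assumption B3, so the theorem as stated is proved by neither argument without supplementing its hypotheses.
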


\begin{thm}[Outer probability conditional weak convergence]\label{HAJEK_outer_expectation_conditional_weak_convergence_BBB}
Under the assumptions of Theorem \ref{HAJEK_unconditional_convergence_BBB} it follows that
\[\sup_{h\in BL_{1}(l^{\infty}(\mathcal{F}))}\left|E_{d}h(\mathbb{G}_{N}'')-Eh(\mathbb{G}'')\right|\overset{P*}{\rightarrow}0,\]
where $\mathbb{G}''$ is defined as in Theorem \ref{HAJEK_unconditional_convergence_BBB}.
\end{thm}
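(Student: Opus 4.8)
The plan is to follow the proof of Theorem \ref{outer_expectation_conditional_weak_convergence} almost verbatim, working first with the auxiliary process $\widetilde{\mathbb{G}}_N''$ from (\ref{HEP_approximation}) and with the semimetric $\rho_w$ in the role there played by $\rho$, and only at the end transferring the conclusion to the H\'ajek empirical process $\mathbb{G}_N''$. The three ingredients demanded by the general scheme of Remark \ref{rem_dimostrazione_generale} are already available: convergence of the conditional marginal characteristic functions to those of the zero-mean Gaussian process $\mathbb{G}''$ with covariance $\Sigma''$ is furnished by Lemma \ref{lemma_marginal_convergence_B_HEP}; the probability version of conditional AEC with respect to $\rho_w$, that is $E_d\lVert\widetilde{\mathbb{G}}_N''\rVert_{\mathcal{F}_{\delta_N}^w}^{**}\overset{P}{\rightarrow}0$ for every $\delta_N\downarrow0$, is furnished by Lemma \ref{lemma_AEC_expectation_POISSON_B_HEP}; and total boundedness of $\mathcal{F}$ with respect to $\rho_w$ was established in Subsection \ref{subsection_case_BBB}. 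By Theorem \ref{HAJEK_unconditional_convergence_BBB} the limit $\mathbb{G}''$ therefore exists as a Borel measurable, tight element of $l^\infty(\mathcal{F})$ whose paths are uniformly $\rho_w$-continuous almost surely.

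First I would fix, for each $\delta>0$, a finite $\delta$-net $\mathcal{G}_\delta$ for $(\mathcal{F},\rho_w)$ and the associated nearest-point projection $\Pi_\delta:\mathcal{F}\to\mathcal{G}_\delta$. Uniform $\rho_w$-continuity of the paths of $\mathbb{G}''$ gives $\mathbb{G}''\circ\Pi_\delta\overset{as}{\rightarrow}\mathbb{G}''$, hence $\sup_{h\in BL_1}|Eh(\mathbb{G}''\circ\Pi_\delta)-Eh(\mathbb{G}'')|\to0$ as $\delta\downarrow0$, just as in (\ref{uniform_continuous_sample_paths}). For each fixed $\delta$, the finite-dimensional convergence of Lemma \ref{lemma_marginal_convergence_B_HEP}, run through the same isometry argument that proves Corollary \ref{cor_marginal_convergence} (now with $\widetilde{\mathbb{G}}_N''\restriction\mathcal{G}_\delta$ and the Gaussian $\mathbb{G}''\restriction\mathcal{G}_\delta$), yields $\sup_{h\in BL_1}|E_d h(\widetilde{\mathbb{G}}_N''\circ\Pi_\delta)-Eh(\mathbb{G}''\circ\Pi_\delta)|\overset{P*}{\rightarrow}0$. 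Combining these with the Lipschitz bound $|E_d h(\widetilde{\mathbb{G}}_N''\circ\Pi_\delta)-E_d h(\widetilde{\mathbb{G}}_N'')|\le E_d\lVert\widetilde{\mathbb{G}}_N''\rVert_{\mathcal{F}_\delta^w}$ gives, for every fixed $\delta$, the decomposition $\sup_{h\in BL_1}|E_d h(\widetilde{\mathbb{G}}_N'')-Eh(\mathbb{G}'')|\le r(\delta)+R_N(\delta)+E_d\lVert\widetilde{\mathbb{G}}_N''\rVert_{\mathcal{F}_\delta^w}$ with $r(\delta)\to0$ deterministically and $R_N(\delta)=o_P(1)$, exactly as in (\ref{last_step_conditional_weak_convergence_proof}). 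Lemma \ref{lemma_AEC_expectation_POISSON_B_HEP} then lets me pick $\delta_N\downarrow0$ along which the whole right-hand side tends to zero in outer probability, which is opCWC for $\widetilde{\mathbb{G}}_N''$.

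The last step transfers the conclusion from $\widetilde{\mathbb{G}}_N''$ to the H\'ajek process $\mathbb{G}_N''$. This is where (\ref{condizione_consistenza_N_hat}) enters: the present hypotheses force $P_d\{|\widehat N/N-1|>\epsilon\}\overset{P*}{\rightarrow}0$, and since $\mathbb{G}_N''f-\widetilde{\mathbb{G}}_N''f=(N/\widehat N-1)\widetilde{\mathbb{G}}_N''f$, the conditional laws of the two processes agree asymptotically in the $BL_1$ metric, so opCWC for $\widetilde{\mathbb{G}}_N''$ carries over to $\mathbb{G}_N''$, exactly as explained after (\ref{HEP_approximation}).

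No genuinely new difficulty is expected, because the two technically demanding ingredients---marginal convergence and conditional asymptotic equicontinuity---have already been isolated in Lemmas \ref{lemma_marginal_convergence_B_HEP} and \ref{lemma_AEC_expectation_POISSON_B_HEP}. The points that require care are (i) verifying that the measurability and isometry steps of Corollary \ref{cor_marginal_convergence} go through unchanged after replacing $\Sigma'$ and $\mathbb{G}_N'$ by $\Sigma''$ and $\widetilde{\mathbb{G}}_N''$, and (ii) carrying out the passage from $\widetilde{\mathbb{G}}_N''$ to $\mathbb{G}_N''$ at the conditional, outer-probability level rather than merely unconditionally, which is precisely the role of (\ref{condizione_consistenza_N_hat}).
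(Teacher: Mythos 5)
Your proposal follows exactly the route the paper itself takes: the paper gives no separate proof of this theorem, but instructs the reader to rerun the argument of Theorem \ref{outer_expectation_conditional_weak_convergence} for $\widetilde{\mathbb{G}}_{N}''$ with Lemma \ref{lemma_marginal_convergence_B_HEP} (marginals), Lemma \ref{lemma_AEC_expectation_POISSON_B_HEP} (conditional AEC) and total boundedness w.r.t.\ $\rho_{w}$ as the three ingredients, and then to transfer the conclusion from $\widetilde{\mathbb{G}}_{N}''$ to the H\'ajek process $\mathbb{G}_{N}''$ via condition (\ref{condizione_consistenza_N_hat}), which is precisely your plan. The only caveat is one you inherit from the paper rather than introduce: Lemma \ref{lemma_AEC_expectation_POISSON_B_HEP} and the total boundedness argument both require assumption B3, which the hypothesis list of Theorem \ref{HAJEK_unconditional_convergence_BBB} omits even though the surrounding text states B3 cannot be dropped, so your appeal to these lemmas is consistent with the paper's intended (if imperfectly stated) set of assumptions.
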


\begin{cor}[Joint weak convergence]\label{HAJEK_corollary_joint_weak_convergence}
Under the assumptions of Theorem \ref{HAJEK_unconditional_convergence_BBB} it follows that
\[(\mathbb{G}_{N}, \mathbb{G}_{N}'')\rightsquigarrow(\mathbb{G}, \mathbb{G}'')\text{ in }l^{\infty}(\mathcal{F})\times l^{\infty}(\mathcal{F}),\]
where $\mathbb{G}_{N}''$ and $\mathbb{G}''$ are defined as in Theorem \ref{HAJEK_unconditional_convergence_BBB}, $\mathbb{G}_{N}$ is the classical empirical process defined in (\ref{classical_empirical_process}), and where $\mathbb{G}$ is a Borel measurable and tight  $P_{y}$-Brownian Bridge which is independent from $\mathbb{G}''$.
\end{cor}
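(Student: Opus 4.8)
The plan is to reproduce, almost verbatim, the argument used for Corollary~\ref{corollary_joint_weak_convergence}, with the Hájek empirical process $\mathbb{G}_{N}''$ and its limit $\mathbb{G}''$ now playing the roles of $\mathbb{G}_{N}'$ and $\mathbb{G}'$. By Example~1.4.6 on page~31 in \cite{vdVW}, once the two marginal weak convergences $\mathbb{G}_{N}\rightsquigarrow\mathbb{G}$ and $\mathbb{G}_{N}''\rightsquigarrow\mathbb{G}''$ are in hand, the joint convergence reduces to the \emph{asymptotic independence} of the two sequences, i.e. to
\[
E^{*}f(\mathbb{G}_{N})g(\mathbb{G}_{N}'')-E^{*}f(\mathbb{G}_{N})\,E^{*}g(\mathbb{G}_{N}'')\rightarrow 0
\]
for every $f,g\in BL_{1}(l^{\infty}(\mathcal{F}))$. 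Establishing this single limit is the entire content of the proof.

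First I would record the two marginal statements. The convergence $\mathbb{G}_{N}''\rightsquigarrow\mathbb{G}''$ is exactly Theorem~\ref{HAJEK_unconditional_convergence_BBB}. For the classical empirical process I would invoke Remark~\ref{F_Donsker_implication}, which shows that, together with the boundedness of $w_{\theta}$, the Donsker hypothesis on $\mathcal{F}/w_{\theta}$ forces $\mathcal{F}$ itself to be a $P_{y}$-Donsker class; hence $\mathbb{G}_{N}\rightsquigarrow\mathbb{G}$ for a tight $P_{y}$-Brownian bridge $\mathbb{G}$. Combining these two marginal convergences with the fact that $\mathcal{F}$ is $P_{y}$-Donsker immediately yields $E^{*}f(\mathbb{G}_{N})\,E^{*}g(\mathbb{G}_{N}'')\rightarrow Ef(\mathbb{G})\,Eg(\mathbb{G}'')$, so that it remains only to prove $E^{*}f(\mathbb{G}_{N})g(\mathbb{G}_{N}'')\rightarrow Ef(\mathbb{G})\,Eg(\mathbb{G}'')$.

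The decomposition is the same as before: writing
\[
f(\mathbb{G}_{N})g(\mathbb{G}_{N}'')-Ef(\mathbb{G})\,Eg(\mathbb{G}'')=S_{N}+T_{N},
\]
with $S_{N}:=f(\mathbb{G}_{N})[g(\mathbb{G}_{N}'')-Eg(\mathbb{G}'')]$ and $T_{N}:=Eg(\mathbb{G}'')[f(\mathbb{G}_{N})-Ef(\mathbb{G})]$, the terms $E^{*}T_{N}$ and $E_{*}T_{N}$ both tend to zero directly from $\mathbb{G}_{N}\rightsquigarrow\mathbb{G}$. For the $S_{N}$ part I would exploit the product structure of the underlying probability space: since $\mathbb{G}_{N}$ depends only on $\mathbf{Y}_{N}$ and is therefore a constant under the design expectation $E_{d}$, inserting $\pm E_{d}g_{*}(\mathbb{G}_{N}'')$ and pulling $f^{*}(\mathbb{G}_{N})$ outside $E_{d}$ bounds $E^{*}S_{N}$ by the sum of a term that vanishes by the outer-probability conditional weak convergence of Theorem~\ref{HAJEK_outer_expectation_conditional_weak_convergence_BBB} and a term dominated by $E[g^{*}(\mathbb{G}_{N}'')-g_{*}(\mathbb{G}_{N}'')]$; the latter vanishes because $\mathbb{G}_{N}''\rightsquigarrow\mathbb{G}''$ makes the sequence asymptotically measurable (Lemma~1.3.8 on page~21 in \cite{vdVW}). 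Treating $E^{*}(-S_{N})$ symmetrically completes the argument.

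I do not expect a genuinely new obstacle, since the proof is structurally identical to that of Corollary~\ref{corollary_joint_weak_convergence}; the only point demanding attention is that the present subsection phrases everything in terms of the transformed class $\mathcal{F}/w_{\theta}$, so the appeal to classical Donsker theory for $\mathbb{G}_{N}$ must be routed through Remark~\ref{F_Donsker_implication} rather than assumed outright. The passage from the genuine H\'{a}jek process $\mathbb{G}_{N}''$ to its surrogate $\widetilde{\mathbb{G}}_{N}''$ is harmless because the standing assumptions force $\widehat{N}/N\rightarrow 1$, so the two share the same weak limit, and the conditional weak convergence invoked for $S_{N}$ is already available for $\mathbb{G}_{N}''$ itself via Theorem~\ref{HAJEK_outer_expectation_conditional_weak_convergence_BBB}.
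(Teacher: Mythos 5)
Your proposal is correct and follows essentially the same route as the paper: the paper proves this corollary only by cross-reference, instructing the reader to repeat the argument of Corollary \ref{corollary_joint_weak_convergence}, which is exactly what you do, with Theorem \ref{HAJEK_unconditional_convergence_BBB}, Theorem \ref{HAJEK_outer_expectation_conditional_weak_convergence_BBB} and Lemma 1.3.8 of \cite{vdVW} supplying the marginal convergence, the conditional weak convergence and the asymptotic measurability needed for the $S_{N}$/$T_{N}$ decomposition. Your observation that the convergence $\mathbb{G}_{N}\rightsquigarrow\mathbb{G}$ must be routed through Remark \ref{F_Donsker_implication} (since in this subsection $\mathcal{F}$ is not assumed to be $P_{y}$-Donsker outright) is precisely the one adaptation that the paper's cross-reference tacitly requires.
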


\begin{thm}[Outer almost sure conditional weak convergence]\label{HAJEK_outer_almost_sure_conditional_weak_convergence}
Let $\{\mathbf{S}_{N}\}_{N=1}^{\infty}$, $\mathcal{F}$ and $\{\mathbb{G}''_{N}\}_{N=1}^{\infty}$ be defined as in Theorem \ref{HAJEK_unconditional_convergence_BBB}. Assume that conditions C1, B0 (the almost sure version), B1, B2$^{*}$ and condition S' are satisfied. Then it follows that
\[\sup_{h\in BL_{1}(l^{\infty}(\mathcal{F}))}\left|E_{d}h(\mathbb{G}_{N}'')-Eh(\mathbb{G}'')\right|\overset{as*}{\rightarrow}0.\]
where $\mathbb{G}''$ is defined as in Theorem \ref{HAJEK_unconditional_convergence_BBB}.
\end{thm}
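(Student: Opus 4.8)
The plan is to prove the statement first for the auxiliary process $\{\widetilde{\mathbb{G}}_{N}''\}_{N=1}^{\infty}$ of (\ref{HEP_approximation}) and then to transfer it to the genuine HEP sequence. The transfer uses the device already employed in this section: the present hypotheses imply the almost sure version of condition A1 (as noted in the proof of Lemma \ref{lemma_marginal_convergence_B_HEP}), which together with C1 forces (\ref{condizione_consistenza_N_hat}), and the identity $\mathbb{G}_{N}''f-\widetilde{\mathbb{G}}_{N}''f=(N/\widehat{N}-1)\widetilde{\mathbb{G}}_{N}''f$ then carries any of the three weak convergence statements for $\{\widetilde{\mathbb{G}}_{N}''\}$ over to $\{\mathbb{G}_{N}''\}$, and conversely. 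For the auxiliary process I would reproduce verbatim the proof of Theorem \ref{outer_almost_sure_conditional_weak_convergence}, which is itself the almost sure strengthening of the proof of Theorem \ref{outer_expectation_conditional_weak_convergence} (compare also its in-probability, size-variable analogue Theorem \ref{HAJEK_outer_expectation_conditional_weak_convergence_BBB}), with the semimetric $\rho$ replaced throughout by $\rho_{w}$ and every convergence in outer probability upgraded to outer almost sure convergence.

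The four ingredients I would collect first are: (i) almost sure convergence of the finite-dimensional conditional laws, from the almost sure version of Lemma \ref{lemma_marginal_convergence_B_HEP}, upgraded by the isometry argument of Corollary \ref{cor_marginal_convergence} to the statement that $\sup_{g\in BL_{1}(l^{\infty}(\mathcal{G}))}|E_{d}g(\widetilde{\mathbb{G}}_{N}''\restriction\mathcal{G})-Eg(\mathbb{G}''\restriction\mathcal{G})|\overset{as}{\to}0$ for every finite $\mathcal{G}\subseteq\mathcal{F}$; (ii) existence of the limit process $\{\mathbb{G}''f:f\in\mathcal{F}\}$ as a Borel measurable, tight random element of $l^{\infty}(\mathcal{F})$ with almost surely uniformly $\rho_{w}$-continuous sample paths, supplied by Theorem \ref{HAJEK_unconditional_convergence_BBB}; (iii) total boundedness of $\mathcal{F}$ with respect to $\rho_{w}$, which follows from B2$^{*}$ and B3 as recorded in Subsection \ref{subsection_case_BBB}; and (iv) the almost sure conditional AEC bound $E_{d}\lVert\widetilde{\mathbb{G}}_{N}''\rVert_{\mathcal{F}_{\delta_{N}}^{w}}^{**}\overset{as}{\to}0$ for every $\delta_{N}\downarrow0$, which is exactly Lemma \ref{lemma_AEC_outer_almost_sure_POISSON_B_HEP}.

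With these in hand I would run the skeleton of the proof of Theorem \ref{outer_expectation_conditional_weak_convergence}. Using (iii), choose for each $\delta>0$ a finite $\rho_{w}$-$\delta$-net $\mathcal{G}_{\delta}$ with nearest-point map $\Pi_{\delta}$. Ingredient (ii) gives $\mathbb{G}''\circ\Pi_{\delta}\overset{as}{\to}\mathbb{G}''$ in $l^{\infty}(\mathcal{F})$ as $\delta\downarrow0$, hence a deterministic remainder $r(\delta):=\sup_{h}|Eh(\mathbb{G}''\circ\Pi_{\delta})-Eh(\mathbb{G}'')|\to0$. Ingredient (i), passed through the composition map $A_{\delta}(z):=z\circ\Pi_{\delta}$ exactly as in (\ref{marginal_convergence}), produces a term $R_{N}(\delta):=\sup_{h}|E_{d}h(\widetilde{\mathbb{G}}_{N}''\circ\Pi_{\delta})-Eh(\mathbb{G}''\circ\Pi_{\delta})|\overset{as}{\to}0$ for every fixed $\delta$. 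The triangle inequality bounds $\sup_{h}|E_{d}h(\widetilde{\mathbb{G}}_{N}''\circ\Pi_{\delta})-E_{d}h(\widetilde{\mathbb{G}}_{N}'')|$ by $E_{d}\lVert\widetilde{\mathbb{G}}_{N}''\rVert_{\mathcal{F}_{\delta}^{w}}^{**}$, so that assembling the three pieces yields, for every fixed $\delta>0$, the inequality $\sup_{h\in BL_{1}}|E_{d}h(\widetilde{\mathbb{G}}_{N}'')-Eh(\mathbb{G}'')|\le r(\delta)+R_{N}(\delta)+E_{d}\lVert\widetilde{\mathbb{G}}_{N}''\rVert_{\mathcal{F}_{\delta}^{w}}^{**}$. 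Finally I would invoke (iv) and a diagonal argument to select $\delta_{N}\downarrow0$ slowly enough that all three terms on the right tend to $0$ almost surely.

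I expect the main obstacle to be the almost sure diagonalization in this last step. In the in-probability proof of Theorem \ref{outer_expectation_conditional_weak_convergence} one simply extracts a single sequence $\delta_{N}\downarrow0$ along which the bound vanishes in outer probability; here one must instead combine the countable family of almost sure statements $R_{N}(\delta)\overset{as}{\to}0$ (taken over, say, rational $\delta$) with the almost sure AEC of Lemma \ref{lemma_AEC_outer_almost_sure_POISSON_B_HEP} on a common probability-one event, and then calibrate $\delta_{N}\downarrow0$ so that $r(\delta_{N})+R_{N}(\delta_{N})$ still converges to zero on that event. The genuinely analytic work, however, is already absorbed into Lemma \ref{lemma_AEC_outer_almost_sure_POISSON_B_HEP}, whose proof drew on condition S$'$ (through the almost sure symmetrization inequality, Lemma 2.9.9 in \cite{vdVW}) and on B3 (to tame the size-variable weights via the contraction principle of Lemma \ref{contraction_principle}); once that lemma and the measurability of $R_{N}(\delta)$ (granted by the separability argument of Corollary \ref{cor_marginal_convergence}) are in place, the residual bookkeeping with the cover functions $\cdot^{**}$ is routine.
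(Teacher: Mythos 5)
Your proposal is correct and is essentially the paper's own proof: the paper dispatches this theorem by precisely the route you describe — transfer between $\mathbb{G}_N''$ and $\widetilde{\mathbb{G}}_N''$ via (\ref{condizione_consistenza_N_hat}), marginal convergence from Lemma \ref{lemma_marginal_convergence_B_HEP}, total boundedness w.r.t. $\rho_w$, the almost sure conditional AEC of Lemma \ref{lemma_AEC_outer_almost_sure_POISSON_B_HEP}, all assembled by the finite-net scheme of Theorems \ref{outer_expectation_conditional_weak_convergence} and \ref{outer_almost_sure_conditional_weak_convergence} (your rational-$\delta$, common-null-set diagonalization just makes explicit a step the paper leaves implicit). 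The one caveat, shared by your argument and the paper's, is that your ingredients (iii) and (iv) both require assumption B3, which the theorem statement omits even though the paper remarks earlier that B3 \emph{cannot be dropped} in this HEP size-variable setting; strictly speaking B3 should be added to the hypotheses.
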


\section{Simulation results}\label{Simulation_results}

This section presents some simulation results for the above theory. The numerical results given in this section have been obtain by using the R Statistical Software \cite{citazioneR} in order to repeat $B=1000$ times the following steps:
\begin{itemize}
\item[1) ] Generate a population of $N$ independent observations $(Y_{i},X_{i})$ from the linear model $Y_{i}=X_{i}+U_{i}$, where the $X_{i}$'s are i.i.d. lognormal with $E(\ln X_{i})=0$ and $Var(\ln X_{i})=1$, and where the $U_{i}$'s are independent zero mean Gaussian random variables with $Var(U_{i})=X_{i}^{2}$, $i=1,2,\dots, N$.
\item[2) ] Select a sample $\mathbf{s}_{N}:=(s_{1,N}, s_{2,N}, \dots, s_{N,N})$ according to the Poisson sampling design with expected sample size $n_{N}$ and with first order sample inclusion probabilities $\pi_{i,N}$ proportional to the $X_{i}$ values (this step was performed by using the function "UPpoisson" from the R package "sampling" \citep{citazione_sampling_package}).
\item[3) ] Compute the Horvitz-Thompson and the H{\'a}jek estimator for the population cdf $F_{Y,N}(t):=\sum_{i=1}^{N}I(Y_{i}\leq t)/N$, $t\in\mathbb{R}$, and compute the uniform distance between each of those estimators and $F_{Y,N}$, i.e. compute $\lVert \mathbb{G}'_{N}\rVert_{\mathcal{F}}$ and $\lVert \mathbb{G}''_{N}\rVert_{\mathcal{F}}$ for the case where $\mathcal{F}:=\{I(y\leq t):t\in\mathbb{R}\}$.
\item[4) ] Estimate the $\gamma$-quantiles $q_{\gamma}'$ and $q_{\gamma}''$ of the limiting distributions of $\lVert \mathbb{G}'_{N}\rVert_{\mathcal{F}}$ and $\lVert \mathbb{G}''_{N}\rVert_{\mathcal{F}}$, i.e. the $\gamma$-quantiles of the distributions of $\lVert \mathbb{G}'\rVert_{\mathcal{F}}$ and $\lVert \mathbb{G}''\rVert_{\mathcal{F}}$. The procedure for doing this is based on Algorithm 5.1 in \citep{Kroese} which was also used in the simulation study in \cite{Bertail_2017}. The details are described below.
\item[5) ] Compute the asymptotic uniform $\gamma$-confidence bands for the population cdf $F_{Y,N}$ based on the Horvitz-Thompson and the H{\'a}jek estimators and verify whether $F_{Y,N}$ lies within these confidence bands, i.e. verify whether $\lVert \mathbb{G}'_{N}\rVert_{\mathcal{F}}\leq \widehat{q}_{\gamma}'$ and whether $\lVert \mathbb{G}''_{N}\rVert_{\mathcal{F}}\leq \widehat{q}_{\gamma}''$, where $\widehat{q}_{\gamma}'$ and $\widehat{q}_{\gamma}''$ are the estimates of $q_{\gamma}'$ and $q_{\gamma}''$, respectively, which have already been computed at step 4. Note that the widths of the two asymptotic uniform $2\gamma$-confidence bands are given by $2\widehat{q}_{\gamma}'$ and $2\widehat{q}_{\gamma}''$, respectively.  
\end{itemize}

The $\gamma$-quantiles of the distributions of $\lVert \mathbb{G}'\rVert_{\mathcal{F}}$ and $\lVert \mathbb{G}''\rVert_{\mathcal{F}}$ were estimated according to the following procedure:
\begin{itemize}
\item[i) ] Estimate the covariance matrices $\Sigma'(\mathbf{f})$ and $\Sigma''(\mathbf{f})$ for $\mathbf{f}:=(I(y\leq Y_{i_{1}}), I(y\leq Y_{i_{2}}), \dots, I(y\leq Y_{i_{r}}))^{\intercal}$ where $(i_{1}, i_{2}, \dots, i_{r})$ correspond to the sampled population units, i.e. $(i_{1}, i_{2}, \dots, i_{r})$ are the values of the subscript $i$ for which $s_{i,N}=1$, $i=1,2,\dots, N$. The components $\Sigma'_{i,j}(\mathbf{f})$ and $\Sigma''_{i,j}(\mathbf{f})$, $i,j=i_{1}, i_{2}, \dots, i_{r}$, of the two covariance matrices were estimated as follows:
\[\widehat{\Sigma}'_{i,j}(\mathbf{f}):=\frac{1}{N}\sum_{k=1}^{N}s_{k,N}\frac{1-\pi_{k,N}}{\pi_{k,N}^{2}}I(Y_{k}\leq Y_{i})I(Y_{k}\leq Y_{j})\]
and
\[\widehat{\Sigma}''_{i,j}(\mathbf{f}):=\frac{1}{\sum_{k=1}^{N}\frac{s_{k,N}}{\pi_{k,N}}}\sum_{k=1}^{N}s_{k,N}\frac{1-\pi_{k,N}}{\pi_{k,N}^{2}}[I(Y_{k}\leq Y_{i})-\overline{I}_{i}][I(Y_{k}\leq Y_{j})-\overline{I}_{j}]\]
with
\[\overline{I}_{i}:=\frac{1}{\sum_{k=1}^{N}\frac{s_{k,N}}{\pi_{k,N}}}\sum_{k=1}^{N}\frac{s_{k,N}}{\pi_{k,N}}I(Y_{k}\leq Y_{i}).\]
\item[ii) ] Compute the Cholesky decompositions of the estimated covariance matrices, i.e. compute two lower triangular matrices $L$ and $H$ such that $\widehat{\Sigma}'(\mathbf{f})=LL^{\intercal}$ and $\widehat{\Sigma}''_{i,j}(\mathbf{f})=HH^{\intercal}$.
\item[iii) ] Generate independently $1000$ random vectors $\mathbf{Z}_{b}:=(Z_{1,b}, Z_{2,b}, \dots, Z_{r,b})^{\intercal}$, $b=1,2,\dots, 1000$, whose components $Z_{k,b}$ are i.i.d. standard normal random variables and compute the vectors $\mathbf{G}_{b}':=L\mathbf{Z}_{b}$ and $\mathbf{G}_{b}'':=H\mathbf{Z}_{b}$ which can be considered as realizations of the limit processes $\mathbb{G}'$ and $\mathbb{G}''$, respectively.
\item[iv) ] for each $b=1,2,\dots, 1000$ compute the maximum norms $\lVert \mathbf{G}_{b}'\rVert_{\infty}$ and $\lVert \mathbf{G}_{b}''\rVert_{\infty}$ (i.e. the two maxima of the absolute values of the components of $\mathbf{G}_{b}'$ and $\mathbf{G}_{b}''$), put the two vectors $(\lVert \mathbf{G}_{1}'\rVert_{\infty}, \lVert \mathbf{G}_{2}'\rVert_{\infty}, \dots, \lVert \mathbf{G}_{1000}'\rVert_{\infty})$ and $(\lVert \mathbf{G}_{1}''\rVert_{\infty}, \lVert \mathbf{G}_{2}''\rVert_{\infty}, \dots, \lVert \mathbf{G}_{1000}''\rVert_{\infty})$ in ascending order and set $\widehat{q}_{\gamma}'$ equal to the $\gamma$-quantile of the first vector, and set $\widehat{q}_{\gamma}''$ equal to the $\gamma$-quantile of the second vector.
\end{itemize}

\begin{table}[h]\label{tabella_simulazioni}
\caption{Simulation results for the Horvitz-Thompson empirical process.}
\label{tabella_simulazioni_HTEP}
\begin{tabular}{rccc}
\hline
 & $\gamma=0.90$ & $\gamma=0.95$ & $\gamma=0.99$\\
\hline
$\mathbf{N=1000}$ \\
\multirow{ 2}{*}{$\alpha=0.05$} & 0.849& 0.901& 0.948\\
&(0.9123; 16.5467)&(1.0573; 19.6398)&(1.3429; 24.9830)\\
\multirow{ 2}{*}{$\alpha=0.10$} & 0.846& 0.912& 0.959\\
&(0.5853; 1.8251)&(0.6738; 2.1567)&(0.8506; 2.6515)\\
$\mathbf{N=2000}$ \\
\multirow{ 2}{*}{$\alpha=0.05$} & 0.860& 0.919& 0.957\\
&(0.5967; 2.2616)&(0.6883; 2.6380)&(0.8660; 3.2897)\\
\multirow{ 2}{*}{$\alpha=0.10$} & 0.865& 0.929& 0.978\\
&(0.4263; 1.1658)&(0.4899; 1.3830)&(0.6158; 1.6729)\\
$\mathbf{N=4000}$ \\
\multirow{ 2}{*}{$\alpha=0.05$} & 0.854& 0.916& 0.965\\
&(0.4296; 1.2044)&(0.4940; 1.3863)&(0.6201; 1.7086)\\
\multirow{ 2}{*}{$\alpha=0.10$} & 0.870 & 0.928& 0.976\\
&(0.3065; 0.5912)&(0.3521; 0.6815)&(0.4407; 0.8974)\\
\hline
\end{tabular}
\end{table}

\begin{table}[h]\label{tabella_simulazioni}
\caption{Simulation results for the H{\'a}jek empirical process.}
\label{tabella_simulazioni_HEP}
\begin{tabular}{rccc}
\hline
 & $\gamma=0.90$ & $\gamma=0.95$ & $\gamma=0.99$\\
\hline
$\mathbf{N=1000}$ \\
\multirow{ 2}{*}{$\alpha=0.05$} & 0.744& 0.833& 0.935\\
&(0.4579; 1.1284)&(0.5195; 1.3473)&(0.6403; 1.6996)\\
\multirow{ 2}{*}{$\alpha=0.10$} & 0.804& 0.878& 0.940\\
&(0.3477; 0.7969)&(0.3927; 0.9365)&(0.4813; 1.2784)\\
$\mathbf{N=2000}$ \\
\multirow{ 2}{*}{$\alpha=0.05$} & 0.792& 0.866& 0.944\\
&(0.3526; 0.7526)&(0.3984; 0.8804)&(0.4890; 1.1742)\\
\multirow{ 2}{*}{$\alpha=0.10$} & 0.844& 0.913& 0.967\\
&(0.2622; 0.5141)&(0.2953; 0.6119)&(0.3611; 0.7657)\\
$\mathbf{N=4000}$ \\
\multirow{ 2}{*}{$\alpha=0.05$} & 0.815& 0.888& 0.958\\
&(0.2632; 0.6370)&(0.2964; 0.7045)&(0.3619; 0.8760)\\
\multirow{ 2}{*}{$\alpha=0.10$} & 0.847& 0.914& 0.967\\
&(0.1928; 0.3740)&(0.2164; 0.4237)&(0.2631; 0.5214)\\
 
\hline
\end{tabular}
\end{table}

Table \ref{tabella_simulazioni_HTEP} (for the HTEP) and Table \ref{tabella_simulazioni_HEP} (for the HEP) summarize the simulation results. For each considered population size $N=1000, 2000, 4000$, for each considered sampling fraction $\alpha:=n_{N}/N=0.05, 0.10$ and for each considered confidence level $\gamma=0.90, 0.95, 0.99$, the two tables report the estimate of the coverage probability of the corresponding confidence band for $F_{Y,N}$ as well as the average (the first figure within each bracket) and the maximum width (the second figure within each bracket) of the $B=1000$ simulated confidence bands. The simulation results show that the confidence bands based on the HTEP are often much too wide to be useful. The confidence bands for the HEP are much narrower but nevertheless their width is occasionally larger than $1$. Other simulation results not reported here show that with more evenly distributed first order inclusion probabilities the confidence bands would have been much narrower. As for the estimated coverage probabilities, they are always smaller than the nominal confidence level $\gamma$, and those obtained from the HEP are only a little bit smaller than those obtained from the HTEP even though the widths of the confidence bands obtained from the latter appear to be much larger. However, other simulation results (not reported here) suggest that the estimated coverage probabilities get much closer to the nominal confidence levels as the variability in the first order sample inclusion probabilities decreases.

%
%
%
%
%



\bibliography{arXiv_WEAK_CONVERGENCE_POISSON_SAMPLING_version3}

\end{document}